\documentclass[a4paper,reqno]{amsart}

\usepackage[utf8]{inputenc}
\usepackage{dsfont}
\usepackage[all]{xy}
\usepackage{mathrsfs}
\usepackage[hidelinks]{hyperref}
\usepackage[all]{xy}
\usepackage{amssymb}
\usepackage[a4paper]{geometry}
\usepackage[backend=biber,style=alphabetic,sorting=nyvt]{biblatex}
\usepackage{graphicx}
\usepackage{enumitem,nicefrac}
\usepackage{xcolor}
\usepackage{float}

\DeclareNolabel{
  \nolabel{\regexp{[\p{Z}\p{P}\p{S}\p{C}]+}}
}

\newcommand*{\MCG}{\textup{MCG}}

\newcommand*{\Homeo}{\textup{Homeo}}
\newcommand*{\Id}{\textup{Id}}
\renewcommand*{\d}{\textup{d}}
\newcommand*{\GL}{\textup{GL}}
\newcommand*{\sign}{\textup{sign}}

\newcommand*{\Stab}{\textup{Stab}}
\newcommand*{\itop}{\overset{i}{\simeq}}
\newcommand*{\WP}{\textup{WP}}
\newcommand*{\PSL}{\textup{PSL}}
\newcommand*{\SL}{\textup{SL}}

\newcommand*{\Tr}{\textup{Tr}}
\newcommand*{\Tw}{\textup{Tw}}

\renewcommand{\hat}[1]{\widehat{#1}}
\newcommand{\argch}[1]{\textup{argch}#1}

\renewcommand{\cosh}[1]{\textup{ch}#1}
\renewcommand{\sinh}[1]{\textup{sh}#1}
\renewcommand{\coth}[1]{\textup{coth}#1}

\theoremstyle{plain}
    \newtheorem{Theo}{Theorem}[section]
    \newtheorem{Cor}[Theo]{Corollary}
    \newtheorem{Lem}[Theo]{Lemma}
    \newtheorem{Prop}[Theo]{Proposition}
\theoremstyle{definition}
    \newtheorem{Def}[Theo]{Definition}
\theoremstyle{remark}
    \newtheorem*{Rem}{Remark}
    \newtheorem*{Rems}{Remarks}
    \newtheorem*{Ex}{Example}

\title[Integrals of general geometric random variables]{Integrals of general geometric random variables on the moduli space of hyperbolic surfaces}

\author{Victor Le Guilloux}

\subjclass[2020]{Primary 57K20. Secondary 32G15; 28A50}
\keywords{Random hyperbolic surfaces, Weil-Petersson measure, Friedman-Ramanujan functions, Pseudo-convolution, Length-type functions}

\begin{document}

\maketitle

\begin{abstract}
    In this article we provide an integration formula making us able to integrate random variables defined on the moduli space of hyperbolic surfaces which involve the lengths of closed geodesics belonging to a fixed arbitrary mapping class group orbit. This generalizes Mirzakhani's formula for simple geodesics and the integration formula of our previous paper on geodesics with exactly one self-intersection. We then compute the general expression of the length function of an arbitrary closed loop in Fenchel-Nielsen coordinates. Using this expression together with our integration formula, we prove that the integral of a geometric random variable can be expressed as an integral over $\mathbf{R}$ for a measure with density with respect to the Lebesgue measure. By studying the asymptotic behavior of this density function (at fixed genus and number of boundaries on the base surface), given an arbitrary closed loop $\gamma$, we obtain an improvement of Mirzakhani's asymptotic equivalent of the Weil-Petersson expectation $\mathbb{E}[N_\gamma(a)]$, when $a\to\infty$, of the number of geodesics in the same mapping class group orbit as $\gamma$ of length at most $a$. This also generalizes the conclusions of our previous article on eight-shaped geodesics.
\end{abstract}

\tableofcontents

\section{Introduction}

\subsection{Main results}

Let $S_{g,n}$ be a fixed surface of genus $g$ with $n$ labelled boundaries $\alpha_1,\ldots,\alpha_n$. Assuming that $2g-2+n>0$, we can consider the moduli space $\mathcal{M}_{g,n}(\mathbf{L}_n)$ of hyperbolic metrics on $S_{g,n}$ with geodesic boundaries of lengths given by $\mathbf{L}_n=(L_1,\ldots,L_n)\in\mathbf{R}_{\geq0}^n$ (a boundary of length 0 is a cusp). The aim of this article is to extend the results we proved in \cite{LeG25} to arbitrary loops on $S_{g,n}$. Namely we study random variables on $\mathcal{M}_{g,n}(\mathbf{L}_n)$ whose expression involves the lengths of closed geodesics of any fixed topological type. In order to consider functions defined on $\mathcal{M}_{g,n}(\mathbf{L}_n)$ as random variables, we equip the moduli space with the Weil-Petersson probability measure, and we denote by $\mathbb{E}_{g,n}^{\mathbf{L}_n}$ the expectation for this measure and $V_{g,n}(\mathbf{L}_n)$ the Weil-Petersson volume of $\mathcal{M}_{g,n}(\mathbf{L}_n)$.

Given a closed loop $\gamma$ on $S_{g,n}$, we denote by $\mathcal{O}_\gamma$ the mapping class group orbit of the free homotopy class $[\gamma]$ of $\gamma$, and if $f\colon S_{g,n}\longrightarrow X$ is an orientation-preserving homeomorphism from $S_{g,n}$ to a hyperbolic surface $X$, we denote for any test function $F\colon \mathbf{R}\longrightarrow\mathbf{C}$ (e.g. compactly supported):
\[
    F^\gamma(X)=\sum_{[\gamma']\in\mathcal{O}_\gamma}F(\ell_{f(\gamma')}(X)),
\]
where $\ell_{f(\gamma')}(X)$ is the hyperbolic length of the unique geodesic of $X$ freely homotopic to $f(\gamma')$. With this definition, $F^\gamma(X)$ does not depend on the choice of the homeomorphism $f$. As a consequence, $F^\gamma$ is a well-defined random variable on $\mathcal{M}_{g,n}(\mathbf{L}_n)$.

Our main result is the following.

\begin{Theo}\label{S1:Theo:MainIntro}
    Let $\gamma$ be a closed loop on $S_{g,n}$. There exists a non-negative function $V_{\mathcal{O}_\gamma}(~.~,\mathbf{L}_n)\in L_{loc}^1(\mathbf{R}_{\geq0})$, depending on the orbit of $\gamma$, such that for every test function $F\colon \mathbf{R}\longrightarrow\mathbf{C}$, we have:
    \[
        \mathbb{E}_{g,n}^{\mathbf{L}_n}[F^\gamma]=\int_0^\infty F(\ell)V_{\mathcal{O}_\gamma}(\ell,\mathbf{L}_n)\frac{\d\ell}{V_{g,n}(\mathbf{L}_n)}.
    \]
    Moreover there exist a polynomial $P_\gamma^{\mathbf{L}_n}$ of degree $6g-7+2n$ and constants $c_0,c>0$ and $0<\lambda_\gamma<1$ such that for every $m\geq1$ we have:
    \[
        \int_0^me^\ell|V_{\mathcal{O}_\gamma}(\ell,\mathbf{L}_n)-P_\gamma^{\mathbf{L}_n}(\ell)|\d\ell\leq c_0(1+m)^ce^{(1-\lambda_\gamma)m}.
    \]
\end{Theo}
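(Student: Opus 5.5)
The plan is to reduce to integrals over moduli spaces of the complement of the filling subsurface of $\gamma$, using a Mirzakhani-type unfolding. Let $S(\gamma)$ be the subsurface filled by $\gamma$, i.e. a regular neighbourhood of $\gamma$ with disc and once-punctured-disc components added. Its boundary multicurve $\beta=(\beta_1,\dots,\beta_k)$ consists of simple curves, possibly with some of the $\alpha_i$ among them. We unfold the sum over $\mathcal{O}_\gamma$ against the stabiliser of $[\gamma]$, which is commensurable with $\Stab(\beta)$ up to the finite symmetry group of $\gamma$ inside $S(\gamma)$ and the twists along the $\beta_j$. Applying Mirzakhani's integration formula to the multicurve $\beta$, we get
\[
\int_{\mathcal{M}_{g,n}(\mathbf{L}_n)}F^\gamma\,\d\WP=C_\gamma\int_{\mathbf{x}\in\mathbf{R}_{>0}^k}\Bigl(\int_{\mathcal{T}(S(\gamma),\mathbf{x})/\Stab_{S(\gamma)}}\sum_{\gamma'\in \Stab\text{-orbit}}F(\ell_{\gamma'}(Y))\,\d\WP(Y)\Bigr)V_{S\setminus S(\gamma)}(\mathbf{x},\mathbf{L})\,x_1\cdots x_k\,\d\mathbf{x}.
\]
The inner part is an integral over the moduli space of the filling surface $S(\gamma)$ with boundary lengths $\mathbf{x}$. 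Since $\gamma$ fills $S(\gamma)$, only finitely many mapping classes of $S(\gamma)$ are involved modulo twists along the boundary. So the inner integral can be written in Fenchel–Nielsen coordinates $(\ell_i,\tau_i)$ of a pants decomposition of $S(\gamma)$ over a fundamental domain that is compact in the twist directions. Here we use the explicit trace formula for $\ell_\gamma$ in Fenchel–Nielsen coordinates, which expresses $\cosh(\ell_\gamma/2)$ as a sum of products of $\cosh$ and $\sinh$ of half-lengths and twists. We then use the coarea formula with respect to the function $\ell_\gamma$ on the full space of parameters $(\mathbf{x},\ell_i,\tau_i)$. This gives the first claim, with $V_{\mathcal{O}_\gamma}(\ell,\mathbf{L}_n)$ the pushforward density: $\ell_\gamma$ is real-analytic and non-constant, so its level sets have measure zero and the pushforward of the (locally finite, polynomially weighted) measure is absolutely continuous. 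Local integrability follows from finiteness of the total mass on $\{\ell_\gamma\le m\}$. That finiteness holds by the collar lemma: all $x_j,\ell_i\lesssim \ell_\gamma$.

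For the asymptotic statement, we write $V_{\mathcal{O}_\gamma}(\ell)=\int_{\{\ell_\gamma=\ell\}}\frac{\Phi}{|\nabla\ell_\gamma|}$, with $\Phi$ polynomial in the $x_j$ (given by $V_{S\setminus S(\gamma)}$ and Mirzakhani's polynomial volumes). The main contribution should come from the region where $\ell_\gamma\approx\sum_j c_jx_j+(\text{lower-order terms})$, i.e. where all boundary lengths are large and $\gamma$ nearly wraps around the boundary. There the trace formula gives $\ell_\gamma=\sum_j n_j\, x_j/\ldots+O(e^{-\lambda\cdot})$-type corrections. Matching this with the total degree of $V_{g,n}$ yields a polynomial $P_\gamma^{\mathbf{L}_n}$. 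Its degree $6g-7+2n$ comes from $\deg V_{g,n}$ minus one after the coarea integration; equivalently, $\int_0^\ell P\sim\ell^{6g-6+2n}$, which matches Mirzakhani's count. So the steps are: (1) decompose the parameter domain into a \emph{principal region}, where the dominant hyperbolic-cosine monomial in the trace formula exceeds the others by a factor $e^{\lambda\ell}$, and a complementary region; (2) in the principal region, expand $\ell_\gamma=L(\mathbf{x},\boldsymbol\ell)+\log(\text{const})+O(e^{-\lambda\min})$ with $L$ linear; (3) compute the linear pushforward exactly. That pushforward is a polynomial times $e^{-\ell}$-compensating factors once one notes that $\Phi$ contains the $\sinh$ factors that produce $e^{\ell}$ growth. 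So we should rather pair $e^{\ell}V$ and identify $P$ as the polynomial part of this density, a Friedman–Ramanujan-type decomposition.

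The main obstacle is the remainder estimate $\int_0^m e^\ell|V-P|\le c_0(1+m)^ce^{(1-\lambda_\gamma)m}$. Two error sources must be bounded. The first is the non-principal region, where some pants lengths or twists are comparable and no single monomial dominates. I would try to bound its measure inside $\{\ell_\gamma\le m\}$ by a polynomial times $e^{(1-\lambda)m}$, using that each additional self-intersection or twist costs an exponential factor via the lower bound $\ell_\gamma\ge\sum(\text{lengths traversed})-O(1)$. The second is the exponentially small perturbation in the principal region. It shifts the level sets by $O(e^{-\lambda\cdot})$, and the error is controlled by differentiating the linear model. One must also handle the twist parameters of $S(\gamma)$, which enter through $\cosh\tau$ terms. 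I would treat them by the same splitting, using that $\gamma$ filling forces twists to be bounded by $O(\ell_\gamma)$, so they contribute only polynomial factors $(1+m)^c$. Choosing $\lambda_\gamma$ as the minimum of the exponential gaps across the finitely many monomials then closes the argument.
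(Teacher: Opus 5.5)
The reduction to a pushforward of a polynomially weighted measure under $\ell_\gamma$ matches the paper's approach (Theorem \ref{S2:Theo:IntFormGeneral}). There is, however, a genuine gap in the asymptotic part, exactly where you locate the main obstacle.

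\textbf{The wall regions are not an error term.} You treat the region where no exponential monomial of the trace formula dominates as negligible. This cannot work. Near a wall where two monomials are comparable, $\ell_\gamma$ is a linear form plus a correction that is bounded but does not decay, so there is no exponential saving in $\ell_\gamma$ there. Such wedges carry a positive proportion of every level set $\{\ell_\gamma=\ell\}$. Their $e^{\ell_\gamma}$-weighted mass on $\{\ell_\gamma\le m\}$ is of order $e^m$ times a polynomial, and they contribute to $P_\gamma^{\mathbf{L}_n}$ itself. (The weighted mass is the relevant quantity; the unweighted measure of $\{\ell_\gamma\le m\}$ is polynomial in $m$ anyway, so bounding it says nothing.) Already for $h(x,y)=\log(e^x+e^y)$ on $\mathbf{R}_{>0}^2$ the pushforward density is $2\ell+2\log(1-e^{-\ell})$. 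The wedge $\{|x-y|\le\lambda\ell\}$ contributes exactly $2\lambda\ell$ of this, and any strip $\{|x-y|\le C\}$ the non-decaying amount $2C$.

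\textbf{The principal region needs $\mathscr{C}^1$ control.} A $\mathscr{C}^0$ expansion $\ell_\gamma=L+\mathrm{const}+O(e^{-\lambda\,\cdot})$ only controls $V$ weakly, against smooth test functions. The quantity $\int_0^m e^\ell|V-P|$ is an $L^1$ quantity governed by the Jacobian $|\partial h_i^{-1}/\partial\ell|$ of the level-set parametrisation. This is why the paper stresses that $\mathscr{C}^0$ control of Mirzakhani's type is insufficient.

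\textbf{What the paper does instead.} It drops the dominant/non-dominant dichotomy and uses a structure valid on the whole parameter space.
First, it shows (Corollary \ref{S3:Cor:HoloRepGen2}) that $h_\gamma$ is length-type with \emph{positive} coefficients $a_T^i$. Your ``sums of products of hyperbolic cosines and sines'' carry no such sign information, and cancellations could kill the leading term or make it depend on the thin-part variables.
Second (Theorem \ref{S3:Theo:SimpFormPLT}), it substitutes $s_j=-\log x_j$ for short pants curves and cuts into finitely many cones mapped linearly onto ordered regions $0\le z_1\le\dots\le z_n$. There the pseudo length-type function is $\Lambda+\psi$, with $\Lambda$ linear and $\psi$ merely bounded, but satisfying $|\partial_{z_p}\psi|\le c\,e^{-\min\{z_p,Y_1,\dots,Y_k\}}$ and $\Lambda\le M\min\{z_{p_0},Y_1,\dots,Y_k\}$.
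Third, it integrates by parts in the last variable $z_{i_0}$ on which $\Lambda$ depends, with $\eta=\partial\Lambda/\partial z_{i_0}$ (Theorem \ref{S4:Theo:OpeLConvCone}). The term weighted by $\eta-\partial h/\partial z_{i_0}=-\partial\psi/\partial z_{i_0}$ lies in $\mathcal{SR}_w(\lambda)$ precisely because of the derivative bound (Lemma \ref{S4:Lem:ConvSLT1}). The boundary terms are pseudo-convolutions on lower-dimensional cones against restricted simplified length-type functions (Lemma \ref{S3:Lem:RestBoundSLT}).
Finally, iterating $\mathcal{L}=\Id-\mathcal{P}$, inducting on dimension and applying the Friedman--Ramanujan characterization (Lemma \ref{S4:Lem:CaracFR}) yields the polynomial and the remainder simultaneously. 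The wall regions are never estimated separately, and you need some mechanism of this kind to reach the stated remainder.

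\textbf{Smaller issues in your first step.}
\begin{itemize}
\item $\Stab([\gamma])$ is not commensurable with $\Stab(\beta)$. It meets $\MCG(S(\gamma))$ in a group that is finite modulo boundary twists, whereas $\MCG(S(\gamma))$ modulo boundary twists is infinite once $S(\gamma)$ is neither an annulus nor a pair of pants.
\item Consequently, after unfolding, the interior twists of $S(\gamma)$ range over all of $\mathbf{R}$. Finiteness comes from properness of $\ell_\gamma$, which you invoke later, not from a fundamental domain compact in the twists.
\item Null level sets alone do not give absolute continuity. You need the critical set to be null, which does hold here by analyticity.
\end{itemize}
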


As a consequence, we obtain:

\begin{Cor}\label{S1:Cor:MainIntro}
    Let $\gamma$ be a closed loop on $S_{g,n}$. Given $a\geq0$ and a hyperbolic metric $X\in\mathcal{M}_{g,n}(\mathbf{L}_n)$, we denote by $N_\gamma(a)(X)$ the number of geodesics of $X$ in the same mapping class group orbit as $\gamma$ with length at most $a$. There exist a polynomial $Q_\gamma^{\mathbf{L}_n}$ of degree $6g-6+2n$ and constants $c>0$ and $0<\lambda_\gamma<1$ such that:
    \[
        \mathbb{E}_{g,n}^{\mathbf{L}_n}[N_\gamma(a)]=Q_\gamma^{\mathbf{L}_n}(a)+\underset{a\to\infty}{O}\big((1+a)^ce^{-\lambda_\gamma a}\big).
    \]
\end{Cor}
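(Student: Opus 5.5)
We derive the corollary from Theorem \ref{S1:Theo:MainIntro} applied to $F=\mathds{1}_{[0,a]}$. With this choice $F^\gamma=N_\gamma(a)$, hence
\[
    \mathbb{E}_{g,n}^{\mathbf{L}_n}[N_\gamma(a)]=\frac{1}{V_{g,n}(\mathbf{L}_n)}\int_0^a V_{\mathcal{O}_\gamma}(\ell,\mathbf{L}_n)\,\d\ell .
\]
The indicator function is not smooth. We only need $F$ to be compactly supported and measurable. If the theorem is stated for continuous test functions, we approximate the indicator monotonically from below by continuous functions and apply monotone convergence, using $V_{\mathcal{O}_\gamma}\geq0$ and $V_{\mathcal{O}_\gamma}\in L^1_{loc}$.

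Next we split $V_{\mathcal{O}_\gamma}=P_\gamma^{\mathbf{L}_n}+R$ on $[0,a]$. Set
\[
    Q(a)=\frac{1}{V_{g,n}(\mathbf{L}_n)}\int_0^a P_\gamma^{\mathbf{L}_n}(\ell)\,\d\ell .
\]
This is a polynomial of degree $6g-6+2n$, since $P_\gamma^{\mathbf{L}_n}$ has degree $6g-7+2n$ and a nonzero leading coefficient. The naive bound $\int_0^a|R|\,\d\ell$ does not decay, so we must not simply discard $R$. Instead we write
\[
    \int_0^a R=\int_0^\infty R-\int_a^\infty R
\]
and absorb the constant $\int_0^\infty R$ into the polynomial. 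The weighted estimate controls $\int_a^\infty|R|$, as we now show.

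Let $I(m)=\int_0^m e^\ell|R(\ell)|\,\d\ell\leq c_0(1+m)^ce^{(1-\lambda_\gamma)m}$. For $m\geq1$ we have
\[
    \int_m^{m+1}|R|\,\d\ell\leq e^{-m}I(m+1)\leq c_0e^{1-\lambda_\gamma}(2+m)^ce^{-\lambda_\gamma m}.
\]
Summing these unit-interval bounds over $m=k,k+1,\dots$ with $k=\lfloor a\rfloor$ gives a convergent series, since $\lambda_\gamma>0$. Hence $R\in L^1(\mathbf{R}_{\geq0})$, because $R$ is also locally integrable near $0$, and
\[
    \int_a^\infty|R|\,\d\ell=O\big((1+a)^ce^{-\lambda_\gamma a}\big).
\]
We then set
\[
    Q_\gamma^{\mathbf{L}_n}(a)=\frac{1}{V_{g,n}(\mathbf{L}_n)}\Big(\int_0^aP_\gamma^{\mathbf{L}_n}(\ell)\,\d\ell+\int_0^\infty R(\ell)\,\d\ell\Big).
\]
Adding a constant does not change the degree, and the error is $-V_{g,n}^{-1}\int_a^\infty R$, which gives the claimed estimate.

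The analytic work is therefore trivial once the theorem is known. The main obstacle is to make sure that the polynomial provided by the theorem genuinely has degree $6g-7+2n$, that is, that its leading coefficient is nonzero. This nonvanishing is what gives $Q_\gamma^{\mathbf{L}_n}$ its stated degree. It should be consistent with Mirzakhani's asymptotic $\mathbb{E}[N_\gamma(a)]\sim c_\gamma a^{6g-6+2n}$ with $c_\gamma>0$. We take this as part of the statement of Theorem \ref{S1:Theo:MainIntro}.
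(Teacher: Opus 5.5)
Your argument is correct and is essentially the paper's proof: split $V_{\mathcal{O}_\gamma}(\cdot,\mathbf{L}_n)$ into its polynomial part and a remainder $R$, deduce from the weighted bound that $R\in L^1(\mathbf{R}_{\geq0})$ with $\int_a^\infty|R|=O\big((1+a)^ce^{-\lambda_\gamma a}\big)$, and absorb $\int_0^\infty R$ into the constant term of the primitive of $P_\gamma^{\mathbf{L}_n}$. You also write out the unit-interval summation behind the tail estimate, which the paper leaves implicit; on the degree, the paper's construction only bounds the degree of $P_\gamma^{\mathbf{L}_n}$ from above, so rather than simply assuming exactness, derive it from Mirzakhani's asymptotic with $n_\gamma>0$.
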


\begin{Rem}
    The method we use to prove Theorem \ref{S1:Theo:MainIntro} does not necessarily provide an optimal coefficient $\lambda_\gamma$. Indeed, if we apply the method of this article to an eight-shaped loop $\gamma$, depending on the orbit of $\gamma$ we could obtain $\lambda_\gamma=\frac{1}{4}$ instead of the $\lambda_\gamma=\frac{1}{2}$ we computed in \cite[Paragraph 4.2.2]{LeG25}.
\end{Rem}

In what follows, we will work with fixed values of $g,n$ and $\mathbf{L}_n$, hence we shall only denote by $\mathbb{E}$ the Weil-Petersson expectation on $\mathcal{M}_{g,n}(\mathbf{L}_n)$.

\subsection{The Weil-Petersson model for random hyperbolic surfaces}

A \emph{marked hyperbolic surface} is a couple $(X,f)$, where $X$ is a hyperbolic surface with labelled geodesic boundaries $\beta_1,\ldots,\beta_n$ of lengths $L_1,\ldots,L_n$ respectively and $f\colon S_{g,n}\longrightarrow X$ is a homeomorphism which preserves the orientation and the labelling of the boundaries. Two marked hyperbolic surfaces $(X,f)$ and $(Y,g)$ are said to be \emph{equivalent} is $g\circ f^{-1}$ is isotopic to an isometry. The Teichmüller space $\mathcal{T}_{g,n}(\mathbf{L}_n)$ is the set of equivalence classes of marked hyperbolic surfaces. This space is a manifold diffeomorphic to $(\mathbf{R}_{>0}\times\mathbf{R})^{3g-3+n}$ through \emph{Fenchel-Nielsen coordinates} and carries a natural symplectic form $\omega_{\WP}$ called the \emph{Weil-Petersson form}. We recall more precisely the definition of Fenchel-Nielsen coordinates in Paragraph \ref{S1:Subsec3}.

The Teichmüller space is additionnally equipped with an action of the \emph{mapping class group}, which is defined by:
\[
    \MCG_{g,n}=\Homeo^+(S_{g,n},\partial S_{g,n})/\Homeo_\circ^+(S_{g,n},\partial S_{g,n}),
\]
where $\Homeo^+(S_{g,n},\partial S_{g,n})$ is the group of orientation preserving homeomorphisms that restrict to the identity on $\partial S_{g,n}$, and $\Homeo_\circ^+(S_{g,n},\partial S_{g,n})$ is its neutral component, i.e. the subgroup of homeomorphisms isotopic to the identity relatively to $\partial S_{g,n}$. If we consider a surface $S$ whose genus and number of boundaries or punctures is not specified, we denote by $\MCG(S)$ its mapping class group. The \emph{moduli space} is the quotient space:
\[
    \mathcal{M}_{g,n}(\mathbf{L}_n)=\mathcal{T}_{g,n}(\mathbf{L}_n)/\MCG_{g,n}.
\]
Unlike the Teichmüller space, $\mathcal{M}_{g,n}(\mathbf{L}_n)$ is not a manifold, but an \emph{orbifold}. However the action of $\MCG_{g,n}$ on $\mathcal{T}_{g,n}(\mathbf{L}_n)$ is symplectic, hence the Liouville measure associated to the Weil-Petersson symplectic form $\omega_{\WP}$ induces a well-defined measure $\mu_{\WP}$ on $\mathcal{M}_{g,n}(\mathbf{L}_n)$, called the \emph{Weil-Petersson measure}. The moduli space $\mathcal{M}_{g,n}(\mathbf{L}_n)$ has a finite volume $V_{g,n}(\mathbf{L}_n)$. More precisely we have:

\begin{Theo}[\cite{Mir07-1}]
    The Weil-Petersson volume $V_{g,n}(\mathbf{L}_n)$ is a polynomial in the variables $L_1,\ldots,L_n$ of total degree $6g-6+2n$, and we can write:
    \[
        V_{g,n}(\mathbf{L}_n)=\sum_{i_1+\cdots+i_n\leq 3g-3+n}C_{i_1\ldots i_n}L_1^{2i_1}\cdots L_n^{2i_n},
    \]
    where $C_{i_1\ldots i_n}\in\pi^{6g-6+2n-2(i_1+\cdots+i_n)}\mathbf{Q}_{>0}$.
\end{Theo}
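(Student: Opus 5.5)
The plan is to follow Mirzakhani's original strategy. First, extend the McShane identity to bordered surfaces, then integrate it over $\mathcal{M}_{g,n}(\mathbf{L}_n)$ by unfolding along simple multicurve orbits. This produces a recursion expressing $V_{g,n}$ through volumes of simpler moduli spaces, and an induction on $2g-2+n$ then gives polynomiality, the degree bound, and the arithmetic of the coefficients. The base cases are $V_{0,3}(L_1,L_2,L_3)=1$ and $V_{1,1}(L)=\frac{1}{24}(L^2+4\pi^2)$; the latter should be computed directly from the McShane identity on a one-holed torus, with the factor $\frac12$ coming from the elliptic involution.

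\emph{Step 1 (McShane--Mirzakhani identity).} For $X\in\mathcal{T}_{g,n}(\mathbf{L}_n)$ with $L_1>0$, decompose the boundary $\beta_1$ according to the behaviour of the orthogonal geodesics emanating from it. Each such geodesic either self-intersects, hits another boundary, or spirals, and Birman--Series shows that the last set has measure zero. Cutting along the boundary of the embedded pair of pants that each such geodesic determines gives
\[
L_1=\sum_{\{\gamma_1,\gamma_2\}}\mathcal{D}(L_1,\ell_{\gamma_1},\ell_{\gamma_2})+\sum_{j\geq2}\sum_{\gamma}\mathcal{R}(L_1,L_j,\ell_\gamma).
\]
Here the first sum runs over pairs of simple closed curves bounding a pair of pants with $\beta_1$, and the second over simple curves $\gamma$ bounding a pair of pants with $\beta_1$ and $\beta_j$. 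The functions $\mathcal{D}$ and $\mathcal{R}$ are explicit elementary functions; they are logarithms of ratios of hyperbolic cosines coming from hyperbolic trigonometry in a pair of pants.

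\emph{Step 2 (integration over orbits).} For a simple multicurve $\Gamma=(\gamma_1,\dots,\gamma_k)$, the sum $\sum_{\Gamma'\in\MCG\cdot\Gamma}f(\ell_{\Gamma'})$ is integrated by passing to the intermediate cover $\mathcal{T}_{g,n}/\Stab(\Gamma)$. On this cover, Wolpert's magic formula $\omega_{\WP}=\sum \d\ell_i\wedge\d\tau_i$ lets one integrate out the twist parameters, each contributing a factor $\ell_i$. The result is
\[
\int_{\mathcal{M}}\sum f=C_\Gamma\int f(\mathbf{x})\,V_{\Gamma}(\mathbf{x},\mathbf{L})\,x_1\cdots x_k\,\d\mathbf{x},
\]
where $V_\Gamma$ is the product of the volumes of the moduli spaces of the components of $S\setminus\Gamma$ and $C_\Gamma$ is a rational combinatorial factor (powers of $2$ and symmetries). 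Applying this to Step 1 gives
\[
L_1V_{g,n}(\mathbf{L})=\tfrac12\int\!\!\int xy\,\mathcal{D}(L_1,x,y)\big(\cdots\big)\,\d x\,\d y+\sum_j\int x\,\mathcal{R}(L_1,L_j,x)V_{g,n-1}(x,\dots)\,\d x.
\]
The bracket contains $V_{g-1,n+1}(x,y,\dots)$ plus the sum over splittings $V_{g_1,n_1+1}(x,\dots)V_{g_2,n_2+1}(y,\dots)$. Every volume appearing on the right has smaller $2g-2+n$.

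\emph{Step 3 (evaluating the kernels; main obstacle).} The key computation is that, for each $k\geq0$,
\[
\frac{\partial}{\partial L_1}\int_0^\infty x^{2k+1}\,\mathcal{R}(L_1,L_j,x)\,\d x
\]
and its analogue for $\mathcal{D}$ are polynomials in $L_1,L_j$. Concretely, after differentiating in $L_1$ the kernels become combinations of $H(x,L)=\frac{1}{1+e^{(x+L)/2}}+\frac{1}{1+e^{(x-L)/2}}$. This reduces everything to $\int_0^\infty x^{2k+1}H(x,t)\,\d x$, which is an explicit even polynomial in $t$ of degree $2k+2$ whose coefficients are rational multiples of $\zeta(2i)(2^{2i}-1)$, hence lie in $\pi^{2i}\mathbf{Q}_{>0}$. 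I expect this to be the hardest part. Checking the exact form, the positivity of these coefficients, and the fact that integrating back in $L_1$ yields only even powers (the integration constant vanishes at $L_1=0$ by continuity) requires care with the expansion $\frac{1}{1+e^u}$ and with the symmetry $x\mapsto -x$.

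\emph{Step 4 (induction).} By induction each volume on the right is an even polynomial with coefficients in the stated $\pi$-graded positive rationals, of total degree $6g'-6+2n'$. The $x$- and $y$-integrations raise the degree by $2$ per integrated variable, minus the one degree divided out by $L_1$. The degree count then gives total degree $6g-6+2n$, and each monomial $L^{2\mathbf{i}}$ carries a power of $\pi$ equal to $6g-6+2n-2|\mathbf{i}|$, since the kernel integrals are homogeneous when $\pi$ is given degree $1$. Positivity and rationality are preserved because only sums and products of positive rational multiples occur. For cusps ($L_i=0$) one concludes by continuity of the volume in $\mathbf{L}_n$, or via the symplectic-reduction viewpoint. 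Alternatively, that viewpoint identifies the coefficients directly as $\prod\frac{1}{2^{i_j}i_j!}\cdot\frac{(2\pi^2)^m}{m!}\int_{\overline{\mathcal{M}}_{g,n}}\kappa_1^m\prod\psi_j^{i_j}$ with $m=3g-3+n-|\mathbf{i}|$, which are positive because $\kappa_1$ is ample and the $\psi_j$ are nef. That identification would serve as a cross-check.
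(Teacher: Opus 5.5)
The paper does not prove this statement; it quotes it from \cite{Mir07-1}, and your outline is Mirzakhani's proof there: the McShane--Mirzakhani identity, unfolding over simple multicurve orbits via Wolpert's formula, polynomiality of the kernel integrals $\int_0^\infty x^{2k+1}H(x,t)\,\d x$, and induction on $2g-2+n$ starting from $V_{0,3}$ and $V_{1,1}$. Only small bookkeeping slips need fixing: \emph{(i)} the coefficient of $t^{2k+2-2i}$ in that integral is $\frac{(2k+1)!}{(2k+2-2i)!}(2^{2i+1}-4)\zeta(2i)$, which equals $\frac{1}{2k+2}$ at $i=0$ since $\zeta(0)=-\frac{1}{2}$, whereas your factor $2^{2i}-1$ would kill the leading term; \emph{(ii)} the kernels $\mathcal{D},\mathcal{R}$ themselves carry one degree, which is what cancels the division by $L_1$, and your literal count gives $6g-7+2n$ instead of $6g-6+2n$; \emph{(iii)} including the elliptic-involution factor $\frac{1}{2}$ gives $V_{1,1}(L)=\frac{1}{48}(L^2+4\pi^2)$ rather than $\frac{1}{24}(L^2+4\pi^2)$, so pick one convention and use it consistently; \emph{(iv)} strict positivity of every $C_{i_1\ldots i_n}$ needs the easy extra check that each monomial with $i_1+\cdots+i_n\leq 3g-3+n$ receives at least one contribution, coming from the $\mathcal{R}$-terms when $n\geq2$ and from the connected $\mathcal{D}$-term when $n=1$.
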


When equipped with the renormalized measure $\frac{1}{V_{g,n}(\mathbf{L}_n)}\mu_{\WP}$, the moduli space is a probability space.

\subsection{Loops on a surface and Fenchel-Nielsen coordinates}\label{S1:Subsec3}

A \emph{loop} is a continuous map $\gamma$ from $[0,1]$ to a surface $X$ such that $\gamma(0)=\gamma(1)$. Two loops $\gamma_1,\gamma_2\colon[0,1]\longrightarrow X$ are \emph{freely homotopic} if there exists a continuous map $h\colon[0,1]^2\longrightarrow X$ called a \emph{homotopy}, such that $h_{|\{0\}\times[0,1]}=\gamma_1$ and $h_{|\{1\}\times[0,1]}=\gamma_2$ and such that for all $t\in[0,1]$, $h_{|\{t\}\times[0,1]}$ is a loop. The free homotopy class of a loop $\gamma$ will be denoted by $[\gamma]$. A loop $\gamma$ is said to be \emph{non-primitive} if it is homotopic to $\gamma^{\prime m}$ for some other loop $\gamma'$ and $m\geq2$, and $\gamma$ is \emph{primitive} otherwise.

Let $\gamma$ be a loop on a surface $X$. We say that $\gamma$ is in \emph{minimal position} if it is an immersion, if its self-intersection points are double points, and if $\gamma$ has the minimal amount of self-intersections among all such loops in the free homotopy class of $\gamma$. The loop $\gamma$ is \emph{essential} if it is non-contractible and not homotopic to any of the boundaries and cusps of $X$.

A \emph{multi-loop} is a family a loops $\Gamma=(\gamma_1,\ldots,\gamma_k)$. Given a multi-loop $\Gamma=(\gamma_1,\ldots,\gamma_k)$ made of essential loops on a hyperbolic surface $X$, there exists a unique family $\Gamma'=(\gamma'_1,\ldots,\gamma'_k)$ of geodesics on $X$ such that $\Gamma$ is freely homotopic to $\Gamma'$ (i.e. such that $\gamma_i$ is homotopic to $\gamma'_i$ for all $i\in\{1,\ldots,k\}$). We denote by $\ell_\Gamma(X)$ the vector $(\ell_{\gamma'_1}(X),\ldots,\ell_{\gamma'_k}(X))$ made of the lengths of the geodesic representatives of the free homotopy classes $[\gamma_1],\ldots,[\gamma_k]$. This definition of $\ell_\Gamma(X)$ holds even if some of the loops $\gamma_i$ are not essential (with the convention that the length of a cusp or a contractible loop is 0). A simple loop, i.e. a loop with no self-intersection, will also be called a \emph{curve}. A \emph{multi-curve} is a family of curves $\Gamma=(\gamma_1,\ldots,\gamma_k)$ where $\gamma_1,\ldots,\gamma_k$ are disjoint essential curves, such that for any $i\neq j$, $\gamma_i$ is not homotopic to $\gamma_j$ or $\gamma_j^{-1}$.

Given a multi-loop $\Gamma=(\gamma_1,\ldots,\gamma_k)$ on the base surface $S_{g,n}$, we define the \emph{length function associated to $\Gamma$}, denoted $\ell_\Gamma\colon\mathcal{T}_{g,n}(\mathbf{L}_n)\longrightarrow\mathbf{R}^k$, the following way: given a marked hyperbolic surface, $\ell_\Gamma(X,f)$ is the length vector of the geodesic representative of $f(\Gamma)$ on $X$ (i.e. $\ell_\Gamma(X,f)=\ell_{f(\Gamma)}(X)$).

A maximal multi-curve on $S_{g,n}$, i.e. a multi-curve which contains the maximal possible number of curves, contains exactly $3g-3+n$ curves and is called a \emph{pants decomposition of $S_{g,n}$}. Every multi-curve can be completed into a pants decomposition of $S_{g,n}$. Given a pants decomposition $\mathscr{P}$ of $S_{g,n}$, the complement $S_{g,n}\backslash\mathscr{P}$ is a disjoint union of three-holed (or punctured) spheres, called \emph{pairs of pants}. If $P$ is such a pair of pants and $(X,f)$ is a marked hyperbolic surface, the hyperbolic metric inside $f(P)$ is uniquely determined by the length vector $\ell_{\partial P}(X,f)\in\mathbf{R}_{\geq0}^3$. To recover the hyperbolic metric on the whole surface $(X,f)$, we additionally require a gluing parameter $\tau_\alpha(X,f)$ for each curve $\alpha\in\mathscr{P}$, called the \emph{twist parameter along $\alpha$}, defined to be the oriented length of the arc of $f\circ\alpha$ between two common perpendiculars of $X$ joining $f\circ\alpha$ and neighboring geodesics of $f\circ\alpha$ in $f\circ\mathscr{P}$. This twist belongs to $\mathbf{R}$ in order to distinguish from $(X,f)$ any marked hyperbolic surface $(X,f')$ obtained from $(X,f)$ by performing a Dehn twist along $\alpha$ (the underlying hyperbolic surface of such a marked hyperbolic surface $(X,f')$ would be isometric to the one of $(X,f)$, however the marking homeomorphisms $f$ and $f'$ are not isotopic and the marked hyperbolic surfaces $(X,f)$ and $(X,f')$ therefore represent distinct points in $\mathcal{T}_{g,n}(\mathbf{L}_n)$). The data of $(\ell_\alpha,\tau_\alpha)_{\alpha\in\mathscr{P}}$ is called a set of \emph{Fenchel-Nielsen coordinates on $\mathcal{T}_{g,n}(\mathbf{L}_n)$} and provides a diffeomorphism between the Teichmüller space $\mathcal{T}_{g,n}(\mathbf{L}_n)$ and $(\mathbf{R}_{>0}\times\mathbf{R})^{3g-3+n}$. Wolpert proved in \cite{Wol85} that this diffeomorphism is actually a symplectomorphism between the Weil-Petersson structure of $\mathcal{T}_{g,n}(\mathbf{L}_n)$ and the standard symplectic structure of $(\mathbf{R}_{>0}\times\mathbf{R})^{3g-3+n}$.

\subsection{Integration of geometric random variables and average geodesic counting}

The mapping class group also acts on the set of free homotopy classes of multi-loops on $S_{g,n}$ by $[\varphi]\cdot[\Gamma]=[\varphi\circ\Gamma]$, and if $\Gamma$ is a multi-loop, we denote by $\mathcal{O}_\Gamma$ the orbit of the free homotopy class $[\Gamma]$, and by $\Stab([\Gamma])$ the stabilizer of $[\Gamma]$, i.e. the subgroup of $\MCG_{g,n}$ made of the elements $[\varphi]$ such that $\varphi\circ\gamma$ is homotopic to $\gamma$.

In this article we will be interested in random variables on $\mathcal{M}_{g,n}(\mathbf{L}_n)$ defined the following way. Given a hyperbolic surface $X$, each compact set of $\mathbf{R}$ contains only a finite number of lengths of closed geodesics on $X$. Hence if $F\colon \mathbf{R}^k\longrightarrow\mathbf{C}$ is a test function (e.g. compactly supported), we can consider the following sum:
\[
    \sum_{[\Gamma']\in\mathcal{O}_\Gamma}F(\ell_{\Gamma'}(X,f)),
\]
where $(X,f)$ is a marked hyperbolic surface. Since the sum is made over the whole orbit of $\Gamma$, it does not depend on the choice of the marking homeomorphism $f$. This sum therefore induces a well-defined random variable on $\mathcal{M}_{g,n}(\mathbf{L}_n)$, which will be denoted by:
\begin{equation}\label{S1:Eq:DefGeomRandVar}
    F^\Gamma\colon  X\mapsto\sum_{[\Gamma']\in\mathcal{O}_\Gamma}F(\ell_{\Gamma'}(X)),
\end{equation}
i.e. we omit the marking homeomorphism $f$.

\begin{Rems}
    \begin{itemize}
        \item The notations in the sum (\ref{S1:Eq:DefGeomRandVar}) are abusive. Indeed $\Gamma'$ is not a multi-loop on $X$, hence we would only be able to consider $\ell_{\Gamma'}(X,f)$, for some marking homeomorphism $f\colon S_{g,n}\longrightarrow X$, but as we said before, since the sum is made on the whole orbit, the value of $F^\Gamma(X)$ does not depend on $f$.
        \item Let $\gamma$ be a closed loop on $S_{g,n}$. If we take $F=\mathds{1}_{[0,a]}$ in (\ref{S1:Eq:DefGeomRandVar}), where $a\geq0$, we retrieve the counting random variable $N_\gamma(a)$.
    \end{itemize}
\end{Rems}

Mirzakhani provided an integration formula which enables us to integrate random variables of the form $F^\Gamma$, when $\Gamma$ is a multi-curve.

\begin{Theo}[Mirzakhani's integration formula, \cite{Mir07-2}]\label{S1:Theo:MirzInt}
    Let $\Gamma=(\gamma_1,\ldots,\gamma_k)$ be a multi-curve on $S_{g,n}$. If $F\colon \mathbf{R}^k\longrightarrow\mathbf{C}$ is a test function, we have:
    \[
        \mathbb{E}[F^\gamma]=\int_{\mathbf{R}_{>0}^k}F(x_1,\ldots,x_k)x_1\cdots x_kV_\Gamma(x_1,\ldots,x_k,\mathbf{L}_n)\frac{\d x_1\cdots\d x_k}{V_{g,n}(\mathbf{L}_n)},
    \]
    where $V_\Gamma(x_1,\ldots,x_k,\mathbf{L}_n)$ is the volume of the moduli space of the complement\footnote{By \emph{moduli space of the complement}, we mean the moduli space of hyperbolic structures on $S_{g,n}\backslash\Gamma$.} $S_{g,n}\backslash\Gamma$ of the multi-curve $\Gamma$, where the two boundary components of $S_{g,n}\backslash\Gamma$ corresponding to the curve $\gamma_i$ have length $x_i$. If $S_{g,n}\backslash\Gamma$ is disconnected, then $V_\Gamma(x_1,\ldots,x_k,\mathbf{L}_n)$ is the product of the volumes of the moduli spaces of the connected components\footnote{We refer to \cite{Mir07-2} for a more expanded expression of $V_\Gamma(x_1,\ldots,x_k,\mathbf{L}_n)$.}. In particular $V_\Gamma$ is a polynomial.
\end{Theo}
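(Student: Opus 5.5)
The plan is to prove the formula by the standard unfolding argument, using the intermediate cover $\mathcal{M}_{g,n}^\Gamma(\mathbf{L}_n)=\mathcal{T}_{g,n}(\mathbf{L}_n)/\Stab([\Gamma])$. The first step is the unfolding identity. Since $\mathcal{O}_\Gamma\simeq\MCG_{g,n}/\Stab([\Gamma])$, the function $(X,f)\mapsto F(\ell_\Gamma(X,f))$ on $\mathcal{T}_{g,n}(\mathbf{L}_n)$ is $\Stab([\Gamma])$-invariant and descends to $\mathcal{M}_{g,n}^\Gamma(\mathbf{L}_n)$. Pushing it forward along the covering map $\mathcal{M}_{g,n}^\Gamma(\mathbf{L}_n)\to\mathcal{M}_{g,n}(\mathbf{L}_n)$ gives exactly $F^\Gamma$. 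Integrating against $\mu_{\WP}$, which is the Liouville measure of $\omega_{\WP}$ and is preserved by the mapping class group, then gives
\[
\int_{\mathcal{M}_{g,n}(\mathbf{L}_n)}F^\Gamma\,\d\mu_{\WP}=\int_{\mathcal{M}^\Gamma_{g,n}(\mathbf{L}_n)}F(\ell_\Gamma)\,\d\mu_{\WP}.
\]
This holds for non-negative $F$ by monotone convergence, and for general test functions by linearity. Orbifold points form a measure-zero set and are ignored.

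The second step computes the right-hand side in coordinates adapted to $\Gamma$. Complete $\Gamma$ into a pants decomposition $\mathscr{P}\supset\Gamma$ and use the Fenchel--Nielsen coordinates $(\ell_\alpha,\tau_\alpha)_{\alpha\in\mathscr{P}}$. By Wolpert's theorem \cite{Wol85}, $\omega_{\WP}=\sum_\alpha \d\ell_\alpha\wedge\d\tau_\alpha$. I will then describe $\Stab([\Gamma])$. Up to finitely many components permuting or reversing the $\gamma_i$, which is a finite-index issue, it contains the subgroup generated by the Dehn twists $T_{\gamma_1},\ldots,T_{\gamma_k}$, which is central in it and isomorphic to $\mathbf{Z}^k$. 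The quotient of $\Stab([\Gamma])$ by this subgroup is the mapping class group $\MCG(S_{g,n}\backslash\Gamma)$, where boundaries are allowed to be permuted only as dictated by the stabilizer.

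Accordingly, $\mathcal{M}^\Gamma_{g,n}(\mathbf{L}_n)$ fibers over $\{(x_1,\ldots,x_k)\in\mathbf{R}_{>0}^k\}$. The fiber over $x$ is a torus bundle: the twists $\tau_{\gamma_i}$ range over $[0,x_i)$, a fundamental domain for $T_{\gamma_i}$, which acts by $\tau_{\gamma_i}\mapsto\tau_{\gamma_i}+x_i$. The base of this bundle is $\mathcal{M}(S_{g,n}\backslash\Gamma,x,x,\mathbf{L}_n)$, with the complementary Fenchel--Nielsen coordinates $(\ell_\alpha,\tau_\alpha)_{\alpha\in\mathscr{P}\backslash\Gamma}$. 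Because $\omega_{\WP}$ splits as a sum, Fubini gives
\[
\int_{\mathcal{M}^\Gamma}F(\ell_\Gamma)\,\d\mu_{\WP}=\int_{\mathbf{R}^k_{>0}}F(x)\Big(\prod_{i}\int_0^{x_i}\d\tau_i\Big)V_\Gamma(x,\mathbf{L}_n)\,\d x=\int_{\mathbf{R}^k_{>0}}F(x)\,x_1\cdots x_kV_\Gamma(x,\mathbf{L}_n)\,\d x.
\]
Here the volume of the moduli space of a disconnected surface is the product of the volumes of its components. Dividing by $V_{g,n}(\mathbf{L}_n)$ then gives the statement, and the polynomiality of $V_\Gamma$ follows from \cite{Mir07-1} applied to each component.

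The main obstacle is the precise identification of $\Stab([\Gamma])$ and of the fiber structure. Two points need care. First, elements of the stabilizer may permute the $\gamma_i$ or reverse their orientation; this produces finite symmetry factors, which are absorbed in the precise definition of $V_\Gamma$ given in \cite{Mir07-2}. Second, there is the exceptional case where some $\gamma_i$ bounds a one-holed torus. There the hyperelliptic involution acts trivially on the Teichmüller space of the torus, which produces the familiar factor $\tfrac12$ in $V_{1,1}$ conventions.

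I would treat this step by showing the following. The map sending $(X,f)$ to its length--twist data along $\Gamma$, together with the restricted marked structure on the complement, induces a measure-preserving isomorphism between $\mathcal{M}^\Gamma_{g,n}(\mathbf{L}_n)$ and the twist-torus bundle described above, up to a finite group whose order is accounted for in $V_\Gamma$. I would prove this by checking that $\Stab([\Gamma])$ preserves $\Gamma$ setwise up to isotopy. Consequently it acts on the cut surface, and its kernel on the complement is generated by the Dehn twists $T_{\gamma_i}$, possibly together with half-twist contributions that are handled by the same convention.
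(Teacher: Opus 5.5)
The paper only quotes this result from \cite{Mir07-2} and does not prove it. Your unfolding argument is correct, and it is the same scheme the paper uses for its generalization, Theorem \ref{S2:Theo:IntFormGeneral}: unfold via Corollary \ref{S2:Cor:IndexRandVar}, take as fundamental domain the twists in $[0,x_i)$ times a fundamental domain for the mapping class group of the complement, then apply Wolpert's theorem and Fubini.

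Your ``main obstacle'' is milder than you fear. Since $\Gamma$ is here an ordered family of oriented loops, $\Stab([\Gamma])$ preserves each oriented $\gamma_i$ and hence each of its two sides, so it is exactly the image of $\MCG(S_{g,n}\backslash\Gamma)$ (the analogue of the paper's index $m(\gamma)$ is $1$), and no permutation or reversal factors occur. The only residual constant is the half-twist on a one-holed torus cut off by some $\gamma_i$, which is Mirzakhani's factor $2^{-M(\Gamma)}$. It is absorbed only if all volumes are taken in the orbifold sense. The same convention is also needed in the cases $(g,n)=(1,1),(2,0)$, where the hyperelliptic involution acts trivially on Teichmüller space, so your remark that orbifold points are negligible is false there.
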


\begin{Cor}
    If $\gamma$ is a closed curve (i.e. $\gamma$ is a simple loop) on $S_{g,n}$, then $\mathbb{E}[N_\gamma(a)]$ is a polynomial of degree $6g-6+2n$ in the variable $a$.
\end{Cor}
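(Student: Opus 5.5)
Taking $F=\mathds{1}_{[0,a]}$ in Mirzakhani's integration formula (Theorem \ref{S1:Theo:MirzInt}) with the multi-curve $\Gamma=(\gamma)$, so $k=1$, gives
\[
    \mathbb{E}[N_\gamma(a)]=\frac{1}{V_{g,n}(\mathbf{L}_n)}\int_0^a x\,V_\gamma(x,\mathbf{L}_n)\,\d x .
\]
Since $\mathbf{L}_n$ is fixed, the right-hand side is a polynomial in $a$ provided $x\mapsto V_\gamma(x,\mathbf{L}_n)$ is a polynomial. That is what Theorem \ref{S1:Theo:MirzInt} asserts. One point should be checked: $\mathds{1}_{[0,a]}$ is not smooth, so either "test function" includes bounded compactly supported Borel functions, or we approximate it monotonically by continuous compactly supported functions and pass to the limit by monotone convergence on both sides.

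To get the degree, I distinguish two cases according to whether $S_{g,n}\backslash\gamma$ is connected.

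\emph{Non-separating $\gamma$.} The complement has genus $g-1$ and $n+2$ boundaries, two of length $x$. Thus $V_\gamma(x,\mathbf{L}_n)=V_{g-1,n+2}(x,x,\mathbf{L}_n)$, possibly times a factor $\tfrac12$ coming from the symmetry exchanging the two sides. By Mirzakhani's volume theorem this is a polynomial of total degree $6(g-1)-6+2(n+2)=6g-8+2n$. Its top-degree part is a sum of monomials with positive coefficients. Setting $L_{n+1}=L_{n+2}=x$ therefore cannot cancel the leading terms, and the polynomial in $x$ has degree exactly $6g-8+2n$. The point is that some monomial of maximal total degree contains only powers of the boundary lengths; it is positive and survives the substitution.

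\emph{Separating $\gamma$.} The complement splits into two pieces of types $(g_1,n_1+1)$ and $(g_2,n_2+1)$, with $g_1+g_2=g$ and $n_1+n_2=n$. The degrees in $x$ are $6g_1-6+2n_1+2$ and $6g_2-6+2n_2+2$, and they add up to $6g-8+2n$. Positivity of the leading coefficients again gives exact degree.

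In both cases $xV_\gamma$ has degree $6g-7+2n$, and integrating from $0$ to $a$ gives degree $6g-6+2n$ with positive leading coefficient. The resulting polynomial has no constant term.

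The main obstacle is justifying that the top-degree coefficient in $x$ is nonzero. I would handle it via the positivity $C_{i_1\ldots i_n}>0$ in Mirzakhani's volume theorem, applied to the index with $i_1+\cdots+i_n=3g-3+n$. This is the only genuinely needed input; the rest is bookkeeping.
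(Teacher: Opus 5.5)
Your argument is correct and is the one the paper intends: the paper states this corollary without proof, as an immediate consequence of Mirzakhani's integration formula (Theorem~\ref{S1:Theo:MirzInt}) applied to $F=\mathds{1}_{[0,a]}$ and $\Gamma=(\gamma)$, together with polynomiality of $V_\Gamma$. Your verification that the degree is exactly $6g-6+2n$ fills in a detail the paper leaves implicit; it rests on positivity of the volume coefficient concentrated on the boundary lengths coming from $\gamma$, together with the count $6g-8+2n$ in both the separating and non-separating cases.
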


Mirzakhani also computed an asymptotic equivalent of $N_\gamma(a)$ and $\mathbb{E}[N_\gamma(a)]$ when $a\to\infty$, for any loop $\gamma$ on $S_{g,n}$.

\begin{Theo}[\cite{Mir16}]\label{S1:Theo:MirzAsympt}
    Let $\gamma$ be a closed loop on $S_{g,n}$. There exists a constant $n_\gamma\in\mathbf{Q}$ and a smooth and integrable function $B\colon \mathcal{M}_{g,n}(\mathbf{L}_n)\longrightarrow\mathbf{R}_{\geq0}$ such that for all $X\in\mathcal{M}_{g,n}(\mathbf{L}_n)$, we have:
    \[
        N_\gamma(a)(X)\underset{a\to\infty}{\sim}n_\gamma\frac{B(X)}{||B||_{L^1}}a^{6g-6+2n}.
    \]
\end{Theo}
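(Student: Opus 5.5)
The plan is to follow Mirzakhani's equidistribution strategy. We work in the space $\mathcal{C}(S_{g,n})$ of geodesic currents, in which the free homotopy classes of closed loops embed. Thurston's cone $\mathcal{ML}_{g,n}$ of measured laminations sits inside it as the set of currents $\lambda$ with $i(\lambda,\lambda)=0$, and carries the $\MCG_{g,n}$-invariant Thurston measure $\mu_{Th}$. Write $\ell_X$ for the continuous, homogeneous extension of the length function of $X$ to currents, and set $d=6g-6+2n$. For $a>0$ we consider the counting measures
\[
    \nu_a=\frac{1}{a^{d}}\sum_{[\gamma']\in\mathcal{O}_\gamma}\delta_{\gamma'/a}.
\]
With $B_X=\{\lambda:\ell_X(\lambda)\leq1\}$, the definition gives directly $N_\gamma(a)(X)=a^{d}\,\nu_a(B_X)$. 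The theorem will therefore follow from the weak convergence $\nu_a\to c_\gamma\,\mu_{Th}$ for some constant $c_\gamma>0$. Once this is known, we set $B(X)=\mu_{Th}(\{\lambda\in\mathcal{ML}_{g,n}:\ell_X(\lambda)\leq1\})$. Since $\mu_{Th}(\partial B_X)=0$ by homogeneity of $\ell_X$ and $\mu_{Th}$, the portmanteau theorem yields $N_\gamma(a)(X)\sim c_\gamma B(X)a^{d}$.

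\emph{Step 1: precompactness.} We show that for fixed $X$ the quantity $\#\{[\gamma']\in\mathcal{O}_\gamma:\ell_{\gamma'}(X)\leq a\}$ is $O(a^{d})$, uniformly on compact sets of $X$. To do this we fix a filling multi-curve, or a pants decomposition together with its dual curves. We bound the number of orbit elements by the number of possible intersection-number vectors, using Dehn–Thurston coordinates together with the fact that $\ell_X$ is comparable to $\sum_i i(\cdot,\alpha_i)$. This makes the family $(\nu_a)$ locally uniformly bounded, hence weakly precompact.

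\emph{Step 2: support.} Let $\nu$ be a weak limit. It is $\MCG_{g,n}$-invariant because $\mathcal{O}_\gamma$ is an orbit. It is supported on $\mathcal{ML}_{g,n}$, because $i(\gamma'/a,\gamma'/a)=i(\gamma,\gamma)/a^2\to0$ and $i$ is continuous. The Step 1 bound, applied to small boxes in train-track charts, shows that $\nu\leq C\mu_{Th}$, so $\nu$ is absolutely continuous.

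\emph{Step 3: classification.} By Masur's ergodicity of the $\MCG_{g,n}$ action on $\mathcal{ML}_{g,n}$ with respect to $\mu_{Th}$, any such $\nu$ equals $c\,\mu_{Th}$ for some $c\geq0$. We then need the constant $c$ to be the same for every subsequence; this is the main obstacle. The approach I would try first is Mirzakhani's: integrate $N_\gamma(a)$ over $\mathcal{M}_{g,n}(\mathbf{L}_n)$ and unfold it as
\[
    \int_{\mathcal{M}_{g,n}}N_\gamma(a)\,\d\mu_{\WP}=\int_{\mathcal{T}_{g,n}/\Stab([\gamma])}\mathds{1}_{\{\ell_\gamma\leq a\}}\,\d\mu_{\WP}.
\]
Next, choose a pants decomposition adapted to $\gamma$, for instance the boundary of a regular neighbourhood of $\gamma$ completed into a pants decomposition. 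On the cover $\mathcal{T}_{g,n}/\Stab([\gamma])$ the integral reduces to a volume computation of the type appearing in Mirzakhani's integration formula (Theorem \ref{S1:Theo:MirzInt}), which shows that $\mathbb{E}[N_\gamma(a)]\sim n'_\gamma a^{d}$ with $n'_\gamma$ a rational multiple of the volumes. On the other hand, the pointwise limit is $c\,B(X)$, and $\int B\,\d\mu_{\WP}=\|B\|_{L^1}<\infty$. Dominated convergence is justified by the Step 1 bound, made uniform in the thick part, together with a Bers-constant argument in the thin part. It forces $c=n'_\gamma/\|B\|_{L^1}$, which is independent of the subsequence. Setting $n_\gamma=n'_\gamma$ gives the statement.

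The delicate points are, first, the uniformity needed for dominated convergence near the boundary of moduli space; and second, the rationality of $n_\gamma$. The latter should come from the fact that $\Stab([\gamma])$ is commensurable with the stabilizer of the multi-curve $\partial N(\gamma)$, where $N(\gamma)$ denotes the regular neighbourhood above. The index of this commensuration, together with the rationality of the volume coefficients of $V_{g,n}(\mathbf{L}_n)$ after normalization by powers of $\pi$, gives $n_\gamma\in\mathbf{Q}$.
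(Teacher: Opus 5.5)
\paragraph{Main gap: identifying the constant in Step 3.}
The paper does not prove this statement; it quotes it from \cite{Mir16}. In the discussion after Theorem \ref{S3:Theo:ChangeFNTor}, it says that the technical core of Mirzakhani's argument is a $\mathscr{C}^0$ estimate of $\ell_\gamma$ in Fenchel--Nielsen coordinates, obtained from Lemma \ref{S3:Lem:LenPolyPos} and Okai's coordinate changes \cite{Oka93}. Your Steps 1--3 reproduce the soft equidistribution part of her strategy, but Step 3 has a genuine gap exactly where that estimate is needed.

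You assert that, on $\mathcal{T}_{g,n}(\mathbf{L}_n)/\Stab([\gamma])$, the integral $\int N_\gamma(a)\,\d\mu_{\WP}$ reduces to a computation of the type of Theorem \ref{S1:Theo:MirzInt}. This fails as soon as $S(\gamma)$ is neither an annulus nor a pair of pants.
\begin{itemize}
\item By Lemma \ref{S2:Lem:StabIndex}, $\Stab([\gamma])$ contains $G(\gamma)=\MCG(S_{g,n}\backslash S(\gamma))$ with finite index. So, modulo boundary twists and finitely many cosets, it contains no mapping class supported inside $S(\gamma)$.
\item The unfolded integral is therefore the one of Theorem \ref{S2:Theo:IntFormGeneral}. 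There the internal coordinates $(\mathbf{x}_k,\mathbf{t}_k)$ of $S(\gamma)$ range over all of $\mathbf{R}_{>0}^k\times\mathbf{R}^k$.
\item You must integrate $\mathds{1}_{\{h_\gamma\leq a\}}$ over that space, where $h_\gamma$ is a transcendental function of these lengths and twists (Corollary \ref{S3:Cor:HoloRepGen2}).
\end{itemize}
Nothing in Mirzakhani's multicurve formula tells you that $a^{-d}\int N_\gamma(a)\,\d\mu_{\WP}$ converges, with $d=6g-6+2n$.

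What you need is a piecewise-linear description of $h_\gamma$ at infinity. After substituting $s=-\log x$ on short curves and cutting into finitely many cones, one needs $h_\gamma=\Lambda+O(1)$ on each cone, with $\Lambda$ linear with rational coefficients. This is Mirzakhani's estimate, and also the $\mathscr{C}^0$ content of Theorem \ref{S3:Theo:SimpFormPLT}. From it one gets $\int N_\gamma(a)\,\d\mu_{\WP}=n_\gamma a^d+o(a^d)$, where $n_\gamma$ is the integral of the top-degree part of $y_1\cdots y_NV_\Gamma/m(\gamma)$ over $\{\Lambda\leq 1\}$. The paper's Corollary \ref{S5:Cor:CountRandVarPTS}, which rests on this kind of estimate, would supply exactly this convergence.

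Without that input, dominated convergence only gives $c\,\|B\|_{L^1}=\lim_j a_j^{-d}\int N_\gamma(a_j)\,\d\mu_{\WP}$ along each convergent subsequence $(a_j)$. That does not exclude different subsequences producing different constants $c$.

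\paragraph{Rationality of $n_\gamma$, and Steps 1--2.}
Your justification of $n_\gamma\in\mathbf{Q}$ is also wrong: $\Stab([\gamma])$ is not commensurable with the stabilizer of $\partial N(\gamma)$ (equivalently of $\partial S(\gamma)$). That stabilizer contains $\MCG(S(\gamma))$. Since $\gamma$ fills $S(\gamma)$, every essential non-peripheral curve $\alpha\subset S(\gamma)$ crosses $\gamma$. Hence the classes $T_\alpha^j\cdot[\gamma]$, $j\in\mathbf{Z}$, are pairwise distinct, since their lengths grow linearly in $|j|$. So the index is infinite whenever $S(\gamma)$ is neither an annulus nor a pair of pants.

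Rationality comes instead from the formula for $n_\gamma$ above. The linear part $\Lambda$ has rational coefficients, because the exponents in Corollary \ref{S3:Cor:HoloRepGen2} are rational linear forms in the Fenchel--Nielsen coordinates. The top-degree coefficients of $V_\Gamma$ are also rational.

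Steps 1 and 2 are lighter than they look for non-simple curves. Turning intersection vectors into an $O(a^d)$ count requires controlling both the image and the fibres of $\gamma'\mapsto(i(\gamma',\alpha_i))_i$ on $\mathcal{O}_\gamma$. Proving $\nu\leq C\mu_{Th}$ for a limit that a priori lives on currents also needs a real argument. But the decisive missing idea is the asymptotic piecewise-linear behaviour of $\ell_\gamma$ in Fenchel--Nielsen coordinates.
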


Mirzakhani proved in the same paper that this result can be integrated in order to obtain the following.

\begin{Cor}[\cite{Mir16}]\label{S1:Cor:MirzAsympt}
    Let $\gamma$ be a closed loop on $S_{g,n}$. We have:
    \[
        \mathbb{E}[N_\gamma(a)]\underset{a\to\infty}{\sim}\frac{n_\gamma}{V_{g,n}(\mathbf{L}_n)}a^{6g-6+2n}.
    \]
\end{Cor}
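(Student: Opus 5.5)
This is a cited result (Mirzakhani's Corollary), so I sketch how it follows from Theorem \ref{S1:Theo:MirzAsympt}. The idea is to integrate the pointwise asymptotics over $\mathcal{M}_{g,n}(\mathbf{L}_n)$ and justify exchanging the limit with the integral by dominated convergence.

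\emph{Pointwise limit.} Set $d=6g-6+2n$. By Theorem \ref{S1:Theo:MirzAsympt}, for every $X$,
\[
    a^{-d}N_\gamma(a)(X)\longrightarrow n_\gamma\frac{B(X)}{\|B\|_{L^1}}
\]
as $a\to\infty$. Integrating the limit against $\mu_{\WP}$ gives exactly $n_\gamma$, because the integral of $B/\|B\|_{L^1}$ is $1$. Dividing by $V_{g,n}(\mathbf{L}_n)$ then gives the claimed constant $n_\gamma/V_{g,n}(\mathbf{L}_n)$.

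\emph{Domination.} This is the main obstacle: we need a $\mu_{\WP}$-integrable function $G$ with
\[
    a^{-d}N_\gamma(a)(X)\leq G(X)\quad\text{for all } X \text{ and all } a\geq1.
\]
The only difficulty is near the cusps of moduli space, where short curves make $N_\gamma(a)$ large. I would proceed in four steps.
\begin{enumerate}
    \item Bound the orbit count by a count of simple multi-curves. Fix a filling-type comparison: the geodesic representative of any $\gamma'\in\mathcal{O}_\gamma$ has length comparable, up to a constant depending only on $\gamma$, to the total length of a bounded-complexity simple multi-curve canonically associated with it. For example, take a boundary multi-curve of a regular neighbourhood, or the curves of a pants decomposition of bounded length met by $\gamma'$.
    \item Deduce $N_\gamma(a)(X)\leq C\,s_X(Ca)$, where $s_X(L)$ counts simple closed multi-curves of length at most $L$.
    \item Use the standard estimate $s_X(L)\leq C' L^{d}\prod_{\ell_\alpha(X)<\epsilon}\ell_\alpha(X)^{-1}$, uniformly in $X$, where the product runs over the short curves $\alpha$ (Mirzakhani's bound via the Bers pants decomposition and counting integral points in train-track coordinates).
    \item Check integrability of the resulting $G(X)=C''\prod_{\ell_\alpha(X)<\epsilon}\ell_\alpha(X)^{-1}$. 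In Fenchel--Nielsen coordinates around a short curve $\alpha$, the twist ranges over an interval of length $\ell_\alpha$, so the measure $\d\ell_\alpha\,\d\tau_\alpha$ contributes a factor $\ell_\alpha\,\d\ell_\alpha$. This cancels the $\ell_\alpha^{-1}$, so $G\in L^1(\mu_{\WP})$.
\end{enumerate}

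\emph{Conclusion.} Dominated convergence then yields
\[
    a^{-d}\,\mathbb{E}[N_\gamma(a)]\longrightarrow\frac{1}{V_{g,n}(\mathbf{L}_n)}\int n_\gamma\frac{B}{\|B\|_{L^1}}\,\d\mu_{\WP}=\frac{n_\gamma}{V_{g,n}(\mathbf{L}_n)},
\]
which is the statement.

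The delicate point is step 1 together with making the constant in step 3 uniform in $X$. If the comparison in step 1 proves awkward, I would first try bounding $N_\gamma(a)$ by the number of geodesic loops of length at most $a$ with a fixed number $k$ of self-intersections. Collar-lemma arguments control such loops by $C(1+a)^{d}\prod\ell_\alpha^{-1}$ as well: each passage through a thin collar either winds around it, contributing twist choices of order $a/\ell_\alpha$, or crosses it at a length cost of order $\log(1/\ell_\alpha)$.
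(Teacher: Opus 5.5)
Deducing the corollary from Theorem \ref{S1:Theo:MirzAsympt} by dominated convergence is the natural route. It is also all the paper itself indicates: it gives no proof, only the remark that Mirzakhani's pointwise result ``can be integrated''. Your limit computation and your steps 3--4 are fine. The gap is in steps 1--2, which carry the entire content of the domination.

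To get $N_\gamma(a)(X)\le C\,s_X(Ca)$, you need a map from $\{[\gamma']\in\mathcal{O}_\gamma : \ell_{\gamma'}(X)\le a\}$ to simple multi-curves. It must send $\gamma'$ to a multi-curve of length at most $C\ell_{\gamma'}(X)$, and its fibres must have size bounded independently of $X$ and $a$. Neither of your candidates has finite fibres:
\begin{itemize}
\item For filling $\gamma$, the boundary of a regular neighbourhood of $\gamma'$ is peripheral or contractible, so that map is constant on the infinite orbit.
\item The sub-multi-curves of a bounded-length pants decomposition of $X$ form a finite set.
\end{itemize}
No better canonical choice exists either. An $\MCG_{g,n}$-equivariant assignment $[\gamma']\mapsto\nu(\gamma')$ to simple multi-curves always has an infinite fibre unless $S(\gamma)$ is an annulus or a pair of pants. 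Indeed, if a component $c$ of $\nu(\gamma)$ crosses $\gamma$, the distinct classes $T_c^k\cdot[\gamma]$, $k\in\mathbf{Z}$, all map to $T_c^k\nu(\gamma)=\nu(\gamma)$. Otherwise $\nu(\gamma)$ can be isotoped off the interior of $S(\gamma)$, so the whole orbit $\MCG(S(\gamma))\cdot[\gamma]$ lies in one fibre. That orbit is infinite: by Lemma \ref{S2:Lem:IndexStabTwist}, the stabilizer of $[\gamma]$ in $\MCG(S(\gamma))$ is virtually the boundary-twist group, which has infinite index once $S(\gamma)$ is not a pair of pants or an annulus. (For $S(\gamma)$ a pair of pants or an annulus, $[\gamma']\mapsto\partial S(\gamma')$ does work.)

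So the twists inside $S(\gamma)$ must be counted within the domination itself. These are precisely the Fenchel--Nielsen twists that enter $h_\gamma$ as soon as $S(\gamma)$ is not a pair of pants. One way to do this:
\begin{itemize}
\item Record $\phi$ applied to a marking of $S(\gamma)$, namely pants curves and dual curves. All of these have length $\le C\ell_{\phi\circ\gamma}(X)$ because $\gamma$ fills $S(\gamma)$, as in the properness lemma.
\item Bound the admissible twists about each pants curve $\alpha$ by $O(1+a/\ell_\alpha(X))$ using the collar lemma.
\item Prove that the resulting sum is $\le C(1+a)^{6g-6+2n}G(X)$ with $G\in L^1(\mu_{\WP})$.
\end{itemize}
Alternatively, carry out a uniform Dehn--Thurston type count relative to a Bers pants decomposition.

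Your last paragraph asserts a bound of this kind, but only heuristically. It does not address the real difficulty for non-simple curves: different strands crossing the same thin collar may twist by different amounts, and this has to be controlled through the fixed self-intersection number. As written, the only non-formal step of your argument is unproved, and that uniform estimate has to be established or cited precisely.
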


Erlandsson and Souto obtained similar results for geodesics on a once-holed torus in \cite{ErSo16} by using a different approach, and Erlandsson, Parlier and Souto in \cite{ErPaSo20}, Erlandsson and Souto in \cite{ErSo24} and Trin in \cite{Tri24} proved similar results for more general notions of lengths of loops. Given $a>0$ and an integer $k\geq0$, Sapir computed in \cite{Sap16-01,Sap16-02} bounds on the number of closed geodesics of length at most $a$ and with at most $k$ self-intersections on any hyperbolic surface.

We also obtained in \cite{LeG25} a more precise asymptotic expansion of $\mathbb{E}[N_\gamma(a)]$ when $a\to\infty$ for eight-shaped loops $\gamma$. However the tools we used to study the figure eight do not work anymore for more complex loops. The aim of this article is therefore to develop a new method suited to the study of arbitrary loops, in order to generalize the results of \cite{LeG25}.

Similarly to the results we obtained in \cite{LeG25}, even though our Corollary \ref{S1:Cor:MainIntro} improves the asymptotic equivalent of Corollary \ref{S1:Cor:MirzAsympt}, we do not know if it is possible to deduce a better pointwise estimate than Theorem \ref{S1:Theo:MirzAsympt} from our Theorem \ref{S1:Theo:MainIntro}.

\subsection{Links with the work of Anantharaman and Monk}

The lengths of closed geodesics on a hyperbolic surface $X$ are linked to the eigenvalues of the Laplace operator on $X$ by the Selberg trace formula \cite{Sel56}. Therefore, having a better understanding of the behavior of random variables of the form $F^\Gamma$ could help us understanding the spectral geometry of random hyperbolic surfaces.

Anantharaman and Monk exploited this fact in \cite{AnMo24,AnMo25} in order to compute the optimal spectral gap for random hyperbolic surfaces of large genus. Since they consider closed surfaces of growing genus $g$, the notion of $\MCG_{g,n}$-orbits of loops is not well-suited to their study. To remedy this problem they defined the notion of \emph{local topological type} of a loop. Given a local topological type $T$, and a test function $F$, the \emph{$T$-average of $F$ over surfaces of genus $g$} is defined to be:
\[
    \langle F\rangle^T_g=\mathbb{E}_{g,0}\bigg[\sum_{\substack{\gamma\sim T\\\gamma\in\mathcal{G}(X)}}F(\ell_\gamma(X))\bigg],
\]
where $\mathcal{G}(X)$ is the set of closed geodesics on $X\in\mathcal{M}_g=\mathcal{M}_{g,0}$ and $\gamma\sim T$ means that $\gamma$ has local topological type $T$. Anantharaman and Monk proved in \cite{AnMo24} that there exists $V_g^T\in L_{loc}^1(\mathbf{R}_{\geq0})$, called the \emph{volume function associated with local topological type $T$ on surfaces of genus $g$}, such that for any test function $F$, we have:
\[
    \langle F\rangle_g^T=\int_0^\infty F(\ell)V_g^T(\ell)\d\ell.
\]
Contrary to Anantharaman and Monk, we fix the topology of the base surface $S_{g,n}$ (i.e. we fix $g$ and $n$), hence we are more interested in studying quantities of the form $\mathbb{E}[F^\gamma]$ rather than $\langle F\rangle_g^T$, i.e. we sort the loops according to their $\MCG_{g,n}$-orbit and not their local topological type. However we can adapt some of the tools developed in \cite{AnMo24,AnMo25} to our framework. We can therefore obtain the existence of functions $V_{\mathcal{O}_\gamma}(~.~,\mathbf{L}_n)$ in Theorem \ref{S1:Theo:MainIntro} by using similar arguments as for the existence of the volume functions $V_g^T$. In this article, we will be studying the large $\ell$ behavior of $V_{\mathcal{O}_\gamma}(\ell,\mathbf{L}_n)$. The proof of the main result of our previous article \cite{LeG25} resorted to explicit computations, thus since we cannot rely anymore on such explicit calculations for arbitrary loops, the new method we develop in this article is different from the one of \cite{LeG25}. We hope that our estimates will help us in the future having a better understanding of the spectral behavior of random hyperbolic surfaces of fixed topology.

\subsection{Length-type functions and pseudo-convolution}

The density function $V_{\mathcal{O}_\gamma}(~.~,\mathbf{L}_n)$ in Theorem \ref{S1:Theo:MainIntro} is a linear combination of \emph{pseudo-convolution of polynomials}, obtained by disintegrating the measure $x_1\ldots x_n V_\Gamma(x_1,\ldots,x_n,\mathbf{L}_n)\d x_1\cdots\d x_n$ along the level sets of the length function $h_\gamma$ of the loop $\gamma$ (written in appropriate Fenchel-Nielsen coordinates on $\mathcal{T}_{g,n}(\mathbf{L}_n)$). One of the contributions of this paper is that we define the class of \emph{length-type functions} and prove that $h_\gamma$ is a length-type function. Length-type functions behave similarly to piecewise linear forms up to a $\mathscr{C}^1$-small error term. We then study the properties of length-type functions and we compute the asymptotic behavior of the pseudo-convolution of polynomials with respect to length-type functions.

The process of pseudo-convolution was introduced by Anantharaman and Monk in \cite{AnMo25} to compute the behavior at high genus of the volume functions $V_g^T$, and we used it in \cite{LeG25} to study eight-shaped loops. The usual convolution of locally integrable functions $f_1,\ldots,f_n$ consists in disintegrating the measure $f_1\cdots f_n\d x_1\cdots\d x_n$ along the level sets of the linear form $(x_1,\ldots,x_n)\mapsto x_1+\cdots+x_n$, and in a similar manner, the \emph{pseudo-convolution of $f_1,\ldots,f_n$} consists in disintegrating $f_1\cdots f_n\d x_1\cdots\d x_n$ along the level set of a more general function $h\colon\mathbf{R}^n\longrightarrow\mathbf{R}$ which behaves similarly to a (piecewise) linear form.

\subsection{Acknoledgements}

This research has been partially funded by the European Research Council
(ERC) under the European Union’s Horizon 2020 research and innovation programme (Grant agreement
No. 101096550).
I would also like to thank Nalini Anantharaman for her invaluable help with the trigonometric calculations of Paragraph \ref{S3:Subsec:GenCase} and for her helpful comments during the preparation of this article.

\section{An integration formula for general geometric random variables}

As soon as a loop $\gamma$ on $S_{g,n}$ is topologically complex enough, the expression of its length function in Fenchel-Nielsen coordinates will involve twist parameters. Hence Mirzakhani's integration formula (Theorem \ref{S1:Theo:MirzInt}) is not suited anymore. The goal of this section is to generalize Mirzakhani's formula in order to be able to integrate general geometric random variables.

\subsection{The surface filled by a loop}

The topological complexity of a loop $\gamma$ we mentionned above is encoded by the \emph{surface filled by $\gamma$}. We begin by recalling the definition of the surface filled by a loop.

\begin{Def}\label{S2:Def:SurfFilled}
    Let $\gamma$ be a closed loop in minimal position on a hyperbolic surface $X$. The \emph{surface filled by $\gamma$}, denoted by $S(\gamma)$ is defined the following way:
    \begin{enumerate}[label=\textup{(\roman*)}]
        \item we consider $\varepsilon>0$ small enough so the regular neighborhood $\mathcal{V}_\varepsilon(\gamma)=\{x\in X|d(x,\gamma)<\varepsilon\}$ retracts to $\gamma$;
        \item denoting by $C_1,\ldots,C_k$ the connected components of $X\backslash\mathcal{V}_\varepsilon(\gamma)$ homeomorphic to a disk, we define:
        \[
            S(\gamma)= \mathcal{V}_\varepsilon(\gamma)\cup\bigcup_{i=1}^kC_i.
        \]
    \end{enumerate}
\end{Def}

\begin{Rem}
    We need the surface $X$ to be endowed with a hyperbolic metric only to be able to consider the regular neighborhood $\mathcal{V}_\varepsilon(\gamma)$. But the surface filled by $\gamma$ is well-defined up to isotopy, and it does not depend on the choice of the metric. Hence we can consider the surface filled by a loop $\gamma$ on the base surface $S_{g,n}$.
\end{Rem}

\begin{Ex}
    The surface filled by a figure-eight is a pair of pants.
\end{Ex}

We will prove in Paragraph \ref{S3:Subsec:GenCase} that as soon as $S(\gamma)$ is neither an annulus nor a pair of pants, then the length function of the loop $\gamma$ will involve twist parameters when written in Fenchel-Nielsen coordinates.

\subsection{Integration on the moduli space}

Let $\mathcal{D}\subset\mathcal{T}_{g,n}(\mathbf{L}_n)$ be a fundamental domain of $\mathcal{M}_{g,n}(\mathbf{L}_n)$ and let $f\colon\mathcal{T}_{g,n}(\mathbf{L}_n)\longrightarrow\mathbf{C}$ be an $\MCG_{g,n}$-invariant test function. The function $f$ therefore induces a well-defined map on $\mathcal{M}_{g,n}(\mathbf{L}_n)$ which we still denote by $f$. We have:
\[
    \int_{\mathcal{M}_{g,n}(\mathbf{L}_n)}f\d\mu_\WP=\int_\mathcal{D}f\d\mu_\WP.
\]
If $f$ is only invariant under the action of a subgroup $G\subset\MCG_{g,n}$, denote:
\begin{equation}\label{S2:Eq:FoncInvSubGrp}
    f_G\colon(X,h)\longrightarrow\sum_{\psi\in G\backslash\MCG_{g,n}}f(\psi\cdot(X,h)),
\end{equation}
where the sum over $\psi\in G\backslash\MCG_{g,n}$ means that we take exactly one element in each right coset modulo $G$. The function $f_G$ is invariant under the action of $\MCG_{g,n}$, and if $\mathcal{D}'\subset\mathcal{T}_{g,n}(\mathbf{L}_n)$ is a fundamental domain for the action of $G$, we have:
\begin{equation}\label{S2:Eq:IntQuot}
    \int_{\mathcal{M}_{g,n}(\mathbf{L}_n)}f_G\d\mu_\WP = \int_{\mathcal{D}'}f\d\mu_\WP.
\end{equation}

\subsection{Stabilizer of a loop in the mapping class group}

Let $\gamma$ be a loop on $S_{g,n}$ and $S(\gamma)$ be the surface filled by $\gamma$. We denote $G(\gamma)=\MCG(S_{g,n}\backslash S(\gamma))\subset\mathcal{M}_{g,n}(\mathbf{L}_n)$.

\begin{Rem}
    The group $G(\gamma)$ contains the Dehn twists along $\partial S(\gamma)$.
\end{Rem}

Let $F\colon\mathbf{R}\longrightarrow\mathbf{C}$ be a test function, we denote $\widetilde{F}=F\circ\ell_\gamma\colon\mathcal{T}_{g,n}(\mathbf{L}_n)\longrightarrow\mathbf{R}$. In order to get an integration formula for $F^\gamma$, we shall compare $F^\gamma$ and $\widetilde{F}_{G(\gamma)}$ (where $\widetilde{F}_{G(\gamma)}$ is defined as (\ref{S2:Eq:FoncInvSubGrp}) with $f=\widetilde{F}$ and $G=G(\gamma)$)

Let us first introduce some notations. Given two loops $\gamma,\gamma'\colon[0,1]\longrightarrow S_{g,n}$, we denote by $\gamma\itop\gamma'$ if $\gamma$ and $\gamma'$ are isotopic, i.e. if there exists an isotopy $(\psi_t)$ of $S_{g,n}$ such that $\psi_1\circ\gamma=\gamma'$. Following the definition of \cite{HaSc99}, we say that $\gamma$ and $\gamma'$ \emph{have the same configuration} if there is an isotopy of $S_{g,n}$ sending $\gamma([0,1])$ to $\gamma'([0,1])$. Recall that we denote by $[\gamma]$ the free homotopy class of $\gamma$, and we shall note $(\gamma)$ its isotopy class.

\begin{Lem}\label{S2:Lem:ConfigItop}
    The configuration class of a loop is partitionned into a finite number of isotopy classes.
\end{Lem}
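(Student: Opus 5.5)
Fix a loop $\gamma\colon[0,1]\longrightarrow S_{g,n}$ and let $\gamma'$ have the same configuration as $\gamma$. The plan is to reduce everything to reparametrizations of a fixed image set. By definition there is an isotopy $(\psi_t)$ with $\psi_1(\gamma'([0,1]))=\gamma([0,1])$. Set $\gamma''=\psi_1\circ\gamma'$; then $\gamma''\itop\gamma'$, and $\gamma''$ is a loop whose image is exactly $C=\gamma([0,1])$. It therefore suffices to show that the loops with image $C$ fall into finitely many isotopy classes.

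\textbf{Reduction to parametrizations of a graph.} Assume first, as throughout the paper, that $\gamma$ is in minimal position: an immersion with finitely many double points and nothing worse. Then $C$ is a finite graph $\mathcal{G}$ embedded in $S_{g,n}$. Its vertices are the $k$ double points, and it has $2k$ edges (or a single circle if $k=0$). Because $\gamma$ has only transverse double points, its image determines a cyclic sequence of edges. At each vertex the loop passes straight through, pairing opposite half-edges, so the traversal of $\mathcal{G}$ is determined up to two choices: the starting point on $C$ and the orientation. The loops $\gamma''$ under consideration are also immersions with the same configuration. For these, I would argue that $\gamma''$ equals $\gamma\circ\rho$ or $\gamma^{-1}\circ\rho$ up to a reparametrization $\rho$ of the circle. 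This uses the fact that a parametrization of $C$ by a loop with transverse double points must follow the straight-through rule at each vertex.

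\textbf{Isotopy classes of parametrizations.} Two parametrizations of $C$ that differ by an orientation-preserving reparametrization $\rho$ of $[0,1]$ fixing the basepoint are isotopic. Indeed, one can push the basepoint along $C$ by an ambient isotopy supported near $C$. What remains is the starting point, which can be chosen on one of the $2k$ edges (up to sliding within an edge), together with the orientation. This gives at most $2\cdot 2k$ isotopy classes, or $2$ when $k=0$. Since isotopy is defined here through $\psi_1\circ\gamma=\gamma'$ for parametrized loops, I would carefully check that sliding the start point within an edge is realized by an ambient isotopy. This should follow from a flow supported in a tubular neighborhood of that edge which moves along the edge.

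\textbf{Main obstacle.} The delicate step is justifying that every loop with image $C$ in the configuration class is such a reparametrization. A priori a loop with image $C$ could backtrack or turn at a vertex rather than go straight through. To handle this, I would restrict to loops in minimal position (immersions with double points), which is the setting in which configurations are used following \cite{HaSc99}. For immersions, turning at a transverse double point is impossible because the tangent directions are fixed there, and backtracking is excluded by the immersion condition. If non-minimal loops must also be handled, I would replace $\gamma$ by a generic immersion, whose image is still a finite graph with transverse double points, and run the same argument.
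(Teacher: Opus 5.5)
Your argument follows the paper's route. You move $\gamma'$ by $\psi_1$ onto the image $C$ of $\gamma$, note that the result is a reparametrization of $\gamma$, and count the finitely many ways of running through $C$; this is the paper's cyclic permutation of the ordered self-intersections. Your straight-through analysis supplies the justification the paper skips for why a loop in minimal position with image $C$ is a reparametrization of $\gamma$. You also rightly include orientation reversal ($\gamma$ and $\gamma^{-1}$ always share a configuration), which a purely cyclic count misses.

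One step fails as you state it, though. You read isotopy literally, $\psi_1\circ\gamma=\gamma'$, and assert that parametrizations of $C$ differing by an orientation-preserving $\rho$ fixing the basepoint are isotopic. Since $\psi_1$ is injective, $\psi_1\circ\gamma=\gamma'$ forces $\gamma$ and $\gamma'$ to have exactly the same pairs of parameters $s<t$ with $\gamma(s)=\gamma(t)$. But $\gamma\circ\rho$ moves these pairs, so under the literal definition the loops $\gamma\circ\rho$ alone give uncountably many isotopy classes as soon as $\gamma$ has a self-intersection, and the lemma is false. The paper's proof relies on the same point: its claim that two reparametrizations with the same cyclic order of self-intersections are isotopic is only true if isotopy of loops is understood modulo reparametrization, i.e.\ $\psi_1\circ\gamma=\gamma'\circ\rho$. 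Make that convention explicit. With it, your sliding isotopy along a single edge, supported away from the vertices, is exactly what is needed, and your count goes through.

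There are three smaller points. First, the basepoint may also lie at a double point; this adds at most $4k$ classes (two passages through each vertex, two orientations) and does not affect finiteness. Second, drop the fallback for non-minimal loops. The configuration class depends only on the image, so you cannot swap $\gamma$ for a generic immersion. Without minimal position for every loop in the class the statement is simply false: a simple closed curve and all its $m$-fold traversals have the same image. The minimal-position hypothesis of your main argument is what the paper implicitly uses, following Hass and Scott. Third, if the isotopies are only topological, $\psi_1\circ\gamma'$ need not be an immersion. In that case phrase the straight-through rule as pairing opposite half-edges in the cyclic order at each crossing, rather than via tangent vectors.
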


\begin{proof}
    Let $\gamma$ and $\gamma'$ be two loops on $S_{g,n}$ which have the same configuration, and let $(\psi_t)$ be an isotopy such that $\psi_1(\gamma([0,1]))=\gamma'([0,1])$. Then $\psi_1\circ\gamma$ is a loop with same image as $\gamma'$, and since $\psi_1$ is a homeomorphism, the loop $\psi_1\circ\gamma$ is a reparametrization of $\gamma'$ and the ordered self-intersections of $\psi_1\circ\gamma$ are obtained from those of $\gamma'$ by applying a cyclic permutation. Two reparametrizations of $\gamma'$ with same self-intersections cyclic order are isotopic, and $\gamma'$ has a finite number of self-intersections, hence $\gamma$ can belong to only finitely many isotopy classes.
\end{proof}

\begin{Lem}
    The free homotopy class of a loop is partitionned into a finite number of isotopy classes.
\end{Lem}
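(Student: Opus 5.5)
The statement is to be read for loops in minimal position, as in the setting of the paper: an arbitrary immersed representative can carry arbitrarily many inessential self-intersections (small kinks), which would give infinitely many isotopy classes. The plan is to reduce to Lemma \ref{S2:Lem:ConfigItop}. By that lemma it suffices to prove that the free homotopy class $[\gamma]$ meets only finitely many \emph{configuration} classes of loops in minimal position. Each configuration class then splits into finitely many isotopy classes, and the union of finitely many finite sets is finite.

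The key input I would invoke is the theorem of Hass and Scott \cite{HaSc99}, which is the paper the notion of configuration is taken from. It states that any two loops in minimal position in the same free homotopy class are related by a finite sequence of \emph{singular} (triangle, or Reidemeister III) moves, followed by an isotopy. A singular move only changes the curve inside a small disk bounding a triangle whose sides are arcs of $\gamma$. It preserves both the number of self-intersections and the minimal position property.

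The finiteness then comes from a combinatorial bound. Let $k$ be the minimal number of self-intersections. Up to isotopy, a loop in minimal position is determined by the following data:
\begin{itemize}
    \item its image, viewed as a $4$-valent graph with $k$ vertices embedded in $S_{g,n}$;
    \item how that graph sits in $S_{g,n}$, namely the homeomorphism types of the complementary regions together with their gluing.
\end{itemize}
Starting from one fixed configuration, each singular move is supported on one of the finitely many triangular faces of the current graph, so at most finitely many new configurations arise from each configuration. What remains to show is that the orbit under these moves is finite, not merely countable.

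The main obstacle is exactly this finiteness of the orbit. I would handle it by noting that all configurations reachable by triangle moves have the same underlying abstract graph up to the finitely many $4$-valent graphs with $k$ vertices. Moreover, the induced ribbon structure, and hence the regular neighbourhood $\mathcal{V}_\varepsilon(\gamma)$, is determined up to finitely many choices. The surface filled $S(\gamma)$ is fixed up to isotopy within the homotopy class. Its embedding, and that of the complementary pieces, is determined by $[\gamma]$ up to the action of maps that fix $[\gamma]$ and are isotopic to the identity. So only finitely many embeddings up to isotopy can occur.

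If this embedding argument becomes delicate, my alternative is to pass to the annular cover of $S_{g,n}$ associated with $[\gamma]$. There, lifts of minimal-position representatives correspond to a fixed finite arrangement of arcs, up to finitely many combinatorial choices. That directly bounds the number of configurations.
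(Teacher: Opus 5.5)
Your reduction is the same as the paper's: finitely many configuration classes of minimal-position representatives, then Lemma \ref{S2:Lem:ConfigItop}. You are also right that the statement must be read for loops in minimal position. The difference is that the paper simply quotes the finiteness of configuration classes from \cite{HaSc99}, whereas you try to prove it, and the decisive step of your proof does not hold. The triangle-move paragraph only gives a connected graph of configurations with finite valence, which, as you note yourself, gives no finiteness. The finiteness is then supposed to come from the combinatorial data (ribbon graph, complementary pieces and their gluing). But that data determines a minimal-position loop only up to a homeomorphism of $S_{g,n}$, not up to isotopy. If $\gamma_1,\gamma_2$ have the same combinatorial type, you get $\varphi$ with $\varphi\circ\gamma_1=\gamma_2$, and all you know is $[\varphi]\in\Stab([\gamma])$. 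Nothing forces $\varphi$ to be isotopic to the identity, so your sentence about ``maps that fix $[\gamma]$ and are isotopic to the identity'' assumes away the whole difficulty.

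What you actually need is that the $\Stab([\gamma])$-orbit of a configuration is finite, i.e.\ that the subgroup of $\Stab([\gamma])$ preserving a given configuration has finite index. That subgroup contains $G(\gamma)$, so $[\Stab([\gamma]):G(\gamma)]<\infty$ would suffice. However, you cannot quote Lemma \ref{S2:Lem:StabIndex} for this: it rests on Lemma \ref{S2:Lem:IndexStabTwist}, whose proof uses the very statement you are proving. An independent argument is required. One option is to use the properness of $\ell_\gamma$ on the Teichm\"uller space of $S(\gamma)$ for the filling loop $\gamma$; the Kerckhoff-type lemma in Section 3 does not depend on this lemma. Combined with proper discontinuity of the mapping class group action, this shows that only finitely many elements fix $[\gamma]$, modulo boundary twists, boundary permutations and mapping classes supported off $S(\gamma)$. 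Once that is in place the triangle moves are superfluous. Your annular-cover fallback, as sketched, does not supply this missing ingredient either.
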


\begin{proof}
    According to \cite{HaSc99}, each free homotopy class is partitionned into a finite number of configuration classes. The conclusion is immediate following Lemma \ref{S2:Lem:ConfigItop}.
\end{proof}

\begin{Lem}\label{S2:Lem:HomFillLoop}
    Let $\gamma$ be a filling loop on $S_{g,n}$ and let $\varphi\in\Homeo^+(S_{g,n},\partial S_{g,n})$ be such that $\varphi\circ\gamma\itop\gamma$. Then $\varphi$ is homotopic to a product of Dehn twists along $\partial S_{g,n}$.
\end{Lem}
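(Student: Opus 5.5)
We reduce the statement to the classical fact that a homeomorphism preserving a cell decomposition up to isotopy is isotopic to the identity, apart from twisting at the boundary. Since $\varphi\circ\gamma\itop\gamma$, there is an isotopy $(\psi_t)$ of $S_{g,n}$ with $\psi_1\circ\varphi\circ\gamma=\gamma$. Replacing $\varphi$ by $\varphi':=\psi_1\circ\varphi$, we get a homeomorphism isotopic to $\varphi$ that fixes $\gamma$ pointwise as a parametrized loop. In particular $\varphi'$ fixes the image $\gamma([0,1])$ pointwise, and hence fixes every self-intersection point. It also preserves the local orientations of the branches at each crossing.

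Because $\gamma$ is filling, $\gamma([0,1])$ is a graph $\Gamma$ in $S_{g,n}$ whose vertices are the double points. Each complementary component of $\Gamma$ is of one of two kinds: a disk, or a boundary-parallel annulus containing exactly one component of $\partial S_{g,n}$ (or a punctured disk around a cusp). The homeomorphism $\varphi'$ fixes $\Gamma$ pointwise and is orientation-preserving. It therefore maps each complementary component to itself: a face is determined by its boundary cycle in $\Gamma$ together with the side on which it lies, and both are preserved. We then use Alexander's lemma on each disk face: a homeomorphism of a closed disk that is the identity on the boundary is isotopic to the identity relative to the boundary. This lets us isotope $\varphi'$, relative to $\Gamma$, to be the identity on every disk face.

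On an annular face $A$ between a boundary component $\beta$ of $S_{g,n}$ and a cycle $c\subset\Gamma$, the map $\varphi'$ is the identity on both boundary circles of $A$. Note that $\varphi$ fixes $\partial S_{g,n}$ pointwise by definition of $\Homeo^+(S_{g,n},\partial S_{g,n})$, and this is preserved because $\psi_t$ may be chosen relative to $\partial S_{g,n}$. The mapping class group of an annulus relative to its boundary is $\mathbf{Z}$, generated by the Dehn twist about its core. Hence $\varphi'|_A$ is isotopic rel $\partial A$ to a power of the Dehn twist about the core of $A$, which is a curve parallel to $\beta$. For a cusp, the corresponding face is a punctured disk, and the Alexander trick again gives no contribution. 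Gluing these isotopies together, $\varphi'$ is isotopic to a product of Dehn twists along the components of $\partial S_{g,n}$, and so is $\varphi$.

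The main obstacle is the first step. We must ensure that the isotopy $\psi_t$ can be taken relative to $\partial S_{g,n}$. If it cannot, we have to account for how it moves near the boundary, since the final answer only holds up to boundary twists anyway. We must also justify the claim about the complementary components of a filling loop in minimal position. In particular we must rule out a face being a disk containing two boundaries, or any face of higher complexity; this follows from Definition \ref{S2:Def:SurfFilled}, since $S(\gamma)=S_{g,n}$ up to boundary collars. An alternative for the first point is to allow $\psi_t$ to move the boundary: $\psi_t$ restricted to each boundary circle is then a loop of rotations, and this contributes only powers of boundary twists, which is harmless for the conclusion.
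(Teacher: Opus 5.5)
Your proof is correct and is essentially the paper's argument. Both normalize so that $\varphi\circ\gamma=\gamma$ and observe that the complementary pieces are discs, boundary-parallel annuli or punctured discs. Both then apply Alexander's trick on the discs and use that the mapping class group of an annulus rel boundary is $\mathbf{Z}$, generated by the twist about its core.

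You work with the faces of the graph $\gamma([0,1])$ rather than the complement of a regular neighborhood $V$. This gives the identity on each face boundary directly, which is cleaner than the paper's retraction step on $V$. The small cost is that a face's closure need not be embedded, so you should apply the Alexander trick to its abstract closed disk or annulus (or thicken $\gamma([0,1])$ slightly, as the paper does).
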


\begin{proof}
    By composing with an isotopy, we can assume without loss of generality that $\varphi\circ\gamma=\gamma$. Let $V$ be a regular neighborhood of $\gamma([0,1])$, small enough so that $V$ retracts to $\gamma$ via retraction map $(r_t)$. Then $\varphi(V)$ is also a neighborhood of $\gamma([0,1])$ which retracts onto $\gamma([0,1])$, let $W\varsubsetneq V\cap\varphi(V)$ be another small regular neighborhood of $\gamma([0,1])$. There exists an isotopy $(\psi_t)$ such that $\psi_{t|W}=\Id_W$ for all $t\in[0,1]$ sending $\varphi(V)$ to $V$, hence we can assume without loss of generality that $\varphi(V)=V$. Moreover $(\varphi\circ r_t)$ is a homotopy between $\varphi\circ r_0=\varphi$ and $\varphi\circ r_1=r_1$. Hence $\varphi_{|V}$ is homotopic to $\Id_V$. Since $\varphi_{|V}$ and $\Id_V$ are two orientation preserving homeomorphisms of $V$, $\varphi$ does not permute the connected components of $\partial V$.

    On the other hand every connected component of $S_{g,n}\backslash V$ is a disc or an annulus around a boundary of $S_{g,n}$. Thus, in a disc of the complement of $V$, $\varphi$ is homotopically trivial, and in an annulus of the complement $V$, $\varphi$ is homotopic to a Dehn twist along the corresponding boundary of $\partial S_{g,n}$ (relatively to $\partial S_{g,n}$). This yields the conclusion.
\end{proof}

\begin{Lem}\label{S2:Lem:IndexStabTwist}
    Let $\gamma$ be a filling loop on $S_{g,n}$ and let us denote by $\Tw(\partial S_{g,n})$ the group of Dehn twists along $\partial S_{g,n}$. The index $[\Stab([\gamma]):\Tw(\partial S_{g,n})]$ is finite.
\end{Lem}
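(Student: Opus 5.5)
Since $\Tw(\partial S_{g,n})$ is central in $\MCG_{g,n}$, it is a normal subgroup of $\Stab([\gamma])$, and it is contained in $\Stab([\gamma])$ because Dehn twists along boundary curves fix every free homotopy class. The plan is to build an action of $\Stab([\gamma])$ on a finite set whose point stabilizers are contained in $\Tw(\partial S_{g,n})$. Finiteness of the index then follows from the orbit–stabilizer theorem.

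The finite set is the set $I$ of isotopy classes contained in the free homotopy class $[\gamma]$. By the second lemma above, based on \cite{HaSc99}, $I$ is finite. First, $\Stab([\gamma])$ acts on $I$. If $[\varphi]\in\Stab([\gamma])$ and $\gamma'\in[\gamma]$, then $\varphi\circ\gamma'$ is freely homotopic to $\varphi\circ\gamma$, hence to $\gamma$. If $\varphi$ is replaced by an isotopic homeomorphism, the isotopy class $(\varphi\circ\gamma')$ does not change. So $[\varphi]\cdot(\gamma')=(\varphi\circ\gamma')$ is a well-defined action of $\Stab([\gamma])$ on $I$.

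Next, consider the stabilizer of the point $(\gamma)\in I$. It consists of the classes $[\varphi]$ with $\varphi\circ\gamma\itop\gamma$. Since $\gamma$ is filling, Lemma \ref{S2:Lem:HomFillLoop} shows that such a $\varphi$ is homotopic, relative to the boundary, to a product of Dehn twists along $\partial S_{g,n}$. Hence the stabilizer of $(\gamma)$ is contained in $\Tw(\partial S_{g,n})$. For surfaces, homotopy relative to the boundary implies isotopy relative to the boundary (Baer–Epstein), so this identification holds in $\MCG_{g,n}$; this is the only non-formal input. The reverse inclusion also holds, but it is not needed.

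By orbit–stabilizer, $[\Stab([\gamma]):\Stab((\gamma))]=|\Stab([\gamma])\cdot(\gamma)|\leq|I|<\infty$. Since $\Stab((\gamma))\subseteq\Tw(\partial S_{g,n})\subseteq\Stab([\gamma])$, we get
\[
[\Stab([\gamma]):\Tw(\partial S_{g,n})]\leq[\Stab([\gamma]):\Stab((\gamma))]\leq|I|<\infty.
\]

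The main obstacle is keeping the equivalence relations straight: one must check that the action on isotopy classes is well defined, and that the conclusion of Lemma \ref{S2:Lem:HomFillLoop}, stated up to homotopy, really gives membership in $\Tw(\partial S_{g,n})$ inside the mapping class group.
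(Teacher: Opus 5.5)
Your proof is correct and is essentially the paper's argument. The paper also sends $[\varphi]\in\Stab([\gamma])$ to the isotopy class of $\varphi\circ\gamma$ among the finitely many isotopy classes in $[\gamma]$, and uses Lemma~\ref{S2:Lem:HomFillLoop} to show that two elements with the same image differ by an element of $\Tw(\partial S_{g,n})$; that is your orbit--stabilizer argument, and your version is slightly more careful in three ways. You get the bound $\leq |I|$ rather than the paper's claimed equality, which would need every isotopy class in $[\gamma]$ to lie in the orbit. You also make explicit the homotopy-versus-isotopy step and the inclusion $\Tw(\partial S_{g,n})\subseteq\Stab([\gamma])$, both of which the paper uses tacitly.
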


\begin{proof}
   Let $[\varphi]\in\Stab([\gamma])$. We partition the free homotopy class $[\gamma]$ into the isotopy classes $(\gamma_1),\ldots,(\gamma_k)$. There exists a unique $j\in\{1,\ldots,k\}$ such that $\varphi\circ\gamma\itop\gamma_j$. If $[\psi]\in\Stab([\gamma])$ also satisfies $\psi\circ\gamma\itop\gamma_j$, then $\psi^{-1}\circ\varphi\circ\gamma\itop\gamma$. Hence, according to Lemma \ref{S2:Lem:HomFillLoop} we can conclude that $\psi^{-1}\circ\varphi$ is homotopic to a product of Dehn twists along $\partial S_{g,n}$, i.e. $[\psi]^{-1}[\varphi]\in\Tw(\partial S_{g,n})$. It follows that $[\Stab([\gamma]):\Tw(\partial S_{g,n})]=k$.
\end{proof}

\begin{Lem}\label{S2:Lem:StabIndex}
    Given any loop $\gamma$ on $S_{g,n}$, the index $m(\gamma)=[\Stab([\gamma]):G(\gamma)]$ is finite.
\end{Lem}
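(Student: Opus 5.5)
The plan is to reduce to Lemma~\ref{S2:Lem:IndexStabTwist} applied to $\gamma$ viewed as a filling loop on $S(\gamma)$. First I would check that $G(\gamma)\subset\Stab([\gamma])$. An element of $G(\gamma)=\MCG(S_{g,n}\backslash S(\gamma))$ is represented by a homeomorphism supported in the complement of $S(\gamma)$, extended by the identity on $S(\gamma)$. Such a homeomorphism fixes $\gamma$ pointwise, so it certainly stabilizes $[\gamma]$. Hence the index $m(\gamma)$ makes sense.

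Next I would build a restriction map. Take $[\varphi]\in\Stab([\gamma])$. Put $\gamma$ in minimal position, so $\varphi\circ\gamma$ is homotopic to $\gamma$.
\begin{itemize}
\item By the finiteness of isotopy classes within a free homotopy class (the preceding lemmas, via \cite{HaSc99}), $\varphi\circ\gamma$ is isotopic to one of finitely many representatives $\gamma_1,\ldots,\gamma_k$ of $[\gamma]$.
\item Fix, once and for all, isotopies taking each $\gamma_j$ to a loop filling the same subsurface $S(\gamma)$ up to isotopy. The filled surface is determined up to isotopy by the free homotopy class, since minimal-position representatives are related by isotopies and finitely many triangle/bigon moves that do not change the filled subsurface.
\item After composing $\varphi$ with an isotopy, we may therefore assume $\varphi(S(\gamma))=S(\gamma)$.
\end{itemize}
The restriction $\varphi_{|S(\gamma)}$ then gives a mapping class of $S(\gamma)$ (possibly permuting boundary components) stabilizing the homotopy class of $\gamma$ in $S(\gamma)$.

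The key step is to show that two elements $[\varphi],[\psi]$ whose restrictions satisfy $\psi^{-1}\circ\varphi\circ\gamma\itop\gamma$ differ by an element of $G(\gamma)$. I would apply Lemma~\ref{S2:Lem:HomFillLoop} on $S(\gamma)$, where $\gamma$ is filling. Up to isotopy, $\psi^{-1}\varphi$ restricted to $S(\gamma)$ is then a product of Dehn twists along $\partial S(\gamma)$. These twists, together with whatever $\psi^{-1}\varphi$ does on $S_{g,n}\backslash S(\gamma)$, form a mapping class of the complement, so they lie in $G(\gamma)$ by the Remark that $G(\gamma)$ contains the twists along $\partial S(\gamma)$.

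Counting then works as in Lemma~\ref{S2:Lem:IndexStabTwist}. The coset $[\varphi]G(\gamma)$ is determined by the finite data of:
\begin{itemize}
\item the permutation of the boundary components of $S(\gamma)$ induced by $\varphi$;
\item the isotopy class $(\gamma_j)$ with $\varphi\circ\gamma\itop\gamma_j$.
\end{itemize}
This gives $m(\gamma)\le k\cdot(\#\partial S(\gamma))!$, which is finite.

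The main obstacle is normalizing $\varphi$ so that it preserves $S(\gamma)$ while keeping track of isotopy classes. I would first try to avoid the homotopy-versus-isotopy issue by working throughout with geodesic representatives for a fixed hyperbolic metric. If $\varphi$ stabilizes $[\gamma]$, then pulling back the metric gives a geodesic representative of $\gamma$ whose filled surface is $\varphi^{-1}$ of the original one. Isotoping the metric back shows $\varphi$ is isotopic to a map preserving $S(\gamma)$. Degenerate cases (annulus, where $S(\gamma)$ retracts to a simple curve and the argument reduces to the stabilizer of a curve) would be handled separately.
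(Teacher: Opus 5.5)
Your proposal is correct and is essentially the paper's argument: both account for the finitely many permutations of the boundary curves of $S(\gamma)$, use the finiteness of isotopy classes in $[\gamma]$ together with Lemma~\ref{S2:Lem:HomFillLoop} to reduce to Dehn twists along $\partial S(\gamma)$, and absorb those twists and everything supported off $S(\gamma)$ into $G(\gamma)$. The paper instead passes to the kernel of the permutation map, splits $\varphi=\varphi_1\circ\varphi_2$ and cites Lemma~\ref{S2:Lem:IndexStabTwist} inside $S(\gamma)$, whereas you compare $\psi^{-1}\circ\varphi$ directly, which amounts to rerunning that lemma's proof in $S_{g,n}$; you should still say why $\psi^{-1}\circ\varphi\circ\gamma\itop\gamma$ in $S_{g,n}$ lets you isotope $\psi^{-1}\circ\varphi$ to fix $\gamma$, preserve $S(\gamma)$ and fix $\partial S(\gamma)$ pointwise before applying Lemma~\ref{S2:Lem:HomFillLoop} on $S(\gamma)$, which is the regular-neighbourhood argument from that lemma's proof.
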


\begin{proof}
    Let $\beta_1,\ldots,\beta_N$ be the essential boundary components of $\partial S(\gamma)$. If $[\varphi]\in\Stab([\gamma])$, then $\varphi$ possibly permutes the $\beta_i$'s, i.e. there exists $\sigma_\varphi\in\mathfrak{S}_N$ such that $\varphi(\beta_i)$ is isotopic to $\beta_{\sigma_\varphi(i)}$ for all $i\in\{1,\ldots,N\}$. This defines a group morphism $\Stab([\gamma])\longrightarrow\mathfrak{S}_N$, let us denote by $K$ its kernel.

    For each $i\in\{1,\ldots,N\}$, we fix a curve $\widetilde{\beta}_i$ isotopic to $\beta_i$, contained inside $S(\gamma)$. Since $\widetilde{\beta}_i$ and $\beta_i$ are isotopic, the Dehn twists along $\beta_i$ and $\widetilde{\beta}_i$ are the same element in the mapping class group. But by performing the Dehn twists along the $\widetilde{\beta}_i$'s instead of the $\beta_i$'s, we can assume without loss of generality that an element $[\varphi]\in K$ can be represented by an homeomorphism $\varphi$ such that $\varphi_{|\partial S(\gamma)}=\Id_{\partial S(\gamma)}$. Given $[\varphi]\in K$, represented by $\varphi$ which restricts to the identity on $\partial S(\gamma)$, we can write $\varphi=\varphi_1\circ\varphi_2$, where $\varphi_{1|S(\gamma)}=\varphi_{|S(\gamma)}$ and $\varphi_{1|S_{g,n}\backslash S(\gamma)}=\Id_{S_{g,n}\backslash S(\gamma)}$ (and therefore $\varphi_{2|S(\gamma)}=\Id_{S(\gamma)}$ and $\varphi_{2|S_{g,n}\backslash S(\gamma)}=\varphi_{|S_{g,n}\backslash S(\gamma)}$). Hence we have $[\varphi]=[\varphi_1]\cdot[\varphi_2]$ and $[\varphi_2]\in G(\gamma)$. Moreover, denoting by $\Stab([\gamma],S(\gamma))$ the stabilizer of $[\gamma]$ in $\MCG(S(\gamma))$, on the one hand we have $[\varphi_1]\in\Stab([\gamma],S(\gamma))$, and on the other hand according to Lemma \ref{S2:Lem:IndexStabTwist} there exist finitely many $[\psi_1],\ldots,[\psi_k]\in\Stab([\gamma],S(\gamma))$ (independent of $\varphi$) such that $\Stab([\gamma],S(\gamma))=[\psi_1]\Tw(\partial S(\gamma))\sqcup\cdots\sqcup[\psi_k]\Tw(\partial S(\gamma))$. Hence $[\varphi]$ belongs to finitely many left cosets modulo $G(\gamma)$. This proves that the index $[K:G(\gamma)]$ is finite.

    Finally we have:
    \[
        m(\gamma)=[\Stab([\gamma]):G(\gamma)]=[\Stab([\gamma]):K][K:G(\gamma)]<\infty.
    \]
\end{proof}

\begin{Cor}\label{S2:Cor:IndexRandVar}
    We have $F^\gamma=\frac{1}{m(\gamma)}\widetilde{F}_{G(\gamma)}$.
\end{Cor}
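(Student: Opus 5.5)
The plan is to expand $\widetilde{F}_{G(\gamma)}$ at a marked surface $(X,f)$ and regroup the terms according to the orbit $\mathcal{O}_\gamma$. By definition (\ref{S2:Eq:FoncInvSubGrp}),
\[
    \widetilde{F}_{G(\gamma)}(X,f)=\sum_{\psi\in G(\gamma)\backslash\MCG_{g,n}}F(\ell_\gamma(\psi\cdot(X,f))).
\]
With the action $\psi\cdot(X,f)=(X,f\circ\psi^{-1})$ (or the other convention; only consistency matters), we have $\ell_\gamma(\psi\cdot(X,f))=\ell_{f\circ\psi^{-1}(\gamma)}(X)=\ell_{\psi^{-1}\cdot\gamma}(X,f)$. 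So each term only depends on the free homotopy class $[\psi^{-1}\circ\gamma]\in\mathcal{O}_\gamma$.

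First I check that the summand is well-defined on cosets. If $\psi'=g\psi$ with $g\in G(\gamma)$, then $\psi'^{-1}\circ\gamma=\psi^{-1}\circ g^{-1}\circ\gamma$. Every element of $G(\gamma)=\MCG(S_{g,n}\backslash S(\gamma))$ is represented by a homeomorphism supported outside $S(\gamma)$, hence fixing $\gamma$ pointwise. It follows that $G(\gamma)\subset\Stab([\gamma])$, which is implicitly used in Lemma \ref{S2:Lem:StabIndex}.

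Next, consider the map $\Phi\colon G(\gamma)\backslash\MCG_{g,n}\to\mathcal{O}_\gamma$, $G(\gamma)\psi\mapsto[\psi^{-1}\circ\gamma]$. It is surjective, since every class in the orbit is $[\varphi\circ\gamma]$ and we may take $\psi=\varphi^{-1}$. For the fibers: $[\psi^{-1}\circ\gamma]=[\psi'^{-1}\circ\gamma]$ if and only if $\psi'\psi^{-1}\in\Stab([\gamma])$, that is, if and only if $\psi'\in\Stab([\gamma])\psi$. Thus the fiber over $[\psi^{-1}\circ\gamma]$ is the set of $G(\gamma)$-cosets contained in the right coset $\Stab([\gamma])\psi$. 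Right multiplication by $\psi$ is a bijection from $G(\gamma)\backslash\Stab([\gamma])$ onto this set, so every fiber has exactly $[\Stab([\gamma]):G(\gamma)]=m(\gamma)$ elements. This index is finite by Lemma \ref{S2:Lem:StabIndex}.

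Summing over fibers gives $\widetilde{F}_{G(\gamma)}(X,f)=m(\gamma)\sum_{[\gamma']\in\mathcal{O}_\gamma}F(\ell_{\gamma'}(X,f))=m(\gamma)F^\gamma(X)$, which is the claim. The main point needing care is the left/right coset bookkeeping together with the convention for the action of $\MCG_{g,n}$ on $\mathcal{T}_{g,n}(\mathbf{L}_n)$: one must make sure the cosets in (\ref{S2:Eq:FoncInvSubGrp}) match the fibers $\Stab([\gamma])\psi$. With the opposite convention, the same argument works after replacing $\psi$ by $\psi^{-1}$. Exchanging the order of summation is legitimate because, for compactly supported $F$, only finitely many terms are nonzero.
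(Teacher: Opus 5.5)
Your proof is correct and is essentially the paper's argument. The paper writes $G(\gamma)\backslash\MCG_{g,n}$ as $\mathcal{X}\times\mathcal{Y}$, where $\mathcal{X}$ and $\mathcal{Y}$ are systems of representatives of $G(\gamma)\backslash\Stab([\gamma])$ and $\Stab([\gamma])\backslash\MCG_{g,n}$; this is exactly your count of the fibres of $G(\gamma)\psi\mapsto[\psi^{-1}\circ\gamma]$, each of which has size $m(\gamma)$, and your explicit check that $G(\gamma)\subset\Stab([\gamma])$, used only tacitly in the paper, is a useful addition.
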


\begin{proof}
    Let $\mathcal{Y}\subset\MCG_{g,n}$ be a complete system of representatives of $\Stab([\gamma])\backslash\MCG_{g,n}$ and $\mathcal{X}\subset\Stab([\gamma])$ be a complete system of representatives of $G(\gamma)\backslash\Stab([\gamma])$. On the one hand we have a bijection:
    \[
        \begin{array}{ccc}
            \mathcal{X}\times\mathcal{Y} & \longrightarrow & G(\gamma)\backslash\MCG_{g,n} \\
            \big([\varphi],[\psi]\big) & \longmapsto & G(\gamma)[\varphi\circ\psi]
        \end{array},
    \]
    thus for every marked hyperbolic surface $(X,f)$ we have:
    \begin{align*}
        \widetilde{F}_{G(\gamma)}([X,f]) = & \sum_{[\varphi]\in\mathcal{X}}\sum_{[\psi]\in\mathcal{Y}}F\circ\ell_\gamma([\varphi\circ\psi]\cdot[X,f]) \\
        = & \sum_{[\varphi]\in\mathcal{X}}\sum_{[\psi]\in\mathcal{Y}}F\circ\ell_{\psi^{-1}\circ\varphi^{-1}\circ\gamma}([X,f]) \\
        = & m(\gamma)\sum_{[\psi]\in\mathcal{Y}}F\circ\ell_{\psi^{-1}\circ\gamma}([X,f]).
    \end{align*}
    On the other hand $\mathcal{Y}^{-1}=\{[\psi^{-1}]|[\psi]\in\mathcal{Y}\}$ is a complete system of representatives of $\MCG_{g,n}/\Stab([\gamma])$, which yields:
    \begin{align*}
        \widetilde{F}_{G(\gamma)}([X,f]) = & m(\gamma)\sum_{[\varphi]\in\MCG_{g,n}/\Stab([\gamma])}F\circ\ell_{\varphi\circ\gamma}([X,f]) \\
        = & m(\gamma)\sum_{\gamma'\in\mathcal{O}_\gamma}F\circ\ell_{\gamma'}([X,f]) \\
        = & m(\gamma)F^\gamma([X,f]).
    \end{align*}
\end{proof}

\subsection{Integration formula}

From now on, for convenience of notations, unless otherwise stated we shall use the bold font $\mathbf{x}_n$ to denote the vector $(x_1,\ldots,x_n)\in\mathbf{R}^n$, and $\d\mathbf{x}_n=\d x_1\cdots\d x_n$.

\begin{Theo}\label{S2:Theo:IntFormGeneral}
    Let $\gamma$ be a loop on $S_{g,n}$, $\beta_1,\ldots,\beta_M$ be the essential connected components of $\partial S(\gamma)$ and $2k$ be the dimension of the Teichmüller space of $S(\gamma)$. For each pair $(\beta_i,\beta_j)$ with $j\neq i$ such that $[\beta_i]=[\beta_j^{\pm1}]$, we only keep one curve so that after having possibly re-ordered the $\beta_i$'s, there exists a maximal $N\leq M$ such that $\Gamma=(\beta_1,\ldots,\beta_N)$ is a multi-curve. Complete $\Gamma$ into a pants decomposition of $S_{g,n}$ and denote by $h_\gamma$ the length function of $\gamma$ in the set of Fenchel-Nielsen coordinates associated to this pants decomposition. For every test function $F\colon\mathbf{R}\longrightarrow\mathbf{C}$, we have:
    \[
        \mathbb{E}[F^\gamma]=\frac{1}{m(\gamma)}\int_{\mathbf{R}_{>0}^k\times\mathbf{R}^k\times\mathbf{R}_{>0}^N}F\circ h_\gamma(\mathbf{x}_k,\mathbf{t}_k,\mathbf{y}_N,\mathbf{L}_n)y_1\cdots y_NV_\Gamma(\mathbf{y}_N,\mathbf{L}_n)\frac{\d\mathbf{x}_k\d\mathbf{t}_k\d\mathbf{y}_N}{V_{g,n}(\mathbf{L}_n)},
    \]
    where $V_\Gamma(\mathbf{y}_N,\mathbf{L}_n)$ is the Weil-Petersson volume of the moduli space of the complement $S_{g,n}\backslash S(\gamma)$, where the length of the boundary component(s)\footnotemark associated to the curve $\beta_i$ has (have) length $y_i$.
\end{Theo}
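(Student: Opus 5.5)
We follow Mirzakhani's unfolding argument, with $\MCG(S_{g,n}\backslash S(\gamma))$ replaced by the larger group $G(\gamma)$. By Corollary \ref{S2:Cor:IndexRandVar}, $F^\gamma=\frac{1}{m(\gamma)}\widetilde{F}_{G(\gamma)}$ with $\widetilde{F}=F\circ\ell_\gamma$. Applying (\ref{S2:Eq:IntQuot}) with $G=G(\gamma)$ then reduces the problem to computing
\[
    \int_{\mathcal{D}'}F\circ\ell_\gamma\,\d\mu_\WP,
\]
where $\mathcal{D}'\subset\mathcal{T}_{g,n}(\mathbf{L}_n)$ is a fundamental domain for the action of $G(\gamma)$. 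Dividing by $V_{g,n}(\mathbf{L}_n)$ gives the expectation.

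Next we choose Fenchel-Nielsen coordinates adapted to the decomposition $S_{g,n}=S(\gamma)\cup(S_{g,n}\backslash S(\gamma))$. Complete $\Gamma=(\beta_1,\ldots,\beta_N)$ into a pants decomposition. The curves of this decomposition split into three families:
\begin{enumerate}[label=\textup{(\roman*)}]
    \item the $k$ interior curves of $S(\gamma)$, with coordinates $(\mathbf{x}_k,\mathbf{t}_k)$;
    \item the curves $\beta_1,\ldots,\beta_N$, with coordinates $(y_i,\tau_i)$;
    \item the interior curves of $S_{g,n}\backslash S(\gamma)$, with coordinates $(z,s)$.
\end{enumerate}
By Wolpert's theorem, $\mu_\WP$ is Lebesgue measure in all these coordinates. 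The key observation is that $\ell_\gamma$ depends only on $(\mathbf{x}_k,\mathbf{t}_k,\mathbf{y}_N)$ and the fixed boundary lengths: a geodesic representative of $\gamma$ lies in the hyperbolic subsurface bounded by the geodesic $\beta_i$'s. Its length is therefore determined by the hyperbolic structure on $S(\gamma)$ with boundary lengths $y_i$ (and $L_j$ for boundaries of $S_{g,n}$ inside $S(\gamma)$). In particular it does not depend on the twists $\tau_i$ along the $\beta_i$'s, nor on the coordinates of the complement. We define $h_\gamma$ accordingly.

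It remains to describe a fundamental domain for $G(\gamma)=\MCG(S_{g,n}\backslash S(\gamma))$. This group acts trivially on the coordinates $(\mathbf{x}_k,\mathbf{t}_k)$. It acts on the complement coordinates $(z,s)$ as the mapping class group of $S_{g,n}\backslash S(\gamma)$, and on the $\tau_i$ through the Dehn twists along the $\beta_i$, which act by $\tau_i\mapsto\tau_i+y_i$. A fundamental domain is thus
\[
    \mathbf{R}_{>0}^k\times\mathbf{R}^k\times\{(\mathbf{y}_N,\boldsymbol{\tau}_N):0\le\tau_i<y_i\}\times(\text{fiber}),
\]
where the fiber is a fundamental domain of the moduli space of the complement. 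Integrating out $\tau_i\in[0,y_i)$ produces the factor $y_1\cdots y_N$. Integrating over the fiber produces $V_\Gamma(\mathbf{y}_N,\mathbf{L}_n)$, exactly as in Mirzakhani's proof of Theorem \ref{S1:Theo:MirzInt}. Fubini then yields the stated formula.

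The main obstacle is making this fundamental-domain step rigorous. Three subtleties arise.
\begin{enumerate}[label=\textup{(\alph*)}]
    \item When $[\beta_i]=[\beta_j^{\pm1}]$, the complement has an annulus component and the two boundary curves share one twist. We keep only one curve, so there is one twist per curve of $\Gamma$, and the annulus contributes the appropriate volume convention in $V_\Gamma$.
    \item The half-twist and orbifold factors (for instance the elliptic involution on one-holed tori) must be handled. As in Mirzakhani's formula, they are either absorbed into $V_\Gamma$ or into the index $m(\gamma)$.
    \item We must check that $G(\gamma)$ contains no element acting nontrivially on the $S(\gamma)$-coordinates. This holds because $G(\gamma)$ consists of mapping classes supported in the complement, together with twists along its boundary, and the latter act only on the $\tau_i$.
\end{enumerate}
I would verify these points by fibering $\mathcal{T}_{g,n}(\mathbf{L}_n)/G(\gamma)$ over the $(\mathbf{x}_k,\mathbf{t}_k,\mathbf{y}_N)$ coordinates, with fiber $\mathcal{M}(S_{g,n}\backslash S(\gamma),\mathbf{y}_N)\times\prod_i(\mathbf{R}/y_i\mathbf{Z})$.
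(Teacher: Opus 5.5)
Your proposal is essentially the paper's proof. Corollary \ref{S2:Cor:IndexRandVar} and (\ref{S2:Eq:IntQuot}) reduce the problem to integrating $F\circ\ell_\gamma$ over a fundamental domain for $G(\gamma)$; in Fenchel--Nielsen coordinates adapted to $S(\gamma)$, $\Gamma$ and the complement, this domain is taken to be $\mathbf{R}_{>0}^k\times\mathbf{R}^k$ times the strips $0\le\tau_i<y_i$ times a fundamental domain for the complement, and integrating out the last two factors gives $y_1\cdots y_NV_\Gamma(\mathbf{y}_N,\mathbf{L}_n)$.

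The paper simply asserts that this product set is a fundamental domain and does not discuss your points (b) and (c). On (b), note that the boundary half-twist of a one-holed-torus component of the complement lies in $G(\gamma)$ itself: its square is the Dehn twist along that boundary, so it shifts $\tau_i$ by $y_i/2$. Hence, with $m(\gamma)=[\Stab([\gamma]):G(\gamma)]$ as defined, the resulting factor $\tfrac{1}{2}$ cannot be absorbed into $m(\gamma)$. It has to come either from the normalization of $V_\Gamma$ (orbifold volume) or from an explicit factor $2^{-M}$, as in Mirzakhani's formula.
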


\footnotetext{If $\beta_i$ is homotopic to another boundary component of $S(\gamma)$, then one of the connected components of $S_{g,n}\backslash S(\gamma)$ is an annulus, and its two boundary components have length $y_i$. In this case the volume $V_\Gamma(\mathbf{y}_N,L)$ does not depend on $y_i$.}

\begin{proof}
    By using Corollary \ref{S2:Cor:IndexRandVar}, we have $\mathbb{E}[F^\gamma]=\frac{1}{m(\gamma)}\mathbb{E}[\widetilde{F}_{G(\gamma)}]$. Our goal will be to use (\ref{S2:Eq:IntQuot}) to compute $\mathbb{E}[\widetilde{F}_{G(\gamma)}]$.

    Let us identify the Teichmüller space $\mathcal{T}_{g,n}(\mathbf{L}_n)$ to $\mathbf{R}_{>0}^{3g-3+n}\times\mathbf{R}^{3g-3+n}$ by using Fenchel-Nielsen coordinates associated to a pants decomposition $\mathscr{P}$ of $S_{g,n}$ as follows: the $k$ first curves of $\mathscr{P}$ are a pants decomposition of $S(\gamma)$, the $N$ following curves are $\beta_1,\ldots,\beta_N$, and the $p=3g-3+n-k-N$ remaining curves are a pants decomposition of the complement $S_{g,n}\backslash S(\gamma)$.

    Given $\mathbf{y}_N\in\mathbf{R}_{>0}^N$, let us denote by $\mathcal{T}(S_{g,n}\backslash S(\gamma),\mathbf{y}_N)$ the Teichmüller space of $S_{g,n}\backslash S(\gamma)$ where for all $i\in\{1,\ldots,N\}$, the boundary component(s) corresponding to $\beta_i$ has (have) length $y_i$. Let $\mathcal{D}(\mathbf{y}_N)$ be a fundamental domain of the action of $\MCG(S_{g,n}\backslash S(\gamma))$ on $\mathcal{T}(S_{g,n}\backslash S(\gamma),\mathbf{y}_N)$. Then the set:
    \[
        \mathcal{D}=\left\{(\mathbf{x}_k,\mathbf{y}_N,\mathbf{z}_p,\mathbf{t}_k,\mathbf{s}_N,\mathbf{u}_p)\in\mathbf{R}_{>0}^{3g-3+n}\times\mathbf{R}^{3g-3+n}~\Big|~
            (\mathbf{z}_p,\mathbf{u}_p)\in\mathcal{D}(\mathbf{y}_N),
            \forall j\in\{1,\ldots,N\},0\leq s_j< y_j\right\}
    \]
    is a fundamental domain of the action of $G(\gamma)$ on $\mathcal{T}_{g,n}(\mathbf{L}_n)$.

    Thus we have:
    \begin{align*}
        \mathbb{E}[\widetilde{F}_{G(\gamma)}] = & \int_{\mathbf{R}_{>0}^{3g-3+n}\times\mathbf{R}^{3g-3+n}}F\circ h_\gamma\Big(\mathbf{x}_k,\mathbf{t}_k,\mathbf{y}_N,\mathbf{L}_n\Big)\mathds{1}_{\mathcal{D}(\mathbf{y}_N)}(\mathbf{z}_p,\mathbf{u}_p)\prod_{j=1}^N\mathds{1}_{\{0\leq s_j<y_j\}}\frac{\d\mathbf{x}_k\d\mathbf{t}_k\d\mathbf{y}_N\d\mathbf{s}_N\d\mathbf{z}_p\d\mathbf{u}_p}{V_{g,n}(\mathbf{L}_n)} \\
        = & \int_{\mathbf{R}_{>0}^k\times\mathbf{R}^k\times\mathbf{R}_{>0}^N}F\circ h_\gamma\Big(\mathbf{x}_k,\mathbf{t}_k,\mathbf{y}_N,\mathbf{L}_n\Big)y_1\cdots y_N\mu_\WP\Big(\mathcal{M}(S_{g,n}\backslash S(\gamma),\mathbf{y}_N)\Big)\frac{\d\mathbf{x}_k\d\mathbf{t}_k\d\mathbf{y}_N}{V_{g,n}(\mathbf{L}_n)},
    \end{align*}
    where $\mathcal{M}(S_{g,n}\backslash S(\gamma),\mathbf{y}_N)=\mathcal{T}(S_{g,n}\backslash S(\gamma),\mathbf{y}_N)/\MCG(S_{g,n}\backslash S(\gamma))$. We conclude by noticing that we have $V_\Gamma(\mathbf{y}_N,\mathbf{L}_n)=\mu_\WP(\mathcal{M}(S_{g,n}\backslash S(\gamma),\mathbf{y}_N))$.
\end{proof}

Similarly to what what we did in \cite{LeG25} with eight-shaped loops, by disintegrating the Lebesgue measure on the level sets of $h_\gamma$, we can consider the expectation $\mathbb{E}[F^\gamma]$ as the integral of $F$ against a density function which is a linear combination of pseudo-convolutions of polynomials with respect to $h_\gamma$. We therefore need to have a better understanding of the behavior of the length function $h_\gamma$.

\section{Length function of a closed loop}

It follows from Theorem \ref{S2:Theo:IntFormGeneral} that the next key step to prove Theorem \ref{S1:Theo:MainIntro} is to have a good understanding of the behavior of length functions of closed loops in Fenchel-Nielsen coordinates. We first describe an algorithm revolving on results of Roger and Yang from \cite{RoYa14} to compute the length of a non-simple closed geodesic by ``opening'' its self-intersections. We will apply this algorithm to prove monotony and properness results on length functions of arbitrary loops and to compute the length functions of loops filling a pair of pants, and we shall explain why it does not make us able to compute the length function of more general loops. 

\subsection{A first approach: resolving a non-simple loop at a self-intersection point}

In order to describe the length of a non-simple loop $\gamma$ in Fenchel-Nielsen coordinates, we first try to open the self-intersections of $\gamma$. This process is called the \emph{resolution of $\gamma$ at its self-intersecting points}. We recall from \cite{RoYa14} the properties of the resolution process and its applications in hyperbolic geometry.

\begin{Def}[\cite{RoYa14}]\label{S3:Def:ResLoop}
    Let $\gamma$ be a closed loop self-intersecting transversally at a point $p$. The \emph{resolutions of $\gamma$ at $p$} are the loops obtained by opening the self-intersection at $p$ by going along $\gamma$ towards $p$ and turning left (respectively right) at $p$.
\end{Def}

\begin{figure}[H]
    \centering
    \includegraphics{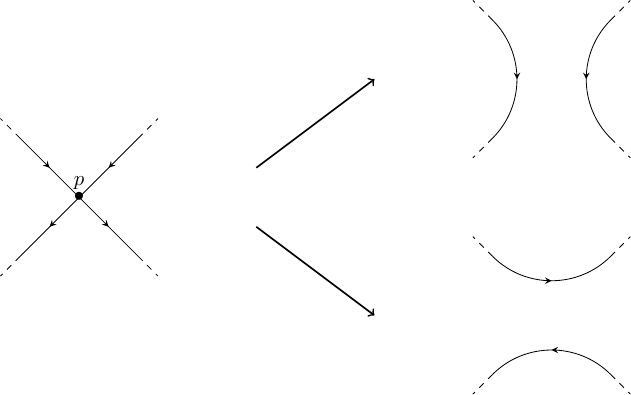}
    \caption{The resolutions of a loop at the self-intersection point $p$.}
    \label{S3:Fig:Resol}
\end{figure}

Since the resolutions of a loop $\gamma$ at a self-intersecting point can be taken in an arbitrarily small neighborhood of $\gamma$, its resolutions are homotopic to loops on the surface $S(\gamma)$ filled by $\gamma$.

\begin{Lem}[\cite{RoYa14}]
    Let $\gamma$ be a closed loop self-intersecting transversally at $p$. One way of resolving the self-intersection at $p$ splits $\gamma$ into two loops $\alpha,\alpha'$ and the other way of resolving $\gamma$ gives a single loop $\beta$.

    The loops $\alpha,\alpha'$ are called the \emph{separating resolutions of $\gamma$ at $p$} and $\beta$ is called the \emph{non-separating resolution of $\gamma$ at $p$}.
\end{Lem}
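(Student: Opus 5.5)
The plan is a purely local, combinatorial argument at the double point $p$, followed by a parity count of how the two smoothings reconnect the strands. Parametrize $\gamma$ so that $\gamma(0)=\gamma(1)=p$ is one passage through $p$ and $\gamma(s_0)=p$ is the other, with $0<s_0<1$. Since the self-intersection is transversal, a small disc $D$ around $p$ meets $\gamma$ in exactly two arcs crossing at $p$. These arcs are the two branches $b_1=\gamma|_{(-\varepsilon,\varepsilon)}$ and $b_2=\gamma|_{(s_0-\varepsilon,s_0+\varepsilon)}$. The complement of $D$ in the image therefore consists of two arcs: $c_1=\gamma|_{[\varepsilon,s_0-\varepsilon]}$ and $c_2=\gamma|_{[s_0+\varepsilon,1-\varepsilon]}$. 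Each $c_i$ is oriented by $\gamma$, and its four endpoints lie on $\partial D$.

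Using the orientation of the surface, label the four points of $\partial D\cap\gamma$ in cyclic order. Let $\mathrm{in}_1,\mathrm{out}_1$ be the entry and exit points of $b_1$, and $\mathrm{in}_2,\mathrm{out}_2$ those of $b_2$. Transversality forces the two branches to alternate around $\partial D$, giving the cyclic order $\mathrm{in}_1,\mathrm{in}_2,\mathrm{out}_1,\mathrm{out}_2$ up to orientation.

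A resolution replaces the crossing inside $D$ by two disjoint arcs that respect the orientation of $\gamma$. Each new arc joins an incoming endpoint to an outgoing one. The non-crossing matchings of four alternating points $\mathrm{in},\mathrm{in},\mathrm{out},\mathrm{out}$ that pair each in with an out are exactly two: $\{\mathrm{in}_1\to\mathrm{out}_2,\ \mathrm{in}_2\to\mathrm{out}_1\}$ and $\{\mathrm{in}_1\to\mathrm{out}_1,\ \mathrm{in}_2\to\mathrm{out}_2\}$. The second of these recreates the crossing, so only the first is a genuine orientation-preserving smoothing. This is the key subtlety: in Definition~\ref{S3:Def:ResLoop}, ``turning left/right'' has to be read correctly. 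Turning left from $b_1$ and turning right from $b_1$ give the two non-crossing matchings of the four endpoints. One of them is $\{\mathrm{in}_1\to\mathrm{out}_2,\mathrm{in}_2\to\mathrm{out}_1\}$, which is oriented. The other is $\{\mathrm{in}_1\to\mathrm{in}_2,\ \mathrm{out}_1\to\mathrm{out}_2\}$, which forces one to traverse the arc $c_2$ backwards.

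With this in hand, compute the connectivity of each smoothing.

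\begin{itemize}
\item For the oriented smoothing: $c_1$ runs from $\mathrm{out}_1$ to $\mathrm{in}_2$, then continues to $\mathrm{out}_1$, closing up. Likewise $c_2$ runs from $\mathrm{out}_2$ to $\mathrm{in}_1$, then to $\mathrm{out}_2$. This yields two loops, $\alpha=c_1$ closed up and $\alpha'=c_2$ closed up. These are the separating resolutions, namely $\gamma|_{[0,s_0]}$ and $\gamma|_{[s_0,1]}$.
\item For the other smoothing: traverse $c_1$ from $\mathrm{out}_1$ to $\mathrm{in}_2$, go to $\mathrm{in}_1$, traverse $c_2$ backwards to $\mathrm{out}_2$, and go to $\mathrm{out}_1$. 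This is a single closed loop $\beta=c_1\cdot c_2^{-1}$, the non-separating resolution.
\end{itemize}

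Since these are the only two smoothings, the lemma follows.

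The main obstacle is the first step: justifying the alternating cyclic order from transversality and matching it precisely with the left/right convention. I would settle it in a chart where the branches are the coordinate axes, in which case the count of components becomes a direct check.
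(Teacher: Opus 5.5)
Your argument is correct. The paper gives no proof of this lemma: it is quoted from \cite{RoYa14} and then used. Your self-contained local argument is the standard justification.

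The key steps all check out. The two branches alternate around $\partial D$, so the two entry points are adjacent on the circle and so are the two exit points. The left and right turns of Definition~\ref{S3:Def:ResLoop} are exactly the two non-crossing matchings of the four endpoints. The oriented smoothing closes $c_1$ and $c_2$ up separately, giving loops homotopic to $\gamma|_{[0,s_0]}$ and $\gamma|_{[s_0,1]}$. The other smoothing produces the single loop $c_1\cdot c_2^{-1}$.

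One sentence needs repair. You say the non-crossing matchings pairing each entry with an exit are exactly two, and then list $\{\mathrm{in}_1\to\mathrm{out}_1,\mathrm{in}_2\to\mathrm{out}_2\}$. That is the original crossing, not a non-crossing matching. What you mean is that there are two entry-to-exit matchings, only one of which is non-crossing. The two non-crossing matchings are $\{\mathrm{in}_1\mathrm{out}_2,\mathrm{in}_2\mathrm{out}_1\}$ and $\{\mathrm{in}_1\mathrm{in}_2,\mathrm{out}_1\mathrm{out}_2\}$, as you correctly state two sentences later.

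Also state explicitly two things you use. First, $p$ is a double point, with exactly two preimages; this is what makes $D\cap\gamma$ consist of two arcs for $D$ small. Second, ``two loops'' counts components of the parametrizing circles, not of the image. Self-intersections of $\gamma$ outside $D$ are irrelevant to the count.
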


\begin{figure}[H]
    \centering
    \includegraphics{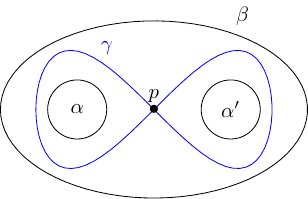}
    \caption{The resolutions of an eight-shaped loop. The curves $\alpha$ and $\alpha'$ are the separating resolutions and $\beta$ is the non-separating one.}
    \label{S3:Fig:ResEight}
\end{figure}

\begin{Def}[\cite{RoYa14}]
    Let $\gamma$ be a loop, $p$ be a self-intersection point of $\gamma$ and let $t_1,t_2\in(0,1)$, $t_1<t_2$, such that $\gamma(t_1)=\gamma(t_2)=p$. If the subloop $\gamma_{|[t_1,t_2]}$ is homotopically trivial, we call it a \emph{curl of $\gamma$} and $p$ is its \emph{vertex}.

    A \emph{Reidemeister move I on $\gamma$ at $p$} consists in removing a curl of $\gamma$ of vertex $p$.

    If $\gamma$ is homotopically non-trivial, the \emph{curling number of $\gamma$}, denoted by $c(\gamma)$ is the maximal number of Reidemeister moves I possible on $\gamma$. If $\gamma$ is homotopically trivial, $c(\gamma)$ is defined by adding 1 to the former definition.
\end{Def}

\begin{figure}[H]
    \centering
    \includegraphics{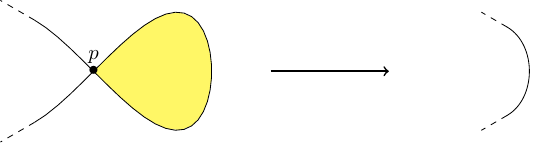}
    \caption{A Reidemeister move I removing a curl of vertex $p$.}
    \label{S3:Fig:ReideMove}
\end{figure}

The length of a non-simple geodesic on a hyperbolic surface is closely related to the length of its resolutions. We can therefore describe an algorithm consisting in successively resolving a closed geodesic at its self-intersecting points in order to find an expression of its length in terms of lengths of simple closed geodesics.

\begin{Lem}[\cite{RoYa14}]\label{S3:Lem:CurlNum}
    Let $\gamma$ be a closed geodesic on a hyperbolic surface.
    \begin{enumerate}[label=\textup{(\roman*)}]
        \item $c(\gamma)=0$.
        \item The curling number of the non-separating resolution of $\gamma$ at a self-intersection $p$ is at most 1, and it is 1 if and only if there exists another self-intersection $p'$ of $\gamma$ such that the domain bounded by $\gamma$ between $p$ and $p'$ is a contractible triangle, with $p$ as a double vertex. The only possible case for it to happen is shown on Figure \ref{S3:Fig:CurlIs1}.
    \end{enumerate}
\end{Lem}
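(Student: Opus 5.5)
Part (i) is a statement about geodesics, so I would argue in the universal cover $\mathbf{H}^2$. Suppose a closed geodesic $\gamma$ has a curl: times $t_1<t_2$ with $\gamma(t_1)=\gamma(t_2)=p$ and $\gamma_{|[t_1,t_2]}$ homotopically trivial. Lift $\gamma_{|[t_1,t_2]}$ to $\mathbf{H}^2$. Because the subloop is null-homotopic, the lift is a closed path. It is also a nontrivial geodesic segment that starts and ends at the same point of $\mathbf{H}^2$. Two distinct points of $\mathbf{H}^2$ are joined by a unique geodesic, and a geodesic segment of positive length never returns to its starting point, so this is impossible.

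Hence no Reidemeister move I can be performed on $\gamma$. Since a closed geodesic is homotopically non-trivial, the definition gives $c(\gamma)=0$ directly. The only point to check is that the definition of $c$ is applied to the geodesic representative itself, which is in minimal position (its self-intersections are transverse double points). So removing curls is a genuine operation on this specific loop, and none is available.

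For (ii), let $\beta$ be the non-separating resolution at $p$. I would write $\gamma=\gamma_1\cdot\gamma_2$, where the $\gamma_i$ are the two subloops based at $p$. Then $\beta$ is, up to homotopy, $\gamma_1\cdot\gamma_2^{-1}$, smoothed at $p$. Any curl of $\beta$ has a vertex $q$ that is a self-intersection of $\beta$, and these are exactly the self-intersections of $\gamma$ other than $p$. For each such $q$, the subloop of $\beta$ it cuts out is made of arcs of $\gamma$ joined through the smoothing at $p$. If this subloop does not pass through the smoothing, it is a subloop of $\gamma$, hence essential by (i). So a curl must pass through the smoothed corner. Lifting to $\mathbf{H}^2$, it becomes a closed piecewise-geodesic path with exactly two geodesic pieces, one corner at $p$ and one at $q$. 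In other words it bounds a geodesic bigon, or a degenerate triangle with $p$ counted as a double vertex. I would then use the Gauss–Bonnet theorem on this contractible region: the interior angles must sum to more than $\pi$ minus the area, and geodesic bigons do not exist in $\mathbf{H}^2$. This forces the configuration where the region is a triangle with vertices $p,p,q$, as in Figure \ref{S3:Fig:CurlIs1}. The same argument excludes two disjoint such triangles, which gives the bound $c(\beta)\le1$. Removing one curl destroys the corner at $p$, so any further curl would give an essential subloop of $\gamma$ being trivial.

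The main obstacle is this combinatorial case analysis. One has to enumerate how the sub-arcs meet at the smoothed corner and rule out every configuration except the single triangle. I would follow Roger–Yang's treatment using the lifting argument and Gauss–Bonnet for geodesic polygons.
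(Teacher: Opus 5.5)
The paper gives no proof of this lemma; it is quoted from Roger--Yang, so your sketch has to stand on its own. Your part (i) is correct: a null-homotopic subloop of $\gamma$ would lift to a geodesic segment of positive length in $\mathbf{H}^2$ with equal endpoints.

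\textbf{The gap in part (ii).} Part (ii) has a genuine gap at its central step. You say a curl of $\beta$ lifts to a closed path with exactly two geodesic pieces, with corners at $p$ and $q$. That is a bigon, which you yourself rule out. Taken literally, your argument therefore shows $c(\beta)=0$ always, and it can never produce the case $c(\beta)=1$ that the lemma (and Lemma~\ref{S3:Lem:LenResol}(ii)) is about. The ``degenerate triangle'' is not produced by anything in your argument. The Gauss--Bonnet step is misstated: the angle sum of a geodesic triangle \emph{equals} $\pi$ minus its area. It is also not what selects the configuration. The missing observation is that the non-separating smoothing at $p$ gives $\beta$ \emph{two} corners near $p$. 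Once you see this, the case analysis you defer to Roger--Yang is short.

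\textbf{How to close it.} Let $\gamma(a)=\gamma(b)=p$ with $a<b$, let $\gamma_1=\gamma_{|[a,b]}$ and let $\gamma_2$ be the complementary subloop, so $\beta\simeq\gamma_1\cdot\gamma_2^{-1}$. Let $q\neq p$ be a double point of $\gamma$.

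\emph{Passages on different $\gamma_i$'s.} If the two passages of $\gamma$ through $q$ lie on different $\gamma_i$'s, each subloop of $\beta$ at $q$ uses exactly one corner. It consists of two arcs of $\gamma$ joining $p$ and $q$. A null-homotopy would force their lifts to coincide, so the two branches of $\gamma$ at $q$ would be tangent, which is absurd.

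\emph{Both passages on $\gamma_1$.} Suppose they occur at times $a<s<t<b$. One subloop is $\gamma_{|[s,t]}$, which is essential by (i). The other is $\gamma_{|[t,b]}\cdot\gamma_2^{-1}\cdot\gamma_{|[a,s]}$, with three geodesic pieces and corners at $p,p,q$. When it is null-homotopic, it lifts to a genuine geodesic triangle with vertices $\tilde q,\tilde p,\tilde p'$. Here $\tilde p\neq\tilde p'$ are lifts of $p$ (since $\gamma_2$ is essential), and the side $[\tilde p,\tilde p']$ lifts the whole loop $\gamma_2$. This is exactly the triangle of Figure~\ref{S3:Fig:CurlIs1}, and it gives the ``if and only if''. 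The case with both passages on $\gamma_2$ is symmetric.

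\emph{The bound $c(\beta)\leq 1$.} This also needs the picture above. Removing such a curl deletes both corners at $p$, but it leaves $\gamma_{|[s,t]}$ closed up at $q$, so it creates a new corner at $q$. A further curl either avoids this corner or passes through it once. In the first case it is a subloop of $\gamma$, excluded by (i). In the second case it is again a bigon, excluded as above. Your sentence about further curls only covers the first alternative, and the appeal to ``two disjoint triangles'' is unnecessary.
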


\begin{figure}[H]
    \centering
    \includegraphics{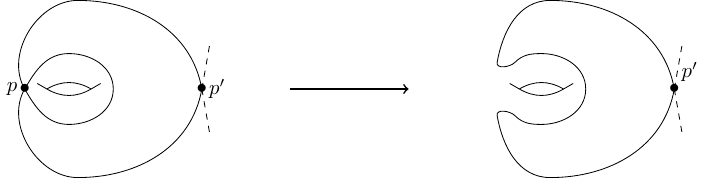}
    \caption{The only possible case for the curling number of the non-separating resolution of $\gamma$ at $p$ to be 1.}
    \label{S3:Fig:CurlIs1}
\end{figure}

We shall denote by $\cosh$ and $\sinh$ the hyperbolic cosinus and sinus instead of the usual notations $\textup{cosh}$ and $\textup{sinh}$. This slight gain of space will become convenient for our further calculations in Paragraph \ref{S3:Subsec:GenCase}.

\begin{Lem}[\cite{RoYa14}]\label{S3:Lem:LenResol}
    Let $\gamma$ be a non-simple closed geodesic on a hyperbolic surface $X$ and let us denote by $\alpha,\alpha'$ the separating resolutions of $\gamma$ at one of its self-intersecting points $p$ and $\beta$ its non-separating resolution at $p$.
    \begin{enumerate}[label=\textup{(\roman*)}]
        \item If $c(\beta)=0$, then:
        \[
            \cosh\Big(\frac{\ell_\gamma(X)}{2}\Big) = 2\cosh\Big(\frac{\ell_\alpha(X)}{2}\Big)\cosh\Big(\frac{\ell_{\alpha'}(X)}{2}\Big)+\cosh\Big(\frac{\ell_\beta(X)}{2}\Big).
        \]
        \item If $c(\beta)=1$, then:
        \[
            \cosh\Big(\frac{\ell_\gamma(X)}{2}\Big) = 2\cosh\Big(\frac{\ell_\alpha(X)}{2}\Big)\cosh\Big(\frac{\ell_{\alpha'}(X)}{2}\Big)-\cosh\Big(\frac{\ell_\beta(X)}{2}\Big).
        \]
    \end{enumerate}
\end{Lem}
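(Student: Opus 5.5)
We lift the problem to $\PSL_2(\mathbf{R})$ and use the trace identity $\Tr(A)\Tr(B)=\Tr(AB)+\Tr(AB^{-1})$. Take $p$ as basepoint and split $\gamma$ at $p$ into the two subloops $a$ and $b$, so that $\gamma=ab$ in $\pi_1(X,p)$. Fix a holonomy representation $\rho\colon\pi_1(X,p)\to\SL_2(\mathbf{R})$ (lifting to $\SL_2$ is possible since $X$ is a hyperbolic surface), and write $A=\rho(a)$, $B=\rho(b)$. For a hyperbolic element $C$ representing a closed geodesic $c$, we have $|\Tr C|=2\cosh(\ell_c/2)$. The separating resolutions at $p$ are the two subloops $a$ and $b$ themselves, i.e. $\{\alpha,\alpha'\}=\{a,b\}$. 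The non-separating resolution goes along $a$ and then along $b$ traversed backwards, i.e. $\beta=ab^{-1}$ up to free homotopy. The identity then gives
\[
\Tr(A)\Tr(B)=\Tr(AB)+\Tr(AB^{-1}).
\]
Everything therefore reduces to determining signs, which is the real content of the lemma and the main obstacle.

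For the signs, I would use the classical fact that for a lift of a surface group into $\SL_2(\mathbf{R})$, one can choose the lift so that every element representing a \emph{simple} closed curve has negative trace. More generally, one can choose it so that the sign of $\Tr\rho(c)$ is $(-1)^{\text{(number of self-intersections)}+1}$, or equivalently so that it is governed by the rotation number mod 2 of a representative (Johnson's spin structure / Goldman's sign rule). I would use the formulation via the winding number mod 2 relative to a fixed nonvanishing vector field, which is well defined on homotopy classes only up to curls: each removable curl changes the parity.

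First, set $\epsilon(c)=-\sign\Tr\rho(c)$ and show that it is determined by the parity of the turning number of an immersed representative. Next, compute turning numbers at $p$. The concatenation $ab$ has turning number equal to $\mathrm{tn}(a)+\mathrm{tn}(b)$, up to the smoothing contributions at the corner, which cancel. For $ab^{-1}$, the turning number is $\mathrm{tn}(a)-\mathrm{tn}(b)$ plus a correction of $\pm1$ coming from the reversed traversal at the transverse crossing, which produces an extra half-turn on each side. Parity then gives $\epsilon(ab)\epsilon(ab^{-1})$ versus $\epsilon(a)\epsilon(b)$. The result is that, after normalising $\Tr A,\Tr B<0$ if $a$ and $b$ are geodesically realised without curls, we get $\Tr(AB)<0$, while $\Tr(AB^{-1})$ has sign $-(-1)^{c(\beta)}$. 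Here $c(\beta)\le1$ by Lemma \ref{S3:Lem:CurlNum}(ii), and $c(\gamma)=0$ by part (i). The subloops $\alpha,\alpha'$ of a closed geodesic are geodesic loops with one corner, so they are curl-free. I expect to need a short argument that they are not contractible and that their parity is that of their geodesic representatives; otherwise $\cosh$ of length $0$ must be handled by convention.

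Finally, take absolute values and substitute $|\Tr|=2\cosh(\ell/2)$. This gives
\[
4\cosh(\ell_\alpha/2)\cosh(\ell_{\alpha'}/2)=2\cosh(\ell_\gamma/2)\mp2\cosh(\ell_\beta/2),
\]
with the sign fixed by $c(\beta)$, which is exactly (i) and (ii). If the turning-number bookkeeping becomes unwieldy, the fallback is to check the sign directly in the only exceptional configuration of Figure \ref{S3:Fig:CurlIs1}. There, the triangle gives $\beta$ one curl, and removing it flips the trace sign.
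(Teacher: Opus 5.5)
The paper quotes this lemma from \cite{RoYa14} without proof, so I am judging your argument on its own. Your reduction is right. You have $\{\alpha,\alpha'\}=\{a,b\}$ and $\beta=ab^{-1}$, and the lemma is equivalent to two lift-independent relations: $\sign\Tr(AB)=\sign\Tr A\,\sign\Tr B$ and $\sign\Tr(AB^{-1})=-(-1)^{c(\beta)}\sign\Tr A\,\sign\Tr B$. However, the sign step fails as written.

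First, the ``classical fact'' is false. In a once-holed torus, Fricke's identity $\Tr(A)^2+\Tr(B)^2+\Tr(AB)^2-\Tr(A)\Tr(B)\Tr(AB)=\Tr[A,B]+2\le 0$ forces $\Tr(A)\Tr(B)\Tr(AB)>0$, although $a$, $b$, $ab$ are all simple. So no lift makes all simple curves negative, and no lift satisfies the rule $(-1)^{\#\text{self-intersections}+1}$. Only the spin-structure version is true: $\sign\Tr\rho(c)=(-1)^{\omega(\hat c)}$, where $\hat c$ is the tangent lift of the geodesic and $\omega\in H^1(T^1X;\mathbf{Z}/2)$ is the class of the double cover $\rho(\pi_1X)\backslash\SL_2(\mathbf{R})\to T^1X$. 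This version is not equivalent to the other two.

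Second, $\omega$ is additive and the corner rotations cancel in the separating resolution. So your bookkeeping makes $\sign\Tr$ multiplicative, not $\epsilon=-\sign\Tr$. After normalising $\Tr A,\Tr B<0$ one gets $\Tr(AB)>0$, not $<0$. With your stated signs and $c(\beta)=0$, the trace identity would equate the positive number $4\cosh(\ell_\alpha/2)\cosh(\ell_{\alpha'}/2)$ with $-2\cosh(\ell_\gamma/2)-2\cosh(\ell_\beta/2)$. Your final display only comes out because the sign of $\Tr(AB)$ was silently reversed. As a check, take $P=\left(\begin{smallmatrix}1&2\\0&1\end{smallmatrix}\right)$ and $Q=\left(\begin{smallmatrix}1&0\\2&1\end{smallmatrix}\right)$. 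The figure eight $PQ$ on $\mathbf{H}/\langle P,Q\rangle$ has lobes $P$ and $Q$. Here $\Tr(-P)=\Tr(-Q)=-2$, while $\Tr(PQ)=6$ and $\Tr(PQ^{-1})=-2$.

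Even with this fixed, the parity input that carries the lemma is asserted, not proved. Note also that a closed $X$ has no nonvanishing vector field, so you should work in $T^1X$ or with the frame-tracking products of Lemma~\ref{S3:Lem:HoloRepGeneral1}. In those products, let $\ell_1,\ell_2$ be the lengths of the two arcs of $\gamma$ cut at $p$ (so $\ell_1+\ell_2=\ell_\gamma$), and let $\theta\in(0,\pi)$ be the angle between the two branches at $p$. With a suitable sign convention on $\theta$, one can take $A=a(\ell_1)R(-\theta)$ and $B=R(\theta)a(\ell_2)$. Then $AB=a(\ell_\gamma)$, $\Tr A=2\cos(\theta/2)\cosh(\ell_1/2)>0$ and $\Tr B=2\cos(\theta/2)\cosh(\ell_2/2)>0$. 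This settles the first relation, and your worry about $\alpha,\alpha'$, at once.

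The second relation becomes the claim that
\[
\Tr(AB^{-1})=2\cos^2(\theta/2)\cosh\big(\tfrac{\ell_1-\ell_2}{2}\big)-2\sin^2(\theta/2)\cosh\big(\tfrac{\ell_\gamma}{2}\big)
\]
is negative exactly when $c(\beta)=0$. Equivalently, the tangent lift of the smoothed $\beta$ differs from that of its geodesic by an even number of fibre turns if and only if $\beta$ has no curl. Your phrase ``well defined on homotopy classes only up to curls'' assumes precisely this, and it does not follow formally from the definitions. A curl only records a self-intersection of the lift of $\beta$ to $\mathbf{H}$ within one period. So ``no curl'' does not by itself make the lift embedded, and the annular-cover argument does not apply directly. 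This geometric input, tied to the triangle configuration of Lemma~\ref{S3:Lem:CurlNum}, is what the citation of \cite{RoYa14} supplies, and your proposal would still have to prove it.
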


By using iteratively Lemma \ref{S3:Lem:LenResol}, we can therefore obtain an expression of the length function of a loop which depends only on the length of simple (possibly non-disjoint) curves.

\begin{Lem}\label{S3:Lem:LenPolyPos}
    Let $\gamma$ be a non-simple closed geodesic on a hyperbolic surface $X$. There exist simple closed geodesics $\alpha_1,\ldots,\alpha_k$, integers $q_1,\ldots,q_k\in\mathbf{N}_{>0}$ and a polynomial $P\in\mathbf{R}[X_1,\ldots,X_k]$ with positive coefficients such that:
    \[
        \cosh\Big(\frac{\ell_\gamma(X)}{2}\Big)=P\bigg(\cosh\Big(\frac{q_1\ell_{\alpha_1}(X)}{2}\Big),\ldots,\cosh\Big(\frac{q_k\ell_{\alpha_k}(X)}{2}\Big)\bigg).
    \]
    Given a boundary $\delta$ of $S(\gamma)$, we can always choose $\alpha_1,\ldots,\alpha_k$ such that $\delta$ is homotopic to one of the $\alpha_i$'s.
    
    Moreover if $\varphi\colon X\longrightarrow X$ is an orientation preserving homeomorphism, we have:
    \[
        \cosh\Big(\frac{\ell_{\varphi\circ\gamma}(X)}{2}\Big)=P\bigg(\cosh\Big(\frac{q_1\ell_{\varphi\circ\alpha_1}(X)}{2}\Big),\ldots,\cosh\Big(\frac{q_k\ell_{\varphi\circ\alpha_k}(X)}{2}\Big)\bigg)
    \]
\end{Lem}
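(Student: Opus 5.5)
I would argue by strong induction on the number of self-intersections $i(\gamma)$ of the geodesic $\gamma$, iterating Lemma \ref{S3:Lem:LenResol}. It is convenient to prove a slightly stronger statement, which also covers simple closed geodesics and proper powers of them. The formula should hold for any closed geodesic, including non-primitive ones, with $P$ a polynomial with non-negative coefficients. For a simple geodesic $\alpha$ this is trivial: take $P=X_1$ and $q_1=1$. For a power $\alpha^q$ of a simple geodesic, take $\cosh(q\ell_\alpha/2)$; this is why the integers $q_i$ appear.

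\textbf{Inductive step.} Let $\gamma$ be a non-simple closed geodesic, and pick a self-intersection point $p$. Let $\alpha,\alpha'$ be the separating resolutions at $p$ and $\beta$ the non-separating one. Replace each of them by its geodesic representative. Each of these loops is realized in a small neighbourhood of $\gamma$ with strictly fewer transverse double points than $\gamma$, so its geodesic representative has fewer self-intersections. Hence the induction hypothesis applies to $\alpha,\alpha'$ and $\beta$, provided none is homotopically trivial. Since $\gamma$ is a geodesic, $c(\gamma)=0$ by Lemma \ref{S3:Lem:CurlNum}(i), and I would use this to rule out trivial separating resolutions. A trivial one would be a subloop of $\gamma$ giving a curl, contradicting $c(\gamma)=0$.

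\textbf{The sign problem.} If $c(\beta)=0$, Lemma \ref{S3:Lem:LenResol}(i) expresses $\cosh(\ell_\gamma/2)$ as a positive combination of products of the inductive expressions, and positivity is preserved. If $c(\beta)=1$, the minus sign in Lemma \ref{S3:Lem:LenResol}(ii) breaks positivity, and this is where I expect the main difficulty. To handle it I would exploit the freedom in choosing $p$. Lemma \ref{S3:Lem:CurlNum}(ii) says $c(\beta)=1$ happens only in the configuration of Figure \ref{S3:Fig:CurlIs1}, a contractible triangle with $p$ as a double vertex. So the first attempt is to choose the resolution point $p$ at a self-intersection where this configuration does not occur, for instance the other vertex $p'$ of the triangle.

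If every self-intersection is bad, a fallback is needed. One option is to apply the formula twice and absorb $-\cosh(\ell_\beta/2)$ into the positive terms. Another is to use the identity $2\cosh a\cosh b-\cosh(a-b)=\cosh(a+b)$, which holds when $\beta$ reduces to a product of $\alpha,\alpha'$-type pieces. In the eight-shaped (pair of pants) case this identity is exactly what happens.

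\textbf{Choice of the $\alpha_i$ and naturality.} The $\alpha_i$ are the simple geodesics, with multiplicities $q_i$, produced at the end of the induction. All of them live in $S(\gamma)$. To guarantee that a given boundary $\delta$ of $S(\gamma)$ appears among them, I would order the resolutions so that the pieces running along $\delta$ are resolved first; this should follow from the fact that $\delta$ is homotopic to a boundary of the regular neighbourhood, which is itself a resolution of $\gamma$ at all the points on that boundary. Alternatively, one can simply add $\delta$ as a variable on which $P$ does not depend. Finally, the algorithm is purely topological: the choice of points, resolutions, curling numbers and multiplicities depends only on the configuration of $\gamma$. An orientation-preserving homeomorphism $\varphi$ transports all of these, so the same $P$ and $q_i$ work for $\varphi\circ\gamma$ with the curves $\varphi\circ\alpha_i$. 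This gives the last identity of the lemma.
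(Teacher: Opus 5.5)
\textbf{Existence and naturality.} For the bare existence of $P$ your plan is the paper's. You induct on the number of self-intersections using Lemma \ref{S3:Lem:LenResol}, and when $c(\beta)=1$ at $p$ you resolve at the simple vertex $p'$ of the contractible triangle instead. The paper does not consider a case where every self-intersection is bad, and your fallbacks for that case are only sketched. Your naturality argument under $\varphi$ is fine.

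\textbf{The gap: the clause about $\delta$.} To obtain a curve homotopic to $\delta$ you must resolve at the specific self-intersections $p_1,\ldots,p_M$ that $\delta$ runs past. At such a forced point you lose the freedom to move to $p_i'$. If $c(\beta_i)=1$ there, your proposal leaves the term $-\cosh(\ell_{\beta_i}/2)$ untreated.

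The missing idea is a second resolution, of $\alpha_i$ at the other vertex $p_i'$. The paper's argument runs as follows.
\begin{enumerate}[label=\textup{(\roman*)}]
    \item One separating resolution at $p_i$, say $\alpha_i'$, is simple.
    \item $p_i'$ is a self-intersection of the other one, $\alpha_i$.
    \item The separating resolutions of $\alpha_i$ at $p_i'$ are homotopic to $\alpha_i'$ and $\beta_i$.
    \item The non-separating resolution $\beta_i'$ of $\alpha_i$ at $p_i'$ has $c(\beta_i')=0$, which the paper deduces from the non-contractibility of $\delta$.
\end{enumerate}
Inserting Lemma \ref{S3:Lem:LenResol}(i) for $\alpha_i$ into Lemma \ref{S3:Lem:LenResol}(ii) for $\gamma$ gives
\[
    \cosh\Big(\frac{\ell_\gamma}{2}\Big)=\cosh\Big(\frac{\ell_{\beta_i}}{2}\Big)\big(2\cosh(\ell_{\alpha_i'})+1\big)+2\cosh\Big(\frac{\ell_{\alpha_i'}}{2}\Big)\cosh\Big(\frac{\ell_{\beta_i'}}{2}\Big).
\]
So the minus sign is absorbed by $4\cosh^2(\ell_{\alpha_i'}/2)-1$. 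This is also where a multiplicity $q=2$ enters, not only through powers of simple geodesics as you suggest. Your remark about applying the formula twice points in this direction. However, you do not say which curve to re-resolve, and you do not check the curling number of the new non-separating resolution.

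\textbf{Two further points.} Adding $\delta$ as a variable on which $P$ does not depend meets the wording only vacuously. The clause is used in Corollary \ref{S3:Cor:LenProperBound} to get $\ell_\gamma\to\infty$ as $\ell_\delta\to\infty$. That requires a monomial involving the $\delta$-variable with positive coefficient.

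A smaller point: the figure-eight does not illustrate your identity $2\cosh a\cosh b-\cosh(a-b)=\cosh(a+b)$. It has a single self-intersection, so $c(\beta)=0$ and Lemma \ref{S3:Lem:LenResol}(i) applies with a plus sign.
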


\begin{proof}
    We first prove the existence of the polynomial $P$ by induction on the number of self-intersections of $\gamma$. If $\gamma$ has one self-intersection then we can use Lemma \ref{S3:Lem:LenResol} to conclude. Let us now assume that $\gamma$ has $N\geq2$ self-intersections and that the conclusion is true for any closed geodesic with at most $N-1$ self-intersections.
    
    Let $p$ be a self-intersecting point of $\gamma$ and let us denote by $\alpha$ and $\alpha'$ its separating resolutions at $p$, and $\beta$ its non-separating resolution. Two scenarios can occur.
    
    If $c(\beta)=0$, then by using Lemma \ref{S3:Lem:LenResol}, since the geodesics homotopic to $\alpha,\alpha'$ and $\beta$ have at most $N-1$ self-intersections, we can conclude by induction.
    
    If $c(\beta)=1$, then according to Lemma \ref{S3:Lem:CurlNum}, $p$ is a double vertex of a contractible triangle bounded by $\gamma$. Let us denote by $p'$ its other vertex. Resolving $\gamma$ at $p'$ instead of $p$ makes us able to conclude.
    
    Let us now prove that, given a boundary $\delta$ of $S(\gamma)$, we can choose $\alpha_1,\ldots,\alpha_k$ such that we have up to homotopy $\delta\in\{\alpha_1,\ldots,\alpha_k\}$. By definition of $S(\gamma)$, the curve $\delta$ is a non-contractible boundary of a regular neighborhood $V$ of $\gamma$ which retracts onto $\gamma$. Let $p_1,\ldots,p_M$ be the self-intersecting points of $\gamma$ such that, when considering a small enough neighborhood $U_i\subset X$ of $p_i$, the intersection $U_i\cap\partial V$ is a disjoint union of four arcs $\widetilde{\delta}_1,\widetilde{\delta}_2,\widetilde{\delta}_3,\widetilde{\delta}_4$ and at least one of the $\widetilde{\delta}_j$'s is an arc of $\delta$.
    
    When resolving $\gamma$ at each $p_i$ consecutively, we obtain a curve homotopic to $\delta$.
    
    \begin{figure}[H]
        \centering
        \includegraphics{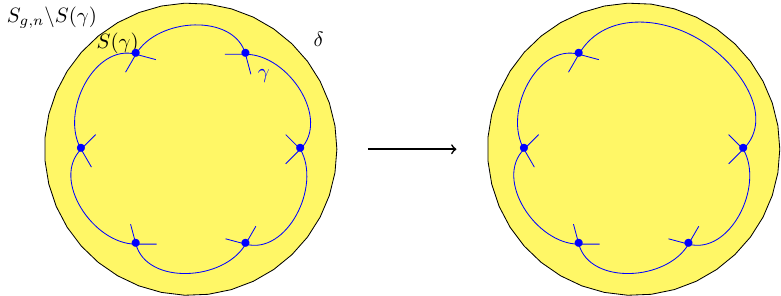}
        \caption{Resolutions of $\gamma$ at self-intersecting points near the curve $\delta$.}
    \end{figure}
    
    However it is possible that the non-separating resolution at one of the self-intersecting points $p_i$ has curling number 1. In this case we denote by $p_i'$ the simple vertex of the contractible triangle bounded by $\gamma$ with double vertex $p_i$. Let $\alpha_i,\alpha_i'$ be the separating resolutions of $\gamma$ at $p_i$ and $\beta_i$ be the non-separating resolution (with $c(\beta_i)=1$). Then one of $\alpha_i,\alpha_i'$ is simple, say $\alpha_i'$, and $p_i'$ is a self-intersection of $\alpha_i$. The separating resolutions of $\alpha_i$ at $p_i'$ are homotopic to $\alpha_i'$ and $\beta_i$, let us denote by $\beta_i'$ the non-separating resolution of $\alpha_i$ at $p_i'$ (see Figure \ref{S3:Lem:LenPolyPos:DemoFig}). Since $\delta$ is non-contractible, we have $c(\beta_i')=0$, and using Lemma \ref{S3:Lem:LenResol} we have:
    \begin{align*}
        \cosh\Big(\frac{\ell_\gamma(X)}{2}\Big) = & 2\cosh\Big(\frac{\ell_{\alpha_i'}(X)}{2}\Big)\cosh\Big(\frac{\ell_{\alpha_i}(X)}{2}\Big)-\cosh\Big(\frac{\ell_{\beta_i}(X)}{2}\Big) \\
        = & 2\cosh\Big(\frac{\ell_{\alpha_i'}(X)}{2}\Big)\bigg[2\cosh\Big(\frac{\ell_{\alpha_i'}(X)}{2}\Big)\cosh\Big(\frac{\ell_{\beta_i}(X)}{2}\Big)+\cosh\Big(\frac{\ell_{\beta_i'}(X)}{2}\Big)\bigg]-\cosh\Big(\frac{\ell_{\beta_i}(X)}{2}\Big) \\
        = & \cosh\Big(\frac{\ell_{\beta_i}(X)}{2}\Big)\bigg[4\cosh^2\Big(\frac{\ell_{\alpha_i'}(X)}{2}\Big)-1\bigg]+2\cosh\Big(\frac{\ell_{\alpha_i'}(X)}{2}\Big)\cosh\Big(\frac{\ell_{\beta_i'}(X)}{2}\Big) \\
        = & \cosh\Big(\frac{\ell_{\beta_i}(X)}{2}\Big)\Big[2\cosh\big(\ell_{\alpha_i'}(X)\big)+1\Big]+2\cosh\Big(\frac{\ell_{\alpha_i'}(X)}{2}\Big)\cosh\Big(\frac{\ell_{\beta_i'}(X)}{2}\Big)
    \end{align*}
    Hence by induction we can still choose the polynomial $P$ with positive coefficients.
    
    \begin{figure}[H]
        \centering
        \includegraphics{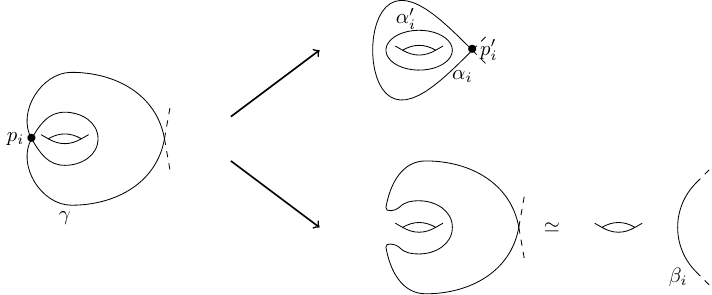}
        \caption{Resolutions at a point where the non-separating resolution has curling number $1$.}
        \label{S3:Lem:LenPolyPos:DemoFig}
    \end{figure}
\end{proof}

\begin{Rem}
    In general the curves $\alpha_1,\ldots,\alpha_k$ are not disjoint.
\end{Rem}

\begin{figure}[H]
    \centering
    \includegraphics{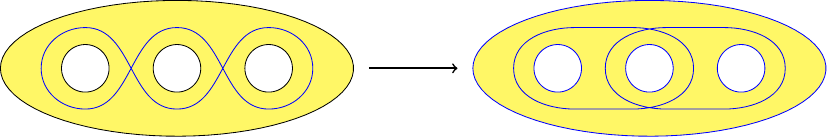}
    \caption{Non-disjoint simple resolutions of a loop filling a four-holed sphere.}
\end{figure}

A first consequence of Lemma \ref{S3:Lem:LenPolyPos} is that we can deduce from it a monotony result for the length functions of loops with respect to the lengths of the boundaries of $S_{g,n}$. Indeed, while proving the main result of \cite{Par05}, Parlier also proved the following monotony property.

\begin{Theo}[\cite{Par05}]\label{S3:Theo:LenIncBound}
    Let $\mathscr{P}$ a pants decomposition of $S_{g,n}$ and $\gamma$ be a simple closed curve on $S_{g,n}$. We denote by $\ell_\gamma^{\mathbf{L}_n}\colon\mathcal{T}_{g,n}(\mathbf{L}_n)\longrightarrow\mathbf{R}_{\geq0}$ its length function on $\mathcal{T}_{g,n}(\mathbf{L}_n)$ and $(\mathbf{x}_{3g-3+n},\mathbf{t}_{3g-3+n})\mapsto h_\gamma(\mathbf{x}_{3g-3+n},\mathbf{t}_{3g-3+n},\mathbf{L}_n)$ its expression in Fenchel-Nielsen coordinates defined along $\mathscr{P}$. Then $h_\gamma$ is an increasing function of its $n$ last variables, i.e. for all $\mathbf{y}_n\in\mathbf{R}_{\geq0}^n$ and $(\mathbf{x}_{3g-3+n},\mathbf{t}_{3g-3+n},\mathbf{L}_n)\in\mathbf{R}_{>0}^{3g-3+n}\times\mathbf{R}^{3g-3+n}\times\mathbf{R}_{\geq0}^n$, we have:
    \[
        h_\gamma(\mathbf{x}_{3g-3+n},\mathbf{t}_{3g-3+n},\mathbf{L}_n)\leq h_\gamma(\mathbf{x}_{3g-3+n},\mathbf{t}_{3g-3+n},\mathbf{L}_n+\mathbf{y}_n).
    \]
\end{Theo}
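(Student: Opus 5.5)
The statement is Parlier's monotonicity theorem for simple closed curves. My plan is to deduce it from the corresponding comparison result of \cite{Par05} for geodesic lengths under lengthening of boundaries, made compatible with fixed Fenchel--Nielsen coordinates. I proceed in three steps.

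\textbf{Step 1: trivial cases.} If $\gamma$ is a curve of $\mathscr{P}$ or a boundary, then $h_\gamma$ is a coordinate or constant, and the claim is trivial. So I assume $\gamma$ crosses some curves of $\mathscr{P}$ essentially. By continuity of $h_\gamma$ in $\mathbf{L}_n$ (including the limit to cusps), it suffices to treat $\mathbf{y}_n=y\,e_i$ with $y>0$ small, or equivalently to show $\partial h_\gamma/\partial L_i\ge 0$ wherever $h_\gamma$ is differentiable.

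\textbf{Step 2: localize to the pants $P$ containing the boundary $\alpha_i$.} Fix the lengths and twists of all interior curves of $\mathscr{P}$. Only the hyperbolic structure of $P$ changes: it is a pair of pants with boundary lengths $(L_i,a,b)$, where $a,b$ are fixed. The gluing to the rest of the surface is prescribed by the twists, which are measured between the seams of $P$, namely the common perpendiculars.

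\textbf{Step 3: decompose the length along seams.} The geodesic representative of $\gamma$ is a concatenation of arcs, one per pants crossed. I compute its length through the standard closed formula: $\cosh(\ell_\gamma/2)$ equals the absolute half-trace of a product of matrices in $\PSL_2(\mathbf{R})$. Each crossing of a pants contributes a factor. Each twist along a curve $\alpha$ contributes the translation matrix with entries $\cosh(\tau_\alpha/2)$ and $\sinh(\tau_\alpha/2)$. Each passage through a pants contributes the hexagon "seam" matrix, whose entries depend only on the boundary lengths via the right-angled hexagon formulas.

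The key point is that $\gamma$ cannot cross $\alpha_i$, since $\alpha_i$ is a boundary of the surface. So every arc of $\gamma$ inside $P$ joins the $a$ and $b$ boundaries, possibly winding around the $L_i$ leg. Such an arc has a single geometric parameter: the length $d(L_i)$ of the seam between $a$ and $b$. By the hexagon formula,
\[
\cosh d = \frac{\cosh(L_i/2)+\cosh(a/2)\cosh(b/2)}{\sinh(a/2)\sinh(b/2)},
\]
which is increasing in $L_i$.

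\textbf{Main obstacle.} The remaining difficulty is to show that the full trace is monotone in each such $d$, because twists can introduce signs. My plan is to follow Parlier's geometric argument rather than algebra. I replace $P$ with boundary $L_i$ by $P'$ with boundary $L_i+y$, keeping $a$ and $b$ fixed. There is a map $P'\to P$ that preserves the lengths of $a$ and $b$ and contracts distances: it collapses a neighbourhood of the lengthened boundary, and it is $1$-Lipschitz on the arcs that join $a$ to $b$ with prescribed endpoints relative to the seams.

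Gluing this map with the identity on the rest of the surface gives a $1$-Lipschitz map between the two surfaces. It sends the geodesic for $\mathbf{L}_n+\mathbf{y}_n$ to a curve homotopic to $\gamma$ for $\mathbf{L}_n$, with the same twist coordinates. The key compatibility check is that the seam endpoints, and hence the twist parameters, are preserved. Given that, the length of the image curve is at most the original length, and it is at least the length of the geodesic in its class. Therefore $h_\gamma(\cdot,\mathbf{L}_n)\le h_\gamma(\cdot,\mathbf{L}_n+\mathbf{y}_n)$. The construction of this contracting map on pants with fixed twist normalization is where I expect the real work to lie.
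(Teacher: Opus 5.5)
\textbf{Main gap: the contracting map is never constructed.} The paper gives no proof of this statement; it quotes it from \cite{Par05}. Your proposal does not prove it either. Everything rests on a $1$-Lipschitz map $P'\to P$ that is an isometry on the two other legs, sends seam feet to seam feet, and is compatible with the markings. As you anticipate, this map is never constructed. Once it exists, the rest is routine: the image of the geodesic on the lengthened surface is a curve in the right class on the original surface and is no longer. So the missing map \emph{is} the theorem.

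``Collapsing a neighbourhood of the lengthened boundary'' does not specify a map, and none of the three required properties is checked. Moreover, a map that is only $1$-Lipschitz ``on the arcs that join $a$ to $b$'' would not glue to a $1$-Lipschitz map of surfaces.

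One way to get such a map is to collapse a strip \emph{transverse} to $\alpha_i$ rather than a collar of it (Thurston's strip deformation):
\begin{enumerate}
\item In the complete funnelled surface containing $P$, cut along the complete geodesic $\kappa$ extending the orthogeodesic from $\alpha_i$ to itself.
\item Insert a strip whose waist sits at the point where $\kappa$ crosses the seam joining $a$ and $b$.
\item Collapse the strip onto $\kappa$ by a $1$-Lipschitz extension of the two boundary identifications (McShane), then compose with the nearest-point retraction onto $P$.
\end{enumerate}
The reflection symmetry of the pants survives, so the seam feet on $a$ and $b$, and hence the twists, do not move. The width of the strip is then tuned so that $\ell_{\alpha_i}$ grows by exactly $y$.

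\textbf{Step 3 is also wrong, and the twist-sign obstacle is not real.} Arcs of $\gamma$ in $P$ need not join $a$ to $b$. Arcs from $a$ back to $a$, separating $b$ from $\alpha_i$, do occur. When $P$ contains two boundary components of $S_{g,n}$ (e.g.\ on $S_{0,4}$), every arc is of this type. Conversely, a simple arc from $a$ to $b$ cannot wind around $\alpha_i$.

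For these U-shaped arcs the relevant parameter is not $d(L_i)$, and the feet of the orthogeodesic slide along $a$ as $L_i$ varies. Let $h$ be half the length of the orthogeodesic and $A_1$ the distance along $a$ from the foot of the seam to $b$. In the right-angled pentagon cut off by half of the orthogeodesic, $\sinh(A_1)=\cosh(\ell_b/2)/\sinh(h)$, with $h$ increasing in $L_i$. So with the Fenchel--Nielsen twists frozen, the offsets between consecutive feet change.

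The ``sign problem'' from twists disappears in frames based at the seam feet. There the twist factors are the diagonal matrices $a(\pm\tau+\frac{m}{2}\ell)$ of Lemma~\ref{S3:Lem:HoloRepGeneral1}. Their entries are positive whatever the sign of $\tau$, and independent of $L_i$. For a simple curve, the pants factors are $w(t+t_*)$ or $w(t)S_b^{\pm1}w(t)$. Their entries are computed in the proof of Corollary~\ref{S3:Cor:HoloRepGen2}; for instance, the off-diagonal ones are $(\cosh(z/2)+e^{\pm x/2}\cosh(y/2))/\sinh(x/2)$. All these entries are non-negative and non-decreasing in the boundary lengths. Hence $2\cosh(\ell_\gamma/2)$ is the trace of a product of entrywise non-negative matrices, each non-decreasing in $L_i$. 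This gives the monotonicity directly, with no geometric construction.
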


Using Lemma \ref{S3:Lem:LenPolyPos}, we can generalize Theorem \ref{S3:Theo:LenIncBound}.

\begin{Cor}\label{S3:Cor:LenIncBound}
    The conclusion of Theorem \ref{S3:Theo:LenIncBound} holds for any closed loop $\gamma$ on $S_{g,n}$.
\end{Cor}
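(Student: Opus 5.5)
We reduce to Theorem \ref{S3:Theo:LenIncBound} through the positive polynomial expression of Lemma \ref{S3:Lem:LenPolyPos}. If $\gamma$ is simple, the statement is exactly Theorem \ref{S3:Theo:LenIncBound}. If $\gamma$ is contractible or homotopic to a boundary or cusp, its length is $0$ or some $L_i$. In that case monotonicity in $\mathbf{L}_n$ is immediate.

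Assume therefore that $\gamma$ is non-simple and essential, and fix a marked surface $(X,f)$. Lemma \ref{S3:Lem:LenPolyPos} gives simple closed geodesics $\alpha_1,\ldots,\alpha_k$, integers $q_i\geq1$, and a polynomial $P$ with positive coefficients such that
\[
\cosh\Big(\frac{\ell_\gamma(X)}{2}\Big)=P\bigg(\cosh\Big(\frac{q_1\ell_{\alpha_1}(X)}{2}\Big),\ldots,\cosh\Big(\frac{q_k\ell_{\alpha_k}(X)}{2}\Big)\bigg).
\]
The key point is that $P$ and the curves $\alpha_i$ are produced by a purely topological process. They come from resolving self-intersections of $\gamma$ on $S_{g,n}$, together with the choice of which point to resolve when the curling number is $1$, and this choice is dictated by the combinatorics of the triangles bounded by $\gamma$. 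Pulling back by the marking, we get topological simple loops $\alpha_i$ on $S_{g,n}$. The identity then holds for every point of $\mathcal{T}_{g,n}(\mathbf{L}'_n)$, for every boundary vector $\mathbf{L}'_n$, with the same $P$, $q_i$ and $\alpha_i$. The last part of Lemma \ref{S3:Lem:LenPolyPos}, which gives invariance under homeomorphisms, supports this marking-independence. I would state this explicitly:
\[
\cosh\Big(\frac{h_\gamma}{2}\Big)=P\bigg(\cosh\Big(\frac{q_1h_{\alpha_1}}{2}\Big),\ldots,\cosh\Big(\frac{q_kh_{\alpha_k}}{2}\Big)\bigg)
\]
as functions of $(\mathbf{x},\mathbf{t},\mathbf{L}_n)$ in the fixed Fenchel-Nielsen coordinates.

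Next, I apply Theorem \ref{S3:Theo:LenIncBound} to each simple loop $\alpha_i$. Some $\alpha_i$ may be inessential, that is, contractible with length $0$ or peripheral with length $L_j$. In these cases monotonicity is trivial, since the constant $0$ or the coordinate $L_j$ is non-decreasing in $\mathbf{L}_n$. Hence every $h_{\alpha_i}$ is non-decreasing in the last $n$ variables. Since $\cosh$ is increasing on $\mathbf{R}_{\geq0}$ and $q_i>0$, every argument of $P$ is non-decreasing and at least $1$. A polynomial with positive coefficients is non-decreasing in each variable on $\mathbf{R}_{\geq0}^k$, so $\cosh(h_\gamma/2)$ is non-decreasing in $\mathbf{L}_n$. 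Because $\cosh$ is injective and increasing on $\mathbf{R}_{\geq0}$, $h_\gamma$ is non-decreasing as well.

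The main obstacle is the uniformity claim: the resolution data must not depend on the metric or on $\mathbf{L}_n$. In particular, the curling-number criterion of Lemma \ref{S3:Lem:CurlNum} is stated for geodesics. I would handle this by working with $\gamma$ in minimal position on $S_{g,n}$ and observing that the configuration of contractible triangles is a topological, homotopy-invariant feature of the minimal-position representative. If this is delicate, a fallback is available: the identity holds on the connected parameter space, and the set of metrics where a given resolution scheme is valid is open and closed, so one scheme works globally.
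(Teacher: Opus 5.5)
Your proposal is correct and is essentially the paper's argument, which simply combines Lemma~\ref{S3:Lem:LenPolyPos} with Theorem~\ref{S3:Theo:LenIncBound} and the monotonicity of $\cosh$ and of polynomials with positive coefficients. The paper leaves implicit the uniformity you flag: the same $P$, $q_i$ and topological $\alpha_i$ must work at both $(\mathbf{x},\mathbf{t},\mathbf{L}_n)$ and $(\mathbf{x},\mathbf{t},\mathbf{L}_n+\mathbf{y}_n)$. Your connectedness fallback is the right way to settle it, since at each resolution step the loci where the $+$ and $-$ versions of Lemma~\ref{S3:Lem:LenResol} hold are disjoint closed sets covering the connected Fenchel--Nielsen parameter space; your first justification is shaky, because minimal-position representatives are only unique up to triangle moves.
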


\begin{proof}
    Using Lemma \ref{S3:Lem:LenPolyPos}, we can write $\ell_\gamma$ as an increasing function of the lengths of simple closed curves. The conclusion immediately follows from Theorem \ref{S3:Theo:LenIncBound}.
\end{proof}

The following result is also a direct consequence to Theorem \ref{S3:Theo:LenIncBound} and Lemma \ref{S3:Lem:LenPolyPos}.

\begin{Cor}\label{S3:Cor:LenProperBound}
    Let $\gamma$ be a closed loop on $S_{g,n}$ and let $\delta$ be a connected component of $\partial S(\gamma)$. We have $\ell_\gamma\to\infty$ as $\ell_\delta\to\infty$.
\end{Cor}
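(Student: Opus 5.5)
The proof will go through Lemma \ref{S3:Lem:LenPolyPos}. Fix a connected component $\delta$ of $\partial S(\gamma)$. If $\delta$ is inessential, i.e. a cusp, it cannot have length tending to infinity, so we may assume $\delta$ is an essential curve (or a boundary of $S_{g,n}$). By the second part of Lemma \ref{S3:Lem:LenPolyPos}, there exist simple closed curves $\alpha_1,\ldots,\alpha_k$, positive integers $q_1,\ldots,q_k$ and a polynomial $P$ with positive coefficients such that
\[
    \cosh\Big(\frac{\ell_\gamma}{2}\Big)=P\bigg(\cosh\Big(\frac{q_1\ell_{\alpha_1}}{2}\Big),\ldots,\cosh\Big(\frac{q_k\ell_{\alpha_k}}{2}\Big)\bigg),
\]
with $\delta$ homotopic to one of them, say $\alpha_1$. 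By the last assertion of that lemma, together with the fact that $\delta$ is a free homotopy class of curves defined on $S_{g,n}$, this identity holds for every marked hyperbolic surface in $\mathcal{T}_{g,n}(\mathbf{L}_n)$. So we may regard it as an identity of functions on Teichmüller space, with the same $P$ and $\alpha_i$.

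Next I need $P$ to genuinely depend on $X_1$, that is, to contain a monomial with positive exponent in $X_1$. This is the main point to check. To do so, I will go back to the construction in the proof of Lemma \ref{S3:Lem:LenPolyPos}. The curve $\delta$ is obtained by successively resolving $\gamma$ at the points $p_1,\ldots,p_M$ adjacent to $\delta$. At each step, Lemma \ref{S3:Lem:LenResol}(i) gives a term $2\cosh(\ell_\alpha/2)\cosh(\ell_{\alpha'}/2)$ or $\cosh(\ell_\beta/2)$ with coefficient $+1$ or $+2$. In the curling-number-one case, the explicit rewriting in that proof again gives only positive coefficients. The branch of the resolution tree leading to $\delta$ therefore contributes a product in which the factor $\cosh(\ell_\delta/2)$ (possibly with a multiplicity $q_1$) appears with a positive coefficient. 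Since all other coefficients are also positive, no cancellation can occur.

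Finally I conclude as follows. All the variables $\cosh(q_i\ell_{\alpha_i}/2)$ are at least $1$, and every monomial of $P$ has a positive coefficient. Hence $P$ is bounded below by $c\cosh(q_1\ell_\delta/2)$ for some constant $c>0$, namely the coefficient of a monomial containing $X_1$. Therefore $\cosh(\ell_\gamma/2)\geq c\cosh(\ell_\delta/2)\to\infty$ as $\ell_\delta\to\infty$, which gives $\ell_\gamma\to\infty$.

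If $\delta$ is one of the boundaries $\alpha_i$ of $S_{g,n}$, the same argument applies when $\ell_\delta=L_i$ varies. Alternatively, one can combine this with Corollary \ref{S3:Cor:LenIncBound} (Theorem \ref{S3:Theo:LenIncBound}), which says the bound is monotone in $\mathbf{L}_n$, so the divergence is uniform in the remaining coordinates.
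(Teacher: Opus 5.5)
Your proof is correct and has the same backbone as the paper's: Lemma~\ref{S3:Lem:LenPolyPos} with $\delta$ among the $\alpha_i$, plus positivity of the coefficients of $P$. The two arguments differ only in the last step.

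The paper finishes by appealing to Parlier's monotonicity (Theorem~\ref{S3:Theo:LenIncBound}), saying that each $\ell_{\alpha_i}$ is increasing in $\ell_\delta$. This tacitly uses that the variable attached to $\delta$ really occurs in $P$. It also requires reading Theorem~\ref{S3:Theo:LenIncBound} on $S(\gamma)$ rather than on $S_{g,n}$, since $\delta$ is in general an interior curve of $S_{g,n}$. And monotonicity only describes how things change when $\ell_\delta$ varies with the other Fenchel--Nielsen coordinates held fixed.

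You make the first point explicit by following the resolution branch that produces $\delta$. You then replace monotonicity by the observation that every variable $\cosh(q_i\ell_{\alpha_i}/2)$ is at least $1$. So a single monomial containing $X_1$ already gives $\cosh(\ell_\gamma/2)\geq c\cosh(\ell_\delta/2)$. This is more elementary, needs nothing from Parlier, and is uniform on $\mathcal{T}_{g,n}(\mathbf{L}_n)$. In particular, your closing appeal to Corollary~\ref{S3:Cor:LenIncBound} is unnecessary.

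One point in your justification needs repair. The last assertion of Lemma~\ref{S3:Lem:LenPolyPos} only transports the identity along a mapping class group orbit. Your uniform constant $c$ needs the same $P$ at every point of $\mathcal{T}_{g,n}(\mathbf{L}_n)$. This does hold, for a different reason: the identity is built from $\SL_2(\mathbf{R})$ trace relations, and their signs cannot jump along the connected Teichmüller space. The paper also uses this fact implicitly.
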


\begin{proof}
    Following Lemma \ref{S3:Lem:LenPolyPos}, we choose simple curves $\alpha_1,\ldots,\alpha_k$ such that $\delta$ is one of the $\alpha_i$'s, integers $q_1,\ldots,q_k$ and a polynomial $P\in\mathbf{R}[X_1,\ldots,X_k]$ with positive coefficients such that:
    \[
        \cosh\Big(\frac{\ell_\gamma(X)}{2}\Big)=P\bigg(\cosh\Big(\frac{q_1\ell_{\alpha_1}(X)}{2}\Big),\ldots,\cosh\Big(\frac{q_k\ell_{\alpha_k}(X)}{2}\Big)\bigg).
    \]
    According to Theorem \ref{S3:Theo:LenIncBound}, each $\ell_{\alpha_i}$ is an increasing function of $\ell_\delta$, hence we have $\ell_\gamma\to\infty$ as $\ell_\delta\to\infty$.
\end{proof}

In addition to this fact, the length function of a filling loop is proper on the Teichmüller space.

\begin{Lem}
    Let $\gamma$ be a filling loop on $S_{g,n}$. Then $\ell_\gamma\colon\mathcal{T}_{g,n}(\mathbf{L}_n)\longrightarrow\mathbf{R}$ is a proper map.
\end{Lem}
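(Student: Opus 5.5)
The plan is to show that sublevel sets $\{\ell_\gamma\leq A\}$ are compact in $\mathcal{T}_{g,n}(\mathbf{L}_n)$. Equivalently, every sequence $(X_j,f_j)$ leaving every compact set must satisfy $\ell_\gamma(X_j,f_j)\to\infty$. Fix a pants decomposition $\mathscr{P}$ and use the Fenchel-Nielsen coordinates $(\mathbf{x},\mathbf{t})\in(\mathbf{R}_{>0}\times\mathbf{R})^{3g-3+n}$. A sequence leaves every compact set iff, after passing to a subsequence, one of three things happens for some curve $\alpha\in\mathscr{P}$: $x_\alpha\to\infty$, or $x_\alpha\to0$, or $x_\alpha$ stays in a compact subset of $\mathbf{R}_{>0}$ while $|t_\alpha|\to\infty$. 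I treat these three cases separately.

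The tool in every case is the collar lemma together with the fact that $\gamma$ fills. Since $S(\gamma)=S_{g,n}$ (up to peripheral annuli), every essential non-peripheral simple curve $\alpha$ has geometric intersection number $i(\gamma,\alpha)\geq1$. Otherwise $\gamma$ could be homotoped off $\alpha$ and would lie in $S_{g,n}\backslash\alpha$, contradicting the filling property.

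\emph{Case $x_\alpha\to0$.} The geodesic representative of $\gamma$ crosses the standard collar around $\alpha$ at least $i(\gamma,\alpha)\geq1$ times. The collar has width $w(x_\alpha)=\textup{argsh}(1/\sinh{(x_\alpha/2)})\to\infty$, and each crossing has length at least $2w(x_\alpha)$. Hence $\ell_\gamma\geq 2w(x_\alpha)\to\infty$.

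\emph{Case $x_\alpha\to\infty$.} I use the elementary bound $\ell_\gamma\geq i(\gamma,\alpha)\cdot w'$, where $w'$ is a lower bound for the length of an arc crossing a pair of pants adjacent to $\alpha$ between two boundary components. Such an arc is a transversal or a seam, and hyperbolic trigonometry shows its length is bounded below in terms of the boundary lengths. This route is messy, so I would prefer a softer argument. By Lemma \ref{S3:Lem:LenPolyPos}, $\cosh{(\ell_\gamma/2)}$ is a polynomial with positive coefficients in the $\cosh{(q_i\ell_{\alpha_i}/2)}$, hence $\ell_\gamma\geq \ell_{\alpha_i}$ up to constants. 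This gives control only through the curves $\alpha_i$, so instead I would pick a curve $\beta$ intersecting $\alpha$ and bound $\ell_\gamma$ from below via $\alpha$ directly. The cleanest option is the standard inequality $\ell_\gamma\geq c\, i(\gamma,\alpha)\,\textup{argsh}(\cdot)$. Alternatively one can use that a long boundary curve forces the pants to have a long ``thin'' part, which any arc crossing $\alpha$ must traverse.

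\emph{Case $|t_\alpha|\to\infty$ with $x_\alpha$ bounded above and below (and all other lengths bounded too, otherwise one of the previous cases applies).} The twist is asymptotically a Dehn twist power $T_\alpha^{m}$ with $|m|\to\infty$. By the standard estimate $\ell_{T_\alpha^m\gamma}\geq |m|\,i(\gamma,\alpha)\,x_\alpha - C$, where $C$ depends on a compact set of the remaining coordinates, we again obtain $\ell_\gamma\to\infty$ because $i(\gamma,\alpha)\geq1$. To make this rigorous, I would write $t_\alpha=m x_\alpha+t'$ with $t'\in[0,x_\alpha)$. Mapping class group invariance then gives $\ell_\gamma(\mathbf{x},\mathbf{t})=\ell_{T_\alpha^{-m}\gamma}(\mathbf{x},\mathbf{t}')$ with $(\mathbf{x},\mathbf{t}')$ in a compact set. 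Finally, the lengths of the curves $T_\alpha^{-m}\gamma$ on a compact set of metrics are uniformly comparable to their lengths on a fixed metric, which grow linearly in $|m|$.

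The main obstacle is the uniformity in the long-boundary case and the combination of cases when several coordinates degenerate at once. I would handle this by a subsequence argument: choose the curve $\alpha$ that degenerates, and bound $\ell_\gamma$ below by contributions that depend only on the coordinates of $\alpha$ and its adjacent pants.
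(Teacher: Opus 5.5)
Your case split is exhaustive, and the case $x_\alpha\to0$ via the collar lemma is correct. The case $x_\alpha\to\infty$, however, has a genuine gap. None of the options you list is carried out. The strategy you finally commit to cannot work: bounding $\ell_\gamma$ below by contributions of arcs of $\gamma$ crossing $\alpha$ and the pants adjacent to $\alpha$, i.e.\ an estimate like $\ell_\gamma\geq c\,i(\gamma,\alpha)F(x_\alpha)$ with $F\to\infty$. The reason is that crossing a long curve costs nothing.

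In a one-holed torus with $\tau_\alpha=0$, Theorem \ref{S3:Theo:ChangeFNTor} gives $\sinh(\ell_\alpha/2)\sinh(\ell_\beta/2)=\cosh(L/4)$ for the curve $\beta$ meeting $\alpha$ once. So $\ell_\beta\to0$ as $\ell_\alpha\to\infty$, although $i(\alpha,\beta)=1$. Geometrically, the seam joining the two copies of $\alpha$ in the adjacent pants (lengths $(x_\alpha,x_\alpha,L)$) has length tending to $0$. Hence there is no long thin part that every arc crossing $\alpha$ is forced to traverse. Worse, $x_\alpha\to\infty$ can occur with the underlying hyperbolic surface fixed: apply powers of a Dehn twist along a curve crossing $\alpha$ to a fixed marked surface. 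Then nothing degenerates locally near $\alpha$ at all. The lower bound on $\ell_\gamma$ in this case must come from the filling property globally, not from $i(\gamma,\alpha)\geq1$.

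The missing idea is the one the paper uses, following Kerckhoff. Since $\gamma$ fills, the complement of a small regular neighbourhood of $\gamma$ consists of discs and peripheral annuli. So every essential curve $\alpha$ is freely homotopic to a closed path made of arcs of the geodesic $\gamma$, covering each point at most $N_\alpha$ times, and therefore $\ell_\alpha\leq N_\alpha\ell_\gamma$. This settles $x_\alpha\to\infty$ at once. Applied to finitely many curves crossing the pants curves, it also gives the lower bounds on the $x_\alpha$ (collar lemma) and the bounds on the twists. Thus $\{\ell_\gamma\leq K\}$ is compact with no case analysis.

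There is also a smaller gap in your twist case. Several twists may diverge simultaneously while all lengths stay in a compact set, and then $(\mathbf{x},\mathbf{t}')$ is not in a compact set after untwisting only along $\alpha$. You must reduce all twists at once by a multitwist $\phi$ along $\mathscr{P}$. You then need the classes $\phi^{-1}[\gamma]$ to be pairwise distinct. This follows from Lemma \ref{S2:Lem:IndexStabTwist}, because the multitwists along $\mathscr{P}$ form a torsion-free group meeting $\Tw(\partial S_{g,n})$ trivially, so its intersection with $\Stab([\gamma])$ is trivial.
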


\begin{proof}
    Kerckhoff proved in \cite{Ker83} that if $\Gamma=(\gamma_1,\ldots,\gamma_k)$ is a multi-loop made of non-disjoint curves on a compact surface $S_g$ such that $S_g\backslash\Gamma$ is a union of discs, then $\ell_{\gamma_1}+\cdots+\ell_{\gamma_k}$ is a proper map. We follow his arguments to prove our result.
    
    It is sufficient to prove that the length function of every essential curve $\alpha$ is bounded on the set $\{\ell_\gamma\leq K\}\subset\mathcal{T}_{g,n}(\mathbf{L}_n)$, for every $K\geq0$. If $\alpha$ is an essential curve on $S_{g,n}$, then by definition of the surface filled by $\gamma$, the curve $\alpha$ is homotopic to a loop contained in an arbitrarily small regular neighborhood of $\gamma$, hence it is homotopic to a loop $\widetilde{\alpha}$ made of a minimal amount of arcs of $\gamma$. Denote by $N_\alpha$ the maximal multiplicity of a point of $\widetilde{\alpha}$. We have $\ell_\alpha=\ell_{\widetilde{\alpha}}\leq N_\alpha\ell_\gamma$, hence $\ell_\alpha$ is bounded on every set of the form $\{\ell_\gamma\leq K\}$.
\end{proof}

\subsection{Consequence for loops filling a pair of pants and limitations for more complex loops}

We immediately deduce the following result from Lemma \ref{S3:Lem:LenPolyPos}.

\begin{Prop}\label{S3:Prop:LenPantsFill}
    Let $\gamma$ be a loop on $S_{g,n}$ such that $S(\gamma)$ is a pair of pants. We denote by $\alpha_1,\alpha_2,\alpha_3$ the curves which bound $S(\gamma)$. There exist a finite set $G$ of linear forms on $\mathbf{R}^3$ and $\{a_T\}_{T\in G}\subset\mathbf{R}_{>0}$ such that:
    \[
        \ell_\gamma=2\argch\bigg[\sum_{T\in G}a_Te^{T(\ell_{\alpha_1},\ell_{\alpha_2},\ell_{\alpha_3})}\bigg].
    \]
\end{Prop}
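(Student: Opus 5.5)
We apply Lemma \ref{S3:Lem:LenPolyPos} to the closed geodesic representative of $\gamma$. It provides simple closed geodesics $\delta_1,\ldots,\delta_k$, positive integers $q_1,\ldots,q_k$ and a polynomial $P$ with positive coefficients such that
\[
    \cosh\Big(\frac{\ell_\gamma}{2}\Big)=P\bigg(\cosh\Big(\frac{q_1\ell_{\delta_1}}{2}\Big),\ldots,\cosh\Big(\frac{q_k\ell_{\delta_k}}{2}\Big)\bigg).
\]
The resolutions are homotopic to loops in an arbitrarily small neighborhood of $\gamma$. Hence each $\delta_i$ is homotopic to a simple closed curve contained in $S(\gamma)$, which is a pair of pants.

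The first key step is the classical fact that every essential simple closed curve in a pair of pants is homotopic to one of its boundary curves $\alpha_1,\alpha_2,\alpha_3$. There are also degenerate cases: a resolution may be contractible (length $0$) or homotopic to a cusp. The convention of the paper assigns length $0$ to both, so such a curve contributes the constant $\cosh(0)=1$. Note also that the resolutions of a geodesic may not themselves be geodesics. Lemma \ref{S3:Lem:LenResol} is stated with lengths of geodesic representatives, so the iteration in Lemma \ref{S3:Lem:LenPolyPos} already handles this. We may therefore replace each $\ell_{\delta_i}$ by $\ell_{\alpha_{j(i)}}$ or by $0$.

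Next we expand. We write $\cosh(q\ell/2)=\frac12(e^{q\ell/2}+e^{-q\ell/2})$ and expand the polynomial $P$. The result is a finite sum of terms $c\,e^{T(\ell_{\alpha_1},\ell_{\alpha_2},\ell_{\alpha_3})}$, where $T$ is a linear form with half-integer coefficients and $c>0$. Positivity holds because $P$ has positive coefficients and every factor $\frac12$ is positive. Collecting terms with the same linear form $T$ gives a finite set $G$ and coefficients $a_T>0$, since sums of positive numbers remain positive.

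Finally, $\cosh(\ell_\gamma/2)\geq1$ and $\ell_\gamma\geq0$. Applying $2\argch{}$, the inverse of $x\mapsto\cosh(x/2)$ on $\mathbf{R}_{\geq0}$, to both sides yields the claimed formula.

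The main obstacle is the curve-replacement step. We must justify that all the simple curves produced by the iterated resolution in Lemma \ref{S3:Lem:LenPolyPos} stay inside $S(\gamma)$ up to homotopy, and that multiples of boundary curves appearing in the process are recorded through the integers $q_i$ rather than lost. This uses the fact that resolutions live in a regular neighborhood of $\gamma$, together with the classification of simple closed curves in a pair of pants.
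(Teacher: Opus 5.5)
Your proposal is correct and follows the paper's route. The paper deduces the proposition directly from Lemma \ref{S3:Lem:LenPolyPos}: the simple curves it produces are homotopic into the pair of pants $S(\gamma)$, so they are boundary curves $\alpha_j$ or trivial. Expanding each hyperbolic cosine of $q_i\ell_{\alpha_j}/2$ into two exponentials then gives a sum with positive coefficients, and applying $2\argch$ finishes. Your write-up simply makes that one-line deduction explicit.
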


\begin{Rem}
    The fact that the coefficients $a_T$'s are positive is a crucial point for our future computations.
\end{Rem}

The length function of a loop filling a pair of pants is our first encounter of a \emph{length-type function}. This class of functions will be defined and studied in Paragraph \ref{S3:Subsec:LenType}, and gathers the useful properties that will make us able to prove our main result. 

Mirzakhani's method from \cite{Mir16} to prove Theorem \ref{S1:Theo:MirzAsympt} consists in using Lemma \ref{S3:Lem:LenPolyPos} and making a finite number of change of Fenchel-Nielsen variables in once-holed tori and four-holed spheres, using the formul{\ae} provided by Okai in \cite{Oka93}.

\begin{Theo}[Change of Fenchel-Nielsen coordinates in a once-holed torus, \cite{Oka93}]\label{S3:Theo:ChangeFNTor}
    Let $a$ and $b$ be two simple closed curves on $S_{1,1}$ such that $a$ and $b$ intersect once, $[a,b]$ is homotopic to $\partial S_{1,1}$ and $\langle a,b\rangle = \pi_1(S_{1,1})$. Let $(\ell_a,\tau_a)$ and $(\ell_b,\tau_b)$ be the Fenchel-Nielsen coordinates defined along $a$ and $b$ respectively, with the twists $\tau_a$ and $\tau_b$ be determined so that $\tau_a=\tau_b=0$ if and only if $a$ and $b$ intersect orthogonally. If $\ell_{\partial S_{1,1}}=L$, we have:
    \[
        \cosh\Big(\frac{\ell_b}{2}\Big) = \frac{\cosh\big(\frac{\tau_a}{2}\big)}{\sinh\big(\frac{\ell_a}{2}\big)}\sqrt{\frac{\cosh(\ell_a)+\cosh\big(\frac{L}{2})}{2}},
    \]
    and $\tau_b$ is determined by the sign convention $\tau_a\tau_b\leq0$ and:
    \[
        \cosh\Big(\frac{\tau_b}{2}\Big) = \cosh\Big(\frac{\ell_a}{2}\Big)\sqrt{\frac{\cosh(\frac{\tau_a}{2})^2(\cosh(\ell_a)+\cosh(\frac{L}{2}))-2\sinh(\frac{\ell_a}{2})^2}{\cosh(\frac{\tau_a}{2})^2(\cosh(\frac{\ell_a}{2})^2+\cosh(\frac{L}{2}))+\sinh(\frac{\ell_a}{2})^2\cosh(\frac{L}{2})}}.
    \]
\end{Theo}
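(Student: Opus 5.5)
The plan is to reduce both formul{\ae} to one piece of right-angled hexagon trigonometry, applied once in the pair of pants $S_{1,1}\backslash a$ and once in $S_{1,1}\backslash b$.

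\textbf{Step 1: the seam of the pants $S_{1,1}\backslash a$.} Cutting $S_{1,1}$ along the geodesic $a$ gives a pair of pants $P_a$ with boundary lengths $\ell_a,\ell_a,L$. Let $\sigma_a$ be the simple common perpendicular in $P_a$ joining the two copies of $a$, and let $d_a$ be its length. Cut $P_a$ along its three seams into two congruent right-angled hexagons, with alternate sides $\frac{\ell_a}{2},\frac{\ell_a}{2},\frac{L}{2}$. The hexagon cosine rule then gives
\[
\cosh{(d_a)}=\frac{\cosh{(\frac{L}{2})}+\cosh{(\frac{\ell_a}{2})}^2}{\sinh{(\frac{\ell_a}{2})}^2},
\]
and hence
\[
\cosh{(\tfrac{d_a}{2})}^2=\frac{1+\cosh{(d_a)}}{2}=\frac{\cosh{(\ell_a)}+\cosh{(\frac{L}{2})}}{2\sinh{(\frac{\ell_a}{2})}^2}.
\]
The occurrence of $\cosh{(\frac{L}{2})}$ here matches its occurrence in the statement.

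\textbf{Step 2: the length of $b$.} The curve $b$ crosses $a$ exactly once. So $b$ is freely homotopic to a piecewise geodesic loop consisting of $\sigma_a$ followed by an arc of $a$ joining the two feet of $\sigma_a$. With the normalisation that $\tau_a=0$ exactly when $a$ and $b$ are orthogonal, this arc has signed length $\tau_a$. The loop therefore has two right angles and is symmetric.

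To compute its length, lift to $\mathbf{H}^2$. The hyperbolic isometry representing $b$ is a product of two half-turns, or equivalently one can take the quadrilateral with two right angles formed by $\sigma_a$, half the twist arc and half of the geodesic $b$. Either way, I expect the standard identity
\[
\cosh{(\tfrac{\ell_b}{2})}=\cosh{(\tfrac{d_a}{2})}\cosh{(\tfrac{\tau_a}{2})}.
\]
As a sanity check, when $\tau_a=0$ the geodesic $b$ is exactly the closed curve $\sigma_a$ with $\ell_b=d_a$, which this identity reproduces. Substituting Step 1 then gives the first formula.

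\textbf{Step 3: the twist $\tau_b$.} Exchange the roles of $a$ and $b$. The hypotheses are symmetric: both curves intersect once, both generate $\pi_1$, and the orthogonality normalisation is the same. So Steps 1–2 applied to $P_b=S_{1,1}\backslash b$ give
\[
\cosh{(\tfrac{\ell_a}{2})}^2=\cosh{(\tfrac{\tau_b}{2})}^2\,\frac{\cosh{(\ell_b)}+\cosh{(\frac{L}{2})}}{2\sinh{(\frac{\ell_b}{2})}^2}.
\]
Solve this for $\cosh{(\frac{\tau_b}{2})}^2$. Then substitute
\[
\cosh{(\ell_b)}=2\cosh{(\tfrac{\ell_b}{2})}^2-1,\qquad \sinh{(\tfrac{\ell_b}{2})}^2=\cosh{(\tfrac{\ell_b}{2})}^2-1,
\]
with $\cosh{(\frac{\ell_b}{2})}^2$ taken from Step 2. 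Clearing the common denominator $2\sinh{(\frac{\ell_a}{2})}^2$ should yield the stated quotient. This is routine algebra, which I would check by specialising to $\tau_a=0$.

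\textbf{Step 4: the sign of $\tau_b$.} The formula determines $\tau_b$ only up to sign. I would settle the sign by an orientation argument: a positive twist along $a$ shears $b$ to the left, which seen from $b$ appears as a shear of $a$ to the right. This gives the convention $\tau_a\tau_b\leq0$.

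\textbf{Main obstacle.} The main obstacle is Step 2. It requires a careful justification that the twist parameter equals exactly the displacement between the two feet of $\sigma_a$ under the chosen normalisation, together with a derivation of the $\cosh\cdot\cosh$ identity for a geodesic crossing $a$ once. I would try the matrix computation first: write the hyperbolic element of $b$ as the translation along $a$ by $\tau_a$ composed with the translation along $\sigma_a$ by $d_a$, which commute up to conjugation by a right-angle frame, and then take half-traces.
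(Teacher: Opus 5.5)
Your Steps 1 and 2 are correct and prove the first formula. The hexagon rule gives $\cosh(d_a/2)^2=\frac{\cosh(\ell_a)+\cosh(L/2)}{2\sinh(\ell_a/2)^2}$. The geodesic $b$ passes through the midpoint of $\sigma_a$, which cuts the crossed quadrilateral into two right triangles with legs $d_a/2$, $\tau_a/2$ and hypotenuse $\ell_b/2$. Equivalently, $2\cosh(\ell_b/2)$ is the trace of $a(d_a)w(\tau_a)$ in the paper's matrix notation. The paper gives no proof of this result; it quotes Okai and uses it only as motivation. Step 4 is acceptable as a sketch. For a fixed orientation convention, the sign of the twist along one curve is the sign of the cosine of the counterclockwise angle from that curve to the other. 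The angles from $a$ to $b$ and from $b$ to $a$ are supplementary, so the two twists have opposite signs.

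\textbf{The gap is Step 3.} The algebra you call routine does not yield the stated quotient. Put $u=\cosh(\tau_a/2)^2$, $s=\sinh(\ell_a/2)^2$, $K=\cosh(L/2)$ and $N=\cosh(\ell_a)+K$. Step 2 gives $\cosh(\ell_b/2)^2=uN/(2s)$, so $2\sinh(\ell_b/2)^2=(uN-2s)/s$ and $\cosh(\ell_b)+K=(uN+s(K-1))/s$. Your symmetric identity then becomes
\[
\cosh\Big(\frac{\tau_b}{2}\Big)^2=\cosh\Big(\frac{\ell_a}{2}\Big)^2\,\frac{\cosh(\frac{\tau_a}{2})^2\big(\cosh(\ell_a)+\cosh(\frac{L}{2})\big)-2\sinh(\frac{\ell_a}{2})^2}{\cosh(\frac{\tau_a}{2})^2\big(\cosh(\ell_a)+\cosh(\frac{L}{2})\big)+\sinh(\frac{\ell_a}{2})^2\big(\cosh(\frac{L}{2})-1\big)}.
\]
The numerator matches the statement. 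The denominator, however, exceeds the stated one by $\sinh(\ell_a/2)^2\sinh(\tau_a/2)^2$. This term vanishes at $\tau_a=0$, so your proposed check cannot detect the difference.

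The mismatch is not your error: the second displayed formula of the statement is misprinted and false. The two displayed formulas are in fact mutually inconsistent. Applying the first formula in both directions, which the symmetric hypotheses allow, shows that $\ell_a=\ell_b$ forces $d_a=d_b$ and hence $|\tau_a|=|\tau_b|$. Test this on the modular torus, where $\Tr(A)=\Tr(B)=\Tr(AB)=3$ and $L=0$:
\begin{itemize}
\item one has $\cosh(\ell_a/2)=\cosh(\ell_b/2)=3/2$ and $\cosh(\tau_a/2)^2=5/4$;
\item the formula above returns $5/4$;
\item the stated formula returns $45/34$.
\end{itemize}
The first formula is also confirmed independently there. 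The intersection angle computed from $\Tr(AB)$ has $|\cos\theta|=3/5$, which equals $\tanh(\tau_a/2)/\tanh(\ell_b/2)$.

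So your argument proves the first formula, the sign convention, and the corrected second formula above, but it cannot prove the statement as printed. In the write-up, carry out the Step 3 algebra explicitly and flag the misprint. Replace the $\tau_a=0$ test by the symmetric case $\ell_a=\ell_b$, which does detect the error.
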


\begin{figure}[H]
    \centering
    \includegraphics{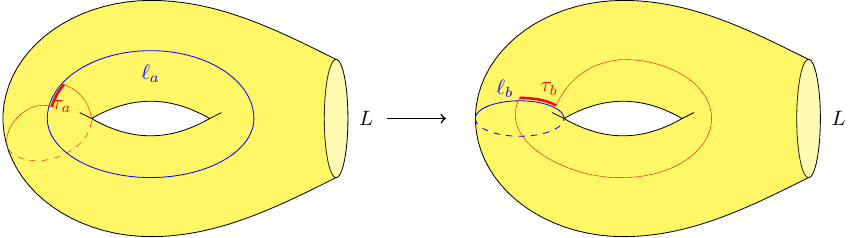}
    \caption{Change of Fenchel-Nielsen coordinates in a once-holed torus.}
\end{figure}

Using this method, Mirzakhani obtained appropriate $\mathscr{C}^0$ estimates on the length functions of arbitrary closed loops. However our method requires $\mathscr{C}^1$ estimates of these functions. Unfortunately we cannot control the simple curves appearing in Lemma \ref{S3:Lem:LenPolyPos}, hence we could have to compose several different changes of coordinates provided by Theorem \ref{S3:Theo:ChangeFNTor}. This, in addition to the fact that obtaining appropriate $\mathscr{C}^1$ estimates out of Theorem \ref{S3:Theo:ChangeFNTor} brings heavy computations (and that the change of coordinates is even more complicated in four-holed spheres), motivated us to choose another approach for obtaining a usable expression of the length function of a closed loop.

\subsection{The general case}\label{S3:Subsec:GenCase}

Our strategy to compute the expression of the length function of an arbitrary loop $\gamma$ will be to represent the homotopy class of $\gamma$ by a right-angled piecewise geodesic loop. This way we will be able to find an appropriate matrix in $\SL_2(\mathbf{R})$ corresponding to the deck transformation associated to $\gamma$. Using the trace formula:
\[
    \Tr(AB)=\Tr(A)\Tr(B)-\Tr(A^{-1}B),
\]
valid for $A,B\in\SL_2(\mathbf{R})$, we will be able to compute an appropriate expression of $\ell_\gamma$. We will rely on the following result to compute a representative of the homotopy class of $\gamma$ in $\SL_2(\mathbf{R})$.

\begin{Lem}[\cite{Oka93}]\label{S3:Lem:HoloRepGeneral1}
    Let $\gamma$ be a piecewise geodesic closed loop on a hyperbolic surface starting at a point $p_0$ on a geodesic $\delta$. We lift $p_0$ to $i\in\mathbf{H}=\{z\in\mathbf{C}|\Im(z)>0\}$, $\delta$ to the imaginary axis and $\gamma$ to a curve $\widetilde{\gamma}$. Suppose that $\widetilde{\gamma}$ is the following curve: make an angle $\theta_1$ then go straight (i.e. geodesically) with length $\ell_1$, turn with an angle $\theta_2$, go straight with length $\ell_2$, $\ldots$, turn with angle $\theta_r$ and go straight with length $\ell_r$ to reach another lift of $p_0$ with an angle $\varphi$ with another lift of $\delta$. The deck transformation corresponding to $\gamma$ is given by the matrix:
    \[
        R(\theta_1)a(\ell_1)R(\theta_2)a(\ell_2)\cdots R(\theta_r)a(\ell_r)R(\varphi),
    \]
    where:
    \[
        R(\theta)=\left(\begin{array}{cc}
            \cos(\frac{\theta}{2}) & \sin(\frac{\theta}{2}) \\
            -\sin(\frac{\theta}{2}) & \cos(\frac{\theta}{2})
        \end{array}\right),~a(\ell)=\left(\begin{array}{cc}
            e^{\frac{\ell}{2}} & 0 \\
            0 & e^{-\frac{\ell}{2}}
        \end{array}\right).
    \]
\end{Lem}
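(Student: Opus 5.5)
The plan is to identify $\PSL_2(\mathbf{R})$ with the unit tangent bundle $T^1\mathbf{H}$ and to read the broken path $\widetilde{\gamma}$ as a sequence of moves of a unit tangent frame. Each geodesic segment becomes a right multiplication by $a(\ell_j)$, each turn a right multiplication by $R(\theta_j)$, and the deck transformation is then pinned down by simple transitivity.

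\emph{Frame dictionary.} Let $v_0\in T^1_i\mathbf{H}$ be the unit vector at $i$ tangent to the imaginary axis, pointing upward. Since $\PSL_2(\mathbf{R})$ acts simply transitively on $T^1\mathbf{H}$ by isometries, the map $g\mapsto g_*v_0$ is a bijection $\PSL_2(\mathbf{R})\to T^1\mathbf{H}$. Under this bijection, left multiplication corresponds to the action by isometries. Right multiplication commutes with every left multiplication, so it is a well-defined operation on frames.

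I will check two model computations at $v_0$.
\begin{itemize}
    \item The matrix $a(\ell)$ sends $i$ to $e^{\ell}i$ and preserves the imaginary axis with its orientation. Hence $a(\ell)_*v_0$ is the vector obtained by flowing $v_0$ geodesically for time $\ell$.
    \item The matrix $R(\theta)$ fixes $i$. Its derivative at $i$ is $(-\sin(\theta/2)\,i+\cos(\theta/2))^{-2}=e^{i\theta}$, so $R(\theta)_*v_0$ is $v_0$ rotated by the angle $\theta$. The sign of this rotation has to be matched with the turning convention of the statement; a transpose or sign change may be needed here.
\end{itemize}

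\emph{Equivariance.} For a general frame $g_*v_0$, applying the isometry $g$ to the two model computations gives:
\begin{itemize}
    \item $(ga(\ell))_*v_0$ is the frame obtained from $g_*v_0$ by the geodesic flow for time $\ell$;
    \item $(gR(\theta))_*v_0$ is $g_*v_0$ rotated by $\theta$.
\end{itemize}
This is the point where right multiplication is shown to encode moves measured intrinsically relative to the current frame.

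\emph{Induction along the path.} Start with the frame $v_0$ at $p_0$, that is, at $i$ along the lift of $\delta$, which corresponds to the identity matrix. Following $\widetilde{\gamma}$ (turn by $\theta_1$, go straight for $\ell_1$, and so on), the equivariance step shows inductively that after the $j$-th segment the current frame is $g_{j*}v_0$ with
\[
g_j=R(\theta_1)a(\ell_1)\cdots R(\theta_j)a(\ell_j).
\]
At the end, the frame sits at the terminal lift of $p_0$, pointing along $\widetilde{\gamma}$. Rotating it by $\varphi$ gives the frame tangent to the other lift of $\delta$, which corresponds to the matrix $g_rR(\varphi)$.

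\emph{Identification of the deck transformation.} The deck transformation $T$ associated with $\gamma$ maps $i$ to the terminal lift of $p_0$ and the lift of $\delta$ through $i$ to the lift of $\delta$ through that point, preserving the orientation of $\delta$ inherited from the surface. Hence $T_*v_0$ is exactly the final frame obtained above. By simple transitivity, $T=g_rR(\varphi)$ in $\PSL_2(\mathbf{R})$, which is the claimed formula.

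The main obstacle is bookkeeping of conventions rather than any conceptual step. Three things must be consistent: the orientation of the angles (left versus right turns), the direction of the lift of $\delta$ at the start and at the end, and the $\pm$ ambiguity when passing from $\PSL_2(\mathbf{R})$ to $\SL_2(\mathbf{R})$. I will fix these by checking a single turn and a single segment explicitly. Traces of the form $\Tr(\cdot)$ only matter up to sign, and the $\PSL_2(\mathbf{R})$ statement is insensitive to the sign choice.
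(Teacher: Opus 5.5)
The paper gives no proof of this lemma; it quotes it directly from \cite{Oka93}, so there is no internal argument to compare yours against. Your proof is correct and complete. It is the standard argument: $\PSL_2(\mathbf{R})$ acts simply transitively on $T^1\mathbf{H}$, and your two model computations together with equivariance show the following.
\begin{itemize}
    \item Right multiplication by $a(\ell)$ is the geodesic flow for time $\ell$.
    \item Right multiplication by $R(\theta)$ is rotation by $\theta$ in the fibre.
\end{itemize}
An equivalent formulation is that the conjugates $gR(\theta)g^{-1}$ and $ga(\ell)g^{-1}$ are, respectively, the rotation about $g(i)$ and the translation along the geodesic tangent to $g_*v_0$, and these conjugations telescope.

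Your final identification $T_*v_0=(g_rR(\varphi))_*v_0$ is justified as follows. Since $T$ is a lift of the identity, $dT_i(v_0)$ is the unique vector at $T(i)$ projecting to the oriented unit tangent of $\delta$ at $p_0$. That is exactly what the hypothesis on $\varphi$ describes, read on the surface as the angle from the incoming direction of $\gamma$ to $\delta$. This reading also picks out the correct branch if $\delta$ passes through $p_0$ more than once.

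The convention issue you leave open is already settled by your own computation. The derivative of $R(\theta)$ at $i$ is $e^{i\theta}$, so $R(\theta)$ rotates tangent vectors counterclockwise. Hence the formula holds exactly as stated, with turning angles measured counterclockwise (left turns positive), and no transpose or sign change is needed. This is consistent with the paper's later identity $w(\ell)=R(-\frac{\pi}{2})a(\ell)R(\frac{\pi}{2})$. The formula is an identity in $\PSL_2(\mathbf{R})$, and the $\SL_2(\mathbf{R})$ sign is irrelevant for the only use the paper makes of it, through $|\Tr|$.
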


\begin{Def}
    Let $\gamma$ be a closed loop on $S_{g,n}$ and let $P\subset S(\gamma)$ be a pair of pants. An \emph{incursion of $\gamma$ in $P$} is a connected (possibly non-closed) subloop of $\gamma([0,1])\cap P$.
\end{Def}

\begin{Rem}
    Given a pants decomposition $\mathscr{P}$ of $S(\gamma)$, the loop $\gamma$ can be recovered by concatenating its incursions in the pairs of pants associated to $\mathscr{P}$.
\end{Rem}

Let $\gamma$ be a closed loop on $S_{g,n}$, $\mathscr{P}$ a pants decomposition of $S(\gamma)$ and $(X,f)\in\mathcal{T}_{g,n}(\mathbf{L}_n)$. We represent the homotopy class of $f\circ\gamma$ by the piecewise geodesic loop $\widetilde{\gamma}$ defined the following way.

\begin{figure}[H]
    \centering
    \includegraphics[width=8cm]{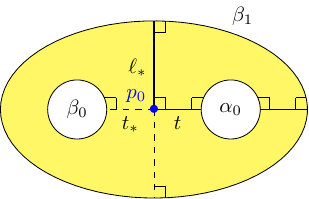}
    \caption{Definitions of the lengths in the pair of pants bounded by $\alpha_0$, $\beta_0$ and $\beta_1$ with $\ell_{\alpha_0}=x$, $\ell_{\beta_0}=y$ and $\ell_{\beta_1}=z$.}
    \label{S3:Fig:Incursion}
\end{figure}

Let $P\subset S(\gamma)$ be a pair of pants in the complement of $\mathscr{P}$ and $\widetilde{\gamma}_P$ be an incursion of $\widetilde{\gamma}$ in $P$. Let $P_0$ be the pair of pants from which $\widetilde{\gamma}_P$ enters in $P$ and $P_1$ be the pair of pants in which $\widetilde{\gamma}_P$ enters after exiting $P$. We note $\alpha_0,\alpha_1\in\mathscr{P}$ the boundaries of $P$ such that the starting point of $\widetilde{\gamma}_P$ is on $\alpha_0$ and the ending point of $\widetilde{\gamma}_P$ is on $\alpha_1$. If $\alpha_0\neq\alpha_1$, let $\beta_0=\alpha_1$, and if $\alpha_0=\alpha_1$ let $\beta_0$ be any connected component of $\partial P$ different from $\alpha_0$. Let $\beta_1$ be the last connected component of $\partial P$. We denote by $T$ the common perpendicular geodesic arc between $\alpha_0$ and $\beta_0$, and $D$ the geodesic arc perpendicular to $\beta_1$ at its extremities. The geodesic arcs $D$ and $T$ intersect orthogonally at a point $p_0$, which is the middle of $D$. Let $\ell_*=\ell_*(\ell_{\alpha_0},\ell_{\beta_0},\ell_{\beta_1})$ be the distance between $p_0$ and $\beta_1$ (which is the half length of $D$), $t=t(\ell_{\alpha_0},\ell_{\beta_0},\ell_{\beta_1})$ be the distance between $p_0$ and $\alpha_0$ and $t_*=t_*(\ell_{\alpha_0},\ell_{\beta_0},\ell_{\beta_1})$ be the distance between $p_0$ and $\beta_0$ (hence the length of $T$ is $t+t_*$). Let $b$ be the piecewise geodesic arc starting from $p_0$, following $T$ with length $t_*$ until reaching $\beta_0$, then turning with angle $\frac{\pi}{2}$, going straight with length $\ell_{\beta_0}$, turning with angle $\frac{\pi}{2}$, going straight with length $t_*$ and finally turning with angle $\pi$. We define $a$ similarly: start at $p_0$ oriented towards $\beta_0$, turn with angle $\pi$, go straight with length $t$ until reaching $\alpha_0$, turn with angle $\frac{\pi}{2}$, go straight with length $\ell_{\alpha_0}$, turn with angle $\frac{\pi}{2}$ and go straight with length $t$.

\begin{Lem}
    With the same notations as above, $a$ and $b$ are freely homotopic to $\alpha_0$ and $\beta_0$ respectively and the fundamental group $\pi_1(P,p_0)$ is generated by the homotopy classes of $a$ and $b$ based at $p_0$.
\end{Lem}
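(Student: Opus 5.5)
The plan has two parts. First I show that $a$ is a genuine loop freely homotopic to $\alpha_0$, and likewise $b$ to $\beta_0$. Then I show that the based classes $[a],[b]\in\pi_1(P,p_0)$ generate this free group of rank two.

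\textbf{Free homotopy.} Consider $b$. It leaves $p_0$, travels along $T$ for length $t_*$ until it hits $\beta_0$ orthogonally, turns by $\frac{\pi}{2}$, and runs once around $\beta_0$ for length $\ell_{\beta_0}$. Since $T$ is perpendicular to $\beta_0$ and $\beta_0$ is a closed geodesic, this brings it back to the foot of $T$. It then turns by $\frac{\pi}{2}$ again, returns along $T$ for length $t_*$, and the final turn by $\pi$ restores the initial direction. So $b$ is the closed loop $T_*\cdot\beta_0\cdot T_*^{-1}$, where $T_*$ is the subarc of $T$ from $p_0$ to $\beta_0$. Conjugating by $T_*$ gives a free homotopy between $b$ and $\beta_0$.

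The loop $a$ is handled the same way. The initial turn by $\pi$ points it from $p_0$ toward $\alpha_0$ along the other subarc $T_0$ of $T$, which has length $t$ and meets $\alpha_0$ orthogonally. Hence $a=T_0\cdot\alpha_0\cdot T_0^{-1}$, which is freely homotopic to $\alpha_0$. The only point to check is orientation: the turns by $\frac{\pi}{2}$ fix the direction in which the boundary is traversed. Since either orientation generates the same cyclic subgroup, this does not affect the generation statement.

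\textbf{Generation.} The pair of pants $P$ deformation retracts onto a graph. I take the graph formed by $T$ together with the two boundary geodesics $\alpha_0$ and $\beta_0$. The complement of this graph in $P$ is a neighbourhood of $\beta_1$ together with collars, i.e.\ an annulus around $\beta_1$. Equivalently, cutting $P$ along $T$ and along the seams turns it into a disc with one boundary curve going around $\beta_1$. The graph $\alpha_0\cup T\cup\beta_0$ is a ``dumbbell'' whose two loops are $\alpha_0$ and $\beta_0$, and its fundamental group based at $p_0\in T$ is free on the two loops obtained by walking out along $T$, going around one circle, and walking back. These are exactly $a$ and $b$.

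It remains to see that the inclusion of the dumbbell into $P$ is a homotopy equivalence. Both spaces are connected with Euler characteristic $-1$. The inclusion is $\pi_1$-surjective, because every loop in $P$ can be pushed off the annular region around $\beta_1$, which retracts onto its boundary lying in the graph. A surjection between free groups of rank $2$ is an isomorphism by Hopficity. Alternatively, one can use the standard presentation $\pi_1(P)=\langle a,b,c\mid abc=1\rangle$, in which $c$ corresponds to $\beta_1$.

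\textbf{Main obstacle.} The only delicate step is making the retraction rigorous: I need $P\setminus(\alpha_0\cup T\cup\beta_0)$ to be an annulus whose core is $\beta_1$. I would argue this by using the reflection symmetry of $P$ across its seams. The arc $T$ lies in the plane of symmetry, and $D$ separates $\beta_1$ from $T$ away from $p_0$. This gives the annulus explicitly as the region between $\beta_1$ and the closed curve $\alpha_0\cup\beta_0\cup(\text{both sides of }T)$. After that, the rest of the argument is immediate.
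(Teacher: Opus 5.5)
The paper states this lemma without proof; it is treated as a standard fact about the seam picture of a pair of pants. So there is no argument in the paper to compare with, and your proof supplies the missing justification by the natural route. Its substance is correct. The turning instructions do produce $b=T_*\cdot\beta_0\cdot T_*^{-1}$ and $a=T_0\cdot\alpha_0\cdot T_0^{-1}$. Moreover, $\pi_1$ of the dumbbell $\alpha_0\cup T\cup\beta_0$ based at $p_0$ is free on these two loops.

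A few points need repair.

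\textbf{The justification of the annulus claim.} The argument in your ``main obstacle'' paragraph is wrong as stated. $D$ has both endpoints on $\beta_1$ and meets $T$ only at $p_0$. Cutting $P$ along $D$ gives two annuli, each containing one half of $T$ and having an arc of $\beta_1$ in its boundary. So $D$ does not separate $\beta_1$ from $T$.

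The easy correct argument is the following. $T$ is an embedded arc joining two distinct boundary components. Cutting $P$ along $T$ therefore gives a connected compact orientable surface with $\chi=\chi(P)+1=0$. It has two boundary components, $\beta_1$ and the curve $\alpha_0\cdot T\cdot\beta_0\cdot T^{-1}$, so it is an annulus. Equivalently, glue the two right-angled hexagons of $P$ along the two seams other than $T$.

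Hence $P$ is the mapping cylinder of the map $S^1\to\alpha_0\cup T\cup\beta_0$ that reads this boundary word. It therefore deformation retracts onto the dumbbell, which gives an isomorphism on $\pi_1$ directly.

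\textbf{Smaller issues.}
\begin{itemize}
\item Once you have the deformation retraction, the Euler-characteristic and Hopficity step is superfluous. In any case the lemma only asserts generation.
\item The ``alternative'' via $\langle a,b,c\mid abc=1\rangle$ is circular. It presupposes that your particular $a,b$ are the standard generators, which is exactly what the lemma asserts.
\item The sentence ``cutting along $T$ and along the seams turns it into a disc'' is inaccurate. Cutting along all three seams gives two hexagons, and cutting along $T$ alone gives the annulus.
\end{itemize}
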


\begin{Rem}
    Starting at $p_0$ and following any word in the letters $a$ and $b$, the corresponding piecewise geodesic path is oriented towards $\beta_0$.
\end{Rem}

By gliding along $\alpha_0$ the ending point of the previous incursion of $\widetilde{\gamma}$ in $P_0$, we can assume without loss of generality that the starting point of $\widetilde{\gamma}_P$ is $\alpha_0\cap T$. Then $\widetilde{\gamma}_P$ follows $T$ with length $t$ until reaching $p_0$. Next $\widetilde{\gamma}_P$ makes a loop (which may be trivial) on $P$, wich can be represented by a word $w(a,b)$ in the letters $a$ and $b$ defined above. By looping enough times around $\alpha_0$ before going inside $P$, we can assume that the word $w(a,b)$ begins with the letter $b$. After having followed the word $w(a,b)$, the arc $\widetilde{\gamma}_P$ follows the following path.

\begin{enumerate}[label=\textup{(\roman*)}]
    \item\label{S3:GeodPath:Item1} If $\alpha_0\neq\alpha_1$, i.e. if the endpoint of $\widetilde{\gamma}_P$ is on $\beta_0$, then $\widetilde{\gamma}_P$ goes straight with length $t_*$ to reach $\beta_0$ and turns with angle $\frac{\pi}{2}$. By looping enough times around $\beta_0$ at this step, we can assume that the word $w(a,b)$ ends with the letter $a$. Next go straight until reaching the starting point of the next incursion of $\widetilde{\gamma}$ in $P_1$. Depending on the choice of the pair of common perpendiculars used to define the twist $\tau_{\beta_0}$, this corresponds to going straight with length $\pm\tau_{\beta_0}+\frac{m}{2}\ell_{\beta_0}$, for some $m\in\mathbf{Z}$. Finally turn with angle $-\frac{\pi}{2}$.
    \item\label{S3:GeodPath:Item2} If $\alpha_0=\alpha_1$, i.e. if the endpoint of $\widetilde{\gamma}_P$ is on $\alpha_0$, then $\widetilde{\gamma}_P$ turns with angle $\pi$, goes straight with length $t$ to reach $\alpha_0$ and turns with angle $\frac{\pi}{2}$. By looping enough times around $\alpha_0$ at this step, we can assume that the word $w(a,b)$ ends with the letter $b$. Next go straight until reaching the starting point of the next incursion of $\widetilde{\gamma}$ in $P_1$. Similarly to the previous case, this corresponds to going straight with length $\pm\tau_{\alpha_0}+\frac{m}{2}\ell_{\alpha_0}$ for some $m\in\mathbf{Z}$. Finally turn with angle $-\frac{\pi}{2}$.
\end{enumerate}

By repeating this process on every incursion of $\gamma$ inside the pairs of pants of $S(\gamma)\backslash\mathscr{P}$, we are able to represent the free homotopy class of $\gamma$ by the right-angled piecewise geodesic loop $\widetilde{\gamma}$. Using Lemma \ref{S3:Lem:HoloRepGeneral1}, we obtain the following.

\begin{Lem}\label{S3:Lem:HoloRepGeneral2}
    With the same notations as above, the deck transformation corresponding to $\widetilde{\gamma}$ is given by a product of the following matrices:
    \begin{equation}\label{S3:Lem:HoloRepGeneral2:Eq1}
        a(t(\ell_{\alpha_0},\ell_{\beta_0},\ell_{\beta_1}))B^{p_1}A^{q_1}\cdots B^{p_r}A^{q_r}a(t_*(\ell_{\alpha_0},\ell_{\beta_0},\ell_{\beta_1}))R\Big(\frac{\pi}{2}\Big)a\Big(\pm\tau_{\beta_0}+\frac{m}{2}\ell_{\beta_0}\Big)R\Big(-\frac{\pi}{2}\Big),
    \end{equation}
    and:
    \begin{equation}\label{S3:Lem:HoloRepGeneral2:Eq2}
        a(t(\ell_{\alpha_0},\ell_{\beta_0},\ell_{\beta_1}))B^{p_1}A^{q_1}\cdots B^{p_r}A^{q_r}B^{p_{r+1}}R(\pi)a(t(\ell_{\alpha_0},\ell_{\beta_0},\ell_{\beta_1}))R\Big(\frac{\pi}{2}\Big)a\Big(\pm\tau_{\alpha_0}+\frac{m}{2}\ell_{\alpha_0}\Big)R\Big(-\frac{\pi}{2}\Big),
    \end{equation}
    where $p_1,q_1,\ldots,p_r,q_r,p_{r+1}\in\mathbf{Z}$, $\alpha_0,\beta_0,\beta_1$ are either elements of $\mathscr{P}$ or connected components of $\partial S(\gamma)$, and:
    \begin{align*}
        A = & \left(\begin{array}{cc}
            \cosh(\frac{\ell_{\alpha_0}}{2}) & e^{-t(\ell_{\alpha_0},\ell_{\beta_0},\ell_{\beta_1})}\sinh(\frac{\ell_{\alpha_0}}{2}) \\
            e^{t(\ell_{\alpha_0},\ell_{\beta_0},\ell_{\beta_1})}\sinh(\frac{\ell_{\alpha_0}}{2}) & \cosh(\frac{\ell_{\alpha_0}}{2})
        \end{array}\right) \\
        B = & \left(\begin{array}{cc}
            \cosh(\frac{\ell_{\beta_0}}{2}) & -e^{t_*(\ell_{\alpha_0},\ell_{\beta_0},\ell_{\beta_1})}\sinh(\frac{\ell_{\beta_0}}{2}) \\
            -e^{-t_*(\ell_{\alpha_0},\ell_{\beta_0},\ell_{\beta_1})}\sinh(\frac{\ell_{\beta_0}}{2}) & \cosh(\frac{\ell_{\beta_0}}{2})
        \end{array}\right).
    \end{align*}
\end{Lem}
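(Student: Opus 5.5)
The plan is to apply Lemma \ref{S3:Lem:HoloRepGeneral1} directly to the right-angled piecewise geodesic path $\widetilde{\gamma}_P$ just constructed, reading off its successive turns and straight segments. The only real work is computing the matrices of the loops $a$ and $b$ based at $p_0$, i.e. checking that they are the stated $A$ and $B$. Everything else is bookkeeping of the factors $a(\cdot)$ and $R(\cdot)$.

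\emph{Factor order.} We lift the starting point $\alpha_0\cap T$ to $i$ with $T$ along the imaginary axis. The first segment has length $t$ along $T$, which gives $a(t)$. Then the word $w(a,b)$ is traversed; after the normalizations it has the form $b^{p_1}a^{q_1}\cdots b^{p_r}a^{q_r}$ in case \ref{S3:GeodPath:Item1}, and the same followed by $b^{p_{r+1}}$ in case \ref{S3:GeodPath:Item2}. This contributes $B^{p_1}A^{q_1}\cdots$, since by the Remark every traversal of $a$ or $b$ returns to $p_0$ with the same direction along $T$. Hence the pieces compose as a plain product of matrices, with no extra rotations between them.

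\emph{Case \ref{S3:GeodPath:Item1}.} We go straight $t_*$ to reach $\beta_0$, giving $a(t_*)$. We turn $\frac{\pi}{2}$ and go along $\beta_0$ for length $\pm\tau_{\beta_0}+\frac m2\ell_{\beta_0}$, then turn $-\frac{\pi}{2}$. This reproduces (\ref{S3:Lem:HoloRepGeneral2:Eq1}).

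\emph{Case \ref{S3:GeodPath:Item2}.} We turn by $\pi$, go straight $t$ back to $\alpha_0$, and then perform the same twist segment along $\alpha_0$. This gives (\ref{S3:Lem:HoloRepGeneral2:Eq2}).

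The deck transformation of the whole loop $\widetilde{\gamma}$ is the concatenation of these blocks over all incursions. The concatenation is legitimate because each block ends pointing along the next $T$ at the entry point of the next incursion. The closing angle $\varphi$ in Lemma \ref{S3:Lem:HoloRepGeneral1} is trivial here, since $\widetilde{\gamma}$ returns to its start with the same tangent direction.

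\emph{Computing $B$.} Lemma \ref{S3:Lem:HoloRepGeneral1} gives
\[
    B=a(t_*)R\Big(\frac{\pi}{2}\Big)a(\ell_{\beta_0})R\Big(\frac{\pi}{2}\Big)a(t_*)R(\pi).
\]
A direct $2\times2$ computation should work here. First,
\[
    R\Big(\frac{\pi}{2}\Big)a(\ell)R\Big(\frac{\pi}{2}\Big)R(\pi)
\]
is, up to sign, the hyperbolic matrix $\begin{pmatrix}\cosh\frac{\ell}{2}&-\sinh\frac{\ell}{2}\\-\sinh\frac{\ell}{2}&\cosh\frac{\ell}{2}\end{pmatrix}$, i.e. the translation along the geodesic orthogonal to the imaginary axis at $i$. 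Next, $a(t_*)R(\pi)=R(\pi)a(-t_*)$ because $R(\pi)$ is the matrix $\begin{pmatrix}0&1\\-1&0\end{pmatrix}$. So $B$ is the conjugate of that translation by $a(t_*)$, which gives the off-diagonal entries $-e^{\pm t_*}\sinh\frac{\ell_{\beta_0}}{2}$.

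\emph{Computing $A$.} For $a$, which starts with a turn $R(\pi)$, we get
\[
    A=R(\pi)a(t)R\Big(\frac{\pi}{2}\Big)a(\ell_{\alpha_0})R\Big(\frac{\pi}{2}\Big)a(t).
\]
The same computation gives conjugation by $a(-t)$ and a sign flip on the $\sinh$ terms, which yields the stated $A$.

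\emph{Main obstacle.} The hardest part is the signs and orientations: the $\pm$ sign in $\mathrm{PSL}_2$, the orientation of the turns, and the orientation conventions for $a$ and $b$ relative to $T$. I would settle these by checking the fixed points of $A$ and $B$ in $\mathbf{H}$. The axis of $B$ must cross the imaginary axis at height $e^{t_*}$ orthogonally, and the axis of $A$ at height $e^{-t}$. The overall sign ambiguity is harmless, since only $|\Tr|$ matters for the length.
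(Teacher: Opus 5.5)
Your proposal is correct and follows the same route as the paper. The paper applies Lemma \ref{S3:Lem:HoloRepGeneral1} to the paths of cases \ref{S3:GeodPath:Item1} and \ref{S3:GeodPath:Item2} and identifies $A$ and $B$ as the matrices of the loops $a$ and $b$; your explicit check that $a(t_*)R(\frac{\pi}{2})a(\ell_{\beta_0})R(\frac{\pi}{2})a(t_*)R(\pi)=-B$ and $R(\pi)a(t)R(\frac{\pi}{2})a(\ell_{\alpha_0})R(\frac{\pi}{2})a(t)=-A$ (so the identities hold in $\PSL_2(\mathbf{R})$) is right and fills in a computation the paper leaves implicit.
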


\begin{proof}
    The expression (\ref{S3:Lem:HoloRepGeneral2:Eq1}) comes from applying Lemma \ref{S3:Lem:HoloRepGeneral1} to the case \ref{S3:GeodPath:Item1}, and (\ref{S3:Lem:HoloRepGeneral2:Eq2}) comes from applying the same result to the case \ref{S3:GeodPath:Item2}. The matrices $A,B\in\SL_2(\mathbf{R})$ are the matricial representations of following the piecewise geodesic loops $a$ and $b$ respectively inside each pair of pants of $S(\gamma)\backslash\mathscr{P}$.
\end{proof}

For technical reason we will introduce the following matrices:
\[
    S_a^n(x,y,z) = \varepsilon_n\left(\begin{array}{cc}
        \sinh(t(x,y,z))\sinh(\frac{nx}{2}) & \cosh(t(x,y,z))\sinh(\frac{nx}{2}) + \cosh(\frac{nx}{2}) \\
        \cosh(t(x,y,z))\sinh(\frac{nx}{2}) - \cosh(\frac{nx}{2}) & \sinh(t(x,y,z))\sinh(\frac{nx}{2})
    \end{array}\right),
\]
where $\varepsilon_n$ is the sign of $n$, and:
\[
    S_b^n(x,y,z) = \varepsilon_n\left(\begin{array}{cc}
        \sinh(t_*(x,y,z))\sinh(\frac{ny}{2}) & \cosh(t_*(x,y,z))\sinh(\frac{ny}{2}) + \cosh(\frac{ny}{2}) \\
        \cosh(t_*(x,y,z))\sinh(\frac{ny}{2}) - \cosh(\frac{ny}{2}) & \sinh(t_*(x,y,z))\sinh(\frac{ny}{2})
    \end{array}\right).
\]
By a direct computation, we have:
\begin{align*}
    S_a^n(\ell_{\alpha_0},\ell_{\beta_0},\ell_{\beta_1}) = &  \varepsilon_nR\Big(\frac{\pi}{2}\Big)A^nR\Big(\frac{\pi}{2}\Big) \\
    S_b^n(\ell_{\alpha_0},\ell_{\beta_0},\ell_{\beta_1})
    = & \varepsilon_nR\Big(-\frac{\pi}{2}\Big)B^nR\Big(-\frac{\pi}{2}\Big).
\end{align*}
Let us also denote:
\[
    w(\ell)=R\Big(-\frac{\pi}{2}\Big)a(\ell)R\Big(\frac{\pi}{2}\Big) =\left(\begin{array}{cc}
        \cosh(\frac{\ell}{2}) & \sinh(\frac{\ell}{2}) \\
        \sinh(\frac{\ell}{2}) & \cosh(\frac{\ell}{2})
    \end{array}\right).
\]
The length function of $\gamma$ is given by the (absolute value of the) trace of a product of matrices of the form (\ref{S3:Lem:HoloRepGeneral2:Eq1}) and (\ref{S3:Lem:HoloRepGeneral2:Eq2}). Since the trace is invariant under cyclic permutation, we can assume without loss of generality that words of the form (\ref{S3:Lem:HoloRepGeneral2:Eq1}) and (\ref{S3:Lem:HoloRepGeneral2:Eq2}) begin with the matrix $R(-\frac{\pi}{2})$ instead of ending with it. With this new expression, words of the form (\ref{S3:Lem:HoloRepGeneral2:Eq1}) can be represented in $\PSL_2(\mathbf{R})$ by:
\begin{equation}\label{S3:Eq:HoloRepGen1}
    w(t(\ell_{\alpha_0},\ell_{\beta_0},\ell_{\beta_1}))\bigg(\prod_{i=1}^rS_b^{p_i}(\ell_{\alpha_0},\ell_{\beta_0},\ell_{\beta_1})S_a^{q_i}(\ell_{\alpha_0},\ell_{\beta_0},\ell_{\beta_1})\bigg)w(t_*(\ell_{\alpha_0},\ell_{\beta_0},\ell_{\beta_1}))a\Big(\pm\tau_{\beta_0}+\frac{m}{2}\ell_{\beta_0}\Big),
\end{equation}
and words of the form (\ref{S3:Lem:HoloRepGeneral2:Eq2}) can be represented by:
\begin{align}
    \nonumber w(t(\ell_{\alpha_0},\ell_{\beta_0},\ell_{\beta_1}))\bigg(\prod_{i=1}^rS_b^{p_i}(\ell_{\alpha_0}, & \ell_{\beta_0},\ell_{\beta_1})S_a^{q_i}(\ell_{\alpha_0},\ell_{\beta_0},\ell_{\beta_1})\bigg) \\
    \label{S3:Eq:HoloRepGen2} & \times S_b^{p_{r+1}}(\ell_{\alpha_0},\ell_{\beta_0},\ell_{\beta_1})w(t(\ell_{\alpha_0},\ell_{\beta_0},\ell_{\beta_1}))a\Big(\pm\tau_{\alpha_0}+\frac{m}{2}\ell_{\alpha_0}\Big).
\end{align}

\begin{Lem}\label{S3:Lem:MatrixCoeffPos1}
    With the same notations as above, if $n\neq0$ and $x,y>0,z\geq0$, the entries of the matrix $S_a^n(x,y,z)\in\SL_2(\mathbf{R})$ are non-negative.
\end{Lem}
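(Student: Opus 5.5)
Since $\varepsilon_n$ is the sign of $n$, I will factor it out and use $\varepsilon_n\sinh(\frac{nx}{2})=\sinh(\frac{|n|x}{2})$ and $\cosh(\frac{nx}{2})=\cosh(\frac{|n|x}{2})$. With $u=\frac{|n|x}{2}>0$ (because $n\neq0$ and $x>0$) and $t=t(x,y,z)$, the entries of $S_a^n(x,y,z)$ become
\[
\sinh(t)\sinh(u),\qquad \cosh(t)\sinh(u)+\varepsilon_n\cosh(u),\qquad \cosh(t)\sinh(u)-\varepsilon_n\cosh(u),
\]
the first appearing twice on the diagonal. I would first establish $t\geq 0$, which is immediate: $t$ is the hyperbolic distance between $p_0$ and $\alpha_0$. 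Hence $\sinh(t)\geq0$ and $\sinh(u)>0$, so the diagonal entries are non-negative.

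For the off-diagonal entries, the one with $+\varepsilon_n\cosh(u)$ and the one with $-\varepsilon_n\cosh(u)$ swap roles depending on the sign of $n$. So it suffices to show $\cosh(t)\sinh(u)\geq\cosh(u)$, i.e. $\cosh(t)\tanh(u)\geq1$, for all $|n|\geq1$. Since $\tanh$ is increasing, the case $|n|=1$ is the worst one. The whole lemma therefore reduces to the geometric inequality
\[
\cosh\big(t(x,y,z)\big)\tanh\Big(\frac{x}{2}\Big)\geq 1 .
\]

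This inequality is the main step, and I would derive it from the right-angled hexagon/pentagon trigonometry of the pair of pants $P$. Cutting $P$ along $D$ and the seams gives a right-angled pentagon, or a quadrilateral with one ideal vertex when $\beta_1$ is a cusp. This pentagon has a side on $\alpha_0$ of length $\frac{x}{2}$, the segment of $T$ of length $t$ from $\alpha_0$ to $p_0$, the half of $D$ of length $\ell_*$, a side on $\beta_1$, and a seam between $\alpha_0$ and $\beta_1$. The side of length $\ell_*$ is opposite the side of length $\frac{x}{2}$ on $\alpha_0$, and these two are joined through the side $t$. The right-angled pentagon relation $\cosh(c)=\sinh(a)\sinh(b)$, applied to sides adjacent to the side opposite $c$, gives
\[
\cosh(\ell_*)=\sinh\Big(\frac{x}{2}\Big)\sinh(t)
\]
(in the cusp case one gets the limiting relation instead). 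Equivalently, I could use the formula $\sinh(t)\sinh(\frac{x}{2})=\cosh(\ell_*)\geq1$ directly. Then
\[
\cosh^2(t)\sinh^2\Big(\frac{x}{2}\Big)=\sinh^2\Big(\frac{x}{2}\Big)+\sinh^2(t)\sinh^2\Big(\frac{x}{2}\Big)\geq \sinh^2\Big(\frac{x}{2}\Big)+1=\cosh^2\Big(\frac{x}{2}\Big),
\]
which is exactly the required bound $\cosh(t)\tanh(\frac{x}{2})\geq1$.

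The obstacle I expect is pinning down which pentagon relation holds for the configuration defined by $T$, $D$ and $p_0$, including the degenerate case $z=0$. For that case I would either pass to the limit $z\to0$ by continuity of $t$, using that $\geq$ is preserved, or use the corresponding ideal-quadrilateral formula, which should give $\sinh(t)\sinh(\frac{x}{2})\geq1$ in the same way. An alternative route, if the trigonometry is messy, is to use the explicit expression for $t(x,y,z)$ coming from the hexagon formula for the common perpendicular between $\alpha_0$ and $\beta_0$.
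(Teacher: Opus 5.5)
Your reduction is the same as the paper's. The diagonal entries are trivially non-negative, and the off-diagonal ones reduce to $\cosh(t)\sinh(\frac{|n|x}{2})\geq\cosh(\frac{|n|x}{2})$, i.e.\ to $\cosh(t)\tanh(\frac{x}{2})\geq1$. The paper handles general $n$ through the identity $\cosh(t)\sinh(\frac{nx}{2})\pm\cosh(\frac{nx}{2})=\cosh(\frac{nx}{2})(\coth(\ell_*)\pm1)+\coth(\ell_*)\sinh(\frac{(n-1)x}{2})/\sinh(\frac{x}{2})$ instead of monotonicity of $\tanh$, which is equivalent.

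The identity you use for the key step, $\cosh(\ell_*)=\sinh(\frac{x}{2})\sinh(t)$, is false. Your pentagon has sides, in cyclic order: $\frac{x}{2}$ on $\alpha_0$, $t$ on $T$, $\ell_*$ on $D$, an arc $z_1$ of $\beta_1$, and the seam $s$ from $\beta_1$ to $\alpha_0$. In the relation $\cosh(c)=\sinh(a)\sinh(b)$, the sides $a,b$ must be the two sides meeting at the vertex \emph{opposite} $c$. Since $t$ is adjacent to $\ell_*$, it cannot occur. The correct relations are $\cosh(\ell_*)=\sinh(\frac{x}{2})\sinh(s)$ and $\cosh(z_1)=\sinh(\frac{x}{2})\sinh(t)$. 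For instance, with $\frac{x}{2}=\ell_*=1$ one has $\cosh(t)=\coth(1)^2\approx1.724$, hence $\sinh(\frac{x}{2})\sinh(t)\approx1.650$, whereas $\cosh(\ell_*)\approx1.543$.

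The damage is local and the repair is one line. Since $\sinh(\frac{x}{2})\sinh(t)=\cosh(z_1)\geq1$, the inequality you actually need is true. Equality holds when $\beta_1$ is a cusp, which is exactly the ideal-quadrilateral relation $\sinh(\frac{x}{2})\sinh(t)=1$ you anticipated. Your computation $\cosh^2(t)\sinh^2(\frac{x}{2})\geq\cosh^2(\frac{x}{2})$ then goes through unchanged. Alternatively, use the relation the paper uses, $\cosh(t)=\coth(\frac{x}{2})\coth(\ell_*)$, which involves the two sides adjacent to $t$. It gives $\cosh(t)\tanh(\frac{x}{2})=\coth(\ell_*)\geq1$ directly, with the case $z=0$ corresponding to $\ell_*=\infty$.
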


\begin{proof}
    We have:
    \[
        S_a^n(x,y,z) = \left(\begin{array}{cc}
            \sinh(t)\sinh(\frac{|n|x}{2}) & \cosh(t)\sinh(\frac{|n|x}{2})+\varepsilon_n\cosh(\frac{nx}{2}) \\
            \cosh(t)\sinh(\frac{|n|x}{2})-\varepsilon_n\cosh(\frac{nx}{2}) & \sinh(t)\sinh(\frac{|n|x}{2})
        \end{array}\right),
    \]
    hence the diagonal entries of $S_a^n(x,y,z)$ are clearly non-negative.
    
    Using hyperbolic trigonometric formul{\ae} in right-angled pentagons and the notations introduced above (see Figure \ref{S3:Fig:Incursion}), we have:
    \[
        \cosh(t)=\coth\Big(\frac{x}{2}\Big)\coth(\ell_*),
    \]
    yielding:
    \begin{align*}
        \cosh(t)\sinh\Big(\frac{nx}{2}\Big)\pm\cosh\Big(\frac{nx}{2}\Big) = & \pm\cosh\Big(\frac{nx}{2}\Big)+\coth(\ell_*)\frac{\sinh(\frac{nx}{2})\cosh(\frac{x}{2})}{\sinh(\frac{x}{2})} \\
        = & \cosh\Big(\frac{nx}{2}\Big)(\coth(\ell_*)\pm1)+\coth(\ell_*)\frac{\sinh(\frac{(n-1)x}{2})}{\sinh(\frac{x}{2})},
    \end{align*}
    which is clearly positive for $n>0$. The matrix $S_a^n(x,y,z)$ therefore has non-negative entries.
\end{proof}

For convenience of notations we will write $S_a^n$ and $S_b^n$ instead of $S_a^n(x,y,z)$ and $S_b^n(x,y,z)$. We have:
\begin{align}
    \label{S3:Eq:MatSa} & \varepsilon_nw(t)a(nx)Ww(t) \\
    \label{S3:Eq:MatSb} & \varepsilon_nw(t_*)a(ny)Ww(t_*).
\end{align}
Let:
\[
    W=\left(\begin{array}{cc}
        0 & 1 \\
        -1 & 0
    \end{array}\right).
\]
\begin{Rem}
    We have $W^2=-I_2$ and if $A\in\SL_2(\mathbf{R})$:
    \[
        WAW^{-1}=\,^tA^{-1}.
    \]
    It follows that:
    \begin{align}
        \label{S3:Eq:MatWa} Wa(\theta) & = a(-\theta)W \\
        \label{S3:Eq:MatWw} Ww(\ell) & = w(-\ell)W,
    \end{align}
    and using (\ref{S3:Eq:MatSa}) the fact that $W^2=-I_2$, we deduce that:
    \begin{equation}\label{S3:Eq:InvMatS}
        (S_a^n)^{-1}=WS_a^{-n}W^{-1}.
    \end{equation}
    Since $S_b^n(x,y,z)=S_a^n(y,x,z)$, we deduce that $(S_b^n)^{-1}=WS_b^{-n}W^{-1}$.
\end{Rem}

\begin{Lem}\label{S3:Lem:TraceHoloRep1}
    With the same notations as above, there exists $k\in\mathbf{N}$ such that $2\cosh(\frac{\ell_\gamma}{2})$ is a linear combination with coefficients which are products of elements of the form:
    \[
        \Tr\big(S_b^{u_1}S_a^{v_1}\cdots S_b^{u_s}S_a^{v_s}\big),
    \]
    with $u_i,v_i\geq 1$ for all $i$, of terms of the form:
    \[
        \Tr\big(M_1a(\theta_1)\cdots M_ka(\theta_k)\big),
    \]
    where each $\theta_j$ is a linear combination with rational coefficients of Fenchel-Nielsen coordinates, and each $M_j$ is either of the form $w(t+t_*)$ or $w(t)S_b^pw(t)$ for some $p\in\mathbf{Z}_{\neq0}$.
\end{Lem}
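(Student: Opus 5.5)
We start from the representation obtained just above: $2\cosh(\frac{\ell_\gamma}{2})=|\Tr(\mathcal{W})|$, where $\mathcal{W}$ is the cyclic product over all incursions of $\widetilde{\gamma}$ of blocks of the form (\ref{S3:Eq:HoloRepGen1}) or (\ref{S3:Eq:HoloRepGen2}). Each block is
\[
w(t)\Big(\prod_i S_b^{p_i}S_a^{q_i}\Big)\big[S_b^{p_{r+1}}\big]\,w(t_\bullet)\,a(\theta),
\]
where $t_\bullet\in\{t,t_*\}$ and $\theta$ is a rational linear combination of Fenchel--Nielsen coordinates (a twist plus $\frac{m}{2}$ times a length). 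The plan is to reduce the middle part $\prod_i S_b^{p_i}S_a^{q_i}$ to the forms allowed in the statement, using the trace identity $\Tr(AB)=\Tr(A)\Tr(B)-\Tr(A^{-1}B)$.

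\textbf{Step 1: normalize signs.} Using (\ref{S3:Eq:InvMatS}), together with (\ref{S3:Eq:MatWa}), (\ref{S3:Eq:MatWw}) and $W^2=-I_2$, rewrite every $S^{n}$ with $n<0$ as $\pm W(S^{-n})^{-1}W^{-1}$. Commuting the factors $W$ through the matrices $a(\cdot)$ and $w(\cdot)$ only flips the signs of their arguments, so the $\theta_j$ remain rational combinations of coordinates. I expect to arrange things so that the middle products are words with positive exponents, or inverses of such words. This bookkeeping is routine but fiddly. I would first try to show that the $W$'s cancel in pairs along the cyclic word; if they do not, I would absorb any leftover $W$ into a redefinition of the angle.

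\textbf{Step 2: induct on the number of $S$-letters not already inside a factor $w(t)S_b^pw(t)$.} Let $X=S_b^{u_1}S_a^{v_1}\cdots S_b^{u_s}S_a^{v_s}$ be a maximal middle product, so the full trace is $\Tr(XY)$ with $Y$ the rest of the cyclic word. Apply
\[
\Tr(XY)=\Tr(X)\Tr(Y)-\Tr(X^{-1}Y).
\]
\begin{itemize}
\item The term $\Tr(X)\Tr(Y)$ has the required shape. The scalar $\Tr(X)$ is an allowed coefficient. Deleting $X$ from the block leaves $w(t)w(t_\bullet)=w(t+t_\bullet)$, since $w$ is a one-parameter group; in case (\ref{S3:Eq:HoloRepGen1}) this is $w(t+t_*)$. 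In case (\ref{S3:Eq:HoloRepGen2}) the block keeps the factor $S_b^{p_{r+1}}$ and becomes $w(t)S_b^{p_{r+1}}w(t)$, which is the other allowed shape $M_j$.
\item The term $\Tr(X^{-1}Y)$ contains $X^{-1}$, which by (\ref{S3:Eq:InvMatS}) is again a product of the same kind with negated exponents, conjugated by $W$. It has the same number of letters, so this alone does not terminate the induction.
\end{itemize}

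\textbf{Main obstacle: termination.} To make the induction terminate, I would peel off one letter at a time rather than the whole block. Write $X=S_b^{u}X'$ and apply the identity with $A=S_b^{u}$. The scalar $\Tr(S_b^u)$ is an allowed coefficient, and $A^{-1}=WS_b^{-u}W^{-1}$ by (\ref{S3:Eq:InvMatS}).

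This alone does not lower the letter count. The way to make progress is to expand $S_b^{-u}$ using the explicit entries of $S_b^n$: each $S_b^n$ is an explicit combination of $w(t_*)a(ny)Ww(t_*)$, by (\ref{S3:Eq:MatSb}). Substituting this turns each $S$-letter into factors of the form $w$ and $a$. Choosing $t_\bullet$-adjacent pairs then merges each $S$-letter either into a $w(t)S_b^pw(t)$ factor or into a $w(t+t_*)$ factor, with a new $a(\pm ny)$ angle. Each such step strictly decreases the number of free $S$-letters, so the process terminates.

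\textbf{Step 3: bookkeeping.} At the end, collect the products of the scalar traces $\Tr(S_b^{u_1}\cdots S_a^{v_s})$ as the coefficients. All remaining matrices are then of the form $M_ja(\theta_j)$ with $M_j$ of the two allowed shapes, and the number $k$ of such factors is bounded by the number of incursions. Finally, fix $k$ to be uniform, by padding with trivial factors $a(0)$ if necessary.
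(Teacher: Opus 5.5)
There is a genuine gap, and it sits exactly where you locate the main obstacle. You claim that $\Tr(X^{-1}Y)$ has as many letters as $\Tr(XY)$. That is false once $X^{-1}$ is read inside its block, and the correct count is what makes the induction terminate.

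For a middle word $X=S_b^{p_1}S_a^{q_1}\cdots S_b^{p_r}S_a^{q_r}$, (\ref{S3:Eq:InvMatS}) gives $X^{-1}=W\,S_a^{-q_r}S_b^{-p_r}\cdots S_a^{-q_1}S_b^{-p_1}\,W^{-1}$. From (\ref{S3:Eq:MatSa}), (\ref{S3:Eq:MatSb}), (\ref{S3:Eq:MatWa}) and (\ref{S3:Eq:MatWw}) one gets the collapse and merge identities
\[
w(t)\,W S_a^{n}=-\varepsilon_n\,a(-nx)\,w(t),\qquad S_b^{n}W^{-1}w(t_*)=\varepsilon_n\,w(t_*)\,a(ny),\qquad S_b^{m}W^{-1}S_b^{n}=\pm S_b^{m+n}.
\]
Hence, when all exponents of $X$ have the same sign,
\[
w(t)X^{-1}w(t_*)=-a(q_rx)\,w(t)\,S_b^{-p_r}S_a^{-q_{r-1}}\cdots S_b^{-p_2}S_a^{-q_1}\,w(t_*)\,a(-p_1y).
\]
The consequences are:
\begin{itemize}
\item The two outermost letters are absorbed.
\item The new factors $a(q_rx)$ and $a(-p_1y)$ merge into the twist factors on either side of the block, and these remain rational combinations of Fenchel--Nielsen coordinates.
\item The number of blocks is unchanged, so $k$ is simply the number of incursions and no padding is needed.
\item The term $-\Tr(X^{-1}Y)$ becomes a $+$ term.
\end{itemize}
So applying the trace identity to the whole middle word already lowers the letter count by two. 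This boundary collapse is the idea your proposal is missing.

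The rest of your plan does not supply a substitute.

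\emph{Step 1 cannot produce same-sign middle words.} Conjugating a negative letter by $W$ only replaces it by an inverse. A leftover $W=R(\pi)$ is not diagonal, so it cannot be absorbed into any $a(\theta)$. For a mixed-sign $X$, the scalar $\Tr(X)$ is therefore not an admissible coefficient.

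\emph{How the paper handles mixed signs.} It runs a preliminary induction that applies the trace identity to the maximal same-sign prefix $S_b^{p_1}\cdots S_a^{q_s}$. When the sign change occurs at an $S_a$-letter, it uses instead $S_b^{p_1}\cdots S_b^{p_s}W^{-1}$, whose trace is $\pm\Tr(S_b^{p_1+p_s}S_a^{q_1}\cdots S_a^{q_{s-1}})$ by the merge identity. In both cases the remaining term is again shortened by the identities above.

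\emph{Your fallback of peeling off one letter fails on two counts.} First, $\Tr(S_b^{u})$ is not an admissible coefficient, since admissible words contain both letters with positive exponents. Second, expanding every letter through (\ref{S3:Eq:MatSb}) merely rewrites the block as a word in $w(\cdot)$, $a(\cdot)$ and $W$. Removing the $W$'s by commutation creates factors $w(-(t+t_*))$, which are neither of the two allowed shapes nor entrywise non-negative.

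That non-negativity matters. The paper uses it to replace $|\Tr|$ by $\Tr$ at the outset, and the positivity of the resulting combination is needed later in Corollary \ref{S3:Cor:HoloRepGen2}.
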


\begin{proof}
    By concatenating all the incursions of $\widetilde{\gamma}$ in the pairs of pants of the complement $S(\gamma)\backslash\mathscr{P}$, we can represent the loop $\widetilde{\gamma}$ by a product of the form:
    \[
        M_1a(\theta_1)\cdots M_ka(\theta_k),
    \]
    where each $\theta_j$ is a linear combination with rational coefficients of Fenchel-Nielsen coordinates defined along $\mathscr{P}$, and each $M_j$ is of the form (\ref{S3:Eq:HoloRepGen1}) or (\ref{S3:Eq:HoloRepGen2}) (with exponents $p_i$'s and $q_i$'s of arbitrary signs). Hence since the entries of the matrices $S_b^{p_i},S_a^{q_i},a(\theta)$ and $w(\ell)$ are all non-negative (when $\ell\geq0$), we have:
    \[
        2\cosh\Big(\frac{\ell_\gamma}{2}\Big)=\Tr\Big(M_1a(\theta_1)\cdots M_ka(\theta_k)\Big).
    \]
    If there is at most one matrix $S_b^{p_i}$ in each matrix $M_j$, there is nothing to prove. Else, since the trace is invariant under cyclic permutation of the matrices, we can assume without loss of generality that the expression of the matrix $M_1$ contains at least one product $S_b^pS_a^q$. Our strategy to prove the result is to inductively reduce the number of such products in the expression of $M_1$ by iterating the trace formula:
    \[
        \Tr(AB)=\Tr(A)\Tr(B)-\Tr(A^{-1}B).
    \]
    The first step is, by induction, to replace $M_1$ by a matrix of the form:
    \begin{equation}\label{S3:Lem:TraceHoloRep1:Demo1}
        w(t)S_b^{p_1}S_a^{q_1}\cdots S_b^{p_r}S_a^{q_r}\widetilde{M}_1,
    \end{equation}
    with $p_1,q_1,\cdots,p_r,q_r$ of the same sign, and the form of the matrix $\widetilde{M}_1$ is either $w(t+t_*)$ or $w(t)S_b^pw(t)$, depending on the expression (\ref{S3:Eq:HoloRepGen1}) or (\ref{S3:Eq:HoloRepGen2}) of $M_1$.
    
    Let us first assume that $p_1,q_1,\ldots,p_r,q_r$ do not have the same signs. At this step two scenarios can occur.
    \begin{enumerate}[label=\textup{(\roman*)}]
        \item\label{S3:Lem:TraceHoloRep1:DemoItem1} The exponents $p_1,q_1,\ldots,p_s,q_s$ have the same sign and $p_{s+1}$ has the opposite sign. Let us denote:
        \[
            M_1=w(t)S_b^{p_1}\cdots S_a^{q_s}S_b^{p_{s+1}}M_0,
        \]
        and $M=a(\theta_1)M_2\cdots a(\theta_{k-1})M_k$. We have:
        \begin{align*}
            2\cosh\Big(\frac{\ell_\gamma}{2}\Big) = & \Tr\Big(w(t)S_b^{p_1}\cdots S_a^{q_s}S_b^{p_{s+1}}M_0Ma(\theta_k)\Big) \\
            = & \Tr\Big(S_b^{p_1}\cdots S_a^{q_s}\Big)\Tr\Big(w(t)S_b^{p_{s+1}}M_0Ma(\theta_k)\Big) \\
            & -\Tr\Big(w(t)(S_a^{q_s})^{-1}(S_b^{p_s})^{-1}\cdots(S_a^{q_1})^{-1}(S_b^{p_1})^{-1}S_b^{p_{s+1}}M_0Ma(\theta_k)\Big)
        \end{align*}
        If the exponents $p_1,q_1,\cdots,p_s,q_s$ are positive, then the coefficient $\Tr(S_b^{p_1}\cdots S_a^{q_s})$ is an admissible coefficient in the linear combination we want to obtain. If they are negative, using (\ref{S3:Eq:InvMatS}) we have:
        \[
            \Tr\Big(S_b^{p_1}\cdots S_a^{q_s}\Big) = \Tr\Big((S_a^{q_s})^{-1}\cdots(S_b^{p_1})^{-1}\Big)=\Tr\Big(S_a^{-q_s}\cdots S_b^{-p_1}\Big),
        \]
        which has the correct form. Next, if $M_1'=w(t)S_b^{p_{s+1}}M_0$ does not have the form (\ref{S3:Lem:TraceHoloRep1:Demo1}), it falls into the scenarios \ref{S3:Lem:TraceHoloRep1:DemoItem1} or \ref{S3:Lem:TraceHoloRep1:DemoItem2} with less products $S_b^pS_a^q$ than $M_1$ and by induction we can assume without loss of generality that this matrix has the form (\ref{S3:Lem:TraceHoloRep1:Demo1}).
        
        It still remains to deal with the term:
        \[
            -\Tr\Big(w(t)(S_a^{q_s})^{-1}(S_b^{p_s})^{-1}\cdots(S_a^{q_1})^{-1}(S_b^{p_1})^{-1}S_b^{p_{s+1}}M_0Ma(\theta_k)\Big).
        \]
        Using (\ref{S3:Eq:MatWa}), (\ref{S3:Eq:MatWw}), (\ref{S3:Eq:InvMatS}) and the fact that $p_1q_s>0$ and $p_1p_{s+1}<0$, we have:
        \begin{align*}
            -\Tr\Big(w(t)(S_a^{q_s})^{-1}(S_b^{p_s})^{-1}\cdots & (S_a^{p_1})^{-1}(S_b^{p_1})^{-1}S_b^{p_{s+1}}M_0Ma(\theta_k)\Big) \\
            = & \Tr\Big(a(q_sx)w(t)S_b^{-p_s}S_a^{-p_{s-1}}\cdots S_b^{-p_2}S_a^{-q_1}S_b^{p_{s+1}-p_1}M_0Ma(\theta_k)\Big) \\
            = & \Tr\Big(w(t)S_b^{-p_s}S_a^{-p_{s-1}}\cdots S_b^{-p_2}S_a^{-q_1}S_b^{p_{s+1}-p_1}M_0Ma(\theta_k+q_sx)\Big).
        \end{align*}
        If the matrix involved in this expression does not have the correct form (\ref{S3:Lem:TraceHoloRep1:Demo1}), it falls into the scenarios \ref{S3:Lem:TraceHoloRep1:DemoItem1} or \ref{S3:Lem:TraceHoloRep1:DemoItem2} with less products $S_b^pS_a^q$, hence we can proceed by induction.
        \item\label{S3:Lem:TraceHoloRep1:DemoItem2} The exponents $p_1,q_1,\ldots,p_s$ have the same sign and $q_s$ has the opposite sign. In this case we denote:
        \[
            M_1=w(t)S_b^{p_1}\cdots S_a^{q_s}M_0,
        \]
        and $M=a(\theta_1)M_2\cdots a(\theta_{k-1})M_k$. We have:
        \begin{align*}
            2\cosh\Big(\frac{\ell_\gamma}{2}\Big) = & \Tr\Big(w(t)S_b^{p_1}\cdots S_a^{q_s}M_0Ma(\theta_k)\Big) \\
            = & \Tr\Big(S_b^{p_1}S_a^{q_1}\cdots S_b^{p_s}W^{-1}\Big)\Tr\Big(w(t)WS_a^{q_s}M_0Ma(\theta_k)\Big) \\
            & - \Tr\Big(w(t)W(S_b^{p_s})^{-1}\cdots(S_a^{q_1})^{-1}(S_b^{p_1})^{-1}WS_a^{q_s}M_0Ma(\theta_k)\Big).
        \end{align*}
        Since $p_1p_s>0$, we have:
        \[
            \Tr\Big(S_b^{p_1}S_a^{q_1}\cdots S_b^{p_s}W^{-1}\Big)=\varepsilon_{p_1}\Tr\Big(S_b^{p_1+p_s}S_a^{q_1}\cdots S_b^{p_{s-1}}S_a^{q_{s-1}}\Big),
        \]
        where $\varepsilon_{p_1}$ is the sign of $p_1$, and the coefficient $\Tr(S_b^{p_1+p_s}S_a^{q_1}\cdots S_b^{p_{s-1}}S_a^{q_{s-1}})$ has an admissible form. Let $\varepsilon_{q_s}$ be the sign of $q_s$. We have:
        \begin{align*}
            \Tr\Big(w(t)WS_a^{q_s}M_0Ma(\theta_k)\Big) = & -\varepsilon_{q_s}\Tr\Big(a(-q_sx)w(t)M_0Ma(\theta_k)\Big) \\
            = & -\varepsilon_{q_s}\Tr\Big(w(t)M_0Ma(\theta_k-q_sx)\Big),
        \end{align*}
        hence, since $\varepsilon_{q_s}\varepsilon_{p_1}=-1$, by induction the term:
        \[
            \Tr\Big(S_b^{p_1}S_a^{q_1}\cdots S_b^{p_s}W^{-1}\Big)\Tr\Big(w(t)WS_a^{q_s}M_0Ma(\theta_k)\Big)
        \]
        has the correct form. Finally we have:
        \begin{align*}
            - \Tr\Big(w(t)W(S_b^{p_s})^{-1}\cdots(S_a^{q_1})^{-1}(S_b^{p_1})^{-1}WS_a^{q_s}M_0 & Ma(\theta_k)\Big) \\
            = & \Tr\Big(w(t)S_b^{-p_s}S_a^{-q_{s-1}}\cdots S_b^{-p_1}S_a^{q_s}M_0Ma(\theta_k)\Big),
        \end{align*}
        hence if the matrix involved in this term has not the form (\ref{S3:Lem:TraceHoloRep1:Demo1}), it falls into the scenarios \ref{S3:Lem:TraceHoloRep1:DemoItem1} or \ref{S3:Lem:TraceHoloRep1:DemoItem2} with a longer sequence of consecutive exponents $p_i,q_i$ of the same sign, and we can conclude by induction.
    \end{enumerate}
    After this first induction, and by performing cyclic permutations of the matrices $M_ja(\theta_j)$, we can express $2\cosh(\frac{\ell_\gamma}{2})$ as a linear combination (with coefficients which are products of terms $\Tr(S_b^{u_1}\cdots S_a^{v_s})$ with $u_1,v_1,\ldots,u_s,v_s\geq1$) of terms of the form:
    \[
        \Tr\Big(M_1a(\theta_1)\cdots M_ka(\theta_k)\Big),
    \]
    where each $M_j$ has the form (\ref{S3:Lem:TraceHoloRep1:Demo1}). Let us now perform another induction to replace the matrix $M_1$ by either $w(t+t_*)$ or $w(t)S_b^pw(t)$ for some $p\in\mathbf{Z}_{\neq0}$. Once again several scenarios can occur.
    \begin{itemize}[label=$\bullet$]
        \item The matrix $M_1$ has the form (\ref{S3:Eq:HoloRepGen1}), i.e. we have:
        \[
            M_1=w(t)S_b^{p_1}\cdots S_a^{q_r}w(t_*),
        \]
        with $p_1,q_1,\ldots,p_r,q_r$ of the same sign. We denote $M'=M_2a(\theta_2)\cdots a(\theta_{k-1})M_k$. We have:
        \begin{align*}
            \Tr\Big(M_1a(\theta_1)\cdots M_ka(\theta_k)\Big) = & \Tr\Big(w(t)S_b^{p_1}\cdots S_a^{q_r}w(t_*)a(\theta_1)M'a(\theta_k)\Big) \\
            = & \Tr\Big(S_b^{p_1}\cdots S_a^{q_r}\Big)\Tr\Big(w(t+t_*)a(\theta_1)M'a(\theta_k)\Big) \\
            & - \Tr\Big(w(t)(S_a^{q_r})^{-1}\cdots(S_b^{p_1})^{-1}w(t_*)a(\theta_1)M'a(\theta_k)\Big).
        \end{align*}
        The coefficient $\Tr(S_b^{p_1}\cdots S_a^{q_r})$ is an admissible coefficient for our linear combination, and in the term $\Tr(w(t+t_*)a(\theta_1)M'a(\theta_k))$, the matrix $M_1$ has been replaced by $w(t+t_*)$. Finally, using (\ref{S3:Eq:MatWa}), (\ref{S3:Eq:MatWw}) and (\ref{S3:Eq:InvMatS}) we have:
        \begin{align*}
            -\Tr\Big(w(t)(S_a^{q_r})^{-1}\cdots & (S_b^{p_1})^{-1}w(t_*)a(\theta_1)M'a(\theta_k)\Big) \\
            = & -\Tr\Big(w(t)(S_a^{q_r})^{-1}WS_b^{-p_r}S_a^{-q_{r-1}}\cdots S_b^{-p_2}S_a^{-q_1}W^{-1}(S_b^{p_1})^{-1}w(t_*)a(\theta_1)M'a(\theta_k)\Big) \\
            = & \Tr\Big(a(q_rx)w(t)S_b^{-p_r}S_a^{-q_{r-1}}\cdots S_b^{-p_2}S_a^{-q_1}w(t_*)a(-p_1y)a(\theta_1)M'a(\theta_k)\Big) \\
            = & \Tr\Big(w(t)S_b^{-p_r}S_a^{-q_{r-1}}\cdots S_b^{-p_2}S_a^{-q_1}w(t_*)a(\theta_1-p_1y)M'a(\theta_k+q_rx)\Big).
        \end{align*}
        Thus we replaced the matrix $M_1$ by a matrix with less products $S_b^pS_a^q$, with exponents of the same signs (and $a(\theta_1)$ and $a(\theta_k)$ have been replaced by matrices of the form $a(\theta)$ where $\theta$ is a linear combination with rational coefficients of Fenchel-Nielsen coordinates), and we can therefore proceed by induction.
        \item The matrix $M_1$ has the form (\ref{S3:Eq:HoloRepGen2}), i.e. we have:
        \[
            M_1=w(t)S_b^{p_1}S_a^{q_1}\cdots S_b^{p_r}S_a^{q_r}S_b^{p_{r+1}}w(t),
        \]
        with $p_1,q_1,\ldots,p_r,q_r,p_{r+1}$ of the same sign. Once again we denote $M'=M_2a(\theta_2)\cdots a(\theta_{k-1})M_k$.
        
        If $p_1=p_{r+1}$, we have:
        \begin{align*}
            \Tr\Big(M_1a(\theta_1)\cdots M_ka(\theta_k)\Big) = & \Tr\Big(w(t)S_b^{p_1}S_a^{q_1}\cdots S_b^{p_r}S_a^{q_r}S_b^{p_{r+1}}w(t)a(\theta_1)M'a(\theta_k)\Big) \\
            = & \Tr\Big(S_b^{p_1}\cdots S_a^{q_r}\Big)\Tr\Big(w(t)S_b^{p_{r+1}}w(t)a(\theta_1)M'a(\theta_k)\Big) \\
            & - \Tr\Big(w(t)(S_a^{q_r})^{-1}(S_b^{p_r})^{-1}\cdots(S_b^{p_2})^{-1}(S_a^{q_1})^{-1}w(t)a(\theta_1)M'a(\theta_k)\Big).
        \end{align*}
        The coefficient $\Tr(S_b^{p_1}\cdots S_a^{q_r})$ is an admissible coefficient for our linear combination, and in the term $\Tr(w(t)S_b^{p_{r+1}}w(t)a(\theta_1)M'a(\theta_k))$, the matrix $M_1$ has been replaced by $w(t)S_b^{p_{r+1}}w(t)$. Using (\ref{S3:Eq:MatWa}), (\ref{S3:Eq:MatWw}) and (\ref{S3:Eq:InvMatS}), we have:
        \begin{align*}
            - \Tr\Big(w(t)(S_a^{q_r})^{-1}(S_b^{p_r})^{-1}\cdots & (S_b^{p_2})^{-1}(S_a^{q_1})^{-1}w(t)a(\theta_1)M'a(\theta_k)\Big) \\
            = & \Tr\Big(a(q_rx)w(t)S_b^{-p_r}S_a^{-q_{r-1}}\cdots S_b^{-p_2}w(t)a(-q_1x)a(\theta_1)M'a(\theta_k)\Big) \\
            = & \Tr\Big(w(t)S_b^{-p_r}S_a^{-q_{r-1}}\cdots S_b^{-p_2}w(t)a(\theta_1-q_1x)M'a(\theta_k+q_rx)\Big).
        \end{align*}
        In this term the matrix $M_1$ has been replaced by a matrix of the form (\ref{S3:Eq:HoloRepGen2}) with less products $S_b^pS_a^q$ (with exponents of the same sign), thus we can conclude by induction.
        
        If the sign of $p_{r+1}-p_1$ is the same as the one of $p_{r+1}$, we have:
        \begin{align*}
            \Tr\Big(M_1a(\theta_1)\cdots M_ka(\theta_k)\Big) = & \Tr\Big(w(t)S_b^{p_1}S_a^{q_1}\cdots S_b^{p_r}S_a^{q_r}S_b^{p_{r+1}}w(t)a(\theta_1)M'a(\theta_k)\Big) \\
            = & \Tr\Big(S_b^{p_1}\cdots S_a^{q_r}\Big)\Tr\Big(w(t)S_b^{p_{r+1}}w(t)a(\theta_1)M'a(\theta_k)\Big) \\
            & - \Tr\Big(w(t)(S_a^{q_r})^{-1}(S_b^{p_r})^{-1}\cdots(S_a^{q_1})^{-1}(S_b^{p_1})^{-1}S_b^{p_{r+1}}w(t)a(\theta_1)M'a(\theta_k)\Big),
        \end{align*}
        and:
        \begin{align*}
            - \Tr\Big(w(t)(S_a^{q_r})^{-1}(S_b^{p_r})^{-1}\cdots & (S_a^{q_1})^{-1}(S_b^{p_1})^{-1}S_b^{p_{r+1}}w(t)a(\theta_1)M'a(\theta_k)\Big) \\
            = & \Tr\Big(a(q_rx)w(t)S_b^{-p_r}S_a^{-q_{r-1}}\cdots S_b^{-p_2}S_a^{-q_1}S_b^{p_{r+1}-p_1}w(t)a(\theta_1)M'a(\theta_k)\Big) \\
            = & \Tr\Big(w(t)S_b^{-p_r}S_a^{-q_{r-1}}\cdots S_b^{-p_2}S_a^{-q_1}S_b^{p_{r+1}-p_1}w(t)a(\theta_1)M'a(\theta_k+q_rx)\Big),
        \end{align*}
        The coefficients $-p_r,-q_{r-1},\cdots,-p_2,-q_1$ have the same sign and $p_{r+1}-p_1$ has the opposite sign, hence the matrix $M_1'=w(t)S_b^{-p_r}S_a^{-q_{r-1}}\cdots S_b^{-p_2}S_a^{-q_1}S_b^{p_{r+1}-p_1}w(t)$ falls into the scenario \ref{S3:Lem:TraceHoloRep1:DemoItem1} and we can conclude by induction.
        
        Finally, if $p_1-p_{r+1}$ has the same sign as $p_1$, we have:
        \begin{align*}
            \Tr\Big(M_1a(\theta_1)\cdots M_ka(\theta_k)\Big) = & \Tr\Big(a(\theta_1)M'a(\theta_k)M_1\Big) \\
            = & \Tr\Big(a(\theta_1)M'a(\theta_k)w(t)S_b^{p_1}S_a^{q_1}\cdots S_a^{q_r}S_b^{p_{r+1}}w(t)\Big) \\
            = & \Tr\Big(a(\theta_1)M'a(\theta_k)w(t)S_b^{p_1}w(t)\Big)\Tr\Big(S_a^{q_1}S_b^{p_2}\cdots S_a^{q_r}S_b^{p_{r+1}}\Big) \\
            & - \Tr\Big(a(\theta_1)M'a(\theta_k)w(t)S_b^{p_1}(S_b^{p_{r+1}})^{-1}(S_a^{q_r})^{-1}\cdots(S_b^{p_2})^{-1}(S_a^{q_1})^{-1}w(t)\Big),
        \end{align*}
        and we proceed similarly to the case $(p_{r+1}-p_1)p_{r+1}>0$ to conclude by induction.
    \end{itemize}
\end{proof}

\begin{Rem}
    In each term of the form:
    \[
        \Tr\Big(M_1a(\theta_1)\cdots M_ka(\theta_k)\Big)
    \]
    provided by Lemma \ref{S3:Lem:TraceHoloRep1}, the number of matrices $2k$ is constant and depends only on $\widetilde{\gamma}$ (and on the pants decomposition $\mathscr{P}$). Moreover the form $w(t+t_*)$ or $w(t)S_b^pw(t)$ of the matrix $M_j$ only depends on the number of matrices $S_b^p$ and $S_a^q$ appearing in the matricial representative (\ref{S3:Eq:HoloRepGen1}) or (\ref{S3:Eq:HoloRepGen2}) of the $j$-th incursion of $\widetilde{\gamma}$ inside a pair of pants of the complement $S(\gamma)\backslash\mathscr{P}$. In particular:
    \begin{itemize}
        \item if the $j$-th incursion of $\widetilde{\gamma}$ enters and exits a pair of pants by passing through different curves, then this incursion is represented by the matrix (\ref{S3:Eq:HoloRepGen1}) and each $M_j$ provided by Lemma \ref{S3:Lem:TraceHoloRep1} is of the form $w(t+t_*)$;
        \item if the $j$-th incursion of $\widetilde{\gamma}$ enters and exits a pair of pants by passing through the same curve, then this incursion is represented by the matrix (\ref{S3:Eq:HoloRepGen2}) and each $M_j$ provided by Lemma \ref{S3:Lem:TraceHoloRep1} is of the form $w(t)S_b^pw(t)$.
    \end{itemize}
\end{Rem}

\begin{Lem}\label{S3:Lem:TraceHoloRep2}
    For any $p_1,q_1,\ldots,p_r,q_r\in\mathbf{N}_{\geq1}$, there exist a finite set $G$ of linear forms on $\mathbf{R}^3$ and $\{a_T\}_{T\in G}\subset\mathbf{R}_{>0}$ such that:
    \[
        \Tr\big(S_b^{p_1}S_a^{q_1}\cdots S_b^{p_r}S_a^{q_r}\big) = \sum_{T\in G}a_Te^{T(x,y,z)}.
    \]
\end{Lem}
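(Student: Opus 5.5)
The plan is to avoid expanding the matrix product directly and instead recognise $\Tr(S_b^{p_1}S_a^{q_1}\cdots S_b^{p_r}S_a^{q_r})$ as $2\cosh(\ell/2)$ for a closed geodesic $\ell$ in the pair of pants $P$ bounded by $\alpha_0,\beta_0,\beta_1$ (lengths $x,y,z$). Then Lemma \ref{S3:Lem:LenPolyPos}, applied inside $P$, finishes the argument. Direct expansion is unattractive because the entries involve $t$ and $t_*$ separately, and these are not linear in $(x,y,z)$.

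\textbf{Step 1 (reduction to a word in $\pi_1(P)$).} Using $S_a^n=\varepsilon_nR(\frac{\pi}{2})A^nR(\frac{\pi}{2})$, $S_b^n=\varepsilon_nR(-\frac{\pi}{2})B^nR(-\frac{\pi}{2})$ and $R(\frac{\pi}{2})R(-\frac{\pi}{2})=I_2$, the product telescopes. Since all exponents are positive, all signs $\varepsilon$ equal $+1$, and
\[
\Tr\big(S_b^{p_1}S_a^{q_1}\cdots S_b^{p_r}S_a^{q_r}\big)=\Tr\big(B^{p_1}A^{q_1}\cdots B^{p_r}A^{q_r}\big).
\]
By Lemma \ref{S3:Lem:HoloRepGeneral2}, $A$ and $B$ are the holonomies of the generators $a,b$ of $\pi_1(P,p_0)$. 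So the right-hand side is, up to sign, the trace of the holonomy of the word $u=b^{p_1}a^{q_1}\cdots b^{p_r}a^{q_r}$. This word is cyclically reduced and nontrivial in the free group $\langle a,b\rangle$, hence hyperbolic. By Lemma \ref{S3:Lem:MatrixCoeffPos1}, all $S_a^{q_i},S_b^{p_i}$ have non-negative entries, and the product is not nilpotent-like, so the trace is positive. Therefore the trace equals $2\cosh(\ell_u/2)$ with no sign ambiguity.

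\textbf{Step 2 (positivity via resolutions).} If $u$ is freely homotopic to a boundary curve or a power of one, for example $(ba)^k$ giving $\beta_1^{\pm k}$, then $2\cosh(k\ell/2)=e^{kw/2}+e^{-kw/2}$ with $w\in\{x,y,z\}$, which already has the required form. Otherwise, apply Lemma \ref{S3:Lem:LenPolyPos} inside $P$. Every simple closed geodesic of a pair of pants is a boundary component, so that lemma gives
\[
\cosh(\ell_u/2)=Q\big(\cosh(q_1x/2),\cosh(q_2y/2),\cosh(q_3z/2)\big)
\]
with $Q$ a polynomial with positive coefficients. Each $\cosh(qw/2)=\frac12(e^{qw/2}+e^{-qw/2})$. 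Expanding the products therefore gives a finite sum $\sum_T a_Te^{T(x,y,z)}$ with $a_T>0$ and $T$ linear forms (with rational coefficients); collecting equal forms keeps the coefficients positive. This is exactly Proposition \ref{S3:Prop:LenPantsFill}, extended to non-filling loops in $P$, and its proof is the same.

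\textbf{Main obstacle: non-primitive words.} The delicate case is when $u=v^k$ with $k\ge2$ and $v$ non-simple. The Chebyshev recursion $\Tr(v^k)=\Tr(v)\Tr(v^{k-1})-\Tr(v^{k-2})$ introduces negative signs, so positivity is not immediate. I would first try to run the induction of Lemma \ref{S3:Lem:LenPolyPos} directly on a minimal-position representative of $v^k$. I would resolve at a crossing of $v$ with itself rather than at the crossings between different sheets, which should keep $c(\beta)=0$ and hence the $+$ sign in Lemma \ref{S3:Lem:LenResol}.

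If that fails, the fallback is a direct algebraic argument. Write $A=a(-t)w(x)a(t)$ and $B=a(t_*)w(-y)a(-t_*)$, so that only $d=t+t_*$ appears, with $\cosh d=\frac{\cosh(z/2)+\cosh(x/2)\cosh(y/2)}{\sinh(x/2)\sinh(y/2)}$. Then show by induction on $r$ that the products of the non-negative entries assemble into positive combinations of exponentials of linear forms.
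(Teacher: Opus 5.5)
Your argument is correct and is essentially the paper's proof. The paper also uses $S_b^{p_1}\cdots S_a^{q_r}=R(-\frac{\pi}{2})B^{p_1}\cdots A^{q_r}R(\frac{\pi}{2})$ to reduce to the trace of a word in the holonomies $A,B$ of a loop in the pair of pants, and then invokes Proposition \ref{S3:Prop:LenPantsFill}, which is deduced from Lemma \ref{S3:Lem:LenPolyPos}.

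The paper applies this proposition to every such word at once. So your separate treatment of boundary powers and your check that the trace is positive (which holds because the diagonal entries of each $S_a^{q},S_b^{p}$ are strictly positive) go beyond what the paper writes. At the paper's level of rigor, your Step 2 already covers non-primitive words, since Lemma \ref{S3:Lem:LenPolyPos} is stated for every non-simple closed geodesic.
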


\begin{proof}
    Any word in the letters $A$ and $B$, where the matrices $A$ and $B$ are defined in Lemma \ref{S3:Lem:HoloRepGeneral2} (with $(\ell_{\alpha_0},\ell_{\beta_0},\ell_{\beta_1})=(x,y,z)$) is the representative in $\SL_2(\mathbf{R})$ of a loop contained inside the pair of pants bounded by $\alpha_0$, $\beta_0$ and $\beta_1$. Hence according to Proposition \ref{S3:Prop:LenPantsFill}, we have:
    \[
        \Tr\Big(B^{p_1}A^{q_1}\cdots B^{p_r}A^{q_r}\Big) = \sum_{T\in G}a_Te^{T(x,y,z)},
    \]
    for some finite set $G\subset(\mathbf{R}^3)^*$ and $\{a_T\}_{T\in G}\subset\mathbf{R}_{>0}$. We conclude by using the fact that for positive $p_1,q_1,\ldots,p_r,q_r$, we have:
    \[
        S_b^{p_1}\cdots S_a^{q_r}=R\Big(-\frac{\pi}{2}\Big)B^{p_1}\cdots A^{q_r}R\Big(\frac{\pi}{2}\Big).
    \]
\end{proof}

\begin{Cor}\label{S3:Cor:HoloRepGen2}
    Let $\gamma$ be a closed loop on $S_{g,n}$. Let $\delta_1,\ldots,\delta_N$ be the connected components of $\partial S(\gamma)$ and $\mathscr{P}$ be a pants decomposition of $S(\gamma)$. We have:
    \[
        \cosh\Big(\frac{\ell_\gamma}{2}\Big)=\frac{1}{\prod_{\alpha\in\mathscr{P}}\sinh(\frac{\ell_\alpha}{2})^{d_\alpha}}P((\ell_\alpha,\tau_\alpha)_{\alpha\in\mathscr{P}},(\ell_{\delta_i})_{1\leq i\leq N}),
    \]
    where for all $\alpha\in\mathscr{P}$, we have $d_\alpha\in\mathbf{N}_{\geq1}$, and $P$ and $Q$ are respectively of the form:
    \[
        P=\sum_{i=1}^p\sqrt{\sum_{T\in G_i}a_T^ie^T},
    \]
    where $G_1,\ldots,G_p$ are finite sets of linear forms on $\mathbf{R}^{2\#\mathscr{P}+N}$, and for all $i\in\{1,\ldots,p\}$, $\{a_T^i\}_{T\in G_i}\subset\mathbf{R}_{>0}$ are positive real numbers.
\end{Cor}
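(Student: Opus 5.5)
Starting from Lemma \ref{S3:Lem:TraceHoloRep1}, I write $2\cosh(\frac{\ell_\gamma}{2})$ as a linear combination of products of coefficients $\Tr(S_b^{u_1}S_a^{v_1}\cdots S_b^{u_s}S_a^{v_s})$ ($u_i,v_i\geq1$) with terms $\Tr(M_1a(\theta_1)\cdots M_ka(\theta_k))$. The plan has three steps: show that every coefficient has the required exponential-sum form, make every remaining matrix entry explicit, and then expand the traces. For the coefficients, Lemma \ref{S3:Lem:TraceHoloRep2} gives sums $\sum a_Te^{T}$ with $a_T>0$ in the boundary lengths of the relevant pair of pants. Each of these can be rewritten as $\sqrt{(\sum a_Te^T)^2}$, and the square is again a positive exponential sum. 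Products of such square roots are square roots of products, so every coefficient is already of the admissible form.

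Next I handle the entries of $M_j$ and $a(\theta_j)$. The matrix $a(\theta)$ has entries $e^{\pm\theta/2}$, and $\theta$ is a rational linear combination of Fenchel--Nielsen coordinates. The matrix $w(\ell)$ has entries $\cosh(\frac{\ell}{2})$ and $\sinh(\frac{\ell}{2})$, with $\ell=t,t_*$ or $t+t_*$. The key step is to express $\cosh(t),\sinh(t),\cosh(t_*),\sinh(t_*)$ through pentagon/hexagon trigonometry. For example, the proof of Lemma \ref{S3:Lem:MatrixCoeffPos1} gives $\cosh(t)=\coth(\frac{x}{2})\coth(\ell_*)$ with $\cosh(\ell_*)$ an explicit ratio. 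After clearing denominators, these become
\[
\frac{1}{\sinh(\frac{x}{2})}\sqrt{\text{positive exponential sum in }(x,y,z)}.
\]
The same should hold for the half-angle quantities $\cosh(\frac{t}{2}),\sinh(\frac{t}{2})$ via $\cosh^2(\frac{t}{2})=\frac{\cosh t+1}{2}$. The hexagon formula $\cosh(\text{side})=\frac{\cosh c+\cosh a\cosh b}{\sinh a\sinh b}$ (in half-lengths) makes each squared quantity a positive combination of exponentials divided by powers of $\sinh(\frac{\ell_\alpha}{2})$. This is where the factors $\sinh(\frac{\ell_\alpha}{2})^{-d_\alpha}$ originate, with $\alpha$ running over $\mathscr{P}$ together with boundary curves; factors coming from $\delta_i$ or the fixed $\mathbf{L}_n$ I would absorb by multiplying numerator and denominator appropriately. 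The matrices $S_b^p$ are handled in the same way, their entries being products of such expressions with $\sinh(\frac{py}{2})$ and $\cosh(\frac{py}{2})$; Lemma \ref{S3:Lem:MatrixCoeffPos1} guarantees these entries are nonnegative.

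Finally I expand the trace $\Tr(M_1a(\theta_1)\cdots M_ka(\theta_k))$ as a sum over index paths of products of entries. Each entry is nonnegative and of the form $\sinh^{-d}\cdot\sqrt{\text{positive exp-sum}}$, up to exponentials of twists. A product of square roots is the square root of the product, and an exponential $e^{\theta/2}$ equals $\sqrt{e^{\theta}}$, so each term of the expansion is $\prod\sinh(\frac{\ell_\alpha}{2})^{-d}\sqrt{\sum a_Te^T}$. I then bring all terms to a common denominator $\prod_\alpha\sinh(\frac{\ell_\alpha}{2})^{d_\alpha}$, multiplying a numerator by $\sinh(\frac{\ell_\alpha}{2})^{e}=\sqrt{\sinh^{2e}}$ where needed. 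Since $\sinh^2(\frac{\ell_\alpha}{2})=\frac{e^{\ell_\alpha}+e^{-\ell_\alpha}-2}{4}$ has a negative coefficient, I would instead choose all $d_\alpha$ equal to the maximal exponent from the outset, so that each term's own factors already match.

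The main obstacle is \textbf{positivity}. The expansion of the trace and of the entries such as $\cosh(t)\sinh(\frac{nx}{2})-\cosh(\frac{nx}{2})$ involves subtractions, and the sign of the trace may differ from the sign of $2\cosh(\frac{\ell_\gamma}{2})$. I would handle this with Lemma \ref{S3:Lem:MatrixCoeffPos1} and the entrywise nonnegativity argument already used in the proof of Lemma \ref{S3:Lem:TraceHoloRep1}, showing that each entry, once squared and multiplied by $\sinh^{2}$ factors, is a positive exponential sum. The rewriting $\cosh(\ell_*)\sinh(\frac{x}{2})\cdots$ as in the proof of Lemma \ref{S3:Lem:MatrixCoeffPos1}, with the term $\frac{\sinh(\frac{(n-1)x}{2})}{\sinh(\frac{x}{2})}$, is exactly the kind of manipulation that cancels the negative terms. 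For the remaining negatively signed traces produced by the induction in Lemma \ref{S3:Lem:TraceHoloRep1}, I would rely on the fact, noted there, that they were transformed into traces with nonnegative entries.
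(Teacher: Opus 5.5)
Your handling of the coefficients from Lemma \ref{S3:Lem:TraceHoloRep2}, of the matrices $a(\theta_j)$ (via $e^{\pm\theta/2}=\sqrt{e^{\pm\theta}}$) and of the final trace expansion is fine. The key step fails, however: the individual quantities $\cosh(t),\sinh(t),\cosh(t_*),\sinh(t_*)$ are \emph{not} of the form $\sinh(\frac x2)^{-1}\sqrt{E}$ with $E$ a positive exponential sum. Combine $\cosh(t)=\coth(\frac x2)\coth(\ell_*)$ with the pentagon relation $\cosh(\ell_*)=\sinh(\frac x2)\sinh(s)$, where $s$ is the common perpendicular from $\alpha_0$ to $\beta_1$. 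This gives
\[
    \cosh^2(t)=\coth^2\Big(\frac x2\Big)\Big(1+\frac{\sinh^2(\frac z2)}{N}\Big),\qquad N=\cosh^2\Big(\frac x2\Big)+\cosh^2\Big(\frac y2\Big)+2\cosh\Big(\frac x2\Big)\cosh\Big(\frac y2\Big)\cosh\Big(\frac z2\Big).
\]
A $y$-dependent denominator survives here, and no power of $\sinh(\frac{\ell_\alpha}{2})$ can clear it. The half-angle entries $\cosh(\frac t2),\sinh(\frac t2)$ of $w(t)$ are no better. So expanding $w(t)S_b^pw(t)$ factor by factor cannot produce the required form.

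The missing observation is that after Lemma \ref{S3:Lem:TraceHoloRep1}, $t$ and $t_*$ only occur through $t+t_*$. Indeed $w$ is a one-parameter group and $S_b^p=\varepsilon_pw(t_*)a(py)Ww(t_*)$, so $w(t)S_b^pw(t)=\varepsilon_pw(t+t_*)a(py)Ww(t+t_*)$. Now $t+t_*$ is the hexagon side between $\alpha_0$ and $\beta_0$, so $\cosh(t+t_*)=\frac{\cosh(\frac x2)\cosh(\frac y2)+\cosh(\frac z2)}{\sinh(\frac x2)\sinh(\frac y2)}$. The identity $\cosh(\frac x2)\cosh(\frac y2)\pm\sinh(\frac x2)\sinh(\frac y2)=\cosh(\frac{x\pm y}{2})$ then gives the entries $\frac{\sqrt{\cosh(\frac z2)+\cosh(\frac{x\pm y}{2})}}{\sqrt{2\sinh(\frac x2)\sinh(\frac y2)}}$ of $w(t+t_*)$. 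For $w(t)S_b^pw(t)$, the nonnegativity from Lemma \ref{S3:Lem:MatrixCoeffPos1} says nothing about the algebraic form of an entry. What removes the minus sign in $\sinh(\frac{|p|y}{2})\cosh(t+t_*)-\cosh(\frac{|p|y}{2})$ is the explicit identity $\sinh(\frac{|p|y}{2})\cosh(\frac y2)-\cosh(\frac{|p|y}{2})\sinh(\frac y2)=\sinh(\frac{(|p|-1)y}{2})$. You also need that $\sinh(\frac{ky}{2})/\sinh(\frac y2)$ is a positive exponential sum for $k\geq1$, which cancels the factor $\sinh(\frac y2)$.

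The denominator bookkeeping is also incomplete. The entries of $w(t+t_*)$ carry half-powers $\sinh(\frac{\ell_{\alpha_0}}{2})^{-1/2}\sinh(\frac{\ell_{\beta_0}}{2})^{-1/2}$. Since $\sqrt{\sinh(\frac{\ell}{2})}$ is not admissible, choosing a large common $d_\alpha$ does not help; you need the paper's argument that these half-powers pair up. Put differently: each endpoint of an incursion on a curve $\alpha\in\mathscr{P}$ contributes exactly $\frac12$ to the exponent of $\sinh(\frac{\ell_\alpha}{2})$. (A bouncing incursion $w(t)S_b^pw(t)$ contributes $\sinh(\frac x2)^{-1}$ through its two endpoints.) Each crossing of $\alpha$ by $\widetilde\gamma$ is the common endpoint of two consecutive incursions, so $d_\alpha$ is the number of crossings, a positive integer.

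Your plan to take all $d_\alpha$ maximal also needs justification. As you note yourself, multiplying a numerator by a power of $\sinh$ leaves the admissible class, so every term must already have the same denominator. This holds for three reasons: the type of each $M_j$ depends only on the $j$-th incursion (as remarked after Lemma \ref{S3:Lem:TraceHoloRep1}); all entries of a given $M_j$ share one denominator; and the coefficients from Lemma \ref{S3:Lem:TraceHoloRep2} have none. For the same reason, factors $\sinh(\frac{\ell_{\delta_i}}{2})^{-1}$ could not be ``absorbed''. In the correct computation they never appear, since denominators only involve curves crossed by $\gamma$, and the boundary length $y$ of a bouncing incursion cancels as above.
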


\begin{proof}
    According to Lemma \ref{S3:Lem:TraceHoloRep1} and Lemma \ref{S3:Lem:TraceHoloRep2}, we have:
    \begin{equation}\label{S3:Cor:HoloRepGen2:Demo1}
        2\cosh\Big(\frac{\ell_\gamma}{2}\Big) = \sum_{j=1}^KP_j\Tr\Big(M_1^ja(\theta_1^j)\cdots M_k^ja(\theta_k^j)\Big),
    \end{equation}
    where for all $j\in\{1,\ldots,K\}$, we have:
    \[
        P_j=\sum_{T\in G_j}a_T^je^{T((\ell_\alpha)_{\alpha\in\mathscr{P}},(\ell_{\delta_i})_{1\leq i\leq N})},
    \]
    each $\theta_i^j$ is a linear combination with rational coefficients of the Fenchel-Nielsen parameters $(\ell_\alpha,\tau_\alpha)_{\alpha\in\mathscr{P}}$, $(\ell_{\delta_i})_{1\leq i\leq N}$, and each $M_i^j$ is either of the form $w(t+t_*)$ or $w(t)S_b^pw(t)$. It is important to notice that in (\ref{S3:Cor:HoloRepGen2:Demo1}), each product $M_1^ja(\theta_1^j)\cdots M_k^ja(\theta_k^j)$ is a product of exactly $2k$ terms, with $k$ depending only on $\gamma$ (and possibly on the pants decomposition $\mathscr{P}$). Let us compute the entries of the matrices $w(t+t_*)$ and $w(t)S_b^pw(t)$.
    
    Using hyperbolic trigonometry in right-angled hexagons, we have:
    \[
        \cosh(t+t_*) = \frac{\cosh(\frac{x}{2})\cosh(\frac{y}{2})+\cosh(\frac{z}{2})}{\sinh(\frac{x}{2})\sinh(\frac{y}{2})}.
    \]
    Hence:
    \[
        w(t+t_*) = \frac{1}{\sqrt{2\sinh(\frac{x}{2})\sinh(\frac{y}{2})}}\left(\begin{array}{cc}
            \sqrt{\cosh(\frac{z}{2})+\cosh(\frac{x+y}{2})} & \sqrt{\cosh(\frac{z}{2})+\cosh(\frac{x-y}{2})} \\
            \sqrt{\cosh(\frac{z}{2})+\cosh(\frac{x-y}{2})} & \sqrt{\cosh(\frac{z}{2})+\cosh(\frac{x+y}{2})}
        \end{array}\right).
    \]
    Denoting the sign of $p$ by $\varepsilon_p$, we also have:
    \[
        w(t)S_b^pw(t) = \left(\begin{array}{cc}
            \sinh(\frac{|p|y}{2})\sinh(t+t_*) & \sinh(\frac{|p|y}{2})\cosh(t+t_*)+\varepsilon_p\cosh(\frac{py}{2}) \\
            \sinh(\frac{|p|y}{2})\cosh(t+t_*)-\varepsilon_p\cosh(\frac{py}{2}) & \sinh(\frac{|p|y}{2})\sinh(t+t_*)
        \end{array}\right).
    \]
    The diagonal coefficients of $w(t)S_b^pw(t)$ are therefore given by:
    \begin{align*}
        \frac{1}{\sinh(\frac{x}{2})}\frac{\sinh(\frac{|p|y}{2})}{\sinh(\frac{y}{2})} & \sqrt{\sinh^2\Big(\frac{x}{2}\Big)+\cosh^2\Big(\frac{y}{2}\Big)+\cosh^2\Big(\frac{z}{2}\Big)+2\cosh\Big(\frac{x}{2}\Big)\cosh\Big(\frac{y}{2}\Big)\cosh\Big(\frac{z}{2}\Big)} \\
        = & \frac{1}{\sinh(\frac{x}{2})}\frac{\sinh(\frac{|p|y}{2})}{\sinh(\frac{y}{2})}\sqrt{\bigg(\cosh\Big(\frac{x+y}{2}\Big)+\cosh\Big(\frac{z}{2}\Big)\bigg)\bigg(\cosh\Big(\frac{x-y}{2}\Big)+\cosh\Big(\frac{z}{2}\Big)\bigg)},
    \end{align*}
    and the antidiagonal coefficients of $w(t)S_b^pw(t)$ are given by:
    \[
        \frac{1}{\sinh(\frac{x}{2})}\bigg[\cosh\Big(\frac{z}{2}\Big)\frac{\sinh(\frac{|p|y}{2})}{\sinh(\frac{y}{2})}+\frac{e^{\frac{x}{2}}}{2}\frac{\sinh(\frac{(|p|+1)y}{2})}{\sinh(\frac{y}{2})}+\frac{e^{-\frac{x}{2}}}{2}\frac{\sinh(\frac{(|p|-1)y}{2})}{\sinh(\frac{y}{2})}\bigg],
    \]
    and:
    \[
        \frac{1}{\sinh(\frac{x}{2})}\bigg[\cosh\Big(\frac{z}{2}\Big)\frac{\sinh(\frac{|p|y}{2})}{\sinh(\frac{y}{2})}+\frac{e^{-\frac{x}{2}}}{2}\frac{\sinh(\frac{(|p|+1)y}{2})}{\sinh(\frac{y}{2})}+\frac{e^{\frac{x}{2}}}{2}\frac{\sinh(\frac{(|p|-1)y}{2})}{\sinh(\frac{y}{2})}\bigg].
    \]
    Thus we can write:
    \[
        w(t)S_b^pw(t) = \frac{1}{\sinh(\frac{x}{2})}\left(\begin{array}{cc}
            Q_1 & Q_2 \\
            Q_3 & Q_4
        \end{array}\right),
    \]
    where $Q_1,Q_2,Q_3,Q_4$ are of the form:
    \begin{equation}\label{S3:Cor:HoloRepGen2:Demo2}
        \sum_{i=1}^q\sqrt{\sum_{T\in G_i}a_T^ie^T},
    \end{equation}
    where each $G_i$ is a finite set of linear forms and each $\{a_T^i\}_{T\in G_i}$ is a set of positive real numbers.
    
    Moreover, a matrix $M_i^j$ in (\ref{S3:Cor:HoloRepGen2:Demo1}) provides a matrix $w(t+t_*)$ after having applied Lemma \ref{S3:Lem:TraceHoloRep1} if and only if the corresponding incursion of $\widetilde{\gamma}$ in a pair of pants $P_1$ of the complement $S(\gamma)\backslash\mathscr{P}$ enters inside $P_1$ and exits $P_1$ by passing through different curves $\alpha_0$ and $\beta_0$. Hence the following incursion of $\widetilde{\gamma}$ into a neighboring pair of pants $P_2$ enters $P_2$ by passing through $\beta_0$. Two scenarios may occur.
    \begin{itemize}[label=$\bullet$]
        \item $\widetilde{\gamma}$ exits $P_2$ by crossing a curve different from $\beta_0$, hence the corresponding incursion $\widetilde{\gamma}_{P_2}$ provides a matrix of the form $w(t+t_*)$ after having applied Lemma \ref{S3:Lem:TraceHoloRep1} and $\sqrt{\sinh(\frac{\ell_{\beta_0}}{2})}$ appears twice in the denominator of the entries of $M_1^ja(\theta_1^j)\cdots M_k^ja(\theta_k^j)$.
        \item $\widetilde{\gamma}$ exits $P_2$ by crossing the same curve $\beta_0$, in which case the corresponding incursion $\widetilde{\gamma}_{P_2}$ provides a matrix of the form $w(t)S_b^pw(t)$ after having applied Lemma \ref{S3:Lem:TraceHoloRep1}. In this case, after a finite number of consecutive incursions into $P_1$ and $P_2$, $\widetilde{\gamma}$ will eventually leave $P_1$ by passing through a curve different from $\beta_0$. Such an incursion will provide a matrix of the form $w(t+t_*)$, hence $\sqrt{\sinh(\frac{\ell_{\beta_0}}{2}})$ will appear twice in the denominator of the entries of $M_1^ja(\theta_1^j)\cdots M_k^ja(\theta_k^j)$.
    \end{itemize}
    Thus we have:
    \[
        2\cosh\Big(\frac{\ell_\gamma}{2}\Big)=\frac{1}{\prod_{\alpha\in\mathscr{P}}\sinh(\frac{\ell_\alpha}{2})^{d_\alpha}}\bigg(\sum_{j=1}^K\Tr(N_j)\bigg),
    \]
    where $d_\alpha\in\mathbf{N}_{\geq1}$ for all $\alpha\in\mathscr{P}$, and the entries of each matrix $N_j$ are of the form (\ref{S3:Cor:HoloRepGen2:Demo2}).
\end{proof}

\begin{Rem}
    If $\beta$ is a boundary of $S(\gamma)$, then $\gamma$ never passes through $\beta$. Hence $\sinh(\frac{\ell_\beta}{2})$ does not appear in the denominator of $2\cosh(\frac{\ell_\gamma}{2})$.
\end{Rem}

\subsection{Length-type functions}\label{S3:Subsec:LenType}

Following Corollary \ref{S3:Cor:HoloRepGen2}, we define the class of \emph{length-type functions}.

\begin{Def}\label{S3:Def:LenType}
    A function $h\colon\mathbf{R}_{>0}^n\times\mathbf{R}^n\times\mathbf{R}_{\geq0}^k\longrightarrow\mathbf{R}_{\geq0}$ is said to be \emph{length-type} if it is proper and if we can write:
    \[
        \cosh\Big(\frac{h(\mathbf{x}_n,\mathbf{t}_n,\mathbf{y}_k)}{2}\Big)=\frac{1}{\prod_{j=1}^n\sinh(\frac{x_j}{2})^{d_j}}\sum_{i=1}^p\sqrt{\sum_{T\in G}a_T^ie^{T(\mathbf{x}_n,\mathbf{t}_n,\mathbf{y}_k)}},
    \]
    where $d_1,\ldots,d_n\in\mathbf{N}_{\geq1}$, and $G_1,\ldots,G_p$ are finite sets of linear forms on $\mathbf{R}^{2n+k}$, and for each $i\in\{1,\ldots,p\}$, $\{a_T^i\}_{T\in G}\subset\mathbf{R}_{>0}$ are positive real numbers.
    
    Let $u\colon\ell\mapsto2\log(2\cosh(\frac{\ell}{2}))$. We say that $H\colon\mathbf{R}_{>0}^n\times\mathbf{R}^n\times\mathbf{R}_{\geq0}^k\longrightarrow\mathbf{R}_{\geq0}$ is a \emph{pseudo length-type function} if $u^{-1}\circ H$ is a length-type function.
\end{Def}

\begin{Rem}
    As stated earlier, the fact that the coefficients $a_T^i$'s are positive is a crucial point for our computations. We will keep track of what would happen if these coefficients had arbitrary signs in our following computations to illustrate this fact.
\end{Rem}

If the surface $S(\gamma)\subset S_{g,n}$ filled by the loop $\gamma$ has disjoint boundaries $\delta,\delta'$ such that $\delta$ is freely homotopic to $\delta^{\pm1}$ (e.g. if $\gamma$ is an eight-shaped loop on a once-holed torus), then the expression $h_\gamma$ of the length function $\ell_\gamma$ in Fenchel-Nielsen coordinates defined along $\partial S(\gamma)$ is the restriction of a length-type function to a hyperplane of the form $\{y_i=y_j\}$. Similarly, if one of the boundaries of $\partial S(\gamma)$ is homotopic to a boundary or a cusp of $S_{g,n}$, then $h_\gamma$ is the restricition of a length-type function to a hyperplane of the form $\{y_i=L\}$.

\begin{Lem}
    The restrictions of a length-type function on hyperplanes of the form $\{y_i=y_j\}$ and $\{y_i=L\}$ are length-type functions.
\end{Lem}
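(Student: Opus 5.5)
The plan is to verify the two requirements of Definition \ref{S3:Def:LenType} separately: the algebraic form of $\cosh(\frac{h}{2})$, and properness of the restricted map.

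\emph{The algebraic form.} Let $h\colon\mathbf{R}_{>0}^n\times\mathbf{R}^n\times\mathbf{R}_{\geq0}^k\longrightarrow\mathbf{R}_{\geq0}$ be length-type, with
\[
    \cosh\Big(\frac{h}{2}\Big)=\prod_{j=1}^n\sinh\Big(\frac{x_j}{2}\Big)^{-d_j}\sum_{i=1}^p\sqrt{\sum_{T\in G_i}a_T^ie^{T(\mathbf{x}_n,\mathbf{t}_n,\mathbf{y}_k)}}.
\]
On the hyperplane $\{y_i=y_j\}$, parametrized by $\mathbf{y}_{k-1}$ (drop $y_j$), each linear form $T$ restricts to the linear form $T'=T\circ\iota$, where $\iota$ is the linear embedding $\mathbf{y}_{k-1}\mapsto(\ldots,y_i,\ldots,y_i,\ldots)$. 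On $\{y_i=L\}$, $T$ restricts to an affine form $T'+c_T$ with $c_T=L\cdot T(e_{y_i})$. The constant is absorbed into the coefficient, replacing $a_T^i$ by $a_T^ie^{c_T}>0$. Distinct $T$ may restrict to the same $T'$; in that case the corresponding coefficients are summed, which is still positive. Positivity is therefore preserved, and this is the point that must not be lost. The denominator $\prod_j\sinh(\frac{x_j}{2})^{d_j}$ involves only the $x$-variables, so it is untouched. Hence the restriction has the required form, with $G_i$ replaced by the finite set of restricted forms.

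\emph{Properness.} This is the main obstacle. The restriction of a proper map to a \emph{closed} subset is proper. The hyperplane $\{y_i=y_j\}\cap(\mathbf{R}_{>0}^n\times\mathbf{R}^n\times\mathbf{R}_{\geq0}^k)$ is closed in the domain, and so is $\{y_i=L\}$ for $L\geq0$. Thus for a compact $K\subset\mathbf{R}_{\geq0}$, $h^{-1}(K)\cap H$ is a closed subset of the compact set $h^{-1}(K)$, hence compact. After identifying $H$ with $\mathbf{R}_{>0}^n\times\mathbf{R}^n\times\mathbf{R}_{\geq0}^{k-1}$ via the homeomorphism dropping the coordinate, we obtain properness of the restricted function.

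The only subtlety is that the restricted domain must again be of the form $\mathbf{R}_{>0}^n\times\mathbf{R}^n\times\mathbf{R}_{\geq0}^{k'}$, which holds because $y_i,y_j$ both range over $\mathbf{R}_{\geq0}$. Iterating the argument handles several such restrictions simultaneously.
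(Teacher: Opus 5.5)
Your argument is correct. The paper states this lemma without proof, treating it as immediate, and your two checks supply exactly the routine verification it leaves implicit: linear forms restrict to linear forms, on $\{y_i=L\}$ the constant $L\,T(e_{y_i})$ is absorbed into the positive coefficient, coefficients of forms that coincide after restriction are added, and the $x$-only denominator is untouched; properness follows because the hyperplane section is closed in the domain.
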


It immediately follows:

\begin{Cor}
    Let $\gamma$ be a closed loop on $S_{g,n}$, $\beta_1,\ldots,\beta_M$ be the essential connected components of $\partial S(\gamma)$ and $2k$ be the dimension of the Teichmüller space of $S(\gamma)$. We extract a maximal multi-curve $\Gamma$ from $(\beta_1,\ldots,\beta_M)$, by possibly re-ordering the $\beta_i$'s we can assume without loss of generality that $\Gamma=(\beta_1,\ldots,\beta_N)$ for some $N\leq M$. Consider a pants decomposition $(\alpha,\ldots,\alpha_k)$ of $S(\gamma)$, and complete $\Gamma\cup(\alpha_1,\ldots,\alpha_k)$ into a pants decomposition $\Gamma'$ of $S_{g,n}$. The expression $h_\gamma$ of $\ell_\gamma$ into the Fenchel-Nielsen coordinates defined along $\Gamma'$ only depends on $(\ell_{\alpha_j},\tau_{\alpha_j})_{1\leq j\leq k}$ and $(\ell_{\beta_i})_{1\leq i\leq M}$, and it is a length-type function in these variables.
\end{Cor}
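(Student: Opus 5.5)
The plan is to assemble three ingredients already established: the locality of $\ell_\gamma$ (it only sees the hyperbolic structure of $S(\gamma)$), the explicit algebraic form of Corollary \ref{S3:Cor:HoloRepGen2}, and the properness results of the previous paragraphs, and then to pass to the variables $(\ell_{\alpha_j},\tau_{\alpha_j})_{j}$, $(\ell_{\beta_i})_{i}$ through the preceding lemma on restrictions to hyperplanes.

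\textbf{Dependence on variables.} I would first show that $h_\gamma$ depends only on $(\ell_{\alpha_j},\tau_{\alpha_j})_{1\leq j\leq k}$ and $(\ell_{\beta_i})_{1\leq i\leq M}$. The geodesic representative of $f\circ\gamma$ lies in the subsurface of $X$ isotopic to $f(S(\gamma))$, with geodesic boundary given by the $\beta_i$'s (made disjoint where two of them are homotopic). Hence its length is determined by the hyperbolic structure of $S(\gamma)$. This structure is in turn parametrised by the Fenchel-Nielsen coordinates along $(\alpha_1,\ldots,\alpha_k)$ together with the boundary lengths $\ell_{\beta_i}$. The twists along the $\beta_i$'s and all coordinates in the complement do not enter, because changing them amounts to regluing outside $S(\gamma)$. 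The matrix description of Lemma \ref{S3:Lem:HoloRepGeneral2} shows the same thing concretely: the only quantities appearing in it are $t$, $t_*$, $\ell_\alpha$ and $\tau_\alpha$ for $\alpha\in\mathscr{P}$, together with the boundary lengths of the pants of $S(\gamma)\backslash\mathscr{P}$.

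\textbf{Algebraic form.} I would first treat $S(\gamma)$ abstractly, with the $M$ boundary lengths independent, in the variables $\mathbf{x}_k=(\ell_{\alpha_j})$, $\mathbf{t}_k=(\tau_{\alpha_j})$ and $\mathbf{y}_M=(\ell_{\beta_i})$. Corollary \ref{S3:Cor:HoloRepGen2}, applied with $\mathscr{P}=(\alpha_1,\ldots,\alpha_k)$ and $\delta_i=\beta_i$, gives exactly the required expression of $\cosh(h_\gamma/2)$. In it each $d_j\geq1$ and all coefficients $a_T^i$ are positive, and by the remark following that corollary no $\sinh(\ell_{\beta_i}/2)$ appears in the denominator. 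One point needs care: inessential boundary components of $S(\gamma)$, namely those bounding cusps or boundaries of $S_{g,n}$, were kept among the $\delta_i$. For these I would set $y_i=L_i$. Components that are mutually homotopic would be identified via $y_i=y_j$. The preceding lemma guarantees that these restrictions preserve the length-type structure.

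\textbf{Properness.} The remaining and most delicate point is properness of $h_\gamma$ on $\mathbf{R}_{>0}^k\times\mathbf{R}^k\times\mathbf{R}_{\geq0}^M$. I would combine two facts.

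\begin{enumerate}
\item For fixed boundary lengths $\mathbf{y}_M$, the loop $\gamma$ fills $S(\gamma)$, so the lemma adapted from Kerckhoff shows that $\ell_\gamma$ is proper on the Teichmüller space $\mathcal{T}(S(\gamma),\mathbf{y}_M)$.
\item Corollary \ref{S3:Cor:LenProperBound} shows that $h_\gamma\to\infty$ as any $y_i\to\infty$.
\end{enumerate}

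To make this uniform, take a sequence escaping every compact set with $h_\gamma$ bounded. By the second fact, $\mathbf{y}_M$ stays in a compact set. Kerckhoff's argument then bounds every $\ell_{\alpha_j}\leq N_{\alpha_j}h_\gamma$, and likewise the lengths of curves $\alpha_j'$ transverse to the $\alpha_j$. By the collar lemma, bounded $\ell_{\alpha_j'}$ forces $\ell_{\alpha_j}$ to stay bounded below. Bounded lengths of the curves $\alpha_j'$ also bound the twists. This contradicts escape to infinity.

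I expect the uniformity in $\mathbf{y}_M$ to be the main obstacle, since Kerckhoff's bound must hold with constants independent of the boundary lengths. I would try first to note that the multiplicities $N_\alpha$ are purely topological, which makes them uniform. I would then invoke the monotonicity of Corollary \ref{S3:Cor:LenIncBound} to compare with a fixed boundary-length slice.
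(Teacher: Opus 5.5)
Your proposal is correct and follows the paper's route: the paper deduces the statement directly from Corollary~\ref{S3:Cor:HoloRepGen2} (where no $\sinh(\ell_{\beta_i}/2)$ appears in the denominator) and the restriction lemma for the hyperplanes $\{y_i=y_j\}$ and $\{y_i=L\}$, leaving properness implicit in the preceding Kerckhoff-type lemma and Corollary~\ref{S3:Cor:LenProperBound}, and you simply make that last step explicit. The uniformity in $\mathbf{y}_M$ that you flag is not a real obstacle, because the same argument gives $\ell_{\beta_i}\leq N_{\beta_i}\ell_\gamma$ and $\ell_{\alpha}\leq N_\alpha\ell_\gamma$ with purely topological constants, so the extra appeal to monotonicity is unnecessary.
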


Mirzakhani used the fact that the length function of a closed loop behaves similarly to a piecewise linear function in a $\mathscr{C}^0$ sense to prove Theorem \ref{S1:Theo:MirzAsympt} in \cite{Mir16}. We will need to improve this statement by obtaining a $\mathscr{C}^1$ version of her argument in order to prove our main result.

\begin{Def}
    We say that $C\subset\mathbf{R}^n$ is an \emph{admissible polytope} if $C\subset\{0\leq x_1\leq\cdots\leq x_n\}$ and if there exist a set $U$ defined by a finite number of (strict or non-strict) linear inequations, $m\in\{0,\ldots,n\}$ and a linear isomorphism $\Psi\in\GL_n(\mathbf{R})$ such that:
    \[
        C=\Psi\big(U\cap([1,+\infty)^m\times\mathbf{R}^{n-m})\big).
    \]
\end{Def}

\begin{Rem}
    An admissible polytope is stable under $\mathbf{x}_n\mapsto t\mathbf{x}_n$ for all $t\geq1$.
\end{Rem}

\begin{Def}\label{S3:Def:SLT}
    Let $C\subset\{0\leq x_1\leq\cdots\leq x_n\}$ be an admissible polytope and $\Bar{C}$ be its closure. A function $h\colon \mathbf{R}_{>0}^m\times C\longrightarrow\mathbf{R}_{\geq0}$ is said to be a \emph{simplified length-type function} if $h$ is real-analytic on an open neighborhood of the closure of $C$, and if there exist a linear form $\Lambda$ on $\mathbf{R}^{m+n}$ and a function $\psi\in L^\infty(\mathbf{R}_{>0}^m\times C)$ such that:
    \begin{enumerate}[label=\textup{(\roman*)}]
        \item $h=\Lambda+\psi$;
        \item $\Lambda$ is positive and proper on $\mathbf{R}_{\geq0}^m\times\Bar{C}$, and $\mathbf{s}_m\mapsto\Lambda(\mathbf{s}_m,0)$ has positive integer coefficients;
        \item there exist a constant $c>0$ and linear forms $Y_1,\ldots,Y_k$ on $\mathbf{R}^n$ such that for every $p\in\{1,\ldots,n\}$ and every $(\mathbf{s}_m,\mathbf{x}_n)\in\mathbf{R}_{>0}^m\times C$, we have:
        \[
            \Big|\frac{\partial\psi(\mathbf{s}_m,\mathbf{x}_n)}{\partial x_p}\Big|\leq ce^{-\min\{x_p,Y_1(\mathbf{x}_n),\ldots,Y_k(\mathbf{x}_n)\}},
        \]
        and if $p_0=\max\{p\in\{1,\ldots,n\}|\frac{\partial\Lambda(\mathbf{s}_m,\mathbf{x}_n)}{\partial x_p}\neq0\}$, there exists a constant $M>0$ such that for all $\mathbf{x}_n\in C$, we have:
        \[
            \Lambda(\mathbf{0}_m,\mathbf{x}_n)\leq M\min\{x_{p_0},Y_1(\mathbf{x}_n),\ldots,Y_k(\mathbf{x}_n)\}.
        \]
    \end{enumerate}
\end{Def}

\begin{Rem}
    If $p_0=\max\{p\in\{1,\ldots,n\}|\frac{\partial\Lambda(\mathbf{s}_m,\mathbf{x}_n)}{\partial x_p}\neq0\}$, there always exists a constant $M>0$ such that:
    \[
        \Lambda(\mathbf{0}_m,\mathbf{x}_n)\leq Mx_{p_0},
    \]
    for all $\mathbf{x}_n\in C$. Indeed, let $K=\max_{1\leq p\leq n}\frac{\partial\Lambda(\mathbf{s}_m,\mathbf{x}_p)}{\partial x_p}$. Since $\Lambda$ is non-negative and proper on the closure of $C$, we have $K>0$ and for all $\mathbf{x}_n\in C$, we have:
    \[
        \Lambda(\mathbf{0}_m,\mathbf{x}_n)\leq K(x_1+\cdots+x_{p_0})\leq p_0Kx_{p_0}.
    \]
\end{Rem}

As suggested by the terminology, length-type functions and simplified length-type functions are closely linked. Before getting more precise with this statement, let us introduce some notations.

Let $I\subset\{1,\ldots,n\}$. We note:
\begin{equation}\label{S3:Eq:Partition1}
    E_I=\Big\{\mathbf{x}_n\in\mathbf{R}_{>0}^n\Big|\forall i\in I, x_i\in(0,1),\forall i\notin I,x_i\geq1\Big\},
\end{equation}
and if $\mathbf{x}_n\in E_I$, we denote $\Phi_I(\mathbf{x}_n)$ the vector $\mathbf{s}_n$ defined by:
\begin{equation}\label{S3:Eq:Partition2}
    s_i=\left\{\begin{array}{cl}
        -\log x_i & \text{ if } i\in I \\
        x_i & \text{ if } i\notin I
    \end{array}\right.,
\end{equation}
and $F_I=\Phi_I(E_I)=\{\mathbf{s}_n\in\mathbf{R}_{>0}^n|\forall i\notin I,s_i\geq1\}$.

\begin{Theo}[Simplified form of pseudo length-type function]\label{S3:Theo:SimpFormPLT}
    Let $H\colon\mathbf{R}_{>0}^n\times\mathbf{R}^n\times\mathbf{R}_{\geq0}^k\longrightarrow\mathbf{R}_{\geq0}$ be a pseudo length-type function, $I\subset\{1,\ldots,n\}$, and $\widetilde{H}\colon(\mathbf{s}_n,\mathbf{t}_n,\mathbf{y}_k)\mapsto H(\Phi_I^{-1}(\mathbf{s}_n),\mathbf{t}_n,\mathbf{y}_k)$. There exist a partition of $F_I\times\mathbf{R}^n\times\mathbf{R}_{\geq0}^k$ into cones $U_1,\ldots,U_N$ defined by a finite number of linear inequations, and linear isomorphisms $\Psi_1,\ldots,\Psi_N\in\GL_{2n+k}(\mathbf{R})$ such that each $\Psi_i$ maps $U_i$ to a product $\mathbf{R}_{>0}^{\#I}\times C_i$, where $C_i$ is an admissible polytope, and $\widetilde{H}\circ\Psi_i^{-1}\colon\mathbf{R}_{>0}^{\#I}\times C_i\longrightarrow\mathbf{R}_{\geq0}$ is a simplified length-type function.
\end{Theo}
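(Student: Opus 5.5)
The plan is to unwind $H=u\circ h$ explicitly and then cut the domain into cones on which one exponential dominates. By Definition \ref{S3:Def:LenType},
\[
H=2\log2-2\sum_{j=1}^n d_j\log\sinh\Big(\frac{x_j}{2}\Big)+2\log\sum_{i=1}^p\sqrt{\sum_{T\in G_i}a_T^ie^{T(\mathbf{x}_n,\mathbf{t}_n,\mathbf{y}_k)}} .
\]
First I substitute $\mathbf{x}_n=\Phi_I^{-1}(\mathbf{s}_n)$, so $x_i=e^{-s_i}$ for $i\in I$. Then:
\begin{itemize}
\item For $i\in I$, $-\log\sinh(x_i/2)=s_i+\log 2+\rho(e^{-s_i})$ with $\rho$ real-analytic near $0$ and $\rho(0)=0$. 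This gives the term $2d_is_i$ in $\Lambda$: a positive integer coefficient on the $\mathbf{s}$-variables, as required in Definition \ref{S3:Def:SLT}(ii).
\item For $i\notin I$, $-\log\sinh(x_i/2)=-x_i/2+\log2-\log(1-e^{-x_i})$. The last term is bounded and its $x_i$-derivative is $O(e^{-x_i})$, since $x_i\geq1$.
\item In each exponential, I split $T=T'(\mathbf{x}_{I^c},\mathbf{t}_n,\mathbf{y}_k)+\sum_{i\in I}c_ix_i$. Because $0<x_i<1$ for $i\in I$, the factor $e^{c_ix_i}=e^{c_ie^{-s_i}}$ is analytic and bounded between positive constants.
\end{itemize}
Hence $\sum_T a_T^ie^T=\sum_T b_T^i(\mathbf{s}_I)\,e^{T'}$, where the functions $b_T^i$ are analytic and pinched between positive constants.

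Next I partition the $(\mathbf{x}_{I^c},\mathbf{t}_n,\mathbf{y}_k)$-space into finitely many closed cones $\{T'_0/2\geq T'/2 \ \forall T'\}$ over all pairs $(i,T)$, so that on each cone a single form $T'_0$ dominates. On such a cone,
\[
\log\sum_i\sqrt{\textstyle\sum_T b_T^ie^{T'}}=\frac{T'_0}{2}+\log\Big(\sqrt{b_{T_0}}+\text{terms }O\big(e^{-(T'_0-T')/2}\big)\Big).
\]
Positivity of the coefficients is used here: the argument of the logarithm is bounded below by $\min\sqrt{b_{T_0}}>0$ and above by a constant. So the remainder is bounded and analytic, and its derivatives are $O(e^{-\min_{T'}(T'_0-T')/2})$. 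Collecting terms gives $\widetilde H=\Lambda+\psi$ on the cone, where
\[
\Lambda=\sum_{i\in I}2d_is_i-\sum_{i\notin I}d_ix_i+T'_0,
\]
and $\psi\in L^\infty$ is real-analytic. I then refine the partition by ordering finitely many linear forms: the coordinates $x_i$ for $i\notin I$, and the gaps $T'_0-T'$. Choosing new linear coordinates $\Psi_i$ in which the ordering becomes $0\leq x_1\leq\cdots\leq x_n$ makes each piece $\mathbf{R}_{>0}^{\#I}\times C_i$ with $C_i$ an admissible polytope. The constraints $x_j\geq1$ for $j\notin I$ supply the factor $[1,+\infty)^m$. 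The gaps become the forms $Y_1,\ldots,Y_k$, and the derivative bound in Definition \ref{S3:Def:SLT}(iii) follows from the estimates above.

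Properness and positivity of $\Lambda$ on $\mathbf{R}_{\geq0}^{\#I}\times\bar C_i$ follow from properness of $h$, hence of $H$, together with boundedness of $\psi$. Indeed, $\Lambda\geq\widetilde H-\|\psi\|_\infty$ tends to infinity. Since $\Lambda$ is linear and the domain is a cone, this forces $\Lambda>0$ away from the origin.

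The main obstacle is the last requirement of Definition \ref{S3:Def:SLT}(iii): $\Lambda(\mathbf{0},\mathbf{x}_n)\leq M\min\{x_{p_0},Y_1,\ldots,Y_k\}$. A gap $T'_0-T'$ may stay bounded while $\Lambda\to\infty$, when two exponentials are co-dominant along a ray. My first attempt is to subdivide further into cones $\{T'_0-T'\geq\varepsilon\Lambda\}$ and $\{T'_0-T'\leq\varepsilon\Lambda\}$. In the second kind of cone, I regroup the co-dominant exponentials: there I factor out $e^{T'_0/2}$ and treat $\log(\sqrt{b_{T_0}}+\sqrt{b_{T}}e^{-(T'_0-T')/2})$ as a function of a coordinate direction in which $\Lambda$ does not grow. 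I would pick $\Psi_i$ so that such a gap becomes one of the first (small) coordinates, not one controlling $\Lambda$. Then its derivative is only required to decay in $e^{-x_p}$ for a $p$ with $\partial_{x_p}\Lambda=0$, and the bound $\Lambda\leq M x_{p_0}$ from the Remark after Definition \ref{S3:Def:SLT} handles the rest. Making this choice of coordinates consistent simultaneously for all gaps, possibly by iterating the subdivision finitely many times, is the step I expect to require the most care.
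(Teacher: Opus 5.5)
\textbf{There is a genuine gap: condition (iii) of Definition \ref{S3:Def:SLT} is not established.} Up to the decomposition $\widetilde H=\Lambda+\psi$ on a cone where one exponent dominates, your reduction matches the paper's. That includes substituting $x_i=e^{-s_i}$, absorbing the factors $e^{c_ix_i}$ into positive bounded coefficients $b_T^i(\mathbf{s}_I)$, and using positivity to keep $\psi$ bounded. The problem comes after that, and you flag it yourself without closing it.

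Your derivative estimate goes only through the smallest gap, $O(e^{-\min_{T'}(T'_0-T')/2})$, plus $O(e^{-x_j})$ for the $\log(1-e^{-x_j})$ terms. You then take the gaps as the $Y$'s. That violates
\[
\Lambda(\mathbf{0},\mathbf{x}_n)\leq M\min\{x_{p_0},Y_1,\ldots,Y_k\}
\]
as soon as one gap, or one $x_j$ with $j\notin I$, stays bounded or grows sublinearly along a ray on which $\Lambda\to\infty$. This does happen.

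The repair you sketch (splitting along $\{T'_0-T'\gtrless\varepsilon\Lambda\}$, regrouping co-dominant terms, iterating) is not carried out. The constraint it aims for, that a bounded gap be a direction in which $\Lambda$ does not grow, is also unnecessary. The alternative $e^{-x_p}$ inside the minimum is available for every $p$, whether or not $\partial_{x_p}\Lambda=0$.

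\textbf{The missing idea is a choice of coordinates giving the coordinatewise bound $|\partial_{z_p}\psi|\leq ce^{-z_p}$ for every $p$.} Once you have it, (iii) is automatic. The construction, as in the paper, is as follows.
\begin{enumerate}
\item Collect into one finite family every linear form occurring as an exponent $e^{-S}$ in $\psi$. This means the $x_j$ for $j\notin I$, and every difference of exponents that appears after factoring out $e^{T_0/2}$, including the halved ones coming from the square roots. These forms are nonnegative on the cone; complete the family if necessary so that it spans.
\item Order the family as $0\prec S_1\prec\cdots\prec S_N$; this refines the partition.
\item Take as coordinates $z_M=S_{K_M}$, where $K_M$ is the first index whose form is not in $\langle S_{K_1},\ldots,S_{K_{M-1}}\rangle$.
\end{enumerate}
Now take an exponent $S$ and let $M$ be minimal with $S\in\langle S_{K_1},\ldots,S_{K_M}\rangle$. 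Then $S\circ\Psi^{-1}$ depends only on $z_1,\ldots,z_M$. Also $S=S_j$ for some $j\geq K_M$, so $S\geq z_M$ on the cone. Hence $\partial_{z_{M'}}e^{-S\circ\Psi^{-1}}$ vanishes for $M'>M$, and is $O(e^{-z_M})=O(e^{-z_{M'}})$ for $M'\leq M$.

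The chain rule then gives $|\partial_{z_p}\psi|=O(e^{-z_p})$. This is cleanest if, as in the paper, you first pick a dominant exponent $T_i$ in each group $G_i$ and then $T_0$ among the $T_i$. That way every square root and the logarithm have arguments bounded below.

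You may then take all $Y$'s equal to the last coordinate, which is the largest one on $\{0\leq z_1\leq\cdots\leq z_N\}$. So $\min\{z_p,Y\}=z_p$, and the $\Lambda$-condition reduces to $\Lambda(\mathbf{0},\mathbf{z})\leq Mz_{p_0}$. This always holds by the Remark after Definition \ref{S3:Def:SLT}. Co-dominant exponentials need no special treatment: a bounded gap simply becomes a low-index coordinate.
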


\begin{proof}
    Let us denote:
    \[
        H\colon(\mathbf{x}_n,\mathbf{t}_n,\mathbf{y}_k)\longmapsto\log\bigg[\frac{1}{\prod_{j=1}^n\sinh(\frac{x_j}{2})^{d_j}}P(\mathbf{x}_n,\mathbf{t}_n,\mathbf{y}_k)\bigg],
    \]
    with:
    \begin{equation}\label{S3:Theo:SimpFormPLT:Demo1}
        P(\mathbf{x}_n,\mathbf{t}_n,\mathbf{y}_k) = \sum_{i=1}^p\sqrt{\sum_{T\in G_i}a_T^ie^{T(\mathbf{x}_n,\mathbf{t}_n,\mathbf{y}_k)}}.
    \end{equation}
    By possibly permuting some variables, we can assume without loss of generality that $I=\{1,\ldots,m\}$ for some $m\leq n$ and we denote $\widetilde{H} \colon (\mathbf{s}_m,\mathbf{x}_{n-m},\mathbf{t}_n,\mathbf{y}_k)\mapsto H(\Phi_I^{-1}(\mathbf{s}_m,\mathbf{x}_{n-m}),\mathbf{t}_n,\mathbf{y}_k)$. In this proof, the variable $\mathbf{x}_{n-m}$ will denote the vector $(x_{m+1},\ldots,x_n)$. With these notations, we have:
    \begin{align*}
        \widetilde{H}(\mathbf{s}_m,\mathbf{x}_{n-m},\mathbf{t}_n,\mathbf{y}_k) = & \sum_{j=1}^md_js_j-\sum_{j=1}^md_j\log\Big[e^{s_j}\sinh\Big(\frac{e^{-s_j}}{2}\Big)\Big] \\
        & - \frac{1}{2}\sum_{j=m+1}^nd_jx_j+\sum_{j=m+1}^nd_j\big(\log2-\log[1-e^{-x_j}]\big) \\
        & + \log[P(e^{-s_1},\ldots,e^{-s_m},\mathbf{x}_{n-m},\mathbf{t}_n,\mathbf{y}_k)].
    \end{align*}
    Let:
    \[
        \widetilde{H}_0(\mathbf{s}_m,\mathbf{x}_{n-m},\mathbf{t}_n,\mathbf{y}_k) =  \log[P(e^{-s_1},\ldots,e^{-s_m},\mathbf{x}_{n-m},\mathbf{t}_n,\mathbf{y}_k)].
    \]
    Let us note $G=\bigcup_{i=1}^pG_i$. If $T$ is a linear form on $\mathbf{R}^{2n+k}$, we denote:
    \begin{align*}
        \widetilde{T}(\mathbf{x}_{n-m},\mathbf{t}_n,\mathbf{y}_k) = & T(\mathbf{0}_m,\mathbf{x}_{n-m},\mathbf{t}_n,\mathbf{y}_k) \\
        R_T(\mathbf{s}_m) = & T(e^{-s_1},\ldots,e^{-s_m},\mathbf{0}_{2n-m+k})
    \end{align*}
    We also denote $\widetilde{G}_i=\{\widetilde{T}|T\in G_i\}$ and $\widetilde{G}=\{\widetilde{T}|T\in G\}$. Given $S\in G_i$, we note:
    \begin{equation}
        \label{S3:Theo:SimpFormPLT:Demo2} \alpha_S^i(\mathbf{s}_m) = \sum_{\substack{T\in G_i\\\widetilde{T}=S}}a_T^ie^{R_T(\mathbf{s}_m)}.
    \end{equation}
    With these notations, we have:
    \begin{equation}
        \label{S3:Theo:SimpFormPLT:Demo3} \widetilde{H}_0 = \log\bigg[\sum_{i=1}^p\sqrt{\sum_{T\in\widetilde{G}_i}\alpha_T^ie^T}\bigg].
    \end{equation}
    For each $i\in\{1,\ldots,p\}$, let $T_i\in \widetilde{G}_i$ be such that the vector cone:
    \[
        V_i = \Big\{(\mathbf{x}_{n-m},\mathbf{t}_n,\mathbf{y}_k)\in\mathbf{R}_{>0}^{n-m}\times\mathbf{
        R}^n\times\mathbf{R}_{\geq0}^k\Big|\forall T\in\widetilde{G}_i\backslash\{T_i\},T(\mathbf{x}_{n-m},\mathbf{t}_n,\mathbf{y}_k)\prec T_i(\mathbf{x}_{n-m},\mathbf{t}_n,\mathbf{y}_k)\Big\}
    \]
    is non empty. Here each occurrence of $\prec$ is either $\leq$ or $<$, chosen so that $\mathbf{R}_{>0}^{n-m}\times\mathbf{R}^n\times\mathbf{R}_{\geq0}^k$ is partitioned into cones of the form $V_i$. The space $\mathbf{R}_{>0}^{n-m}\times\mathbf{R}^n\times\mathbf{R}_{\geq0}^k$ is therefore partitioned into cones of the form:
    \[
        W=V_1\cap\cdots\cap V_p.
    \]
    We fix one these cones $W\neq\emptyset$. Inside $\mathbf{R}_{>0}^m\times W$, we have:
    \[
        \widetilde{H}_0 = \log\bigg[\sum_{i=1}^pe^{\frac{T_i}{2}}\sqrt{\alpha_{T_i}^i+\sum_{\substack{T\in\widetilde{G}_i\\T\neq T_i}}\alpha_T^ie^{-(T_i-T)}}\bigg].
    \]
    We fix $T_0\in\{T_1,\ldots,T_p\}$ such that the vector cone:
    \[
        W_0 = \bigcap_{\substack{i=1\\T_i\neq T_0}}^p\{T_i\prec T_0\}
    \]
    is non-empty. Here each occurrence of $\prec$ is either $\leq$ or $<$, chosen so that the cones of the form $W_0$ realize a partition of $W$. Let us fix one of these cones $W_0\neq\emptyset$. Inside $\mathbf{R}_{>0}^m\times W_0$, we have:
    \[
        \widetilde{H}_0 = \frac{T_0}{2}+ \log\bigg[\sum_{\substack{i=1\\T_i=T_0}}^p\sqrt{\alpha_{T_i}^i+\sum_{\substack{T\in\widetilde{G}_i\\T\neq T_i}}\alpha_T^ie^{-(T_i-T)}}+\sum_{\substack{i=1\\T_i\neq T_0}}^pe^{-\frac{1}{2}(T_0-T_i)}\sqrt{\alpha_{T_i}^i+\sum_{\substack{T\in\widetilde{G}_i\\T\neq T_i}}\alpha_T^ie^{-(T_i-T)}}\bigg].
    \]
    We note:
    \[
        \psi_0 = \widetilde{H}_0-\frac{T_0}{2}.
    \]
    Let:
    \begin{align*}
        \Lambda\colon(\mathbf{s}_m,\mathbf{x}_{n-m},\mathbf{t}_n,\mathbf{y}_k)\longmapsto & \sum_{j=1}^md_js_j-\frac{1}{2}\sum_{j=m+1}^nd_jx_j+\frac{1}{2}T_0(\mathbf{x}_{n-m},\mathbf{t}_n,\mathbf{y}_k) \\
        \psi\colon(\mathbf{s}_m,\mathbf{x}_{n-m},\mathbf{t}_n,\mathbf{y}_k)\longmapsto & -\sum_{j=1}^md_j\log\Big[e^{s_j}\sinh\Big(\frac{e^{-s_j}}{2}\Big)\Big]+\sum_{j=m+1}^nd_j\big(\log2-\log[1-e^{-x_j}]\big) \\
        & + \psi_0(\mathbf{s}_m,\mathbf{x}_{n-m},\mathbf{t}_n,\mathbf{y}_k).
    \end{align*}
    With these notations, $\Lambda$ is linear and we have $\widetilde{H}=\Lambda+\psi$. Moreover since the $a_T^i$'s are positive in (\ref{S3:Theo:SimpFormPLT:Demo1}), the function $\psi$ is bounded on $\mathbf{R}_{>0}^m\times W_0$. Let:
    \begin{align*}
        G_0 = & \big\{(\mathbf{x}_{n-m},\mathbf{t}_n,\mathbf{y}_k)\mapsto x_j\big|j\in\{m+1,\ldots,n\}\big\}\cup\big\{T_i-T\big|i\in\{1,\ldots,p\},T\in\widetilde{G}_i\big\} \\
        & \cup\Big\{\frac{1}{2}(T_0-T_i)\Big|i\in\{1,\ldots,p\}\Big\}\cup\Big\{\frac{1}{2}(T_0+T_i)-T\Big|i\in\{1,\ldots,p\},T\in\widetilde{G}_i\Big\}.
    \end{align*}
    We can notice that all the linear forms of $G_0$ are non-negative on the cone $W_0$. If $G_0$ is not a generating set of the dual space $(\mathbf{R}^{2n-m+k})^*$, we complete it into a finite generating set $\widetilde{G}_0$ made of non-negative linear forms on $W_0$. Let us fix an ordering of the elements of $\widetilde{G}_0$, i.e. we fix a labelling $S_1,\ldots,S_N$ of the elements of $\widetilde{G}_0$ such that the vector cone:
    \[
        U = \{0\prec S_1\prec\cdots\prec S_N\}
    \]
    is non-empty. Here each occurrence of $\prec$ is either $\leq$ or $<$, chosen so that the cones of the form $U$ realize a partition of $W_0$. Since $\widetilde{G}_0$ is a generating set of $(\mathbf{R}^{2n-m+k})^*$, we can define:
    \begin{align*}
        \begin{array}{lcl}
            K_1 & = & 1 \\
            K_2 & = & \min\{j>1|S_j\notin\langle S_{K_1}\rangle\} \\
            & \vdots & \\
            K_{2n-m+k} & = & \min\{j>K_{2n-m+k-1}|S_j\notin\langle S_{K_1},\ldots,S_{K_{2n-m+k-1}}\rangle\}.
        \end{array}
    \end{align*}
    The map $\Psi=(S_{K_1},\ldots,S_{K_{2n-m+k}})$ is therefore a linear isomorphism. Moreover $\Psi$ maps the cone $U$ into a vector cone $C\subset\{0\leq z_1\leq\cdots\leq z_{2n-m+k}\}$. Let $\widetilde{U}=U\cap\bigcap_{j=m+1}^n\{x_j\geq1\}$ and $\widetilde{C}=\Psi(\widetilde{U})$. The set $\widetilde{C}$ is an admissible polytope.
    
    Let us prove that $(\mathbf{s}_m,\mathbf{z}_{2n-m+k})\mapsto\widetilde{H}(\mathbf{s}_m,\Psi^{-1}(\mathbf{z}_{2n-m+k}))$ matches the definition of a simplified length-type function with $Y_1(\mathbf{z}_{2n-m-k})=\cdots=Y_k(\mathbf{z}_{2n-m+k})=z_{2n-m+k}$. Since $\widetilde{H}$ is non-negative and proper, the linear form $(\mathbf{s}_m,\mathbf{z}_{2n-m+k})\mapsto\Lambda(\mathbf{s}_m,\Psi^{-1}(\mathbf{z}_{2n-m+k}))$ is proper on the closure of $\mathbf{R}_{>0}^m\times \widetilde{C}$, and by definition the coefficients of $\mathbf{s}_m\mapsto\Lambda(\mathbf{s}_m,\Psi^{-1}(\mathbf{0}_{2n-m+k}))$ are positive.
    
    If $j\in\{m+1,\ldots,n\}$, we note $S^j\colon(\mathbf{x}_{n-m},\mathbf{t}_n,\mathbf{y}_k)\mapsto x_j$ and $M_j\in\{1,\ldots,2n-m+k\}$ the minimal integer such that $S^j\in\langle S_{K_1},\ldots,S_{K_{M_j}}\rangle$. By definition of $\Psi$, the linear form $S^j\circ\Psi^{-1}$ only depends on its $M_j$ first variables, hence if $M>M_j$ we have:
    \[
        \frac{\partial}{\partial z_M}\Big(\log[1-e^{-S^j\circ\Psi^{-1}}]\Big)=0.
    \]
    Since $S^j\in\widetilde{G}_0$, we have $S^j\geq S_{K_{M_j}}$ on $U$, hence if $M\leq M_j$, we have on $\widetilde{C}$:
    \[
        \frac{\partial}{\partial z_M}\Big(\log[1-e^{-S^j\circ\Psi^{-1}}]\Big) = O\Big(e^{-S^j\circ\Psi^{-1}}\Big) = O(e^{-z_{M_j}}) = O(e^{-z_M}).
    \]
    It follows that for all $M\in\{1,\ldots,2n-m+k\}$, we have:
    \[
        \frac{\partial}{\partial z_M}\bigg(-\sum_{j=1}^md_j\log\Big[e^{s_j}\sinh\Big(\frac{e^{-s_j}}{2}\Big)\Big]+\bigg(\sum_{j=m+1}^nd_j\big(\log2-\log[1-e^{-S^j}]\big)\bigg)\circ\Psi^{-1}\bigg) = O(e^{-z_M}).
    \]
    And for every $i\in\{1,\ldots,p\}$ we can write:
    \[
        \sum_{\substack{T\in\widetilde{G}_i\\ T\neq T_i}}\alpha_T^i(\mathbf{s}_m)e^{-(T_i-T)(\mathbf{x}_{n-m},\mathbf{t}_n,\mathbf{y}_k)} = \sum_{S\in\widetilde{G}_0}\rho_S^i(\mathbf{s}_m)e^{-S(\mathbf{x}_{n-m},\mathbf{t}_n,\mathbf{y}_k)},
    \]
    where $\rho_S^i$ is either $0$ or a bounded positive function on $\mathbf{R}_{>0}^m$. We have:
    \[
        \sum_{S\in\widetilde{G}_0}\rho_S^ie^{-S} = \sum_{M=1}^{2n-m+k}e^{-S_{K_M}}\bigg(\sum_{\substack{S\in\langle S_{K_1},\ldots,S_{K_M}\rangle\\S\notin\langle S_{K_1},\ldots,S_{K_{M-1}}\rangle}}\rho_S^ie^{-(S-S_{K_M})}\bigg).
    \]
    It follows that if $M'\in\{1,\ldots,2n-m+k\}$, we have:
    \begin{align*}
        \frac{\partial}{\partial z_{M'}}\bigg[\bigg(\sum_{\substack{i=1\\ T_i=T_0}}^p & \sqrt{\alpha_{T_i}^i+\sum_{\substack{T\in\widetilde{G}_i\\ T\neq T_i}}\alpha_T^ie^{-(T_i-T)}}\bigg)\circ \Psi^{-1}\bigg]\\
        = & O\bigg(\sum_{\substack{i=1\\T_i=T_0}}^p\frac{\partial}{\partial z_{M'}}\bigg(\bigg(\sum_{S\in\widetilde{G}_0}\rho_S^ie^{-S}\bigg)\circ\Psi^{-1}\bigg)\bigg) \\
        = & O\bigg(\sum_{M=1}^{2n-m+k}\frac{\partial}{\partial z_{M'}}\bigg(e^{-z_M}\bigg(\sum_{\substack{S\in\langle S_{K_1},\ldots,S_{K_M}\rangle\\S\notin\langle S_{K_1},\ldots,S_{K_{M-1}}\rangle}}\sum_{\substack{i=1\\T_i=T_0}}^p\rho_S^ie^{-(S-S_{K_M})}\bigg)\circ\Psi^{-1}\bigg)\bigg).
    \end{align*}
    By definition of the isomorphism $\Psi$, the function:
    \[
        \bigg(\sum_{\substack{S\in\langle S_{K_1},\ldots,S_{K_M}\rangle\\S\notin\langle S_{K_1},\ldots,S_{K_{M-1}}\rangle}}\sum_{\substack{i=1\\T_i=T_0}}^p\rho_S^ie^{-(S-S_{K_M})}\bigg)\circ\Psi^{-1}
    \]
    only depends on the variables $\mathbf{s}_m,z_1,\ldots,z_M$. Moreover this function and its derivatives are bounded, thus we have:
    \[
        \sum_{M=1}^{2n-m+k}\frac{\partial}{\partial z_{M'}}\bigg(e^{-z_M}\bigg(\sum_{\substack{S\in\langle S_{K_1},\ldots,S_{K_M}\rangle\\S\notin\langle S_{K_1},\ldots,S_{K_{M-1}}\rangle}}\sum_{\substack{i=1\\T_i=T_0}}^p\rho_S^ie^{-(S-S_{K_M})}\bigg)\circ\Psi^{-1}\bigg) = O\bigg(\sum_{M=M'}^{2n-m+k}e^{-z_M}\bigg)= O(e^{-z_{M'}}).
    \]
    We can also write:
    \[
        e^{-\frac{1}{2}(T_0-T_i)}\sqrt{\alpha_{T_i}^i+\sum_{\substack{T\in\widetilde{G}_i\\ T\neq T_i}}\alpha_T^ie^{-(T_i-T)}} = \sqrt{\alpha_{T_i}^ie^{-2S_0}+e^{-S_0}\sum_{S\in\widetilde{G}_0}\sigma_S^ie^S},
    \]
    where $S_0\in\widetilde{G}_0$ and the $\sigma_S^i$'s are either $0$ or positive and bounded functions that only depend on the variable $\mathbf{s}_m$. It follows:
    \begin{align*}
        \frac{\partial}{\partial z_{M'}}\bigg[\bigg(e^{-\frac{1}{2}(T_0-T_i)}\sqrt{\alpha_{T_i}^i+\sum_{\substack{T\in\widetilde{G}_i\\ T\neq T_i}}\alpha_T^ie^{-(T_i-T)}} & \bigg)\circ\Psi^{-1}\bigg] \\ = & O\bigg(\frac{\partial}{\partial z_{M'}}\Big(e^{-S_0\circ\Psi^{-1}}\Big)+\sum_{S\in\widetilde{G}_0}\sigma_S^i\frac{\partial}{\partial z_{M'}}\Big(e^{-S\circ\Psi^{-1}}\Big)\bigg).
    \end{align*}
    By repeating the same process as above, we conclude that:
    \[
        \frac{\partial}{\partial z_{M'}}\bigg[\bigg(e^{-\frac{1}{2}(T_0-T_i)}\sqrt{\alpha_{T_i}^i+\sum_{\substack{T\in\widetilde{G}_i\\ T\neq T_i}}\alpha_T^ie^{-(T_i-T)}} \bigg)\circ\Psi^{-1}\bigg]=O(e^{-z_{M'}}),
    \]
    hence $\frac{\partial}{\partial z_{M'}}\big(\psi_0(\mathbf{s}_m,\Psi^{-1}(\mathbf{z}_{2n-m+k}))\big)=O(e^{-z_{M'}})$. This makes us able to conclude that $\widetilde{H}$ is a simplified length-type function.
\end{proof}

\begin{Rem}
    If we allow the coefficients $a_T^i$ to have an arbitrary sign in the definition of length-type functions, then the coefficients $a_T^i$ would also have arbitrary signs in the expression (\ref{S3:Theo:SimpFormPLT:Demo1}). This would imply that the coefficients $\alpha_S^i$ defined by (\ref{S3:Theo:SimpFormPLT:Demo2}) can vanish at infinity. This, in turn, would imply that the leading term of the function $\widetilde{H}_0$ defined by (\ref{S3:Theo:SimpFormPLT:Demo3}) may depend on the variable $\mathbf{s}_m$, and that the linear changes of variables $\Psi$ we consider may impact the variable $\mathbf{s}_m$.
\end{Rem}

Theorem \ref{S3:Theo:SimpFormPLT} provides simplified length-type functions with trivial conditions on the linear forms $Y_1,\ldots,Y_k$. The reason why these linear forms are in Definition \ref{S3:Def:SLT} is to ensure that simplified length-type functions are stable under restriction on the boundary of their domain, as stated in the following:

\begin{Lem}\label{S3:Lem:RestBoundSLT}
    Let $h\colon \mathbf{R}_{>0}^m\times C\longrightarrow\mathbf{R}_{\geq0}$ be a simplified length-type function. The restriction of $h$ to $\mathbf{R}^m\times\partial C$ is a simplified length-type function in the following sense. For all $i\in\{1,\ldots,n\}$, there exist piecewise affine functions $X_i^0,X_i^1\colon\mathbf{R}^{n-1}\longrightarrow\mathbf{R}$ and an admissible polytope $C_i\subset\mathbf{R}^{n-1}$ such that:
    \[
        C=\Big\{\mathbf{x}_n\in\mathbf{R}^n\Big|(x_j)_{j\neq i}\in C_i, X_i^0((x_j)_{j\neq i})\prec x_i\prec X_i^1((x_j)_{j\neq i}) \Big\},
    \]
    where each occurrence of $\prec$ is either $<$ or $\leq$, depending on the choice of the inequalities used to define $C$, and when restricted to each admissible polytope on which $X_i^0$ (resp. $X_i^1$) is affine, then the function $(\mathbf{s}_m,(x_j)_{j\neq i})\mapsto h(\mathbf{s}_m,x_1,\ldots,x_{i-1},X_i^0((x_j)_{j\neq i}),x_{i+1},\ldots,x_n)$ (resp. the function $(\mathbf{s}_m,(x_j)_{j\neq i})\mapsto h(\mathbf{s}_m,x_1,\ldots,x_{i-1},X_i^0((x_j)_{j\neq i}),x_{i+1},\ldots,x_n)$) is a simplified length-type function.
\end{Lem}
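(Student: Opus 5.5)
Fix $i\in\{1,\ldots,n\}$. The first step is purely geometric. By definition $C=\Psi(U\cap([1,+\infty)^{m'}\times\mathbf{R}^{n-m'}))$ is cut out by finitely many linear inequalities $\ell_j(\mathbf{x}_n)\prec c_j$. I split these inequalities according to the sign of their $x_i$-coefficient. Those with positive coefficient give upper bounds $x_i\prec A_j((x_l)_{l\neq i})$, those with negative coefficient give lower bounds $x_i\succ B_j((x_l)_{l\neq i})$, and those with zero coefficient, together with the compatibility conditions $B_j\prec A_{j'}$, describe the projection $C_i$. Setting $X_i^0=\max_j B_j$ and $X_i^1=\min_j A_j$ gives the desired description of $C$. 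These two functions are piecewise affine, and the regions where a given $B_j$ (resp. $A_j$) realises the max (resp. min) are again cut out by linear inequalities. Each such region is therefore an admissible polytope: it lies in $\{0\leq x_1\leq\cdots\}$ after relabelling the remaining coordinates, and it is dilation-stable because all constraints are homogeneous except the $[1,\infty)$ ones.

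Next, on a piece $D$ where $X_i^0=\beta$ is affine, I set $g(\mathbf{s}_m,\mathbf{x}')=h(\mathbf{s}_m,\iota(\mathbf{x}'))$, where $\iota(\mathbf{x}')=(x_1,\ldots,\beta(\mathbf{x}'),\ldots,x_n)$. I write $g=\Lambda\circ\iota+\psi\circ\iota$. Two properties are immediate. First, $\Lambda\circ\iota$ is affine; I absorb its constant into $\psi$ so that it is linear plus bounded. Second, real-analyticity near $\bar D$ holds because $\iota$ is affine. Positivity and properness of the new linear part on $\bar D$ follow because $\iota(\bar D)\subset\bar C$, $\Lambda$ is proper there, and $\iota$ is injective and affine. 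The $\mathbf{s}_m$-coefficients are unchanged.

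The main work is condition (iii). By the chain rule,
\[
\partial_{x_p}(\psi\circ\iota)=\partial_{x_p}\psi+\partial_p\beta\,\partial_{x_i}\psi ,
\]
so I need to bound $e^{-x_i}=e^{-\beta(\mathbf{x}')}$ by $e^{-\min\{x_p,\ldots\}}$. This is exactly where the linear forms $Y$ enter. I take the new family $Y'$ to consist of the old $Y_j\circ\iota$ together with $\beta$ (made linear by dropping its constant, which changes bounds only by a factor), and, if $i\neq p$, also the coordinate $x_i$ expressed as $\beta$. The derivative bound then becomes $c'e^{-\min\{x_p,Y'(\mathbf{x}')\}}$. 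The last requirement is the domination $\Lambda(\mathbf{0},\iota(\mathbf{x}'))\leq M\min\{x_{p_0'},Y'\}$, where $p_0'$ is the new top index. I would derive it from the old inequality $\Lambda\leq M\min\{x_{p_0},Y\}$ together with the ordering $x_1\leq\cdots\leq x_n$ on $C$. If $p_0\neq i$, then $p_0'\geq$ the index of $x_{p_0}$ and $x_{p_0'}\geq x_{p_0}$. If $p_0=i$, the old bound gives $\Lambda\leq Mx_i=M\beta$, and $\beta$ is now one of the $Y'$. For $x_{p_0'}$ in this case I use the ordering again, since $x_i\leq x_{i+1}$ if such an index exists in the support of the new $\Lambda$. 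I expect this bookkeeping of $p_0'$ to be the main obstacle. In the degenerate case I would instead enlarge $Y'$ so that the minimum is controlled, using that $\Lambda\circ\iota$ remains proper, so it is dominated by a multiple of the largest coordinate it depends on.

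The upper face $X_i^1$ is treated identically.
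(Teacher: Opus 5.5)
Your route is the paper's: split the defining inequalities of $C$ by the sign of the $x_i$-coefficient, take $X_i^0$ as a max and $X_i^1$ as a min of affine functions, substitute $x_i=\beta(\mathbf{x}')$ on a piece where $X_i^0=\beta$ is affine, apply the chain rule, and add the linear part of $\beta$ to $Y'$ to absorb $\partial_p\beta\,\partial_{x_i}\psi$.

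\textbf{The gap.} It lies in the domination condition of Definition~\ref{S3:Def:SLT}(iii) when $p_0\neq i$ and $\beta$ is non-constant. Once $\beta\in Y'$, you need $\Lambda(\mathbf{0}_m,\iota(\mathbf{x}'))\leq M\beta(\mathbf{x}')$ on the piece, and you verify this only for $p_0=i$. The ordering $x_1\leq\cdots\leq x_n$ controls $x_{p_0'}$ but says nothing about $\beta$. Enlarging $Y'$ cannot help either, since it only lowers the minimum.

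\textbf{This cannot be repaired.} The statement itself fails in that case. Take $m=0$, $n=3$, $C=\{0\leq x_1\leq x_2\leq x_3,\ x_1\geq x_2-\frac{x_3}{2}\}$ and $h=x_3+e^{-x_1}$. This $h$ is simplified length-type with $\Lambda=x_3$, $p_0=3$ and $Y_1=x_3$. For $i=1$ the fibres force $X_1^0=\max\{0,x_2-\frac{x_3}{2}\}$. On the piece $D=\{\frac{x_3}{2}\leq x_2\leq x_3\}$ the restriction is $x_3+\psi'$ with $\psi'=e^{-(x_2-x_3/2)}$. Its linear part is forced to be $x_3$, so every form in any admissible family $Y'$ is $\geq x_3/M$ on $D$. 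Hence
\[
|\partial_{x_2}\psi'|\leq c\,e^{-\min\{x_2,Y'\}}\leq c\,e^{-\min\{1/2,1/M\}x_3},
\]
whereas $|\partial_{x_2}\psi'|=1$ on the ray $x_2=\frac{x_3}{2}$.

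\textbf{What is true.} Your argument does prove the case $i=p_0$: there, $\Lambda\leq Mx_{p_0}$ evaluated at $x_{p_0}=\beta$ supplies the missing domination. This is the only case the paper needs, because Theorem~\ref{S4:Theo:OpeLConvCone} is only used with $i_0=p_0$, the choice made in Lemma~\ref{S4:Lem:ConvSLT1} and carried through Lemma~\ref{S4:Lem:ConvSLT2}. The paper's own proof has the same limitation. For $i\neq p_0$ it only obtains $\Lambda_0\leq M\min\{x_{p_0},Y^0\}$, even though $Z_i^0$ enters its derivative bound.

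\textbf{Two smaller points.}
\begin{enumerate}
\item Your claim $p_0'\geq p_0$ can fail. The $x_{p_0}$-coefficient of $\Lambda\circ\iota$ is $\lambda_{p_0}+\lambda_i\partial_{p_0}\beta$, which may vanish. The bound $\Lambda\circ\iota\leq Mx_{p_0'}$ holds anyway, by the Remark after Definition~\ref{S3:Def:SLT}.
\item The domination inequality allows no additive slack. Dropping the constants of $\beta$ and of $Y_j\circ\iota$ there is not a matter of ``a factor''. It needs the scaling $\mathbf{x}'\mapsto t\mathbf{x}'$ with $t\to\infty$, as in the paper.
\end{enumerate}
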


\begin{proof}
    Since $C$ is an admissible polytope, there exist linear forms $T_1,\ldots,T_N$ on $\mathbf{R}^n$ and constants $b_1,\ldots,b_N\in\mathbf{R}$ such that:
    \[
        C=\{\mathbf{x}_n\in\mathbf{R}_{\geq0}^n|\forall q\in\{1,\ldots,N\}, T_q(\mathbf{x}_n)\succ b_q\}.
    \]
    Given $q\in\{1,\ldots,N\}$, let us denote $T_q\colon\mathbf{x}_n\mapsto a_{q,1}x_1+\cdots+a_{q,n}x_n$. Let:
    \begin{align*}
        I_0 = & \{q~|~a_{q,i}=0\} \\
        I_+ = & \{q~|~a_{q,i}>0\} \\
        I_- = & \{q~|~a_{q,i}<0\}.
    \end{align*}
    If $q\in I_0$, the linear form $T_q$ does not depend on the variable $x_i$, hence we denote $T_q(\mathbf{x}_n)=T_q((x_j)_{j\neq i})$ for such $q$. We denote:
    \begin{align*}
        X_i^0((x_j)_{j\neq i}) = & \max_{q\in I_+}\bigg\{\frac{b_q-a_{q,1}x_1-\cdots-a_{q,i-1}x_{i-1}-a_{q,i+1}x_{i+1}-\cdots-a_{q,n}x_n}{a_{q,i}}\bigg\} \\
        X_i^1((x_j)_{j\neq i}) = & \min_{q\in I_-}\bigg\{\frac{b_q-a_{q,1}x_1-\cdots-a_{q,i-1}x_{i-1}-a_{q,i+1}x_{i+1}-\cdots-a_{q,n}x_n}{a_{q,i}}\bigg\}.
    \end{align*}
    We finally note:
    \[
        C_i=\Big\{(x_j)_{j\neq i}\in\mathbf{R}_{\geq0}^{n-1}\Big|\forall q\in I_0,T_q((x_j)_{j\neq i})\succ b_q\Big\}\cap\Big\{(x_j)_{j\neq i}\in\mathbf{R}_{\geq0}^{n-1}\Big|0\prec X_i^0((x_j)_{j\neq i})\prec X_i^1((x_j)_{j\neq i})\Big\}.
    \]
    Since $C\subset\{0\leq x_1\leq \cdots\leq x_n\}$ we can assume without loss of generality that the linear forms $\mathbf{x}_n\mapsto x_1$ and $\mathbf{x}_n\mapsto x_{j+1}-x_j$ belong to $\{T_1,\ldots,T_N\}$ with associated constant $b_q=0$, hence we have $C_i\subset\{0\leq x_1\leq \cdots\leq x_{i-1}\leq x_{i+1}\leq \cdots\leq x_n\}$ and $C_i$ is an admissible polytope.
    
    With these notations, we have:
    \[
        C=\{\mathbf{x}_n\in\mathbf{R}_{\geq0}^n|(x_j)_{j\neq i}\in C_i,0\prec X_i^0((x_j)_{j\neq i})\prec x_i\prec X_i^1((x_j)_{j\neq i})\}.
    \]
    Let us now denote $X_i^0=Z_i^0+b_i^0$ and $X_i^1=Z_i^1+b_i^1$, where $Z_i^0$ and $Z_i^1$ are piecewise linear and $b_i^0$ and $b_i^1$ are piecewise constant, and let:
    \begin{align*}
        & h_0\colon (\mathbf{s}_m,(x_j)_{j\neq i})\mapsto h(\mathbf{s}_m,x_1,\ldots,x_{i-1},X_i^0((x_j)_{j\neq i}),x_{i+1},\ldots,x_n) \\
        & h_1\colon (\mathbf{s}_m,(x_j)_{j\neq i}\mapsto h(\mathbf{s}_m,x_1,\ldots,x_{i-1},X_i^1((x_j)_{j\neq i}),x_{i+1},\ldots,x_n).
    \end{align*}
    Let $C_1^0,\ldots,C_a^0\subset C_0$ be the subcones on which $X_i^0$ is affine and $C_1^1,\ldots,C_b^1\subset C_0$ be the subcones on which $X_i^1$ is affine.
    
    The linear part of $h_0$ on the cones $C_r^0$, for $r\in\{1,\ldots,a\}$, is given by:
    \[
        \Lambda_0\colon(\mathbf{s}_m,(x_j)_{j\neq i})\longmapsto\Lambda(\mathbf{s}_m,x_1,\ldots,x_{i-1},Z_i^0((x_j)_{j\neq i}),x_{i+1},\ldots,x_n).
    \]
    Let us note:
    \[
        \psi_0\colon(\mathbf{s}_m,(x_j)_{j\neq i})\longmapsto\Lambda(\mathbf{0}_m,0,\ldots,0,b_i^0,0,\ldots,0)+\psi(\mathbf{s}_m,x_1,\ldots,x_{i-1},X_i^0((x_j)_{j\neq i}),x_{i+1},\ldots,x_n).
    \]
    With these notations we have $h_0=\Lambda_0+\psi_0$, and $||\psi_0||_\infty<\infty$.
    
    Since $h$ is a simplified length-type function, there exist linear forms $Y_1,\ldots,Y_k$ and a constant $c>0$ such that for every $p\in\{1,\ldots,n\}$ and every $(\mathbf{s}_m,\mathbf{x}_n)\in \mathbf{R}_{>0}^m\times C$, we have:
    \[
        \Big|\frac{\partial\psi(\mathbf{s}_m,\mathbf{x}_n)}{\partial x_p}\Big|\leq ce^{-\min\{x_p,Y_1(\mathbf{x}_n),\ldots,Y_k(\mathbf{x}_n)\}},
    \]
    and if $p_0=\max\{p|\frac{\partial\Lambda(\mathbf{s}_m,\mathbf{x}_n)}{\partial x_{p_0}}\neq0\}$, there exists a constant $M>0$ such that for all $\mathbf{x}_n\in C$, we have:
    \[
        \Lambda(\mathbf{0}_m,\mathbf{x}_n)\leq M\min\{x_{p_0},Y_1(\mathbf{x}_n),\ldots,Y_k(\mathbf{x}_n)\}.
    \]
    On the one hand when denoting $Y_d^0((x_j)_{j\neq i})=Y_d(x_1,\ldots,x_{i-1},Z_i^0((x_j)_{j\neq i}),x_{i+1},\ldots,x_n)$ and $\widetilde{Y}_d^0((x_j)_{j\neq i})=Y_d(x_1,\ldots,x_{i-1},X_i^0((x_j)_{j\neq i}),x_{i+1},\ldots,x_n)$ for $d\in\{1,\ldots,k\}$, if $i=p_0$, for all $(x_j)_{j\neq i}\in C_i$ we have:
    \[
        \Lambda_0(\mathbf{0}_m,(x_j)_{j\neq i})\leq M\min\{X_i^0((x_j)_{j\neq i}),\widetilde{Y}_1^0((x_j)_{j\neq i}),\ldots,\widetilde{Y}_k^0((x_j)_{j\neq i})\}.
    \]
    Since an admissible polytope is stable under $\mathbf{x}_n\mapsto t\mathbf{x}_n$ with $t\geq1$, we have:
    \[
        \Lambda_0(\mathbf{0}_m,(x_j)_{j\neq i})\leq M\min\{Z_i^0((x_j)_{j\neq i}),Y_1^0((x_j)_{j\neq i}),\ldots,Y_k^0((x_j)_{j\neq i})\},
    \]
    and if $i\neq p_0$, we similarly have:
    \[
        \Lambda_0(\mathbf{0}_m,(x_j)_{j\neq i})\leq M\min\{x_{p_0},Y_1^0((x_j)_{j\neq i}),\ldots,Y_k^0((x_j)_{j\neq i})\}.
    \]
    Denoting $p_1=\max\{p\neq i|\frac{\partial\Lambda_0}{\partial x_p}\neq0\}$, by taking a possibly greater $M$ for all $(x_j)_{j\neq i}\in C_r^0$ we have:
    \[
        \Lambda_0(\mathbf{0}_m,(x_j)_{j\neq i})\leq Mx_{p_1}.
    \]
    On the other hand we have for all $p\neq i$ and all $(\mathbf{s}_m,(x_j)_{j\neq i})\in\mathbf{R}_{>0}^m\times(C_r^0\cap(A,+\infty)^{n-1})$:
    \begin{align*}
        \Big|\frac{\partial\psi_0}{\partial x_p}(\mathbf{s}_m,(x_j)_{j\neq i})\Big| = & \bigg|\frac{\partial \psi}{\partial x_p}(\mathbf{s}_m,x_1,\ldots,x_{i-1},X_i^0((x_j)_{j\neq i}),x_{i+1},\ldots,x_n) \\
        & + \frac{\partial X_i^0}{\partial x_p}\frac{\partial \psi}{\partial x_i}(\mathbf{s}_m,x_1,\ldots,x_{i-1},X_i^0((x_j)_{j\neq i}),x_{i+1},\ldots,x_n)\bigg|\\
        \leq & c\Big(1+\Big|\frac{\partial X_i^0}{\partial x_p}\Big|\Big)e^{-\min\{x_p,X_i^0((x_j)_{j\neq i}),\widetilde{Y}_1^0((x_j)_{j\neq i}),\ldots,\widetilde{Y}_k^0((x_j)_{j\neq i})\}} \\
        \leq & c'\Big(1+\Big|\frac{\partial X_i^0}{\partial x_p}\Big|\Big)e^{-\min\{x_p,Z_i^0((x_j)_{j\neq i}),Y_1^0((x_j)_{j\neq i}),\ldots,Y_k^0((x_j)_{j\neq i})\}}.
    \end{align*}
    Hence $h_0$ is a simplified length-type function on each $C_r^0$. By replacing $X_i^0$ by $X_i^1$ and $Z_i^0$ by $Z_i^1$, we also conclude that $h_1$ is a simplified length-type function on each $C_r^1$.
\end{proof}

\begin{Lem}\label{S3:Lem:PropLinForm}
    Let $C$ be an $n$-dimensional admissible polytope and $h\colon \mathbf{R}_{>0}^m\times C\longrightarrow\mathbf{R}$ be a simplified length-type function. Denote by $\Lambda$ the linear part of $h$. If $T$ is a linear form on $\mathbf{R}^n$, there exists a constant $\mu>0$ such that for all $\mathbf{x}_n\in C$, we have $\Lambda(\mathbf{0}_m,\mathbf{x}_n)\geq\mu T(\mathbf{x}_n)$.
\end{Lem}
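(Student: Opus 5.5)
The plan is to reduce the inequality to a linear lower bound $\Lambda(\mathbf{0}_m,\mathbf{x}_n)\geq c\|\mathbf{x}_n\|$ on $C$, up to a bounded error that can be absorbed. Since $T(\mathbf{x}_n)\leq\|T\|\,\|\mathbf{x}_n\|$, such a bound gives the claim. Write $\lambda(\mathbf{x}_n)=\Lambda(\mathbf{0}_m,\mathbf{x}_n)$; it is a linear form on $\mathbf{R}^n$. By Definition \ref{S3:Def:SLT}, $\lambda$ is positive and proper on $\Bar{C}$. The argument only uses the polyhedral structure of $\Bar{C}$ together with these two properties.

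\textbf{Step 1: decomposition of $\Bar{C}$.} By definition of an admissible polytope, $\Bar{C}=\Psi(\Bar{U}\cap([1,+\infty)^{m'}\times\mathbf{R}^{n-m'}))$ for some $m'$. It is therefore a closed polyhedron defined by finitely many non-strict linear inequalities. By the Minkowski--Weyl theorem we can write $\Bar{C}=P+K$, where $P$ is a compact polytope and $K$ is the recession cone of $\Bar{C}$. Set $B=\max_P\|\cdot\|$ and $A=\max_P|\lambda|$.

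\textbf{Step 2: coercivity of $\lambda$ on $K$.} First, $\lambda\geq0$ on $K$. Indeed, for $v\in K$ and $p\in P$ we have $p+tv\in\Bar{C}$, so $0\leq\lambda(p+tv)=\lambda(p)+t\lambda(v)$ for all $t\geq0$, which forces $\lambda(v)\geq0$. Next, suppose $\lambda(v)=0$ for some $v\in K\setminus\{0\}$. Then the unbounded set $\{p+tv\}_{t\geq0}\subset\Bar{C}$ lies in the bounded level set $\{\lambda\leq\lambda(p)\}$, which contradicts properness. Hence $\lambda>0$ on $K\cap\mathbf{S}^{n-1}$. By compactness there is $c>0$ with $\lambda(v)\geq c\|v\|$ for all $v\in K$.

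\textbf{Step 3: the affine bound on $C$.} Any $\mathbf{x}_n\in C$ can be written as $p+v$ with $p\in P$ and $v\in K$. Then
\[
\|\mathbf{x}_n\|\leq B+\|v\|\leq B+\frac{\lambda(v)}{c}\leq B+\frac{\lambda(\mathbf{x}_n)+A}{c}.
\]
Consequently $T(\mathbf{x}_n)\leq\|T\|\big(B'+\lambda(\mathbf{x}_n)/c\big)$ for some constant $B'$.

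\textbf{Step 4: absorbing the constant.} This is the main obstacle, since the constant term must be controlled when $\lambda$ is small. We split into two cases according to $m'$.

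If $m'=0$, then $\Bar{C}$ is a closed cone, so $P$ may be taken to be $\{0\}$ and $A=B=0$. We then get directly $T\leq(\|T\|/c)\lambda$.

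If $m'\geq1$, then $\Bar{C}$ does not contain $0$, because it lies in the image under $\Psi$ of a set with one coordinate at least $1$. By positivity, $\lambda>0$ on $\Bar{C}$. By properness, $\lambda$ attains a minimum $\delta>0$ on $\Bar{C}$, since the sublevel sets are compact. Then $B'\leq(B'/\delta)\lambda$ on $C$, which gives
\[
T\leq\|T\|\Big(\frac{B'}{\delta}+\frac{1}{c}\Big)\lambda.
\]
In both cases we may take $\mu^{-1}=\|T\|(B'/\delta+1/c)$, with the conventions above, which proves the claim.

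The point to check carefully is that the "positive" in Definition \ref{S3:Def:SLT}(ii) indeed means $\lambda>0$ on $\Bar{C}\setminus\{0\}$. If instead only $\lambda\geq0$ were available, the argument would first have to exclude the case $\lambda(x_0)=0$ at some $x_0\in\Bar{C}\setminus\{0\}$. In the case $m'=0$ this is done by properness along the ray $tx_0$. In the case $m'\geq1$ it is done by properness along $x_0+tv$ for some $v\in K\setminus\{0\}$, combined with Step 2.
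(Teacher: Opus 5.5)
Your proof is correct apart from the hedge for $m'\geq1$ discussed below, and it takes a different route from the paper. The paper fixes a level $a_0>0$ and sets $\nu=\frac{1}{a_0}\max\{T(\mathbf{x}_n)\mid \mathbf{x}_n\in\Bar{C},\ \Lambda(\mathbf{0}_m,\mathbf{x}_n)=a_0\}$, which is finite by properness. For $\mathbf{x}_n\in C$ with $a=\Lambda(\mathbf{0}_m,\mathbf{x}_n)$, it rescales to $\frac{a_0}{a}\mathbf{x}_n$ on that slice. This two-line homogeneity argument tacitly treats $\Bar{C}$ as a cone: the factor $\frac{a_0}{a}$ is $<1$ whenever $a>a_0$, but an admissible polytope is only stable under $\mathbf{x}_n\mapsto t\mathbf{x}_n$ for $t\geq1$. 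Your route is longer: you write $\Bar{C}=P+K$, prove coercivity of $\lambda$ on the recession cone, and absorb the additive constant via $\min_{\Bar{C}}\lambda>0$. In exchange, it genuinely handles the truncated case $m'\geq1$.

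The point you flag is a real issue, and your fallback for $m'\geq1$ does not work. The paper uses ``positive'' to mean ``non-negative'': the Remark after Definition \ref{S3:Def:SLT} says ``Since $\Lambda$ is non-negative and proper''. So for $m'\geq1$ the claim $\lambda>0$ on $\Bar{C}$ has to be proved. Your suggestion, properness along $x_0+tv$ with $v\in K\setminus\{0\}$, gives $\lambda(x_0+tv)=t\lambda(v)\to\infty$, which contradicts nothing.

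The missing ingredient is the scaling stability of $\Bar{C}$. If $\lambda(x_0)=0$ for some $x_0\in\Bar{C}$, then the unbounded ray $\{tx_0\}_{t\geq1}$ lies in $\Bar{C}\cap\{\lambda=0\}$, contradicting properness.

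In fact this stability, together with closedness and convexity, gives $\Bar{C}\subset K$. Indeed, for $p,x_0\in\Bar{C}$ and $s\geq0$, $p+sx_0=\lim_{\varepsilon\to0}\big((1-\varepsilon)p+\varepsilon\cdot\tfrac{s}{\varepsilon}x_0\big)$, which is a limit of points of $\Bar{C}$. Your Step 2 uses only $\lambda\geq0$ and properness, so it then yields $\lambda\geq c\|\cdot\|$ on all of $\Bar{C}$, hence $T\leq(\|T\|/c)\lambda$ directly. The affine bound and the case split in Steps 3--4 become unnecessary. Running the paper's slice argument on $K$ instead of $\Bar{C}$ repairs it in the same way. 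Finally, take e.g. $\mu=1$ when $T=0$, since your formula for $\mu^{-1}$ then vanishes.
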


\begin{proof}
    Let $\Bar{C}$ be the closure of $C$, and let us fix $a_0>0$ such that $\Bar{C}\cap\{\Lambda(\mathbf{0}_m,\mathbf{x}_n)=a_0\}\neq\emptyset$. Since this set is compact, we can define:
    \[
        \nu=\frac{1}{a_0}\max\big\{T(\mathbf{x}_n) | \mathbf{x}_n\in\Bar{C}\cap\{\Lambda(\mathbf{0}_m,\mathbf{x}_n)=a_0\}\big\}.
    \]
    Let $\mathbf{x}_n\in C$ and $a=\Lambda(\mathbf{0}_m,\mathbf{x}_n)>0$. We have $\Lambda(\mathbf{0}_m,\frac{a_0}{a}\mathbf{x}_n)=a_0$, hence $T(\frac{a_0}{a}\mathbf{x}_n)\leq a_0\nu$. If $\nu\leq 0$, then $T\leq 0$ on $C$ in which case the conclusion is trivial, and if $\nu>0$ we conclude by letting $\mu=\nu^{-1}$.
\end{proof}

\section{Pseudo-convolution of polynomials with respect to length-type functions}\label{S4}

Using Theorem \ref{S2:Theo:IntFormGeneral}, we can compute the Weil-Petersson expectation $\mathbb{E}[F^\gamma]$ of a geometric random variable as a linear combination of terms of the form:
\[
    \int_{\mathbf{R}_{>0}^k\times\mathbf{R}^k\times\mathbf{R}_{\geq0}^N}F\circ h_\gamma(\mathbf{x}_k,\mathbf{t}_k,\mathbf{y}_N,\mathbf{L}_n)y_1^{K_1}\cdots y_N^{K_N}\d\mathbf{x}_k\d\mathbf{t}_k\d\mathbf{y}_N.
\]
We therefore obtain the existence of the function $V_{\mathcal{O}_\gamma}(~.~,\mathbf{L}_n)$ of our main result by disintegrating the measure $y_1^{K_1}\cdots y_N^{K_N}\d\mathbf{x}_k\d\mathbf{t}_k\d\mathbf{y}_N$ along the level sets of the length function $h_\gamma$ (written in appropriate Fenchel-Nielsen coordinates). The function $h_\gamma$ is a length-type function, hence according to Theorem \ref{S3:Theo:SimpFormPLT}, we can approximate it with a piecewise linear function. If we disintegrate the measure $y_1^{K_1}\cdots y_N^{K_N}\d\mathbf{x}_k\d\mathbf{t}_k\d\mathbf{y}_N$ along the level sets of a linear form, we recover (up to rescaling) the usual convolution of $2k$ times 1 and $y_1^{K_1},\ldots,y_N^{K_N}$ on a subcone of $\mathbf{R}^{2k+N}$. Our strategy is to exploit this analogy to obtain our estimates on the function $V_{\mathcal{O}_\gamma}(~.~,\mathbf{L}_n)$. Our main tool is the \emph{pseudo-convolution}, developed by Anantharaman and Monk in \cite{AnMo25}.

\subsection{Disintegration and pseudo-convolution}

Let $h\colon\mathbf{R}_{\geq0}^n\longrightarrow\mathbf{R}$ be a measurable function such that for each compact subset $K\subset\mathbf{R}$, the set $h^{-1}(K)$ has finite measure, and $f\in L_{loc}^1(\mathbf{R}_{\geq0}^n)$. When integrating $f$ on the level set $h^{-1}(\ell)$ defined by the implicit relation $\ell=h(\mathbf{x}_n)$, we denote by $\frac{\d\mathbf{x}_n}{\d\ell}$ the disintegration of the Lebesgue measure on the level sets of $h$. As an example, if $\gamma$ is a closed loop on $S_{g,n}$ and $F\colon\mathbf{R}\longrightarrow\mathbf{C}$ is a test function, using the same notations as in Theorem \ref{S2:Theo:IntFormGeneral} we can therefore write:
\[
    \mathbb{E}[F^\gamma]=\int_\mathbf{R} F(\ell)\bigg(\int_{\ell=h_\gamma(\mathbf{x}_k,\mathbf{t}_k,\mathbf{y}_N,\mathbf{L}_n)}y_1\cdots y_NV_\Gamma(\mathbf{y}_N,\mathbf{L}_n)\frac{\d\mathbf{x}_k\d\mathbf{t}_k\d\mathbf{y}_N}{\d\ell}\bigg)\frac{\d\ell}{V_{g,n}(\mathbf{L}_n)}.
\]

\begin{Def}
    Let $C\subset\mathbf{R}_{\geq0}^n$ be a measurable set with non-empty interior. If $1\leq i\leq n$, we denote:
    \[
        \hat{C}_i=\{(x_j)_{j\neq i}\in\mathbf{R}_{\geq0}^{n-1}|\exists x_i\in\mathbf{R}_{\geq0},(x_1,\ldots,x_n)\in C\}.
    \]
    Let $h\colon C\longrightarrow\mathbf{R}$ be a proper map. We say that $h$ satisfies Assumption $(\mathbf{H}_n^i)$ if $h$ is $\mathscr{C}^1$ and if for any $(x_j)_{j\neq i}\in \hat{C}_i$, the function:
    \[
        h_i=h_i^{(x_j)_{j\neq i}}\colon x_i\longmapsto h(\mathbf{x}_n)
    \]
    is a $\mathscr{C}^1$-diffeomorphism from an interval of the form $(X_0((x_j)_{j\neq i}),X_1((x_j)_{j\neq i}))$ onto its image, where $X_1((x_j)_{j\neq i})$ may be infinite. Its inverse will then be denoted by:
    \[
        \ell\mapsto h_i^{-1}(\ell,(x_j)_{j\neq i}).
    \]
\end{Def}

Under Assumption $(\mathbf{H}_n^i)$, we can compute the expression of the disintegration of the Lebesgue measure on the level sets of $h$.

\begin{Prop}\label{S4:Prop:DensDesint}
    Let $h\colon \mathbf{R}_{>0}^n\longrightarrow\mathbf{R}$ be a function satisfying Assumption $(\mathbf{H}_n^i)$. For any $\ell$ belonging in the image of $h$, the function $(x_j)_{j\neq i}\mapsto h_i^{-1}(\ell,(x_j)_{j\neq i})$ is defined on:
    \[
        U_\ell^i=\{(x_j)_{j\neq i}\in\mathbf{R}_{>0}^{n-1}|h_i^{(x_j)_{j\neq i}}(X_0((x_j)_{j\neq i}))<\ell<h_i^{(x_j)_{j\neq i}}(X_1((x_j)_{j\neq i}))\}
    \]
    if $h$ is an increasing function of $x_i$ and:
    \[
        U_\ell^i=\{(x_j)_{j\neq i}\in\mathbf{R}_{>0}^{n-1}|h_i^{(x_j)_{j\neq i}}(X_1((x_j)_{j\neq i}))<\ell<h_i^{(x_j)_{j\neq i}}(X_0((x_j)_{j\neq i}))\}
    \]
    otherwise. Moreover if $f\in L_{loc}^1(\mathbf{R}^n)$, we have for almost every $\ell\in\mathbf{R}$:
    \begin{equation}\label{S4:Eq:ExpDesint}
        \int_{\ell=h(\mathbf{x}_n)}f(\mathbf{x}_n)\frac{\d\mathbf{x}_n}{\d\ell}=\int_{U_\ell^i}f\Big(x_1,\ldots,x_{i-1},h_i^{-1}\big(\ell,(x_j)_{j\neq i}\big),x_{i+1},\ldots,x_n\Big)\Big|\frac{\partial h_i^{-1}}{\partial\ell}\big(\ell,(x_j)_{j\neq i}\big)\Big|\prod_{j\neq i}\d x_j.
    \end{equation}
\end{Prop}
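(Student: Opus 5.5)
The plan is to reduce the statement to the one-dimensional change of variables $\ell=h_i(x_i)$ together with Fubini's theorem. I first treat the case where $h$ is increasing in $x_i$; the decreasing case is identical after reversing the orientation of the interval.

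\emph{Domain of $h_i^{-1}$.} Fix $(x_j)_{j\neq i}$ in $\hat{C}_i$. By Assumption $(\mathbf{H}_n^i)$, the map $h_i$ is a $\mathscr{C}^1$-diffeomorphism from $(X_0,X_1)$ onto its image. Being a continuous strictly monotone map of an interval, its image is the open interval with endpoints the limits $h_i(X_0^+)$ and $h_i(X_1^-)$; this is how the values $h_i^{(x_j)_{j\neq i}}(X_0)$ and $h_i^{(x_j)_{j\neq i}}(X_1)$ in the statement are interpreted. Consequently $\ell$ has a preimage, namely $h_i^{-1}(\ell,(x_j)_{j\neq i})$, exactly when $(x_j)_{j\neq i}\in U_\ell^i$. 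This gives the first claim.

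\emph{The disintegration formula.} By definition, the disintegrated measure is characterized by
\[
\int F(h(\mathbf{x}_n))f(\mathbf{x}_n)\,\d\mathbf{x}_n=\int_{\mathbf{R}} F(\ell)\bigg(\int_{\ell=h(\mathbf{x}_n)} f\,\frac{\d\mathbf{x}_n}{\d\ell}\bigg)\d\ell
\]
for all test functions $F$ (say bounded with compact support). Properness of $h$ ensures that $h^{-1}(\operatorname{supp}F)$ is compact, so that $f\,(F\circ h)$ is integrable. Starting from the left-hand side, I apply Fubini to isolate the inner integral in $x_i$. Substituting $x_i=h_i^{-1}(\ell,(x_j)_{j\neq i})$ in that inner integral, the Jacobian is $|\partial_\ell h_i^{-1}|=1/|h_i'|$, and the range of integration in $\ell$ is exactly the image of $h_i$. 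The inner integral therefore becomes
\[
\int F(\ell)\,\mathds{1}_{U_\ell^i}((x_j)_{j\neq i})\, f(\ldots,h_i^{-1},\ldots)\,\big|\partial_\ell h_i^{-1}\big|\,\d\ell .
\]
A second application of Fubini (Tonelli first, applied to $|f|$ and $|F|$, to justify the exchange) swaps the $\ell$ and $(x_j)_{j\neq i}$ integrals. The resulting identity holds for every test function $F$, so the density $\ell\mapsto\int_{U_\ell^i}\cdots$ agrees with the disintegration for almost every $\ell$, which is (\ref{S4:Eq:ExpDesint}). I also need measurability of $(\ell,(x_j)_{j\neq i})\mapsto h_i^{-1}(\ell,(x_j)_{j\neq i})$. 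This follows from continuity in $\ell$ combined with the description of its subgraph $\{x_i<h_i^{-1}\}$ as $\{h(\mathbf{x}_n)<\ell\}$, which is open.

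\emph{Main obstacle.} I expect the delicate point to be integrability and local finiteness. Since $f$ is only $L_{loc}^1$, the inner density might be infinite on a null set of $\ell$, which is why the formula is stated only almost everywhere. The fact that $f$ is $L^1$ on the compact set $h^{-1}(K)$ for every compact $K\subset\mathbf{R}$ ensures, via Tonelli, that the density is locally integrable in $\ell$, hence finite for almost every $\ell$.
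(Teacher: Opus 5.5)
The paper does not prove this proposition. It is stated as a standard fact and used directly, in the remark after the definition of pseudo-convolution and in the proof of Theorem \ref{S4:Theo:OpeLConvCone}.

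Your argument is correct and is the standard proof. You apply Fubini in $\mathbf{x}_n$, then the one-dimensional substitution $x_i=h_i^{-1}(\ell,(x_j)_{j\neq i})$ on each fibre, then Tonelli/Fubini to exchange the $\ell$ and $(x_j)_{j\neq i}$ integrals. Properness gives local integrability of the resulting density, hence the almost-everywhere statement.

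You are implicitly reading $(X_0,X_1)$ as the whole fibre of the domain over $(x_j)_{j\neq i}$. That is the intended meaning; on any smaller interval the formula would miss part of the level set.

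One small point on measurability. Your argument that $h_i^{-1}$ is measurable does not by itself make $f\big(x_1,\ldots,h_i^{-1}(\ell,(x_j)_{j\neq i}),\ldots,x_n\big)$ jointly measurable when $f$ is merely Lebesgue measurable. You would need, for instance, a Borel representative of $f$.

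A cleaner route settles this and shortens the proof. Since $h_i$ is a $\mathscr{C}^1$-diffeomorphism, $\partial_{x_i}h\neq0$. So $\Theta\colon\mathbf{x}_n\mapsto((x_j)_{j\neq i},h(\mathbf{x}_n))$ is an injective $\mathscr{C}^1$ map with Jacobian determinant $\pm\partial_{x_i}h$. It is therefore a $\mathscr{C}^1$-diffeomorphism onto the open set $\{(\ell,(x_j)_{j\neq i})\,|\,(x_j)_{j\neq i}\in U_\ell^i\}$. A single change of variables by $\Theta$ followed by Fubini gives (\ref{S4:Eq:ExpDesint}) directly. Because $\Theta$ and $\Theta^{-1}$ preserve null sets, this also handles the measurability of $f\circ\Theta^{-1}$.
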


\begin{Lem}[Change of variables]\label{S4:Lem:ChangeVar}
    Let $C\subset\mathbf{R}^n$ be an open set, $h\colon C\longrightarrow\mathbf{R}$ be a proper and measurable function and $f\in L_{loc}^1(\mathbf{R}^n)$.
    \begin{enumerate}[label=\textup{(\roman*)}]
        \item\label{S4:Lem:ChangeVar:Item1} Let $\Phi\colon U\longrightarrow C$ be a $\mathscr{C}^1$-diffeomorphism, we have:
        \[
            \int_{\ell=h(\mathbf{y}_n)}f(\mathbf{y}_n)\mathds{1}_C(\mathbf{y}_n)\frac{\d\mathbf{y}_n}{\d\ell}=\int_{\ell=h\circ\Phi(\mathbf{x}_n)}f\circ\Phi(\mathbf{x}_n)|J_\Phi(\mathbf{x}_n)|\mathds{1}_U(\mathbf{x}_n)\frac{\d\mathbf{x}_n}{\d\ell},
        \]
        where $J_\Phi$ is the determinant of the Jacobian matrix of $\Phi$.
        \item\label{S4:Lem:ChangeVar:Item2} Let $u\colon \mathbf{R}\longrightarrow\mathbf{R}$ be a $\mathscr{C}^1$-diffeomorphism. Let us denote $H=u\circ h$ and:
        \[
            F(\ell)=\int_{\ell=h(\mathbf{x}_n)}f(\mathbf{x}_n)\mathds{1}_C(\mathbf{x}_n)\frac{\d\mathbf{x}_n}{\d\ell},
        \]
        and
        \[
            G(\ell)=\int_{\ell=H(\mathbf{x}_n)}f(\mathbf{x}_n)\mathds{1}_C(\mathbf{x}_n)\frac{\d\mathbf{x}_n}{\d\ell}.
        \]
        We have $F=|u'|G\circ u$.
    \end{enumerate}
\end{Lem}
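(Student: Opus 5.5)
The plan is to characterise both densities by testing against functions on $\mathbf{R}$. By definition, the disintegration $\frac{\d\mathbf{x}_n}{\d\ell}$ along the level sets of $h$ is the unique family of measures, up to a null set of $\ell$, such that for every test function $\phi\colon\mathbf{R}\longrightarrow\mathbf{C}$ (say bounded measurable with compact support)
\[
    \int_\mathbf{R}\phi(\ell)\bigg(\int_{\ell=h(\mathbf{x}_n)}g(\mathbf{x}_n)\frac{\d\mathbf{x}_n}{\d\ell}\bigg)\d\ell=\int_{\mathbf{R}^n}\phi(h(\mathbf{x}_n))g(\mathbf{x}_n)\d\mathbf{x}_n
\]
holds for all $g\in L_{loc}^1$. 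Properness of $h$ ensures the right-hand side is finite. So for each item we will check that both sides give the same pairing against every $\phi$, and then conclude by uniqueness of the density, i.e. the fundamental lemma of measure theory applied to $\ell\mapsto F(\ell)$.

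For item (i), fix $\phi$ and compute the pairing of the right-hand side:
\[
    \int_\mathbf{R}\phi(\ell)\int_{\ell=h\circ\Phi}f\circ\Phi|J_\Phi|\mathds{1}_U\frac{\d\mathbf{x}_n}{\d\ell}\d\ell=\int_U\phi(h\circ\Phi(\mathbf{x}_n))f(\Phi(\mathbf{x}_n))|J_\Phi(\mathbf{x}_n)|\d\mathbf{x}_n.
\]
The classical change of variables $\mathbf{y}_n=\Phi(\mathbf{x}_n)$ for the $\mathscr{C}^1$-diffeomorphism $\Phi\colon U\to C$ turns this into $\int_C\phi(h(\mathbf{y}_n))f(\mathbf{y}_n)\d\mathbf{y}_n$. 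This is exactly the pairing of the left-hand side. The function $h\circ\Phi$ is still proper on $U$, since preimages of compacts under a homeomorphism are compact, so the disintegration on the right is well defined. The two densities therefore agree for almost every $\ell$.

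For item (ii), pair $G$ against an arbitrary $\psi$:
\[
    \int_\mathbf{R}\psi(\ell)G(\ell)\d\ell=\int_C\psi(u(h(\mathbf{x}_n)))f(\mathbf{x}_n)\d\mathbf{x}_n.
\]
Applying the defining identity for $F$ with $\phi=\psi\circ u$, this equals $\int_\mathbf{R}\psi(u(\ell))F(\ell)\d\ell$. The one-dimensional substitution $\ell'=u(\ell)$ is valid because $u$ is a $\mathscr{C}^1$-diffeomorphism of $\mathbf{R}$, hence strictly monotone with nonvanishing derivative. It rewrites the integral as $\int_\mathbf{R}\psi(\ell')F(u^{-1}(\ell'))|(u^{-1})'(\ell')|\d\ell'$. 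Since $\psi$ is arbitrary, we get $G=|(u^{-1})'|\,F\circ u^{-1}$ almost everywhere. Substituting $\ell'=u(\ell)$ and using $(u^{-1})'(u(\ell))=1/u'(\ell)$, this reads $F=|u'|\,G\circ u$.

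The only delicate point is to make sure the disintegrations exist and are determined almost everywhere. This reduces to the local finiteness of the pushforward measures $h_*(f\mathds{1}_C\d\mathbf{x})$ and $(u\circ h)_*(f\mathds{1}_C\d\mathbf{x})$. That follows from the properness of $h$ and from $u$ mapping compacts to compacts. Apart from this, the argument is just two applications of the change of variables formula.
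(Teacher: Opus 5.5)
Your proposal is correct and follows the paper's route: item (i) is the $n$-dimensional change of variables tested against functions of $h$, and item (ii) is duality against test functions combined with a one-dimensional substitution. The only cosmetic difference is that you pair $G$ against $\psi$ and then invert the resulting relation. The paper instead pairs $F$ against $\varphi$, writes $\varphi=(\varphi\circ u^{-1})\circ u$, and reads off $F=|u'|\,G\circ u$ directly.
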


\begin{proof}
    The point \ref{S4:Lem:ChangeVar:Item1} immediately comes from the change of variables formula in $\mathbf{R}^n$. To prove the point \ref{S4:Lem:ChangeVar:Item2}, let us consider a test function $\varphi\colon \mathbf{R}\longrightarrow\mathbf{R}$. We have on a one hand, by definition:
    \[
        \int_C\varphi(h(\mathbf{x}_n))f(\mathbf{x}_n)\d\mathbf{x}_n = \int_\mathbf{R}\varphi(\ell)F(\ell)\d\ell,
    \]
    and on the other hand, we have:
    \begin{align*}
        \int_C\varphi(h(\mathbf{x}_n))f(\mathbf{x}_n)\d\mathbf{x}_n = & \int_C\varphi\circ u^{-1}(H(\mathbf{x}_n))f(\mathbf{x}_n)\d\mathbf{x}_n \\
        = & \int_\mathbf{R}\varphi\circ u^{-1}(\ell)G(\ell)\d\ell \\
        = & \int_\mathbf{R}\varphi(L)|u'(L)|G\circ u(L)\d L,
    \end{align*}
    which makes us able to conclude.
\end{proof}

The \emph{pseudo-convolution} is an operation defined by Anantharaman and Monk in \cite{AnMo25}, which generalizes the usual convolution by disintegrating measures on level sets of non-linear maps.

\begin{Def}\cite{AnMo25}
    Let $\varphi:\mathbf{R}^n\longrightarrow\mathbf{R}$ be a measurable and locally bounded function and $h$ a measurable function defined on the support of $\varphi$ such that for every compact subset $K\subset\mathbf{R}$, the set $h^{-1}(K)\cap\{\varphi\neq0\}$ has finite measure.

    Let $f_1,\ldots,f_n\in L^1_{loc}(\mathbf{R})$. The \emph{$(h,\varphi)$-convolution of $f_1,\ldots,f_n$} is the pushforward measure $\mu$ of $\varphi\times\bigotimes_{i=1}^nf_i\d x_i$ by the map $h$, i.e. for every test function $g$:
    \[
        \int_\mathbf{R}g\d\mu = \int_{\mathbf{R}^n}g\big(h(x_1,\ldots,x_n)\big)\varphi(x_1,\ldots,x_n)f_1(x_1)\cdots f_n(x_n)\d x_1\cdots\d x_n.
    \]
    When the measure $\mu$ admits a density with respect to the Lebesgue measure, we will denote for almost every $\ell\in\mathbf{R}$:
    \[
        \frac{\d\mu}{\d\ell}(\ell)=f_1\star\cdots\star f_n(\ell)\Big|_\varphi^h.
    \]
\end{Def}

\begin{Rem}
    Under Assumption $(\mathbf{H}_n^i)$, we have:
    \begin{align*}
        f_1\star\cdots\star f_n(\ell)\Big|_\varphi^h=\int_{U_\ell^i} & f_i\Big(h_i^{-1}\big(\ell,(x_j)_{j\neq i}\big)\Big)\prod_{j\neq i}f_j(x_j) \\
        & \times\varphi\Big(x_1,\ldots,x_{i-1},h_i^{-1}\big(\ell,(x_j)_{j\neq i}\big),x_{i+1},\ldots,x_n\Big)\Big|\frac{\partial h_i^{-1}}{\partial\ell}\big(\ell,(x_j)_{j\neq i}\big)\Big|\prod_{j\neq i}\d x_j.
    \end{align*}
\end{Rem}

When re-expressing our goal in terms of pseudo-convolution, we have to obtain appropriate estimates on the pseudo-convolution of polynomials with respect to length-type functions.

\subsection{Friedman-Ramanujan functions}

The asymptotic estimates we want to prove on the density function $V_{\mathcal{O}_\gamma}(~.~,\mathbf{L}_n)$ can be reformulated by saying that $V_{\mathcal{O}_\gamma}(~.~,\mathbf{L}_n)$ belongs to the class $\mathcal{SF}_w(\lambda)$, for some $\lambda\in(0,1)$. This class of functions is closely related to the class of \emph{Friedman-Ramanujan functions}.

\begin{Def}
    Let $\lambda\in(0,1)$. A continuous function $f\colon \mathbf{R}\longrightarrow\mathbf{C}$ belongs to the class $\mathcal{R}(\lambda)$ if there exist constants $c,c_1>0$ such that:
    \[
        \forall\ell\geq0, |f(\ell)|\leq c_1(1+\ell)^ce^{\lambda\ell}.
    \]
    We denote by $\mathcal{R}_w(\lambda)$ the space of locally integrable functions $f\colon \mathbf{R}\longrightarrow\mathbf{C}$ such that for some constants $c,c_1>0$:
    \[
        \forall m\geq1,\int_0^m|f(\ell)|\d\ell\leq c_1(1+m)^ce^{\lambda m}.
    \]
    The space of \emph{$\lambda$-Friedman-Ramanujan functions} is the space $\mathcal{F}(\lambda)=e^\ell\mathbf{C}[\ell]\oplus\mathcal{R}(\lambda)$. The \emph{degree} of a Friedman-Ramanujan function is the degree of its polynomial part.

    We similarly define the space of \emph{weak $\lambda$-Friedman-Ramanujan functions} to be the space $\mathcal{F}_w(\lambda)=e^\ell\mathbf{C}[\ell]\oplus\mathcal{R}_w(\lambda)$.

    If $m\geq0$, we denote $\mathcal{F}^m(\lambda)=e^\ell\mathbf{C}_{m-1}[\ell]\oplus\mathcal{R}(\lambda)$ and $\mathcal{F}_w^m(\lambda)=\mathbf{C}_{m-1}[\ell]\oplus\mathcal{R}_w(\lambda)$.

    We denote $\mathcal{SF}(\lambda)=e^{-\ell}\mathcal{F}(1-\lambda)$ and $\mathcal{SF}^m(\lambda)=e^{-\ell}\mathcal{F}^{m+1}(1-\lambda)$. We similarly define the spaces $\mathcal{SF}_w(\lambda)$ and $\mathcal{SF}_w^m(\lambda)$.

    We finally denote $\mathcal{SR}(\lambda)=e^{-\ell}\mathcal{R}(1-\lambda)$ and $\mathcal{SR}_w(\lambda)=e^{-\ell}\mathcal{R}_w(1-\lambda)$.
\end{Def}

\begin{Rem}
    This definition extends the class of Friedman-Ramanujan functions defined by Anantharaman and Monk in \cite{AnMo24}. A Friedman-Ramanujan function as defined by Anantharaman and Monk corresponds to a $\frac{1}{2}$-Friedman-Ramanujan with this new definition. In the case of eight-shaped loops, it is sufficient to work with $\frac{1}{2}$-Friedman-Ramanujan functions as we did in \cite{LeG25}, but when working with more general loops, this class is not suited anymore. We can also notice that a $\lambda$-Friedman-Ramanujan function is the same thing as an $(e+1)$-Ramanujan function of order $e^\lambda$, as defined by Friedman in \cite{Fri03}, but with continuous variable.
\end{Rem}

Anantharaman and Monk characterized $\frac{1}{2}$-Friedman-Ramanujan functions in \cite{AnMo24}. We extend their characterization to arbitrary Friedman-Ramanujan functions.

\begin{Def}
    We denote by $\mathcal{P}$ the operator consisting in taking the primitive vanishing at 0, and $\mathcal{L}=\Id-\mathcal{P}$. When considering functions of several variables, we shall denote by $\mathcal{P}_{x_i}$ and $\mathcal{L}_{x_i}$ when we apply these operators with respect to the variable $x_i$ if there is an ambiguity on the variable.

    If $f$ is defined on an interval of the form $[x_0,+\infty)$ with $x_0>0$, we extend it to be 0 on $[0,x_0)$, and $\mathcal{P}f$ is the primitive vanishing at 0 of this extension of $f$.
\end{Def}

\begin{Lem}[Characterization of Friedman-Ramanujan functions]\label{S4:Lem:CaracFR}
    Let $\lambda\in(0,1)$ and $m\geq0$. The function $f$ belongs to $\mathcal{F}^m(\lambda)$ (respectively $\mathcal{F}_w^m(\lambda)$) if and only if $\mathcal{L}^mf\in\mathcal{R}(\lambda)$ (respectively $\mathcal{R}_w(\lambda)$).
\end{Lem}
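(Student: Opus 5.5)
The plan is to prove both directions by studying how $\mathcal{L}=\Id-\mathcal{P}$ acts on the two summands $e^\ell\mathbf{C}[\ell]$ and $\mathcal{R}(\lambda)$ (resp. $\mathcal{R}_w(\lambda)$), and then to argue by induction on $m$. Three facts about $\mathcal{L}$ are needed.

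First, $\mathcal{L}$ preserves $\mathcal{R}(\lambda)$ and $\mathcal{R}_w(\lambda)$. If $|f(\ell)|\leq c_1(1+\ell)^ce^{\lambda\ell}$, then
\[
|\mathcal{P}f(\ell)|\leq c_1(1+\ell)^c\int_0^\ell e^{\lambda s}\d s\leq \tfrac{c_1}{\lambda}(1+\ell)^ce^{\lambda\ell},
\]
which uses $\lambda>0$. In the weak case, $\int_0^m|\mathcal{P}f|\leq m\int_0^m|f|$, so only the polynomial exponent increases.

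Second, $\mathcal{L}$ lowers degree on the exponential-polynomial part. Integrating by parts,
\[
\int_0^\ell e^ss^k\d s=e^\ell\big(\ell^k-k\ell^{k-1}+\cdots\big)+\mathrm{const},
\]
so $\mathcal{L}(e^\ell\ell^k)=e^\ell\big(k\ell^{k-1}+(\text{lower order terms})\big)+\mathrm{const}$. In particular $\mathcal{L}(e^\ell)$ is constant. Constants lie in $\mathcal{R}(\lambda)$, so modulo $\mathcal{R}(\lambda)$ the operator $\mathcal{L}$ maps $e^\ell\mathbf{C}_k[\ell]$ to $e^\ell\mathbf{C}_{k-1}[\ell]$. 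It is triangular with nonzero diagonal entries $k$, hence surjective onto $e^\ell\mathbf{C}_{k-1}[\ell]$ modulo $\mathcal{R}(\lambda)$. These two facts give the forward direction at once: if $f=e^\ell p+r$ with $\deg p\leq m-1$, then $\mathcal{L}^mf\in\mathcal{R}(\lambda)$, and likewise for $\mathcal{R}_w(\lambda)$.

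Third, for the converse, the base case is: if $\mathcal{L}f=g\in\mathcal{R}(\lambda)$, then $f\in e^\ell\mathbf{C}\oplus\mathcal{R}(\lambda)$. Set $F=\mathcal{P}f$. Then $F'-F=g$ with $F(0)=0$, so
\[
F(\ell)=e^\ell\int_0^\ell e^{-s}g(s)\d s=Ce^\ell-e^\ell\int_\ell^\infty e^{-s}g(s)\d s,\qquad C=\int_0^\infty e^{-s}g(s)\d s.
\]
The integral defining $C$ converges because $\lambda<1$. The tail term is bounded by $\frac{c_1}{1-\lambda}(1+\ell)^ce^{\lambda\ell}$ up to adjusting constants. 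Hence $f=g+F\in Ce^\ell+\mathcal{R}(\lambda)$.

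The induction then goes as follows. Suppose $\mathcal{L}^mf\in\mathcal{R}(\lambda)$. By the induction hypothesis applied to $\mathcal{L}f$, we have $\mathcal{L}f=e^\ell p+r$ with $\deg p\leq m-2$. Using surjectivity, choose $q\in\mathbf{C}_{m-1}[\ell]$ with $\mathcal{L}(e^\ell q)\equiv e^\ell p$ modulo $\mathcal{R}(\lambda)$. Then $\mathcal{L}(f-e^\ell q)\in\mathcal{R}(\lambda)$, and the base case shows $f-e^\ell q\in e^\ell\mathbf{C}\oplus\mathcal{R}(\lambda)$. Therefore $f\in\mathcal{F}^m(\lambda)$.

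The main obstacle is the weak version of the base case, where $g$ is only locally integrable and is controlled only through $\int_0^m|g|\leq c_1(1+m)^ce^{\lambda m}$. Here I would split $[\ell,\infty)$ into unit intervals $[j,j+1]$. On each one, $\int_j^{j+1}e^{-s}|g|\leq e^{-j}c_1(2+j)^ce^{\lambda(j+1)}$, which gives convergence of $C$ and the tail bound
\[
\int_\ell^\infty e^{-s}|g|\lesssim(1+\ell)^ce^{-(1-\lambda)\ell}.
\]
Integrating $e^\ell$ times this tail bound over $[0,m]$ yields $O\big((1+m)^ce^{\lambda m}\big)$, so $f-Ce^\ell\in\mathcal{R}_w(\lambda)$. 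The rest of the argument carries over verbatim.
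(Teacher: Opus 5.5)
Your proof is correct and follows essentially the same route as the paper. The forward direction uses that $\mathcal{L}$ maps $\mathcal{F}^m(\lambda)$ into $\mathcal{F}^{m-1}(\lambda)$, and the converse goes by induction on $m$, solving $F'-F=\mathcal{L}f$ for $F=\mathcal{P}f$ by variation of constants. The only difference is cosmetic: you first subtract a preimage $e^\ell q$ (using surjectivity of $\mathcal{L}$ on exponential polynomials modulo constants) to reduce to $\mathcal{L}f\in\mathcal{R}(\lambda)$, whereas the paper integrates the polynomial part $\int_0^t p(s)e^{t-s}\,\d s$ directly; your unit-interval tail estimate also supplies the weak-case bound that the paper leaves implicit.
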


\begin{proof}
    We proceed similarly to \cite{AnMo24}. The operator $\mathcal{L}$ maps $\mathcal{F}^m(\lambda)$ and $\mathcal{F}_w^m(\lambda)$ to $\mathcal{F}^{m-1}(\lambda)$ and $\mathcal{F}_w^{m-1}(\lambda)$ respectively, hence by induction if $f\in\mathcal{F}^m(\lambda)$ (resp. $f\in\mathcal{F}_w^m(\lambda)$), we have $\mathcal{L}^mf\in\mathcal{F}^0(\lambda)=\mathcal{R}(\lambda)$ (resp. $\mathcal{L}^mf\in\mathcal{R}_w(\lambda)$).
    
    Now if $\mathcal{L}f=p+r$ with $p\in e^\ell\mathbf{C}_{m-1}[\ell]$ and $r\in\mathcal{R}(\lambda)$ or $r\in\mathcal{R}_w(\lambda)$, we prove the result by induction on $m$. If $g=\mathcal{P}f$, then $g$ is the solution to $g'-g=\mathcal{L}f=p+r$. Hence there exists $C\in\mathbf{R}$ such that:
    \[
        \forall t\geq0,g(t)=Ce^t+\int_t^\infty r(s)e^{t-s}\d s+\int_0^tp(s)e^{t-s}\d s.
    \]
    As a direct consequence:
    \[
        f(t) = Ce^t+p(t)+r(t)+\int_t^\infty r(s)e^{t-s}\d s+\int_0^tp(s)e^{t-s}\d s,
    \]
    and the estimates on $r$ make us able to conclude that $f\in\mathcal{F}^{m+1}(\lambda)$ or $f\in\mathcal{F}_w^{m+1}(\lambda)$. We can therefore conclude by induction.
\end{proof}

In order to show that a function belongs to $\mathcal{SF}(\lambda)$, we shall multiply it by $e^\ell$ and use Lemma \ref{S4:Lem:CaracFR}.
\[
    \xymatrix@!0 @R=2cm @C=1.5cm{
    \mathcal{SF}^m(\lambda) \ar[rr] \ar[rd]_{\times e^\ell} & & \mathcal{R}(1-\lambda) \\
    & \mathcal{F}^{m+1}(1-\lambda) \ar[ru]_{\mathcal{L}^{m+1}} & 
    }
\]

\subsection{Convolution with respect to simplified length-type functions}

We introduced the class of simplified length-type functions to facilitate our computations in order to prove that the pseudo-convolution of polynomials with respect to length-type functions belongs to the class $\mathcal{SF}_w(\lambda)$. Hence we begin by proving that the pseudo-convolution of polynomials with respect to simplified length-type functions is $\mathcal{SF}_w(\lambda)$.

\subsubsection{Computations in one variable}

We assume in this paragraph that $x\mapsto h(x)$ is a $\mathscr{C}^1$ diffeomorphism from $(0,+\infty)$ to $(h_0,+\infty)$ and we denote by $h\mapsto x(h)$ its inverse.

\begin{Lem}\label{S4:Lem:OpeLOneVar}
    Let $0<x_0<x_1\leq\infty$ and let $f\colon \mathbf{R}\longrightarrow\mathbf{R}$ be a $\mathscr{C}^1$ function. We have:
    \begin{align*}
        \mathcal{L}_h\Big[e^hf\circ x(h)\mathds{1}_{(x_0,x_1)}\circ x(h)\Big] =  \sign(h')\bigg( & e^{h(x_0)}f(x_0)\mathds{1}_{(h(x_0),+\infty)}(h)-e^{h(x_1)}f(x_1)\mathds{1}_{(h(x_1),+\infty)}(h) \\ & + \mathcal{P}_h\Big[e^hf'\circ x(h)\Big|\frac{\partial x}{\partial h}\Big|\mathds{1}_{(x_0,x_1)}\circ x(h)\Big]\bigg).
    \end{align*}
\end{Lem}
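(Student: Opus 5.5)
The identity comes from computing $\mathcal{P}_h$ of the function $g(h)=e^hf(x(h))\mathds{1}_{(x_0,x_1)}(x(h))$ explicitly by a change of variables followed by an integration by parts; we then read off $\mathcal{L}_h g=g-\mathcal{P}_hg$. Throughout, $g$ is extended by $0$ outside the range of $h$, as in the definition of $\mathcal{P}$. We treat the increasing case $h'>0$ in detail. The decreasing case is identical except that the endpoints $x_0,x_1$ exchange roles in $h$-space, which is what the factor $\sign(h')$ accounts for.

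Assume $h'>0$, and fix a value $H$. Then
\[
\mathcal{P}_h g(H)=\int_{-\infty}^{H} e^{u}f(x(u))\mathds{1}_{(x_0,x_1)}(x(u))\,\d u .
\]
We substitute $u=h(y)$, so that $\d u=h'(y)\,\d y$ and $e^u\,\d u=\d(e^{h(y)})$. This gives
\[
\mathcal{P}_h g(H)=\int_{x_0}^{\min(x_1,x(H))} f(y)\,\d\big(e^{h(y)}\big),
\]
and this is $0$ when $x(H)\le x_0$. Integrating by parts yields boundary terms $e^{h(\cdot)}f(\cdot)$ at the two ends, minus $\int e^{h(y)}f'(y)\,\d y$. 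Changing back to $u$ with $\d y=\frac{\partial x}{\partial u}\,\d u$ turns the remaining integral into $\mathcal{P}_h\big[e^hf'\circ x\,\frac{\partial x}{\partial h}\mathds{1}_{(x_0,x_1)}\circ x\big](H)$.

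Next we sort the boundary terms according to where $H$ lies. The lower end always contributes $-e^{h(x_0)}f(x_0)\mathds{1}_{(h(x_0),\infty)}(H)$. The upper end contributes $e^{H}f(x(H))$ when $h(x_0)<H<h(x_1)$, and $e^{h(x_1)}f(x_1)$ when $H\ge h(x_1)$. The first case is exactly $g(H)$. The second case is $e^{h(x_1)}f(x_1)\mathds{1}_{(h(x_1),\infty)}(H)$, up to a null set. Subtracting $\mathcal{P}_hg$ from $g$, the $g$ terms cancel, and we are left with
\[
e^{h(x_0)}f(x_0)\mathds{1}_{(h(x_0),\infty)}-e^{h(x_1)}f(x_1)\mathds{1}_{(h(x_1),\infty)}+\mathcal{P}_h\Big[e^hf'\circ x\,\Big|\frac{\partial x}{\partial h}\Big|\mathds{1}_{(x_0,x_1)}\circ x\Big].
\]
This is the stated formula with $\sign(h')=1$. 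When $x_1=\infty$ the second term is simply absent.

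For $h'<0$ the image of $(x_0,x_1)$ is $(h(x_1),h(x_0))$, and the same computation runs with the roles of $x_0$ and $x_1$ exchanged. Since $\frac{\partial x}{\partial h}<0$, it equals $-\big|\frac{\partial x}{\partial h}\big|$. These two sign changes produce the overall factor $-1$. The main obstacle is this bookkeeping: getting the indicator ranges right at the boundary (strict versus non-strict inequalities only matter on null sets) and ensuring no $g$ term is left over in each case of $H$ relative to $h(x_0)$ and $h(x_1)$.
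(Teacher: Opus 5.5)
Your proof is correct and follows the same route as the paper. You both split according to the position of $h$ relative to $h(x_0)$ and $h(x_1)$, compute $\mathcal{P}_h$ by integration by parts so that the boundary term at the moving endpoint reproduces $e^hf\circ x(h)\mathds{1}_{(x_0,x_1)}\circ x(h)$, and conclude with $\mathcal{L}_h=\Id-\mathcal{P}_h$; the only cosmetic difference is that you integrate by parts in the $x$-variable against $\d(e^{h(y)})$ and then change variables back, whereas the paper integrates by parts directly in $t=h$.
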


\begin{proof}
    Let us assume that $h$ is increasing (the computations are very similar if $h$ is decreasing). We have:
    \begin{align*}
        \mathcal{P}_h\Big[e^h f\circ x(h)\mathds{1}_{(x_0,x_1)}\circ x(h)\Big] = & \mathds{1}_{(x_0,x_1)}\circ x(h)\int_{h(x_0)}^he^tf\circ x(t)\d t+\mathds{1}_{(x_1,+\infty)}\circ x(h)\int_{h(x_0)}^{h(x_1)}e^tf\circ x(t)\d t \\
        = & \mathds{1}_{(x_0,x_1)}\circ x(h)\bigg(e^hf\circ x(h)-e^{h(x_0)}f(x_0)-\int_{h(x_0)}^he^tf'\circ x(t)\frac{\partial x}{\partial h}(t)\d t\bigg) \\
        & + \mathds{1}_{(x_1,+\infty)}\circ x(h)\bigg(e^{h(x_1)}f(x_1)-e^{h(x_0)}f(x_0)-\int_{h(x_0)}^{h(x_1)}e^tf'\circ x(t)\frac{\partial x}{\partial h}(t)\d t\bigg) \\
        = & \mathds{1}_{(x_0,x_1)}\circ x(h)e^hf\circ x(h) - \mathds{1}_{(x_0,+\infty)}\circ x(h)e^{h(x_0)}f(x_0) \\
        & + \mathds{1}_{(x_1,+\infty)}\circ x(h)e^{h(x_1)}f(x_1) - \mathcal{P}_h\Big[\mathds{1}_{(x_0,x_1)}\circ x(h)e^hf'\circ x(h)\frac{\partial x}{\partial h}(h)\Big].
    \end{align*}
    Hence we conclude by using the fact that $\mathcal{L}_h=\Id-\mathcal{P}_h$.
\end{proof}

\subsubsection{Computations is several variables}

The key result on which we will rely is the following:

\begin{Theo}\label{S4:Theo:OpeLConvCone}
    Let $C$ be an $n$-dimensional admissible polytope, $h\colon\mathbf{R}_{>0}^m\times C\longrightarrow\mathbf{R}$ be a simplified length-type function and $\varphi\in L^\infty(\mathbf{R}_{>0}^m)$. Let $A\geq0$ and $i_0\in\{1,\ldots,n\}$ such that $h$ satisfies Assumption $(\mathbf{H}_{m+n}^{m+i_0})$ on $\mathbf{R}_{>0}^m\times C^{A,i}$ where $C^{A,i}=C\cap[0,A]^i\times(A,+\infty)^{n-i}$. There exist $(n-1)$-dimensional admissible polytopes $C_1,\ldots,C_p$, simplified length-type functions $H_1,\ldots,H_p$, linear forms $Z_1,\ldots,Z_p$ on $\mathbf{R}^{n-1}$, constants $b_1,\ldots,b_p\in\mathbf{R}$ and $1\leq q\leq p$ such that $H_k$ is defined on $\mathbf{R}_{>0}^m\times C_k$ for all $k\in\{1,\ldots,p\}$, and for any real number\footnote{The constant $\eta$ will be chosen later as a well-suited coefficient of the linear part of $h$.} $\eta\in\mathbf{R}^*$ and any $g_1,\ldots,g_m\in L^\infty(\mathbf{R}_{>0})$ and $f_1,\ldots,f_n\in\mathscr{C}^1(\mathbf{R})$, we have:
    \[
        \mathcal{L}_\ell\bigg[e^\ell\Big(g_1\star\cdots\star g_m\star f_1\star\cdots\star f_n(\ell)\Big|_{\varphi\mathds{1}_{\mathbf{R}_{>0}^m\times C^{A,i}}}^h\Big)\bigg] = \frac{1}{\eta}\bigg[\xi_{i_0}(\ell)+\chi_\eta(\ell)+\sum_{k=1}^q\theta_k(\ell)-\sum_{k=q+1}^p\theta_k(\ell)\bigg],
    \]
    where:
    \begin{align*}
        \xi_{i_0}(\ell) = \mathcal{P}_\ell\bigg[e^\ell\Big(g_1\star\cdots\star g_m\star f_1\star & \cdots\star f_{i_0-1}\star f_{i_0}'\star f_{i_0+1}\star\cdots\star f_n(\ell)\Big|_{\varphi\mathds{1}_{\mathbf{R}_{>0}^m\times C^{A,i}}}^h\Big)\bigg] \\
        \chi_\eta(\ell) = \mathcal{L}_\ell\bigg[e^\ell\Big(g_1\star\cdots\star g_m\star f_1\star & \cdots\star f_n(\ell)\Big|_{(\eta-\frac{\partial h}{\partial x_{i_0}})\varphi\mathds{1}_{\mathbf{R}_{>0}^m\times C^{A,i}}}\Big)\bigg]\\
        \theta_k(\ell) = \mathcal{P}_\ell\bigg[e^\ell\int_{\ell=H_k(\mathbf{s}_m,\mathbf{x}_{n-1})} f_{i_0}\Big( & Z_k(\mathbf{x}_{n-1})+b_k\Big)\prod_{j=1}^mg_j(s_j)\prod_{\substack{r=1\\r\neq i_0}}^nf_r(x_r) \\ 
        & \times\varphi(\mathbf{s}_m)\mathds{1}_{\mathbf{R}_{>0}^m\times (C_k\cap[0,A]^i\times(A,+\infty)^{n-1-i})}(\mathbf{s}_m,\mathbf{x}_{n-1})\frac{\d\mathbf{s}_m\d\mathbf{x}_{n-1}}{\d\ell}\bigg].
    \end{align*}
\end{Theo}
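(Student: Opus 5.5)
The plan is to reduce the statement to the one-variable computation of Lemma \ref{S4:Lem:OpeLOneVar}, applied fiberwise in the variable $x_{i_0}$. The constant $\eta$ and the correction $\chi_\eta$ exist to replace the Jacobian factor $|\partial h_{i_0}^{-1}/\partial\ell|$ by a constant.

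\emph{Step 1: disintegrate along $x_{i_0}$.} Using Assumption $(\mathbf{H}_{m+n}^{m+i_0})$ and Proposition \ref{S4:Prop:DensDesint}, write the pseudo-convolution as an integral over $(\mathbf{s}_m,(x_j)_{j\neq i_0})$. The integrand is $f_{i_0}(x_{i_0}(\ell))\,|\partial x_{i_0}/\partial\ell|$, times the other factors and the indicator. By Lemma \ref{S3:Lem:RestBoundSLT}, this indicator is $\mathds{1}_{(X^0,X^1)}(x_{i_0})$ for piecewise affine bounds $X^0,X^1$ on $(x_j)_{j\neq i_0}$ in an $(n-1)$-dimensional admissible polytope; the truncation $[0,A]^i\times(A,+\infty)^{n-i}$ only adds finitely many affine constraints.

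Next split $|\partial x_{i_0}/\partial\ell| = \frac{1}{\eta}\big(\eta-\partial_{x_{i_0}}h\big)|\partial x_{i_0}/\partial\ell| + \frac{1}{\eta}\,\sign(\partial_{x_{i_0}}h)$. The first piece, after applying $\mathcal{L}_\ell[e^\ell\,\cdot\,]$, is by definition $\frac{1}{\eta}\chi_\eta$. The second piece is, up to sign, the pseudo-convolution in which $\ell\mapsto f_{i_0}(x_{i_0}(\ell))$ appears without Jacobian. Since $\mathcal{L}_\ell$ and $\mathcal{P}_\ell$ commute with integration in the other variables (Fubini, using local integrability guaranteed by properness of $h$), it remains to compute, for fixed $(\mathbf{s}_m,(x_j)_{j\neq i_0})$, the quantity $\mathcal{L}_\ell\big[e^\ell f_{i_0}(x(\ell))\mathds{1}_{(X^0,X^1)}(x(\ell))\big]$, with the Jacobian then reinstated when undisintegrating.

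\emph{Step 2: apply Lemma \ref{S4:Lem:OpeLOneVar}.} This produces three terms. The $\mathcal{P}_\ell$-term with $f_{i_0}'\,|\partial x/\partial h|$ integrates back (Proposition \ref{S4:Prop:DensDesint} again) to $\xi_{i_0}$. The boundary terms $e^{h(X^\epsilon)}f_{i_0}(X^\epsilon)\mathds{1}_{(h(X^\epsilon),\infty)}(\ell)$ are handled by the identity $e^{a}\mathds{1}_{(a,\infty)}(\ell)=\mathcal{P}_\ell[e^\ell\delta_a](\ell)$. After integrating in the remaining variables, each becomes $\mathcal{P}_\ell$ of $e^\ell$ times the pushforward by $H_k:=h|_{x_{i_0}=X^\epsilon}$ of the measure with density $f_{i_0}(X^\epsilon)\prod g_j\prod_{r\neq i_0} f_r\,\varphi$. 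Splitting into the subpolytopes $C_k$ on which $X^\epsilon=Z_k+b_k$ is affine gives the $\theta_k$. Lower bounds $X^0$ contribute with sign $+\sign(h')$ and upper bounds $X^1$ with $-\sign(h')$; ordering them accordingly yields the index $q$. Lemma \ref{S3:Lem:RestBoundSLT} ensures that each $H_k$ is a simplified length-type function on $\mathbf{R}^m_{>0}\times C_k$, and the truncation indicator passes to $C_k\cap[0,A]^i\times(A,+\infty)^{n-1-i}$.

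\emph{Main obstacle.} The delicate point is the bookkeeping of signs and the claim that $\sign(\partial_{x_{i_0}}h)$ is constant. Since $h$ satisfies $(\mathbf{H})$, $h_{i_0}$ is monotone on each fiber. I would first check that this sign is uniform over the region; if it is not, I would split the region further into finitely many cones by the sign of the linear part's coefficient, which is admissible because the statement only asks for finitely many $C_k$.

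A second point needs justification: when $X^1=\infty$, the corresponding boundary term must vanish. This holds because $h\to\infty$ by properness, so $\mathds{1}_{(h(X^1),\infty)}=0$.

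Finally, one has to justify exchanging $\mathcal{L}_\ell$ with the fiber integrals, which needs the fiber integrals to be locally integrable in $\ell$. I expect this to follow from boundedness of $g_j$ and $\varphi$ together with properness of $\Lambda$.
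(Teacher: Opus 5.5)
Your proposal is correct and follows essentially the same route as the paper. Both arguments use the same $\eta$-splitting of the Jacobian $|\partial x_{i_0}/\partial\ell|$, whose correction term gives $\chi_\eta$. Both then apply Lemma \ref{S4:Lem:OpeLOneVar} fiberwise in $x_{i_0}$, with the bounds supplied by Lemma \ref{S3:Lem:RestBoundSLT}, and disintegrate the resulting boundary terms along the restrictions $H_k$ to obtain the $\theta_k$. The sign issue you flag is harmless: on each fiber, the factor $\sign(\partial_{x_{i_0}}h)$ from the splitting multiplies the $\sign(h')$ produced by Lemma \ref{S4:Lem:OpeLOneVar} to give $1$, so no uniformity of sign and no further cone subdivision is needed.
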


\begin{proof}
    According to Lemma \ref{S3:Lem:RestBoundSLT}, there exist an admissible polytope $C_0\subset\mathbf{R}^{n-1}$ and piecewise affine functions $X_0,X_1\colon\mathbf{R}^{n-1}\longrightarrow\mathbf{R}$ such that:
    \[
        C=\Big\{\mathbf{x}_n\in\mathbf{R}_{\geq0}^n\Big|(x_r)_{r\neq i_0}\in C_0,X_0((x_r)_{r\neq i_0})\prec x_{i_0}\prec X_1((x_r)_{r\neq i_0})\Big\}.
    \]
    We denote by $C_1,\ldots,C_q\subset C_0$ the admissible polytopes on which $X_0$ is affine and $C_{q+1},\ldots,C_p\subset C_0$ the admissible poytopes on which $X_1$ is affine. For each $k\in\{1,\ldots,q\}$, we denote by $Z_k$ the linear part of $X_0$ on $C_k$ and $b_k$ its constant part, and if $k\in\{q+1,\ldots,p\}$, $Z_k$ and $b_k$ are respectively the linear and constant parts of $X_1$ on $C_k$. Lemma \ref{S3:Lem:RestBoundSLT} ensures that each
    \[
        H_k\colon(\mathbf{s}_m,(x_r)_{r\neq i_0})\longmapsto h(\mathbf{s}_m,x_1,\ldots,x_{i_0-1},Z_k((x_r)_{r\neq i_0})+b_k,x_{i_0+1},\ldots,x_n)
    \]
    is a simplified length-type function on $\mathbf{R}_{>0}^m\times C_k$.
    
    For convenience of notations we now assume that $i_0=n$. According to Proposition \ref{S4:Prop:DensDesint}, we have:
    \begin{align*}
        g_1\star\cdots\star g_m\star f_1\star\cdots\star f_n(\ell)\Big|_{\varphi\mathds{1}_{\mathbf{R}_{>0}^m\times C^{A,i}}}^h = & \int_{U_\ell^{m+n}} \prod_{j=1}^m g_j(s_j)\prod_{r=1}^{n-1}f_r(x_r)f_n\Big(h_{m+n}^{-1}(\ell,\mathbf{s}_m,\mathbf{x}_{n-1})\Big) \\
        & \times\varphi(\mathbf{s}_m)\mathds{1}_{\mathbf{R}_{>0}^m\times C^{A,i}}(\mathbf{s}_m,\mathbf{x}_{n-1})\Big|\frac{\partial h_{m+n}^{-1}}{\partial\ell}(\ell,\mathbf{s}_m,\mathbf{x}_{n-1})\Big|\frac{\d\mathbf{s}_m\d\mathbf{x}_{n-1}}{\d\ell}.
    \end{align*}
    The function $h$ satisfies $(\mathbf{H}_{m+n}^{m+n})$, hence $\frac{\partial h_{n+m}^{-1}}{\partial\ell}\neq0$. For all $\eta\neq0$, we have:
    \[
        \Big|\frac{\partial h_{m+n}^{-1}}{\partial\ell}(\ell,\mathbf{s}_m,\mathbf{x}_{n-1})\Big|=\frac{1}{\eta}\bigg(\sign\Big(\frac{\partial h}{\partial x_n}\Big)+\Big|\frac{\partial h_{m+n}^{-1}}{\partial\ell}(\ell,\mathbf{s}_m,\mathbf{x}_{n-1})\Big|\Big(\eta-\frac{\partial h}{\partial x_n}\big(\mathbf{s}_m,\mathbf{x}_{n-1},h_{m+n}^{-1}(\ell,\mathbf{s}_m,\mathbf{x}_{n-1})\big)\Big)\bigg).
    \]
    Let $\varepsilon=\sign(\frac{\partial h}{\partial x_n})$. It follows that $e^\ell\Big(g_1\star\cdots\star g_m\star f_1\star\cdots\star f_n(\ell)\Big|_{\varphi\mathds{1}_{\mathbf{R}_{>0}^m\times C^{A,i}}}^h\Big)$ equals:
    \begin{align}
        \nonumber \varepsilon\frac{e^\ell}{\eta}\int_{U_\ell^{m+n}}\prod_{j=1}^mg_j(\mathbf{s}_m)\prod_{r=1}^{n-1}f_r & (x_r)f_n\Big(h_{m+n}^{-1}(\ell,\mathbf{s}_m,\mathbf{x}_{n-1})\Big)\\
        \label{S4:Theo:OpeLConvCone:Demo1} & \times\varphi(\mathbf{s}_m)\mathds{1}_{\mathbf{R}_{>0}^m\times C^{A,i}}\Big(\mathbf{s}_m,\mathbf{x}_{n-1},h_{m+n}^{-1}(\ell,\mathbf{s}_m,\mathbf{x}_{n-1})\Big)\d\mathbf{s}_m\d\mathbf{x}_{n-1} \\
        \label{S4:Theo:OpeLConvCone:Demo2} + \frac{e^\ell}{\eta}\int_{\ell=h(\mathbf{s}_m,\mathbf{x}_n)}\prod_{j=1}^mg_j(s_j)\prod_{r=1}^n & f_r(x_r)\Big(\eta-\frac{\partial h}{\partial x_n}(\mathbf{s}_m,\mathbf{x}_n)\Big)\varphi(\mathbf{s}_m)\mathds{1}_{\mathbf{R}_{>0}^m\times C^{A,i}}(\mathbf{s}_m\mathbf{x}_n)\frac{\d\mathbf{s}_m\d\mathbf{x}_n}{\d\ell}.
    \end{align}
    When applying the operator $\mathcal{L}_\ell$, the term (\ref{S4:Theo:OpeLConvCone:Demo2}) gives the term $\frac{1}{\eta}\chi_\eta(\ell)$. Let us now focus on the term (\ref{S4:Theo:OpeLConvCone:Demo1}). Let $C_0^{A,i}=C_0\cap[0,A]^i\times(A,+\infty)^{n-1-i}$. When applying Lemma \ref{S4:Lem:OpeLOneVar}, we obtain:
    \begin{align}
        \nonumber\mathcal{L}_\ell\bigg[\varepsilon\int_{U_\ell^{m+n}}e^\ell & \prod_{j=1}^mg_j(\mathbf{s}_m)\prod_{r=1}^{n-1}f_r(x_r)f_n\Big(h_{m+n}^{-1}(\ell,\mathbf{s}_m,\mathbf{x}_{n-1})\Big)\\
        \nonumber & \times\varphi(\mathbf{s}_m)\mathds{1}_{\mathbf{R}_{>0}^m\times C^{A,i}}\Big(\mathbf{s}_m,\mathbf{x}_{n-1},h_{m+n}^{-1}(\ell,\mathbf{s}_m,\mathbf{x}_{n-1})\Big)\d\mathbf{s}_m\d\mathbf{x}_{n-1}\bigg] \\
        \nonumber =\int_{U_\ell^{m+n}} & \prod_{j=1}^mg_j(s_j)\prod_{r=1}^{n-1}f_r(x_r)\varphi(\mathbf{s}_m)\mathds{1}_{\mathbf{R}_{>0}^m\times C^{A,i}_0}(\mathbf{s}_m,\mathbf{x}_{n-1})\\
        \nonumber & \times\varepsilon\mathcal{L}_\ell\Big[e^\ell f_n\circ h_{m+n}^{-1}(\ell,\mathbf{s}_m,\mathbf{x}_{n-1})\mathds{1}_{(X_0(\mathbf{x}_{n-1}),X_1(\mathbf{x}_{n-1}))}\circ h_{m+n}^{-1}(\ell,\mathbf{s}_m,\mathbf{x}_{n-1})\Big]\d\mathbf{s}_m\d\mathbf{x}_{n-1} \\
        \nonumber =\int_{U_\ell^{m+n}} & \prod_{j=1}^mg_j(s_j)\prod_{r=1}^{n-1}f_r(x_r)\varphi(\mathbf{s}_m)\mathds{1}_{\mathbf{R}_{>0}^m\times C^{A,i}_0}(\mathbf{s}_m,\mathbf{x}_{n-1}) \\
        \label{S4:Theo:OpeLConvCone:Demo3} & \times e^{h(\mathbf{s}_m,\mathbf{x}_{n-1},X_0(\mathbf{x}_{n-1}))}f_n(X_0(\mathbf{x}_{n-1}))\mathds{1}_{\{h(\mathbf{s}_m,\mathbf{x}_{n-1},X_0(\mathbf{x}_{n-1}))<\ell\}}\d\mathbf{s}_m\d\mathbf{x}_{n-1}\\
        \nonumber- \int_{U_\ell^{m+n}} & \prod_{j=1}^mg_j(s_j)\prod_{r=1}^{n-1}f_r(x_r)\varphi(\mathbf{s}_m)\mathds{1}_{\mathbf{R}_{>0}^m\times C^{A,i}_0}(\mathbf{s}_m,\mathbf{x}_{n-1}) \\
        \label{S4:Theo:OpeLConvCone:Demo4} & \times e^{h(\mathbf{s}_m,\mathbf{x}_{n-1},X_1(\mathbf{x}_{n-1}))}f_n(X_1(\mathbf{x}_{n-1}))\mathds{1}_{\{h(\mathbf{s}_m,\mathbf{x}_{n-1},X_1(\mathbf{x}_{n-1}))<\ell\}}\d\mathbf{s}_m\d\mathbf{x}_{n-1} \\
        \nonumber+ \int_{U_\ell^{m+n}} & \prod_{j=1}^mg_j(s_j)\prod_{r=1}^{n-1}f_r(x_r)\varphi(\mathbf{s}_m)\mathds{1}_{\mathbf{R}_{>0}^m\times C^{A,i}_0}(\mathbf{s}_m,\mathbf{x}_{n-1})\\
        \label{S4:Theo:OpeLConvCone:Demo5} & \times \mathcal{P}_\ell\Big[e^\ell f_n'\circ h_{m+n}^{-1}\Big|\frac{\partial h_{m+n}^{-1}}{\partial\ell}(\ell,\mathbf{s}_m,\mathbf{x}_{n-1})\Big|\mathds{1}_{(X_0(\mathbf{x}_{n-1}),X_1(\mathbf{x}_{n-1}))}\circ h_{m+n}^{-1}(\ell,\mathbf{s}_m,\mathbf{x}_{n-1})\Big]\d\mathbf{s}_m\d\mathbf{x}_{n-1}.
    \end{align}
    Thus, by disintegrating along the level sets of the functions $H_1,\ldots,H_p$, the terms (\ref{S4:Theo:OpeLConvCone:Demo3}) and (\ref{S4:Theo:OpeLConvCone:Demo4}) respectively give:
    \begin{align}
        \nonumber \sum_{k=1}^q\int_0^\ell e^t\bigg(\int_{t=H_k(\mathbf{s}_m,\mathbf{x}_{n-1})}f_n & \Big(X_0(\mathbf{x}_{n-1})\Big)\prod_{j=1}^mg_j(s_j)\prod_{r=1}^{n-1}f_r(x_r) \\
        \label{S4:Theo:OpeLConvCone:Demo6}& \times\varphi(\mathbf{s}_m)\mathds{1}_{\mathbf{R}_{>0}^m\times C^{A,i}_k}(\mathbf{s}_m,\mathbf{x}_{n-1})\frac{\d\mathbf{s}_m\d\mathbf{x}_{n-1}}{\d t}\bigg)\d t,
    \end{align}
    and:
    \begin{align}
        \nonumber-\sum_{k=q+1}^p\int_0^\ell e^t\bigg(\int_{t=H_k(\mathbf{s}_m,\mathbf{x}_{n-1})}f_n & \Big(X_1(\mathbf{x}_{n-1})\Big)\prod_{j=1}^mg_j(s_j)\prod_{r=1}^{n-1}f_r(x_r) \\
        \label{S4:Theo:OpeLConvCone:Demo7} & \times\varphi(\mathbf{s}_m)\mathds{1}_{\mathbf{R}_{>0}^m\times C^{A,i}_k}(\mathbf{s}_m,\mathbf{x}_{n-1})\frac{\d\mathbf{s}_m\d\mathbf{x}_{n-1}}{\d t}\bigg)\d t.
    \end{align}
    The terms (\ref{S4:Theo:OpeLConvCone:Demo6}) and (\ref{S4:Theo:OpeLConvCone:Demo7}) therefore give the term $\sum_{k=1}^q\theta_k(\ell)-\sum_{k=q+1}^p\theta_k(\ell)$, and (\ref{S4:Theo:OpeLConvCone:Demo5}) gives:
    \[
        \mathcal{P}_\ell\bigg[\int_{\ell=h(\mathbf{s}_m,\mathbf{x}_n)}\prod_{j=1}^mg_j(s_j)\prod_{r=1}^{n-1}f_r(x_r)f_n'(x_n)\varphi(\mathbf{s}_m)\mathds{1}_{\mathbf{R}_{>0}^m\times C^{A,i}}(\mathbf{s}_m,\mathbf{x}_n)\frac{\d\mathbf{s}_m\d\mathbf{x}_n}{\d\ell}\bigg].
    \]
    We conclude by putting these terms together.
\end{proof}

When considering polynomial functions $f_1,\ldots,f_n$, the conclusion of Theorem \ref{S4:Theo:OpeLConvCone} makes us able to develop the following plan.
\begin{enumerate}[label=\textup{(\roman*)}]
    \item By choosing appropriately the index $i_0$ and the constant $\eta$, the term $\eta-\frac{\partial h}{\partial x_{i_0}}$ in the definition of $\chi_\eta(\ell)$ will compensate the contribution of $e^\ell$ and we will therefore have $\chi_\eta\in\mathcal{R}_w(\lambda)$ for some $\lambda\in(0,1)$ (Lemma \ref{S4:Lem:ConvSLT1}).
    \item If we apply the operator $\mathcal{L}_\ell$ enough times, the derivatives in $\xi_{i_0}$ will eventually vanish. The only remaining term will have an expression similar to $\theta_k(\ell)$ (Lemma \ref{S4:Lem:ConvSLT2}).
    \item The $\theta_k$'s can be written as linear combinations of pseudo-convolutions of polynomials with respect to simplified length-type functions defined on products of $\mathbf{R}_{>0}^m$ and $(n-1)$-dimensional admissible polytopes. We therefore conclude by induction on $n$ that the pseudo-convolution of polynomials with respect to a simplified length-type function is a function of class $\mathcal{SF}_w(\lambda)$, for some $\lambda\in(0,1)$ (Corollary \ref{S4:Cor:ConvSLTUndbound}).
\end{enumerate}

\begin{Rem}
    If we consider a function $\varphi$ that depends on the variables $\mathbf{s}_m$ and $\mathbf{x}_n$ in Theorem \ref{S4:Theo:OpeLConvCone} instead of a function only depending on the variable $\mathbf{s}_m\in\mathbf{R}_{>0}^m$, then we would have:
    \begin{equation}\label{S4:Eq:ProbArbSign}
        \mathcal{L}_\ell\Big[e^\ell\Big(g_1\star\cdots\star g_m\star f_1\star\cdots\star f_n(\ell)\Big|_{\varphi\mathds{1}_{\mathbf{R}_{>0}^m\times C^{A,i}}}\Big)\Big] =  \frac{1}{\eta}\bigg[\widetilde{\xi}_{i_0}(\ell)+\chi_\eta(\ell)+\sum_{k=1}^q\theta_k(\ell)-\sum_{k=q+1}^p\theta_k(\ell)\bigg],
    \end{equation}
    where:
    \[
        \widetilde{\xi}_{i_0}(\ell)=\xi_{i_0}(\ell)+e^\ell\Big(g_1\star\cdots\star g_m\star f_1\star\cdots\star f_n(\ell)\Big|_{\frac{\partial\varphi}{\partial x_{i_0}}\mathds{1}_{\mathbf{R}_{>0}^m\times C^{A,i}}}\Big).
    \]
    The addition of this term would however make us unable to perform the induction described above, which would be problematic for the proof of Theorem \ref{S4:Theo:ConvLenType} if we allow the length-type functions to be defined with coefficients of arbitrary signs.
\end{Rem}

\begin{Lem}\label{S4:Lem:ConvSLT1}
    Let $C$ be an $n$-dimensional admissible polytope, $i\in\{0,\ldots,n\}$ and $h\colon\mathbf{R}_{>0}^m\times C\longrightarrow\mathbf{R}$ be a simplified length-type function. We note $\varphi\colon\mathbf{s}_m\in\mathbf{R}_{>0}^m\mapsto e^{-(s_1+\cdots+s_m)}$. Let $\Lambda$ be the linear part of $h$ and $i_0=\max\{r\in\{1,\ldots,n\}|\frac{\partial\Lambda}{\partial x_r}\neq0\}$.
    If $i_0>i$, there exist $A\geq0$, $\eta\neq0$ and $\lambda\in(0,1)$ such that $h$ satisfies Assumption $(\mathbf{H}_{m+n}^{m+i_0})$ on $\mathbf{R}_{>0}^m\times C^{A,i}$ and for every $g_1,\ldots,g_m\in L^\infty(\mathbf{R}_{>0})$ and for every polynomial functions $f_1,\ldots,f_n$, we have:
    \[
        g_1\star\cdots\star g_m\star f_1\star\cdots\star f_n\Big|_{(\eta-\frac{\partial h}{\partial x_{i_0}})\varphi\mathds{1}_{\mathbf{R}_{>0}^m\times C^{A,i}}}^h\in\mathcal{SR}_w(\lambda).
    \]
\end{Lem}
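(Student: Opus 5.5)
The natural choice is $\eta=\frac{\partial\Lambda}{\partial x_{i_0}}$, the (constant) coefficient of $x_{i_0}$ in the linear part $\Lambda$ of $h$. This is nonzero by definition of $i_0$. Writing $h=\Lambda+\psi$, we then have $\eta-\frac{\partial h}{\partial x_{i_0}}=-\frac{\partial\psi}{\partial x_{i_0}}$. Definition \ref{S3:Def:SLT}(iii) bounds this by $ce^{-\min\{x_{i_0},Y_1,\ldots,Y_k\}}$, and the second half of (iii) gives $\min\{x_{i_0},Y_1,\ldots,Y_k\}\geq\Lambda(\mathbf{0}_m,\mathbf{x}_n)/M$. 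So on the domain the weight satisfies
\[
    \Big|\eta-\frac{\partial h}{\partial x_{i_0}}\Big|\leq ce^{-\Lambda(\mathbf{0}_m,\mathbf{x}_n)/M}.
\]
On the level set $\ell=h$ we have $\Lambda(\mathbf{s}_m,\mathbf{x}_n)=\ell+O(1)$, since $\psi$ is bounded. Moreover $\Lambda(\mathbf{s}_m,\mathbf{0})$ has positive integer coefficients, so $\Lambda(\mathbf{0}_m,\mathbf{x}_n)=\ell-\Lambda(\mathbf{s}_m,\mathbf{0})+O(1)$. The weight $\varphi=e^{-(s_1+\cdots+s_m)}$ absorbs the $\mathbf{s}_m$ part: the product of the two weights is at most $Ce^{-\lambda'\ell}$ with $\lambda'=\min(1/M,1/\max_j a_j)>0$, after possibly shrinking $\lambda'$. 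More precisely, $e^{-\sum s_j}e^{-(\ell-\sum a_js_j)/M}\leq e^{-\lambda'\ell}$ as soon as $\lambda'\leq 1/M$ and $\lambda'a_j\leq 1$.

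Next I would fix $A$. Since $i_0>i$, on $C^{A,i}$ the variable $x_{i_0}$ ranges in $(A,+\infty)$. By (iii), $|\partial\psi/\partial x_{i_0}|\leq ce^{-\min\{x_{i_0},\ldots\}}$, and on $C$ one has $\min\{x_{i_0},Y\}\geq\Lambda(\mathbf 0,\mathbf x)/M\geq x_{i_0}\mu/M$ by Lemma \ref{S3:Lem:PropLinForm} applied to $T=x_{i_0}$. Taking $A$ large therefore makes $|\partial\psi/\partial x_{i_0}|<|\eta|/2$, so $\partial h/\partial x_{i_0}$ has the constant sign of $\eta$. This gives that $x_{i_0}\mapsto h$ is a $\mathscr{C}^1$ diffeomorphism on each fibre interval, which is Assumption $(\mathbf{H}_{m+n}^{m+i_0})$; the fibres are intervals because $C$ is convex. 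As a by-product, $|\partial h_{m+n}^{-1}/\partial\ell|\leq 2/|\eta|$.

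Then I would estimate the density via Proposition \ref{S4:Prop:DensDesint}, or more simply through the pushforward. For $m'\geq1$,
\[
    \int_0^{m'}\Big|g\star f\big|^h_{(\eta-\partial_{i_0}h)\varphi\mathds{1}}\Big|\d\ell\leq\prod\|g_j\|_\infty\int_{\{h\leq m'\}}\prod|f_r(x_r)|\,ce^{-\Lambda(\mathbf 0,\mathbf x)/M}e^{-\sum s_j}\d\mathbf{s}_m\d\mathbf{x}_n .
\]
I would split this into level shells $\ell\in[L,L+1]$. On each shell the integrand is bounded by $(1+L)^{c}e^{-\lambda' L}$, because the polynomials are bounded by powers of $|\mathbf{x}_n|$, which is $O(1+L)$ since $\Lambda$ is proper on $\bar C$. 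The $\mathbf{x}_n$-volume of the shell is polynomial in $L$, again by properness of $\Lambda$. The $\mathbf{s}_m$-integral of $e^{-\sum s_j}$ converges, but I must keep the $\mathbf{s}$-dependence: I would bound the weight by $e^{-\lambda'\ell}$ pointwise as above and integrate $e^{-(1-\lambda'a_j)s_j}$. This requires choosing $\lambda'<1/\max a_j$ strictly.

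Summing over shells gives $\int_0^{m'}|e^\ell\cdot\text{density}|\leq C(1+m')^ce^{(1-\lambda')m'}$, that is, the density lies in $\mathcal{SR}_w(\lambda)$ with $\lambda=\lambda'$ (reduced so that it lies in $(0,1)$).

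I expect the main obstacle to be the uniform comparison on the level set between $\Lambda(\mathbf 0,\mathbf x_n)$ and $\ell$ in the presence of the $\mathbf{s}_m$ variables, together with the polynomial volume bound of shells in $\mathbf{x}_n$. This must be handled carefully using properness of $\Lambda$ on $\mathbf{R}_{\geq0}^m\times\bar C$ and the stability of admissible polytopes under dilation.
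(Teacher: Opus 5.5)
Your proof is correct and follows the paper's argument. It makes the same choice $\eta=\frac{\partial\Lambda}{\partial x_{i_0}}$, uses Definition~\ref{S3:Def:SLT}(iii) together with Lemma~\ref{S3:Lem:PropLinForm} both to fix $A$ and to bound the weight, and splits $\Lambda(\mathbf{s}_m,\mathbf{x}_n)=\Lambda(\mathbf{s}_m,\mathbf{0}_n)+\Lambda(\mathbf{0}_m,\mathbf{x}_n)$ in the same way to gain a factor $e^{-\lambda\ell}$ against $e^{\ell}$. The only difference is cosmetic: the paper avoids your shell decomposition and the integrable factor $e^{-(1-\lambda'a_j)s_j}$ by bounding the integrand globally by $e^{(1-\lambda)(a+c_0)}$, and then uses $\Lambda(\mathbf{s}_m,\mathbf{x}_n)\geq s_1+\cdots+s_m+\mu x_n$ to confine the integration to a region of polynomial volume.
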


\begin{proof}
    We note:
    \[
        B(\ell) = g_1\star\cdots\star g_m\star f_1\star\cdots\star f_n(\ell)\Big|_{(\eta-\frac{\partial h}{\partial x_{i_0}})\varphi\mathds{1}_{\mathbf{R}_{>0}^m\times C^{A,i}}}^h.
    \]
    Let us denote $h=\Lambda+\psi$, where $\Lambda$ is linear, and $\eta=\frac{\partial\Lambda}{\partial x_{i_0}}$ (which is not necessarily positive). Since $h$ is a simplified length-type function, there exists a constant $c_0>0$ such that:
    \[
        \Lambda-c_0\leq h\leq \Lambda+c_0,
    \]
    and there exist linear forms $Y_1,\ldots,Y_k$ on $\mathbf{R}^n$ and constants $c,M>0$ such that for all $r\in\{1,\ldots,n\}$ and all $(\mathbf{s}_m,\mathbf{x}_{n-m})\in\mathbf{R}_{>0}^m\times C$, we have:
    \begin{align*}
        \Big|\frac{\partial\psi(\mathbf{s}_m,\mathbf{x}_n)}{\partial x_r}\Big| & \leq ce^{-\min\{x_r,Y_1(\mathbf{x}_n),\ldots,Y_k(\mathbf{x}_n)\}} \\
        \Lambda(\mathbf{0}_m,\mathbf{x}_n) & \leq M\min\{x_{i_0},Y_1(\mathbf{x}_n),\ldots,Y_k(\mathbf{x}_n)\}.
    \end{align*}
    According to Lemma \ref{S3:Lem:PropLinForm}, there exists a constant $\mu>0$ such that for all $r\in\{1,\ldots,n\}$ and all $\mathbf{x}_n\in C$, we have $\Lambda(\mathbf{0}_m,\mathbf{x}_n)\geq\mu x_r$. Hence if $A\geq0$ and $\mathbf{x}_n\in C^{A,i}$, we have:
    \[
        \min\{x_{i_0},Y_1(\mathbf{x}_n),\ldots,Y_k(\mathbf{x}_n)\}\geq \frac{1}{M}\Lambda(\mathbf{0}_m,\mathbf{x}_n)\geq\frac{\mu}{M}A.
    \]
    Thus for $A\geq0$ large enough, the function $h$ satisfies Assumption $(\mathbf{H}_{m+n}^{m+i_0})$ on $\mathbf{R}_{>0}^m\times C^{A,i}$ (and more generally, $h$ satisfies Assumption $(\mathbf{H}_{m+n}^{m+r})$ as soon as $r>i$ and $\frac{\partial\Lambda}{\partial x_r}\neq0$).
    
    Let $a\geq0$, we have:
    \[
        \int_0^ae^\ell|B(\ell)|\d\ell \leq e^{c_0}\prod_{j=1}^m||g_j||_\infty \int_{\mathbf{R}_{>0}^m\times C^{A,i}}\mathds{1}_{\{h(\mathbf{s}_m,\mathbf{x}_n)\leq a\}}e^{\Lambda(\mathbf{s}_m,\mathbf{x}_n)}\varphi(\mathbf{s}_m)\Big|\frac{\partial\psi(\mathbf{s}_m,\mathbf{x}_n)}{\partial x_{i_0}}\Big|\prod_{r=1}^n|f_r(x_r)|\d\mathbf{s}_m\d\mathbf{x}_n,
    \]
    and:
    \[
        e^{\Lambda(\mathbf{s}_m,\mathbf{x}_n)}\varphi(\mathbf{s}_m)\Big|\frac{\partial\psi(\mathbf{s}_m,\mathbf{x}_n)}{\partial x_{i_0}}\Big| \leq e^{\Lambda(\mathbf{s}_m,\mathbf{x}_n)-(s_1+\cdots+s_m+\min\{x_{i_0},Y_1(\mathbf{x}_n),\ldots,Y_k(\mathbf{x}_n)\})}.
    \]
    We have:
    \begin{align*}
        \Lambda(\mathbf{s}_m,\mathbf{x}_n)-(s_1+\cdots+s_m+\min\{x_{i_0},Y_1(\mathbf{x}_n),\ldots,Y_k(\mathbf{x}_n)\}) = &  \Lambda(\mathbf{s}_m,\mathbf{0}_n)-(s_1+\cdots+s_m) \\
        & + \Lambda(\mathbf{0}_m,\mathbf{x}_n)-\min\{x_{i_0},Y_1(\mathbf{x}_n),\ldots,Y_k(\mathbf{x}_n)\}.
    \end{align*}
    On the one hand, let:
    \[
        D=\max_{1\leq j\leq m}\frac{\partial\Lambda}{\partial s_j}.
    \]
    If $m=0$ we note $D=0$, in which case we have $\Lambda(\mathbf{s}_m,\mathbf{0}_n)-(s_1+\cdots+s_m)=0$. If $m>0$ we have $D>0$ and for all $\mathbf{s}_m\in\mathbf{R}_{>0}^m$ we have:
    \[
        \Lambda(\mathbf{s}_m,\mathbf{0}_n) \leq D(s_1+\cdots+s_m),
    \]
    hence:
    \[
        \Lambda(\mathbf{s}_m,\mathbf{0}_n)-(s_1+\cdots+s_m)\leq (1-\lambda_1)\Lambda(\mathbf{s}_m,\mathbf{0}_n),
    \]
    with $\lambda_1=\frac{1}{D}$.
    
    On the other hand, for all $\mathbf{x}_n\in C$, we have:
    \[
        \Lambda(\mathbf{0}_m,\mathbf{x}_n)\leq M\min\{x_{i_0},Y_1(\mathbf{x}_k),\ldots,Y_k(\mathbf{x}_n)\},
    \]
    hence:
    \[
        \Lambda(\mathbf{0}_m,\mathbf{x}_n)-\min\{x_{i_0},Y_1(\mathbf{x}_k),\ldots,Y_k(\mathbf{x}_n)\} \leq (1-\lambda_2)\Lambda(\mathbf{0}_m,\mathbf{x}_n),
    \]
    with $\lambda_2=\frac{1}{M}$.
    
    Let $\lambda=\min\{\lambda_1,\lambda_2\}$. By possibly taking a larger $M>0$, we can assume that $\lambda\in(0,1)$ and we have:
    \[
        \Lambda(\mathbf{s}_m,\mathbf{x}_n)-(s_1+\cdots+s_m+\min\{x_{i_0},Y_1(\mathbf{x}_n),\ldots,Y_k(\mathbf{x}_n)\}) \leq (1-\lambda)\Lambda(\mathbf{s}_m,\mathbf{x}_n).
    \]
    In addition to this, Lemma \ref{S3:Lem:PropLinForm} provides a constant $\mu>0$ such that for all $(\mathbf{s}_m,\mathbf{x}_n)\in\mathbf{R}_{>0}^m\times C$, we have $\Lambda(\mathbf{s}_m,\mathbf{x}_n)\geq s_1+\cdots+s_m+\mu x_n$. This yields:
    \[
        \int_0^ae^\ell|B(\ell)|\d\ell\leq ce^{c_0+(1-\lambda)(a+c_0)}\prod_{j=1}^m||g_j||_\infty\int_{\mathbf{R}_{>0}^m\times C^{A,i}}\mathds{1}_{\{s_1+\cdots+s_m+\mu x_n\leq a+c_0\}}\prod_{r=1}^n|f_r(x_r)|\d\mathbf{s}_m\d\mathbf{x}_n.
    \]
    Thus there exist constants $K,c_1>0$ such that:
    \[
        \int_0^ae^\ell|B(\ell)|\d\ell\leq K(1+a)^{c_1}e^{(1-\lambda)a},
    \]
    which proves that $B\in\mathcal{SR}_w(\lambda)$.
\end{proof}

\begin{Lem}\label{S4:Lem:ConvSLT2}
    Let $C$ be an $n$-dimensional admissible polytope, $i\in\{0,\ldots,n\}$ and $h\colon\mathbf{R}_{>0}^m\times C\longrightarrow\mathbf{R}_{\geq0}$ be a simplified length-type function. We denote $\varphi\colon\mathbf{s}_m\in\mathbf{R}_{>0}^m\mapsto e^{-(s_1+\cdots+s_m)}$. Let $i_0\in\{1,\ldots,n\}$, $A\geq0$ and $\eta\neq0$ be chosen as in Lemma \ref{S4:Lem:ConvSLT1} and $C_1,\ldots,C_p$, $H_1,\ldots,H_p$, $Z_1,\ldots,Z_p$, $b_1,\ldots,b_p$ and $q\leq p$ be chosen as in Theorem \ref{S4:Theo:OpeLConvCone}. Let $g_1,\ldots,g_m\in L^\infty(\mathbf{R}_{>0})$ and let $f_1,\ldots,f_n$ be polynomial functions. For $d\geq0$, we denote:
    \begin{align*}
        \Theta_{i_0}^d(\ell) =  \sum_{k=1}^qe^\ell\int_{\ell=H_k(\mathbf{s}_m,\mathbf{x}_{n-1})}f_{i_0}^{(d)}\Big(Z_k(\mathbf{x}_{n-1})+ & b_k\Big)\prod_{j=1}^mg_j(s_j)\prod_{\substack{r=1\\r\neq i_0}}^nf_r(x_r) \\
        & \times\varphi(\mathbf{s}_m)\mathds{1}_{\mathbf{R}_{>0}^m\times C^{A,i}_k}(\mathbf{s}_m,\mathbf{x}_{n-1})\frac{\d\mathbf{s}_m\d\mathbf{x}_{n-1}}{\d\ell}\\
        -\sum_{k=q+1}^pe^\ell\int_{\ell=H_k(\mathbf{s}_m,\mathbf{x}_{n-1})}f_{i_0}^{(d)}\Big(Z_k(\mathbf{x}_{n-1}) & +b_k\Big)\prod_{j=1}^mg_j(s_j)\prod_{\substack{r=1\\r\neq i_0}}^nf_r(x_r) \\
        & \times\varphi(\mathbf{s}_m)\mathds{1}_{\mathbf{R}_{>0}^m\times C^{A,i}_k}(\mathbf{s}_m,\mathbf{x}_{n-1})\frac{\d\mathbf{s}_m\d\mathbf{x}_{n-1}}{\d\ell}.
    \end{align*}
    If $K_{i_0}=\deg f_{i_0}$, there exist $\lambda\in(0,1)$, which depends only on $h$, and $\rho\in\mathcal{R}_w(\lambda)$ such that for all $\ell\geq0$, we have:
    \begin{align*}
        \mathcal{L}_\ell^{K_{i_0}+1}\bigg[e^\ell\Big(g_1\star\cdots\star g_m\star f_1\star\cdots\star f_n(\ell)\Big|_{\varphi\mathds{1}_{\mathbf{R}_{>0}^m\times C^{A,i}}}^h\Big)\bigg] = \sum_{d=0}^{K_{i_0}}\frac{1}{\eta^{d+1}}\mathcal{L}_\ell^{K_{i_0}-d}\mathcal{P}_\ell^{d+1}\Theta_{i_0}^d(\ell)+\rho(\ell).
    \end{align*}
\end{Lem}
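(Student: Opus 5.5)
We argue by induction on the degree $K_{i_0}=\deg f_{i_0}$, iterating Theorem \ref{S4:Theo:OpeLConvCone}. For an arbitrary polynomial $f$ in the slot $i_0$, write
\[
    E[f](\ell)=e^\ell\Big(g_1\star\cdots\star g_m\star f_1\star\cdots\star f_{i_0-1}\star f\star f_{i_0+1}\star\cdots\star f_n(\ell)\Big|_{\varphi\mathds{1}_{\mathbf{R}_{>0}^m\times C^{A,i}}}^h\Big),
\]
with the same $A$, $\eta$ as in Lemma \ref{S4:Lem:ConvSLT1}. These do not depend on the $f_r$, since Lemma \ref{S4:Lem:ConvSLT1} holds for every family of polynomials. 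Let $\Theta^{d}$ denote the boundary term attached to $f^{(d)}$, and note that the family $C_k,H_k,Z_k,b_k$ depends only on $h$ and $C$. With this notation, Theorem \ref{S4:Theo:OpeLConvCone} reads
\[
    \mathcal{L}_\ell E[f]=\frac{1}{\eta}\Big(\mathcal{P}_\ell E[f']+\chi_\eta[f]+\mathcal{P}_\ell\Theta^0[f]\Big),
\]
because $\sum_{k\le q}\theta_k-\sum_{k>q}\theta_k=\mathcal{P}_\ell\Theta^0$ by definition. Lemma \ref{S4:Lem:ConvSLT1} gives $\chi_\eta[f]=\mathcal{L}_\ell(e^\ell B)$ with $B\in\mathcal{SR}_w(\lambda)$, so $e^\ell B\in\mathcal{R}_w(1-\lambda)$. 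We will rename $1-\lambda$ as $\lambda$ to match the statement, and $\lambda$ depends only on $h$.

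The first step is to check that $\mathcal{R}_w(\lambda)$ is stable under $\mathcal{L}$, $\mathcal{P}$ and the commuting compositions $\mathcal{L}^a\mathcal{P}^b$. If $\int_0^m|r|\le c_1(1+m)^ce^{\lambda m}$, then $|\mathcal{P}r(\ell)|\le c_1(1+\ell)^ce^{\lambda\ell}$, which integrates to a bound of the same shape. Hence $\mathcal{L}=\Id-\mathcal{P}$ also preserves the class.

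Next we apply $\mathcal{L}_\ell^{K}$ with $K=K_{i_0}$ to $\mathcal{L}_\ell E[f_{i_0}]$, using that $\mathcal{L}$ and $\mathcal{P}$ commute. This gives
\[
    \mathcal{L}^{K+1}E[f_{i_0}]=\frac{1}{\eta}\mathcal{L}^{K}\mathcal{P}E[f_{i_0}']+\frac{1}{\eta}\mathcal{L}^K\mathcal{P}\Theta^0+\rho_0,
\]
with $\rho_0\in\mathcal{R}_w(\lambda)$. Now $\mathcal{L}^K\mathcal{P}E[f_{i_0}']=\mathcal{P}\mathcal{L}^{K}E[f']$ with $\deg f'=K-1$. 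We iterate the same identity on $\mathcal{L}^{K}E[f']$, which already contains $\deg f'+1$ factors of $\mathcal{L}$. At step $d$ this produces $\eta^{-(d+1)}\mathcal{L}^{K-d}\mathcal{P}^{d+1}\Theta^d$, a remainder $\eta^{-(d+1)}\mathcal{P}^{d}\mathcal{L}^{K-d}\chi_\eta[f^{(d)}]\in\mathcal{R}_w(\lambda)$, and the new main term $\eta^{-(d+1)}\mathcal{P}^{d+1}\mathcal{L}^{K-d}E[f^{(d+1)}]$. After step $d=K$ we have $f^{(K+1)}=0$, so $E[f^{(K+1)}]=0$ and the chain stops. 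Summing the boundary terms gives exactly the sum over $d$ in the statement, and all the $\chi$ terms are collected into $\rho$.

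The main obstacle is bookkeeping rather than analysis, and there are two points to check. First, the coefficients $\eta$ and $i_0$, the polytopes $C_k$ and the forms $H_k,Z_k$ must be the same at every step. This holds because they are determined by $h$, $C$, $i$ alone and not by the polynomial in slot $i_0$. Second, the powers of $\mathcal{L}$ must be tracked so that the remainder at step $d$ is $\mathcal{P}^d\mathcal{L}^{K-d}$ applied to $\chi_\eta[f^{(d)}]=\mathcal{L}(e^\ell B_d)$. This requires Lemma \ref{S4:Lem:ConvSLT1} to apply with the $f_r$ replaced by derivatives, which it does since it is uniform over polynomials. One should also verify that $\lambda$ can be taken uniform over the finitely many steps by taking the minimum, so that it depends only on $h$.
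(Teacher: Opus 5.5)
Your proposal is correct and follows essentially the paper's argument. You iterate Theorem \ref{S4:Theo:OpeLConvCone} on the successive derivatives $f_{i_0}^{(d)}$ and commute $\mathcal{L}_\ell$ with $\mathcal{P}_\ell$. You then absorb every $\chi_\eta$ term into $\mathcal{R}_w$ via Lemma \ref{S4:Lem:ConvSLT1} and the stability of $\mathcal{R}_w$ under $\mathcal{P}_\ell$ and $\mathcal{L}_\ell$. That lemma's $A$, $\eta$ and $\lambda$ are already uniform over all polynomial families, so no minimum is needed. The iteration stops because $f_{i_0}^{(K_{i_0}+1)}=0$.

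The only difference is bookkeeping. The paper inducts on the number of applications of $\mathcal{L}_\ell$, carrying along the term $\eta^{-(K+1)}\mathcal{P}_\ell^{K+1}[\ldots f_{i_0}^{(K+1)}\ldots]$, while you peel off one derivative at a time. Your explicit treatment of $\chi_\eta=\mathcal{L}_\ell(e^\ell B)$ with $e^\ell B\in\mathcal{R}_w(1-\lambda)$ is, if anything, more careful than the paper's.
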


\begin{proof}
    We prove by induction on $K\geq0$ that there exists $\lambda\in(0,1)$ such that for any $K$, there exists $\rho_K\in\mathcal{R}_w(\lambda)$ with:
    \begin{equation}\label{S4:Lem:ConvSLT2:Demo}
        \begin{aligned}
            \mathcal{L}_\ell^{K+1}\bigg[e^\ell & \Big(g_1\star\cdots\star g_m\star f_1\star\cdots\star f_n(\ell)\Big|_{\varphi\mathds{1}_{\mathbf{R}_{>0}^m\times C^{A,i}}}^h\Big)\bigg] \\
            = & \frac{1}{\eta^{K+1}}\mathcal{P}_\ell^{K+1}\bigg[e^\ell\Big(g_1\star\cdots\star g_m\star f_1\star\cdots\star f_{i_0-1}\star f_{i_0}^{(K+1)}\star f_{i_0+1}\star\cdots\star f_n(\ell)\Big|_{\varphi\mathds{1}_{\mathbf{R}_{>0}^m\times C^{A,i}}}^h\Big)\bigg] \\
            & + \sum_{d=0}^K\frac{1}{\eta^{d+1}}\mathcal{L}_\ell^{K-d}\mathcal{P}_\ell^{d+1}\Theta_{i_0}^k(\ell) + \rho_K(\ell).
        \end{aligned}
    \end{equation}
    According to Theorem \ref{S4:Theo:OpeLConvCone}, this result is true for $K=0$. Now if we assume the result to be true for some $K\geq0$, according to Theorem \ref{S4:Theo:OpeLConvCone} we have:
    \begin{align*}
        \mathcal{L}_\ell^{K+2}\bigg[e^\ell & \Big(g_1\star\cdots\star g_m\star f_1\star\cdots\star f_n(\ell)\Big|_{\varphi\mathds{1}_{\mathbf{R}_{>0}^m\times C^{A,i}}}^h\Big)\bigg] \\
        = & \frac{1}{\eta^{K+1}}\mathcal{P}_\ell^{K+1}\mathcal{L}_\ell\bigg[e^\ell\Big(g_1\star\cdots\star g_m\star f_1\star\cdots\star f_{i_0-1}\star f_{i_0}^{(K+1)}\star f_{i_0+1}\star\cdots\star f_n(\ell)\Big|_{\varphi\mathds{1}_{\mathbf{R}_{>0}^m\times C^{A,i}}}^h\Big)\bigg] \\
        & + \sum_{d=0}^K\frac{1}{\eta^{d+1}}\mathcal{L}_\ell^{K+1-d}\mathcal{P}_\ell^{d+1}\Theta_{i_0}^k(\ell) + \mathcal{L}_\ell \rho_K(\ell) \\
        = & \frac{1}{\eta^{K+2}}\mathcal{P}_\ell^{K+2}\bigg[e^\ell\Big(g_1\star\cdots\star g_m\star f_1\star\cdots\star f_{i_0-1}\star f_{i_0}^{(K+2)}\star f_{i_0+1}\star\cdots\star f_n(\ell)\Big|_{\varphi\mathds{1}_{\mathbf{R}_{>0}^m\times C^{A,i}}}^h\Big) \\
        & + e^\ell\Big(g_1\star\cdots\star g_m\star f_1\star\cdots\star f_{i_0-1}\star f_{i_0}^{(K+1)}\star f_{i_0+1}\star\cdots\star f_n(\ell)\Big|_{(\eta-\frac{\partial h}{\partial x_i})\varphi\mathds{1}_{\mathbf{R}_{>0}^m\times C^{A,i}}}^h\Big)\bigg] \\
        & + \frac{1}{\eta^{K+2}}\mathcal{P}_\ell^{K+2}\Theta_{i_0}^{K+1}(\ell) + \sum_{k=0}^K\frac{1}{\eta^{d+1}}\mathcal{L}_\ell^{K+1-d}\mathcal{P}_\ell^{d+1}\Theta_{i_0}^d(\ell) + \mathcal{L}_\ell \rho_K(\ell)
    \end{align*}
    According to Lemma \ref{S4:Lem:ConvSLT1}, we have:
    \[
        \ell\mapsto e^\ell\Big(g_1\star\cdots\star g_m\star f_1\star\cdots\star f_{i_0-1}\star f_{i_0}^{(K+1)}\star f_{i_0+1}\star\cdots\star f_n(\ell)\Big|_{(\eta-\frac{\partial h}{\partial x_{i_0}})\varphi\mathds{1}_{\mathbf{R}_{>0}^m\times C^{A,i}}}^h\Big)\in\mathcal{R}_w(\lambda),
    \]
    hence if we define $\rho_{K+1}$ to be:
    \[
        \mathcal{L}_\ell \rho_K+\frac{1}{\eta^{K+2}}\mathcal{P}_\ell^{K+2}\bigg[e^\ell\Big(g_1\star\cdots\star g_m\star f_1\star\cdots\star f_{i_0-1}\star f_{i_0}^{(K+1)}\star f_{i_0+1}\star\cdots\star f_n(\ell)\Big|_{(\eta-\frac{\partial h}{\partial x_{i_0}})\varphi\mathds{1}_{\mathbf{R}_{>0}^m\times C^{A,i}}}^h\Big)\bigg],
    \]
    the formula (\ref{S4:Lem:ConvSLT2:Demo}) is true. We prove the result by taking $K=K_{i_0}$.
\end{proof}

\begin{Cor}\label{S4:Cor:ConvSLTUndbound}
    Let $C$ be an $n$-dimensional admissible polytope, $i\in\{0,\ldots,n\}$ and $h\colon\mathbf{R}_{>0}^m\times C\longrightarrow\mathbf{R}$ be a simplified length-type function. Let $\varphi\colon\mathbf{s}_m\in\mathbf{R}_{>0}^m\longrightarrow e^{-(s_1+\cdots+s_m)}$. There exists $\lambda\in(0,1)$ such that for every $g_1,\ldots,g_m\in L^\infty(\mathbf{R}_{>0})$ and every polynomial functions $f_1,\ldots,f_n$ of degree $K_1,\ldots,K_n$ respectively, we have:
    \[
        g_1\star\cdots\star g_m\star f_1\star\cdots\star f_n\Big|_{\varphi\mathds{1}_{\mathbf{R}_{>0}^m\times C^{A,i}}}^h\in\mathcal{SF}_w^{K_{i+1}+\cdots+K_n+n-i-1}(\lambda).
    \]
\end{Cor}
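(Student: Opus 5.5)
We argue by induction on the dimension $n$ of the admissible polytope $C$, with $m$ arbitrary, and for each $n$ by a secondary downward induction on $i\in\{0,\ldots,n\}$. Multiplying by $e^\ell$ and applying Lemma \ref{S4:Lem:CaracFR}, we must show that
\[
    \mathcal{L}_\ell^{N}\bigg[e^\ell\Big(g_1\star\cdots\star g_m\star f_1\star\cdots\star f_n(\ell)\Big|_{\varphi\mathds{1}_{\mathbf{R}_{>0}^m\times C^{A,i}}}^h\Big)\bigg]\in\mathcal{R}_w(1-\lambda),
\]
with $N=K_{i+1}+\cdots+K_n+n-i$, for some $\lambda$ depending only on $h$.

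Let $i_0=\max\{r\mid\partial\Lambda/\partial x_r\neq0\}$. First we dispose of the case $i_0\leq i$. On $C^{A,i}$ the variables $x_1,\ldots,x_i$ lie in $[0,A]$. If $i_0\leq i$, then $\Lambda$ restricted to $C^{A,i}$ does not depend on the unbounded variables $x_{i+1},\ldots,x_n$. Properness of $\Lambda$ on $\Bar C$ then forces the range of these variables to be bounded at fixed $\mathbf{s}_m$, so the slice is effectively compact in $\mathbf{x}_n$. Bounding $e^{h}\varphi\leq e^{c_0}e^{\Lambda(\mathbf{s}_m,\mathbf{x}_n)-(s_1+\cdots+s_m)}$, the computation of Lemma \ref{S4:Lem:ConvSLT1} with $\lambda_1=1/D$ gives an $\mathcal{R}_w(1-\lambda)$ bound directly, without any $\mathcal{L}_\ell$ needed. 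A cleaner alternative is to reduce this case to the $\theta$-type boundary terms with one fewer unbounded variable.

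In the main case $i_0>i$, we choose $A$ and $\eta$ by Lemma \ref{S4:Lem:ConvSLT1}, and Lemma \ref{S4:Lem:ConvSLT2} gives
\[
    \mathcal{L}_\ell^{K_{i_0}+1}[e^\ell(\cdots)]=\sum_{d=0}^{K_{i_0}}\eta^{-d-1}\mathcal{L}_\ell^{K_{i_0}-d}\mathcal{P}_\ell^{d+1}\Theta_{i_0}^d+\rho,\qquad\rho\in\mathcal{R}_w(\lambda).
\]
Each $\Theta^d_{i_0}$ is $e^\ell$ times a pseudo-convolution, with respect to the simplified length-type functions $H_k$ on $\mathbf{R}_{>0}^m\times C_k$ given by Theorem \ref{S4:Theo:OpeLConvCone} and Lemma \ref{S3:Lem:RestBoundSLT}, of $g_1,\ldots,g_m$ and of the polynomials $f_r$, $r\neq i_0$. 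The factor $f_{i_0}^{(d)}(Z_k+b_k)$ is a polynomial in $\mathbf{x}_{n-1}$, which we expand into monomials $\prod_r x_r^{a_r}$ of total degree at most $K_{i_0}-d$. Since $C_k$ has dimension $n-1$, the induction hypothesis on $n$ applies to each monomial term with the truncation index $i$ (or $i-1$ if the removed coordinate $i_0\leq i$, which cannot happen here since $i_0>i$). This shows that $e^{-\ell}\Theta^d_{i_0}\in\mathcal{SF}_w^{M}(\lambda')$, where $M$ is bounded by the sum of the degrees of the unbounded variables other than $x_{i_0}$, plus $K_{i_0}-d$, plus $n-1-i-1$.

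It remains to track the degree. By Lemma \ref{S4:Lem:CaracFR}, applying $\mathcal{L}_\ell^{M+1}$ to $\Theta^d_{i_0}$ lands in $\mathcal{R}_w$. Since $\mathcal{P}_\ell$ preserves $\mathcal{F}_w^{M+1}$ up to possibly raising the polynomial degree, the operator $\mathcal{P}^{d+1}$ is compensated by the extra $d+1$ powers of $\mathcal{L}$. Applying $\mathcal{L}^{N-K_{i_0}-1}$ more to the whole identity therefore yields a $\mathcal{R}_w(1-\lambda)$ remainder. We also need that $\mathcal{L}$ and $\mathcal{P}$ preserve $\mathcal{R}_w(\mu)$ for $\mu>0$, which follows from the ODE formula in the proof of Lemma \ref{S4:Lem:CaracFR}. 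We then take $\lambda$ as the minimum over the finitely many polytopes occurring in the recursion; this $\lambda$ depends only on $h$.

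The base case $n=0$ (or $C$ reduced to $\mathbf{R}_{>0}^m$) is the bounded computation of the first case. \textbf{The main obstacle} is the bookkeeping of this degree count: checking that the boundary terms $\Theta$ never require more than $N$ applications of $\mathcal{L}$. In particular, we must verify that the substitution $x_{i_0}=Z_k+b_k$ redistributes degree only among the remaining unbounded variables, and that $\mathcal{L}\mathcal{P}$-commutation does not raise the required order.
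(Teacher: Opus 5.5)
Your proposal is correct and is essentially the paper's proof. You use the same split between $i_0\leq i$, where $C^{A,i}$ is bounded and the direct estimate with $\lambda=1/D$ gives $\mathcal{SR}_w(\lambda)$, and $i_0>i$, where you apply Lemma \ref{S4:Lem:ConvSLT2} and then the induction hypothesis on the $(n-1)$-dimensional polytopes $C_k$ with the same index $i$; this is exactly the paper's induction on $n-i$, so your secondary downward induction on $i$ is never actually used.

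One small correction to your degree bookkeeping: $\mathcal{P}$ does not raise the polynomial degree, since $\mathcal{P}(e^\ell\mathbf{C}_M[\ell])\subset e^\ell\mathbf{C}_M[\ell]\oplus\mathbf{C}$. If it did raise the degree, your count would fail. The count closes because $\mathcal{L}$ and $\mathcal{P}$ commute, so $\mathcal{L}^{N-1-d}\mathcal{P}^{d+1}\Theta_{i_0}^d=\mathcal{P}^{d+1}\mathcal{L}^{N-1-d}\Theta_{i_0}^d$. Here $e^{-\ell}\Theta_{i_0}^d\in\mathcal{SF}_w^{N-2-d}(\lambda')$, so $\mathcal{L}^{N-1-d}\Theta_{i_0}^d\in\mathcal{R}_w$, and $\mathcal{P}$ preserves $\mathcal{R}_w$.
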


\begin{proof}
    We denote:
    \[
        B(\ell)=g_1\star\cdots\star g_m\star f_1\star\cdots\star f_n(\ell)\Big|_{\varphi\mathds{1}_{\mathbf{R}_{>0}^m\times C^{A,i}}}^h
    \]
    We prove the result by induction on $n-i$. Denote $h=\Lambda+\psi$, where $\Lambda$ is linear. If $n=i$ and $m=0$, then $h$ is bounded on $C^{A,i}$ and the pseudo-convolution $B$ is compactly supported. Else, if $m>0$, then $\mathbf{x}_n\mapsto\Lambda(\mathbf{0}_m,\mathbf{x}_n)$ is bounded on $C^{A,n}=C\cap[0,A]^n$. Hence there exist constants $c_0,K>0$ such that for every $a\geq0$, we have:
    \begin{align*}
        \int_0^ae^\ell|B(\ell)|\d\ell\leq K\int_{\mathbf{R}_{>0}^m}e^{\Lambda(\mathbf{s}_m,\mathbf{0}_n)-(s_1+\cdots+s_m)}\mathds{1}_{\{\Lambda(\mathbf{s}_m,\mathbf{0}_n)\leq a+c_0\}}\d\mathbf{s}_m.
    \end{align*}
    Let $D$ be the maximal coefficient of $\mathbf{s}_m\mapsto\Lambda(\mathbf{s}_m,\mathbf{0}_n)$. If $D=1$, we let $\lambda$ be any number in $(0,1)$, and if $D>1$, let $\lambda=\frac{1}{D}$. In both cases, for all $\mathbf{s}_m$, we have:
    \[
        \Lambda(\mathbf{s}_m,\mathbf{0}_n)-(s_1+\cdots+s_m)\leq (1-\lambda)\Lambda(\mathbf{s}_m,\mathbf{0}_n).
    \]
    This yields that there exist constants $c,c'>0$ such that:
    \[
        \int_0^ae^\ell|B(\ell)|\d\ell\leq Ke^{(1-\lambda)a}\int_{\mathbf{R}_{>0}^m}\mathds{1}_{\{\Lambda(\mathbf{s}_m,\mathbf{0}_n)\leq a+c_0\}}\d\mathbf{s}_m\leq c'(1+a)^ce^{(1-\lambda)a}.
    \]
    Hence $B\in\mathcal{SR}_w(\lambda)$.
    
    Let us now assume that $n-i>0$. Let $i_0=\max\{r\in\{1,\ldots,n\}|\frac{\partial\Lambda}{\partial x_r}\neq0\}$. If $i_0\leq i$, then $\mathbf{x}_n\mapsto\Lambda(\mathbf{0}_m,\mathbf{x}_n)$ is bounded on $C^{A,i}$, hence we conclude in this case that $B\in\mathcal{SR}_w(\lambda)$ for some $\lambda\in(0,1)$. We can therefore assume that $i_0>i$, in which case we can use Lemma \ref{S4:Lem:ConvSLT2}. For convenience of notations, we assume that $i_0=n$. Thus $\mathcal{L}_\ell^{K_n+1}[e^\ell B(\ell)]$ is a linear combination of elements of $\mathcal{R}_w(\nu)$, for some $\nu\in(0,1)$, and of terms of the form:
    \begin{align}\label{S4:Cor:ConvSLTUndbound:Demo1}
        \nonumber \mathcal{P}_\ell^{d+1}\mathcal{L}_\ell^{K_n-d}\bigg[ e^\ell\int_{\ell=H(\mathbf{s}_m,\mathbf{x}_{n-1})}f_n^{(d)}\Big(Z(\mathbf{x}_{n-1})+ & b\Big)\prod_{j=1}^mg_j(s_j)\prod_{r=1}^{n-1}f_r(x_r) \\
        & \times\varphi(\mathbf{s}_m)\mathds{1}_{\mathbf{R}_{>0}^m\times \widetilde{C}^{A,i}}(\mathbf{s}_m,\mathbf{x}_{n-1})\frac{\d\mathbf{s}_m\d\mathbf{x}_{n-1}}{\d\ell}\bigg],
    \end{align}
    where $d\leq K_n$, $\widetilde{C}$ is an $(n-1)$-dimensional admissible polytope, $H\colon\mathbf{R}_{>0}^m\times\widetilde{C}\longrightarrow\mathbf{R}$ is a simplified length-type function, $Z$ is a linear form on $\mathbf{R}^{n-1}$ and $b\in\mathbf{R}$ is a constant. The function $\mathbf{x}_{n-1}\mapsto f_n^{(d)}(Z(\mathbf{x}_{n-1})+b)\prod_{r=1}^{n-1}f_r(x_r)$ is therefore a polynomial function of the variable $\mathbf{x}_{n-1}$ of total degree $K_1+\cdots+K_n-d$. Hence (\ref{S4:Cor:ConvSLTUndbound:Demo1}) is a linear combination of elements of the form:
    \[
        \mathcal{P}_\ell^{d+1}\mathcal{L}_\ell^{K_n-d}\Big[e^\ell\Big(g_1\star\cdots\star g_m\star\widetilde{f}_1\star\cdots\star\widetilde{f}_{n-1}(\ell)\Big|_{\mathds{1}_{\mathbf{R}_{>0}^m\times\widetilde{C}^{A,i}}}^H\Big)\Big],
    \]
    where $\widetilde{f}_1,\cdots,\widetilde{f}_{n-1}$ are polynomial functions of degree $K_1+d_1,\ldots,K_{n-1}+d_{n-1}$ respectively, where $d_1+\ldots+d_{n-1}=K_n-d$. By induction, there exists a constant $\lambda\in(0,1)$ such that:
    \begin{align*}
        e^\ell\Big(g_1\star\cdots\star g_m\star\widetilde{f}_1\star\cdots\star\widetilde{f}_{n-1}(\ell)\Big|_{\mathds{1}_{\mathbf{R}_{>0}^m\times\widetilde{C}^{A,i}}}^H\Big) \in & \mathcal{F}_w^{K_{i+1}+d_{i+1}+\cdots+K_{n-1}+d_{n-1}+n-1-i}(1-\lambda) \\
        \subset & \mathcal{F}_w^{K_{i+1}+\cdots+K_n-d+n-1-i}(1-\lambda).
    \end{align*}
    It follows that (\ref{S4:Cor:ConvSLTUndbound:Demo1}) belongs to $\mathcal{F}_w^{K_{i+1}+\cdots+K_{n-1}+n-1-i}(1-\lambda)$, which yields:
    \[
        \mathcal{L}_\ell^{K_n+1}[e^\ell B(\ell)]\in\mathcal{F}_w^{K_{i+1}+\cdots+K_{n-1}+n-1-i}(1-\lambda),
    \]
    and $B\in\mathcal{SF}_w^{K_{i+1}+\cdots+K_n+n-1-i}(\lambda)$.
\end{proof}

\begin{Cor}\label{S4:Cor:ConvSLTGeneral}
    Let $C$ be an $n$-dimensional admissible polytope and $h\colon\mathbf{R}_{>0}^m\times C\longrightarrow\mathbf{R}$ be a simplified length-type function. We denote $\varphi\colon\mathbf{s}_m\mapsto e^{-(s_1+\cdots+s_m)}$. There exists $\lambda\in(0,1)$ such that for all $g_1,\ldots,g_m\in L^\infty(\mathbf{R}_{>0})$ and all polynomial functions $f_1,\ldots,f_n$ of degree $K_1,\ldots,K_n$ respectively, we have:
    \[
        g_1\star\cdots\star g_m\star f_1\star\cdots\star f_n\Big|_{\varphi\mathds{1}_{\mathbf{R}_{>0}^m\times C}}^h\in\mathcal{SF}_w^{K_1+\cdots+K_n+n-1}(\lambda).
    \]
\end{Cor}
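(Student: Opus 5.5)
The plan is to deduce Corollary \ref{S4:Cor:ConvSLTGeneral} from Corollary \ref{S4:Cor:ConvSLTUndbound}. Fix $A\geq0$, to be chosen large later. Decompose the admissible polytope $C$ according to which coordinates are $\leq A$ and which are $>A$. Because $C\subset\{0\leq x_1\leq\cdots\leq x_n\}$, the set $\{x_r\leq A\}$ forces $x_1,\ldots,x_r\leq A$. Hence
\[
    C=\bigsqcup_{i=0}^nC^{A,i},\qquad C^{A,i}=C\cap\big([0,A]^i\times(A,+\infty)^{n-i}\big),
\]
and this is a disjoint union up to the convention on boundary points, which is harmless since hyperplanes have measure zero. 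By linearity of the pushforward in the weight $\varphi\mathds{1}_{\mathbf{R}_{>0}^m\times C}=\sum_i\varphi\mathds{1}_{\mathbf{R}_{>0}^m\times C^{A,i}}$, the pseudo-convolution splits into
\[
    g_1\star\cdots\star g_m\star f_1\star\cdots\star f_n\Big|_{\varphi\mathds{1}_{\mathbf{R}_{>0}^m\times C}}^h=\sum_{i=0}^ng_1\star\cdots\star g_m\star f_1\star\cdots\star f_n\Big|_{\varphi\mathds{1}_{\mathbf{R}_{>0}^m\times C^{A,i}}}^h .
\]

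Next I apply Corollary \ref{S4:Cor:ConvSLTUndbound} to each term. The $i$-th term lies in $\mathcal{SF}_w^{K_{i+1}+\cdots+K_n+n-i-1}(\lambda_i)$, where $\lambda_i\in(0,1)$ depends only on $h$ (and on $A$, $i$). The classes are nested: $\mathcal{SF}_w^{a}(\lambda)\subset\mathcal{SF}_w^{b}(\lambda')$ whenever $a\leq b$ and $\lambda'\leq\lambda$, because polynomial parts of lower degree are allowed and $\mathcal{R}_w(1-\lambda)\subset\mathcal{R}_w(1-\lambda')$. Also $K_{i+1}+\cdots+K_n+n-i-1\leq K_1+\cdots+K_n+n-1$. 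The term $i=n$ gives an $\mathcal{SR}_w$ function, i.e. degree index $-1$, which is still contained in the target class. Taking $\lambda=\min_i\lambda_i$ and summing these finitely many contributions gives the claim, since $\mathcal{SF}_w^{N}(\lambda)$ is a vector space.

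The main point to check is that one fixed $A$ works simultaneously for all $i$. In Corollary \ref{S4:Cor:ConvSLTUndbound}, $A$ enters through Lemma \ref{S4:Lem:ConvSLT1}, which requires $A$ large enough for Assumption $(\mathbf{H}_{m+n}^{m+i_0})$ to hold. I would take $A$ to be the maximum of the finitely many thresholds arising there, including those from the recursive applications to the lower-dimensional polytopes $C_k$ in the induction.

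One subtlety needs care: on $C^{A,i}$ all of $x_{i+1},\ldots,x_n$ exceed $A$, so the monotonicity argument of Lemma \ref{S4:Lem:ConvSLT1} still applies after enlarging $A$. Enlarging $A$ only enlarges the bound $\min\{x_{i_0},Y_j\}\geq\frac{\mu}{M}A$, so the argument is preserved. Finally, $\lambda$ depends only on $h$ and not on the $g_j,f_r$, as required, because each $\lambda_i$ comes from the linear part $\Lambda$ and the constants $M,D$ of $h$.
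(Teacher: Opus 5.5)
Your proposal is correct and follows essentially the same route as the paper: you use the ordering $x_1\leq\cdots\leq x_n$ to partition $C$ into the pieces $C\cap([0,A]^i\times(A,+\infty)^{n-i})$, apply Corollary~\ref{S4:Cor:ConvSLTUndbound} to each piece, and sum. Your additional remarks (the nesting of the classes $\mathcal{SF}_w^{N}(\lambda)$, the $\mathcal{SR}_w$ contribution of the $i=n$ piece, taking $\lambda=\min_i\lambda_i$, and choosing one $A$ valid for every $i$) make explicit what the paper leaves implicit.
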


\begin{proof}
    Since $C\subset\{0\leq x_1\leq\cdots\leq x_n\}$, it is partitionned into the sets $C\cap[0,A]^i\times(A,+\infty)^{n-i}$ for $i\in\{0,\ldots,n\}$, and we have:
    \[
        g_1\star\cdots\star g_m\star f_1\star\cdots\star f_n\Big|_{\varphi\mathds{1}_{\mathbf{R}_{>0}^m\times C}}^h=\sum_{i=0}^ng_1\star\cdots\star g_m\star f_1\star\cdots\star f_n\Big|_{\varphi\mathds{1}_{\mathbf{R}_{>0}^m\times (C\cap[0,A]^i\times(A,+\infty)^{n-i})}}^h.
    \]
    We conclude by using Corollary \ref{S4:Cor:ConvSLTUndbound}.
\end{proof}

\subsection{Convolution with respect to length-type functions}

\begin{Theo}\label{S4:Theo:ConvLenType}
    Let $h\colon\mathbf{R}_{>0}^n\times\mathbf{R}^n\times\mathbf{R}_{\geq0}^k\longrightarrow\mathbf{R}$ be a length-type function. There exists $\lambda\in(0,1)$ such that for every polynomial functions $f_1,\ldots,f_{2n+k}$ of degree $K_1,\ldots,K_{2n+k}$ respectively, we have:
    \[
        f_1\star\cdots\star f_{2n+k}\Big|_{\mathds{1}_{\mathbf{R}_{>0}^n\times\mathbf{R}^n\times\mathbf{R}_{\geq0}^k}}^h\in\mathcal{SF}_w^{K_1+\ldots+K_{2n+k}+2n+k-1}(\lambda).
    \]
\end{Theo}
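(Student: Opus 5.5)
The plan is to reduce Theorem \ref{S4:Theo:ConvLenType} to Corollary \ref{S4:Cor:ConvSLTGeneral} through Theorem \ref{S3:Theo:SimpFormPLT}. First, pass from $h$ to the pseudo length-type function $H=u\circ h$ with $u(\ell)=2\log(2\cosh(\frac{\ell}{2}))$. By Lemma \ref{S4:Lem:ChangeVar}\ref{S4:Lem:ChangeVar:Item2}, the $h$-convolution $F$ equals $|u'|\,G\circ u$, where $G$ is the $H$-convolution. Since $u(\ell)=\ell+2\log(1+e^{-\ell})$ and $u'(\ell)=\tanh(\frac{\ell}{2})$, we have $u(\ell)-\ell=O(e^{-\ell})$ and $u'-1=O(e^{-\ell})$. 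One then checks directly that $G\in\mathcal{SF}_w^{D}(\lambda)$ implies $F\in\mathcal{SF}_w^{D}(\lambda')$ for some $\lambda'\in(0,1)$. Indeed, $e^\ell G(u(\ell))$ and $e^{u(\ell)}G(u(\ell))$ differ by a bounded factor tending to $1$ exponentially fast, and the polynomial part $P(u(\ell))$ equals $P(\ell)$ up to an error $O((1+\ell)^ce^{-\ell})$, so the remainder stays in $\mathcal{R}_w$. It therefore suffices to treat $H$.

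Next, partition the domain. Split $\mathbf{R}_{>0}^n$ into the $2^n$ sets $E_I$ of (\ref{S3:Eq:Partition1}) and use the change of variables $\Phi_I$ from Lemma \ref{S4:Lem:ChangeVar}\ref{S4:Lem:ChangeVar:Item1}. For $i\in I$, the variable $x_i\in(0,1)$ becomes $s_i=-\log x_i$. The polynomial $f_i(x_i)$ becomes $f_i(e^{-s_i})$, which lies in $L^\infty(\mathbf{R}_{>0})$, and the Jacobian contributes exactly $e^{-s_i}$. This is why the weight $\varphi(\mathbf{s}_m)=e^{-(s_1+\cdots+s_m)}$ and the bounded factors $g_j$ appear in Corollary \ref{S4:Cor:ConvSLTGeneral}. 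Theorem \ref{S3:Theo:SimpFormPLT} then partitions $F_I\times\mathbf{R}^n\times\mathbf{R}_{\geq0}^k$ into finitely many cones $U_r$, up to measure-zero boundaries, with linear isomorphisms $\Psi_r$ sending $U_r$ onto $\mathbf{R}_{>0}^{\#I}\times C_r$. On each piece, $\widetilde H\circ\Psi_r^{-1}$ is a simplified length-type function. After composing with $\Psi_r^{-1}$, the product $\prod_{i\notin I}f_i$ becomes a polynomial in the new variables $\mathbf{z}$ of total degree at most $\sum_{i\notin I}K_i$; the Jacobian is constant. Expand this polynomial in monomials, each of which is a product of one-variable polynomials $\widetilde f_1(z_1)\cdots\widetilde f_{N}(z_N)$. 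The coordinates $\mathbf{s}_m$ are untouched because $\Psi_r$ only acts on the remaining variables, as the proof of Theorem \ref{S3:Theo:SimpFormPLT} shows. Also, by that proof, each $C_r$ already incorporates the constraints $x_j\geq1$ for $j\notin I$.

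Then apply Corollary \ref{S4:Cor:ConvSLTGeneral} to each monomial term. Each contribution lies in $\mathcal{SF}_w^{D'+N-1}(\lambda_r)$, where $N=2n+k-m$ and $D'\leq\sum_{i\notin I}K_i$. Since $N-1+D'\leq K_1+\cdots+K_{2n+k}+2n+k-1$, and the classes $\mathcal{SF}_w^{D}$ increase with $D$, each term lies in the target class. Take $\lambda=\min_r\lambda_r$ over the finitely many pieces; $\lambda$ depends only on $h$, because the cones, the $\Psi_r$ and the simplified length-type functions depend only on $h$. Summing the pieces is legitimate because pseudo-convolution is additive in the indicator weight, and this gives the result.

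The main obstacle will be the bookkeeping of the first step and of the change of variables: one must check carefully that the $e^{-\ell}$-small perturbation coming from $u$ preserves the class $\mathcal{SF}_w$ with the same degree index. One must also check that the variables $t_j\in\mathbf{R}$ and the cone boundaries are correctly absorbed, so that each piece has exactly the product form $\mathbf{R}_{>0}^m\times C$ with $C$ an admissible polytope, as required by Corollary \ref{S4:Cor:ConvSLTGeneral}. For the first point, I would write $e^\ell F(\ell)=\tanh(\frac{\ell}{2})e^{\ell-u(\ell)}\,(e^{u}G)(u(\ell))$, decompose $e^uG=e^u P(u)+r$, and handle $P(u(\ell))-P(\ell)$ by the mean value theorem.
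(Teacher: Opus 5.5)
Your proposal is correct and follows the paper's proof essentially step for step. You pass to $H=u\circ h$ via Lemma \ref{S4:Lem:ChangeVar}, split along the sets $E_I$ with Jacobian weight $e^{-(s_1+\cdots+s_m)}$ and bounded factors $g_j(s_j)=f_j(e^{-s_j})$, apply Theorem \ref{S3:Theo:SimpFormPLT} with $\Psi$ acting as the identity on $\mathbf{s}_m$, expand into monomials, and conclude with Corollary \ref{S4:Cor:ConvSLTGeneral}. You also check explicitly that the $O(e^{-\ell})$ perturbation coming from $u$ preserves $\mathcal{SF}_w^D$, using $u'=\tanh(\ell/2)$, $e^{\ell-u}=(1+e^{-\ell})^{-2}$, the mean value theorem on the polynomial part, and the substitution $v=u(\ell)$ on the remainder. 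This fills in the step the paper only asserts with ``it is sufficient''.
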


\begin{proof}
    Let $u\colon\ell\mapsto2\log(2\cosh(\frac{\ell}{2}))$ and $H=u\circ h$. Then $H$ is a pseudo length-type function, and according to Lemma \ref{S4:Lem:ChangeVar}, we have:
    \[
        f_1\star\cdots\star f_{2n+k}(\ell)\Big|_{\mathds{1}_{\mathbf{R}_{>0}^n\times\mathbf{R}^n\times\mathbf{R}_{\geq0}^k}}^h = \Big(1+O(e^{-\ell})\Big)\Big(f_1\star\cdots\star f_{2n+k}\big(\ell+O(e^{-\ell})\big)\Big|_{\mathds{1}_{\mathbf{R}_{>0}^n\times\mathbf{R}^n\times\mathbf{R}_{\geq0}^k}}^H\Big),
    \]
    hence it is sufficient to prove that:
    \[
        f_1\star\cdots\star f_{2n+k}\Big|_{\mathds{1}_{\mathbf{R}_{>0}^n\times\mathbf{R}^n\times\mathbf{R}_{\geq0}^k}}^H\in\mathcal{SF}_w^{K_1+\ldots+K_{2n+k}+2n+k-1}(\lambda).
    \]
    We partition the space $\mathbf{R}_{>0}^n\times\mathbf{R}^n\times\mathbf{R}_{\geq0}^k$ into the sets $E_I\times\mathbf{R}^n\times\mathbf{R}_{\geq0}^k$, for $I\subset\{1,\ldots,n\}$, defined by (\ref{S3:Eq:Partition1}). Given a subset $I\subset\{1,\ldots,n\}$, we note:
    \[
        \begin{array}{cccc}
            \widetilde{H}_I\colon & F_I\times\mathbf{R}^n\times\mathbf{R}_{\geq0}^k & \longrightarrow & \mathbf{R} \\
             & (\mathbf{s}_m,\mathbf{t}_n,\mathbf{y}_k) & \longmapsto & H(\Phi_I^{-1}(\mathbf{s}_n),\mathbf{t}_n,\mathbf{y}_k)
        \end{array},
    \]
    where the diffeomorphism $\Phi_I\colon E_I\longrightarrow F_I$ is defined by (\ref{S3:Eq:Partition2}).
    
    By possibly performing a permutation of variables, we can assume without loss of generality for convenience of notations that $I=\{1,\ldots,m\}$. In this case we have $E_I=(0,1)^m\times[1,+\infty)^{n-m}$, $F_I=\mathbf{R}_{>0}^m\times[1,+\infty)^{n-m}$ and if $(\mathbf{s}_m,\mathbf{x}_n)\in F_I$, we have:
    \[
        \Phi_I^{-1}(\mathbf{s}_m,\mathbf{x}_{n-m}) = (e^{-s_1},\ldots,e^{-s_m},\mathbf{x}_{n-m}).
    \]
    The absolute value of the Jacobian determinant of $\Phi_I^{-1}$ is therefore given by the function $\varphi\colon(\mathbf{s}_m,\mathbf{x}_{n-m})\mapsto e^{-(s_1+\cdots+s_m)}$. According to Lemma \ref{S4:Lem:ChangeVar}, it follows:
    \[
        f_1\star\cdots\star f_{2n+k}\Big|_{\mathds{1}_{E_I\times\mathbf{R}^n\times\mathbf{R}_{\geq0}^k}}^H = g_1\star\cdots\star g_m\star f_{m+1}\star\cdots\star f_{2n+k}\Big|_{\varphi\mathds{1}_{F_I\times\mathbf{R}^n\times\mathbf{R}_{\geq0}^k}}^{H_I},
    \]
    where for all $j\in\{1,\ldots,m\}$, $g_j\colon s_j\mapsto f_j(e^{-s_j})$.
    
    Using Theorem \ref{S3:Theo:SimpFormPLT}, we partition $F_I\times\mathbf{R}^n\times\mathbf{R}_{\geq0}^k$ into cones $U_1,\ldots,U_{N_I}$ such that for each $i\in\{1,\ldots,N_I\}$, we have a linear isomorphism $\Psi_i\colon U_i\longrightarrow\mathbf{R}_{>0}^m\times C_i$, where $C_i$ is an admissible polytope, and $\widetilde{H}_{I,i}=\widetilde{H}_I\circ\Psi_i^{-1}\colon\mathbf{R}_{>0}^m\times C_i\longrightarrow\mathbf{R}$ is a simplified length-type function. For each $i\in\{1,\ldots,N_I\}$, we note $\Psi_i^{-1}=(\Id_{\mathbf{R}_{>0}^m},S_{i,1},\ldots,S_{i,2n-m+k})$. Since each $\Psi_i^{-1}$ is linear, its Jacobian determinant is constant, hence $f_1\star\cdots\star f_{2n+k}|_{\mathds{1}_{E_I\times\mathbf{R}^n\times\mathbf{R}_{\geq0}^k}}^{H_I}$ is a linear combination of terms of the form:
    \begin{equation}\label{S4:Theo:ConvLenType:Demo1}
        \int_{\ell=H_{I,i}(\mathbf{s}_m,\mathbf{z}_{2n-m+k})}\varphi(\mathbf{s}_m)\prod_{j=1}^mg_j(s_j)\prod_{r=1}^{2n-m+k}f_r(S_{i,r}(\mathbf{z}_{2n-m+k}))\mathds{1}_{\mathbf{R}_{>0}^m\times C_i}(\mathbf{s}_m,\mathbf{z}_{2n-m+k})\frac{\d\mathbf{s}_m\d\mathbf{z}_{2n-m+k}}{\d\ell}.
    \end{equation}
    Since the $S_{i,r}$'s are linear forms, (\ref{S4:Theo:ConvLenType:Demo1}) is a linear combination of terms of the form:
    \[
        g_1\star\cdots\star g_m\star z_1^{d_1}\star\cdots\star z_{2n-m+k}^{d_{2n-m+k}}\Big|_{\varphi\mathds{1}_{\mathbf{R}_{>0}^m\times C_i}}^{H_{I,i}},
    \]
    where $d_1+\cdots+d_{2n-m+k}\leq K_{m+1}+\cdots+K_{2n+k}$. The conclusion follows from Corollary \ref{S4:Cor:ConvSLTGeneral}
\end{proof}

\begin{Rem}
    Let us check what would have happened if we had allowed the length-type functions to be defined using coefficients with arbitrary signs. Recall that we noticed earlier that if it had been the case, the linear changes of variables provided by Theorem \ref{S3:Theo:SimpFormPLT} would impact the variable $\mathbf{s}_m$. Therefore, after having applied such a linear change of variable $\Psi_i^{-1}$, the Jacobian determinant $\varphi\colon(\mathbf{s}_m,\mathbf{x}_{n-m})\mapsto e^{-(s_1+\cdots+s_m)}$ would be re-expressed as:
    \[
        \varphi\circ\Psi_i^{-1}\colon\mathbf{z}_{2n+k}\mapsto e^{-T(\mathbf{z}_{2n+k})},
    \]
    for some positive linear form $T$. The issue is that we cannot control anymore the variables on which the Jacobian determinant $\varphi\circ\Psi_i^{-1}$ depends. Hence we have to replace the conclusion of Theorem \ref{S4:Theo:OpeLConvCone} by the expression (\ref{S4:Eq:ProbArbSign}), which makes us unable to perform the induction we used to prove Corollary \ref{S4:Cor:ConvSLTUndbound}.
    
    This is the reason why we insisted on the fact that we require the length-type functions to be defined with positive coefficients.
\end{Rem}

\section{Application to the integration of geometric random variables and geodesic counting}

Using the integration formula from Theorem \ref{S2:Theo:IntFormGeneral} together with Theorem \ref{S4:Theo:ConvLenType}, we are now able to prove the following.

\begin{Theo}\label{S5:Theo:IntRandVarPTS}
    Let $\gamma$ be a loop in minimal position on $S_{g,n}$. There exist $\lambda_\gamma\in(0,1)$ and a function $V_{\mathcal{O}_\gamma}(~.~,\mathbf{L}_n)\in\mathcal{SF}_w^{6g-7+2n}(\lambda_\gamma)$ such that for all test function $F\colon\mathbf{R}\longrightarrow\mathbf{C}$, we have:
    \[
        \mathbb{E}[F^\gamma]=\int_0^\infty F(\ell)V_{\mathcal{O}_\gamma}(\ell,\mathbf{L}_n)\frac{\d\ell}{V_{g,n}(\mathbf{L}_n)}.
    \]
\end{Theo}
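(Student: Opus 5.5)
The plan is to combine Theorem \ref{S2:Theo:IntFormGeneral} with Theorem \ref{S4:Theo:ConvLenType}.

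By Theorem \ref{S2:Theo:IntFormGeneral}, after choosing a pants decomposition adapted to $S(\gamma)$ and to the multi-curve $\Gamma=(\beta_1,\ldots,\beta_N)$ extracted from $\partial S(\gamma)$, we have
\[
    \mathbb{E}[F^\gamma]=\frac{1}{m(\gamma)}\int F\circ h_\gamma(\mathbf{x}_k,\mathbf{t}_k,\mathbf{y}_N,\mathbf{L}_n)\,y_1\cdots y_NV_\Gamma(\mathbf{y}_N,\mathbf{L}_n)\frac{\d\mathbf{x}_k\d\mathbf{t}_k\d\mathbf{y}_N}{V_{g,n}(\mathbf{L}_n)}.
\]
Here $V_\Gamma$ is a polynomial in $\mathbf{y}_N$ with non-negative coefficients. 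It is a product of Mirzakhani volume polynomials, with annular components contributing $1$.

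Expanding $y_1\cdots y_NV_\Gamma$ into monomials $y_1^{K_1}\cdots y_N^{K_N}$, I will define
\[
    V_{\mathcal{O}_\gamma}(\ell,\mathbf{L}_n)=\frac{1}{m(\gamma)}\sum c_{\mathbf{K}}\,1\star\cdots\star1\star y_1^{K_1}\star\cdots\star y_N^{K_N}(\ell)\Big|^{h_\gamma}_{\mathds{1}},
\]
that is, the pushforward by $h_\gamma$ of the corresponding measures. The identity $\mathbb{E}[F^\gamma]=\int F V_{\mathcal{O}_\gamma}\,\d\ell/V_{g,n}$ is then immediate from the definition of pseudo-convolution. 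Non-negativity is clear. The existence of a density uses that $h_\gamma$ is real-analytic and non-constant, so its level sets are null, and that $h_\gamma$ is proper (via Corollary \ref{S3:Cor:LenProperBound} and properness of filling loops), which gives local finiteness.

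By the Corollary following Corollary \ref{S3:Cor:HoloRepGen2}, $h_\gamma$ is a length-type function in the variables $(\ell_{\alpha_j},\tau_{\alpha_j})_{j\le k}$ and $(\ell_{\beta_i})$. Boundary lengths coinciding with each other or with some $L_i$ are handled by the restriction lemma for hyperplanes $\{y_i=y_j\}$ and $\{y_i=L\}$. Theorem \ref{S4:Theo:ConvLenType} therefore applies to each term and gives $\lambda_{\mathbf{K}}\in(0,1)$ with the term in $\mathcal{SF}_w^{\sum K_i+2k+N-1}(\lambda_{\mathbf{K}})$. I take $\lambda_\gamma=\min_{\mathbf{K}}\lambda_{\mathbf{K}}$; the minimum is over finitely many values. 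I also use that the classes are nested: $\mathcal{SF}_w^{a}(\lambda)\subset\mathcal{SF}_w^{b}(\lambda')$ for $a\le b$ and $\lambda'\le\lambda$.

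It remains to bound the degree. The total degree of $y_1\cdots y_NV_\Gamma$ is at most $N+\dim_{\mathbf R}\mathcal{T}(S_{g,n}\backslash S(\gamma))$. Adding the $2k$ coordinates of $S(\gamma)$ and the $N$ boundary-length variables, and using the dimension count
\[
    2k+2N+\dim\mathcal{T}(S_{g,n}\backslash S(\gamma))=6g-6+2n,
\]
which holds because the twists and lengths along $\Gamma$ account for $2N$, we get $\sum K_i+2k+N-1\le 6g-6+2n-1=6g-7+2n$. This index lies in the class $\mathcal{SF}_w^{6g-7+2n}$ claimed in the statement.

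I expect the main obstacle to be the bookkeeping in the degenerate cases. The first case is when boundary curves of $S(\gamma)$ are homotopic to each other or to boundaries of $S_{g,n}$, so that some $y_i$ is fixed or identified and the length-type function lives on a hyperplane. The second case is when $S(\gamma)$ is an annulus or a pair of pants, so that $k=0$ and there are no twist variables. There I need to check that the restricted function is still length-type, in particular that it remains proper in the reduced variables, so that Theorem \ref{S4:Theo:ConvLenType} applies verbatim. I would then verify that the degree count still gives at most $6g-7+2n$, since fewer free variables only decrease the index.
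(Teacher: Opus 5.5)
Your proposal is correct and follows the paper's own argument: the paper's proof simply combines Theorem~\ref{S2:Theo:IntFormGeneral}, the length-type property of $h_\gamma$, and Theorem~\ref{S4:Theo:ConvLenType} by linearity, and your count $\sum K_i+2k+N-1\leq 2k+2N+\dim\mathcal{T}(S_{g,n}\backslash S(\gamma))-1=6g-7+2n$ spells out the degree bookkeeping the paper leaves implicit. The only quibble is your side argument for the existence of a density. It is redundant, since Theorem~\ref{S4:Theo:ConvLenType} already produces the density, and as phrased it is incomplete: null level sets only rule out atoms, and what you actually need is that the critical set of a non-constant real-analytic function is null, after which the coarea formula gives the density.
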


\begin{proof}
    We denote $\beta_1,\ldots,\beta_N$ the connected components of $S(\gamma)$, and we extract a maximal multi-curve $\Gamma$ from $(\beta_1,\ldots,\beta_N)$. Let $N'$ be the number of curves of $\Gamma$. The volume $V_\Gamma(\mathbf{y}_{N'},\mathbf{L}_n)$ of the moduli space of $S_{g,n}\backslash S(\gamma)$ with boundary lengths given by $y_1,\ldots,y_{N'},L_1,\ldots,L_n$ is a polynomial and the length function $h_\gamma$ (written in appropriate Fenchel-Nielsen coordinates) is a length-type function. Hence, by linearity and using Theorem \ref{S2:Theo:IntFormGeneral} and Theorem \ref{S4:Theo:ConvLenType} we can conclude.
\end{proof}

\begin{Cor}\label{S5:Cor:CountRandVarPTS}
    Let $\gamma$ be a loop in minimal position on $S_{g,n}$. Then $a\mapsto\mathbb{E}[N_\gamma(a)]\in\mathcal{SF}^{6g-6+2n}(\lambda_\gamma)$.
\end{Cor}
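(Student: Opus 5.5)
Taking $F=\mathds{1}_{[0,a]}$ in Theorem \ref{S5:Theo:IntRandVarPTS} (by monotone approximation of the indicator by test functions, since everything is non-negative) gives
\[
    \mathbb{E}[N_\gamma(a)]=\frac{1}{V_{g,n}(\mathbf{L}_n)}\int_0^aV_{\mathcal{O}_\gamma}(\ell,\mathbf{L}_n)\,\d\ell=\frac{1}{V_{g,n}(\mathbf{L}_n)}\mathcal{P}V(a),
\]
with $V=V_{\mathcal{O}_\gamma}(\,.\,,\mathbf{L}_n)\in\mathcal{SF}_w^{6g-7+2n}(\lambda_\gamma)$. The plan is to show that the primitive $\mathcal{P}$ maps $\mathcal{SF}_w^{m}(\lambda)$ into $\mathcal{SF}^{m+1}(\lambda)$, so that the weak (integrated) estimate becomes a pointwise one and the degree goes up by one.

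Write $e^\ell V(\ell)=e^\ell p(\ell)+r(\ell)$ with $p\in\mathbf{C}_{m}[\ell]$ (where $m=6g-7+2n$) and $\int_0^M|r|\le c_1(1+M)^ce^{(1-\lambda)M}$. Then
\[
    \mathcal{P}V(a)=\int_0^ap(\ell)\,\d\ell+\int_0^ae^{-\ell}r(\ell)\,\d\ell .
\]
The first term is a polynomial of degree $m+1=6g-6+2n$. For the second term, let $R(t)=\int_0^t|r|$. Integrating by parts, $\int_M^\infty e^{-\ell}|r(\ell)|\,\d\ell\le\int_M^\infty e^{-\ell}R(\ell)\,\d\ell\le C(1+M)^ce^{-\lambda M}$, using $\lambda>0$. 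Hence $e^{-\ell}r$ is integrable on $\mathbf{R}_{\ge0}$. Setting $\kappa=\int_0^\infty e^{-\ell}r$, we obtain
\[
    \mathcal{P}V(a)=\Big(\int_0^ap+\kappa\Big)+O\big((1+a)^ce^{-\lambda a}\big)
\]
as a genuinely pointwise bound, which is exactly membership of $\mathcal{SF}^{6g-6+2n}(\lambda_\gamma)$ after multiplying by $e^a$. Here the polynomial part lies in $e^a\mathbf{C}_{6g-6+2n}[a]$, and the remainder is continuous and bounded by $(1+a)^ce^{(1-\lambda)a}$. Continuity of the remainder follows from $\mathcal{P}V$ being continuous, since $V\in L^1_{loc}$.

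The only delicate point is this conversion of the $L^1$-type bound into a pointwise bound, handled by the tail integral above. The degree convention also needs matching against the definition $\mathcal{SF}^m=e^{-\ell}\mathcal{F}^{m+1}$: for $V$, the weak class gives polynomial degree at most $6g-7+2n$, and integration raises it to $6g-6+2n$. The leading coefficient is positive (by comparison with Corollary \ref{S1:Cor:MirzAsympt}), but this is not needed for the membership statement.
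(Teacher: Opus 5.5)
Your proof is correct and follows the same route as the paper. You split $V_{\mathcal{O}_\gamma}(\,.\,,\mathbf{L}_n)$ into its polynomial part of degree at most $6g-7+2n$ and an integrable remainder, integrate the polynomial, and absorb the remainder into the constant term plus a tail of size $O\big((1+a)^ce^{-\lambda_\gamma a}\big)$. Your integration-by-parts step just makes explicit the tail estimate that the paper asserts without justification.
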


\begin{proof}
    Let $P_\gamma^{\mathbf{L}_n}$ be the polynomial part of $V_{\mathcal{O}_\gamma}(~.~,\mathbf{L}_n)$ and $r_\gamma^{\mathbf{L}_n}=V_{\mathcal{O}_\gamma}(~.~,\mathbf{L}_n)-P_\gamma^{\mathbf{L}_n}$. We have $V_{\mathcal{O}_\gamma}(~.~,\mathbf{L}_n)\in\mathcal{SF}_w^{6g-7+2n}(\lambda_\gamma)$, hence $r_\gamma^{\mathbf{L}_n}\in L^1(\mathbf{R}_{\geq0})$ and:
    \[
        \int_a^\infty|r_\gamma^{\mathbf{L}_n}(\ell)|\d\ell=\underset{a\to\infty}{O}\big((1+a)^ce^{-\lambda_\gamma a}\big).
    \]
    Denoting by $Q_\gamma^{\mathbf{L}_n}$ the primitive of $P_\gamma^{\mathbf{L}_n}$ such that $Q_\gamma(0)=\int_0^\infty r_\gamma^{\mathbf{L}_n}(\ell)\d\ell$, we have:
    \[
        \mathbb{E}[N_\gamma(a)]=Q_\gamma^{\mathbf{L}_n}(a)+\underset{a\to\infty}{O}\big((1+a)^ce^{-\lambda_\gamma a}\big).
    \]
\end{proof}

\printbibliography

\end{document}